\documentclass{article}

\usepackage[french, english]{babel}

\usepackage[letterpaper,top=2cm,bottom=2cm,left=3cm,right=3cm,marginparwidth=1.75cm]{geometry}

\usepackage{amsmath}
\usepackage{graphicx}
\usepackage[colorlinks=true, allcolors=blue]{hyperref}
\usepackage{xr}
\externaldocument[RTaub-]{RTaub}
\externaldocument[Asymp-]{Asymptotic}
\title{The dependency digraph for irreducible finite range random walk in Free Groups}
\author{Chevalier Guillaume}

\usepackage[T1]{fontenc} 
\usepackage[utf8]{inputenc} \usepackage{amsmath,amsthm,amsfonts,amscd,amssymb,graphics,color,xspace}  \usepackage{xr}
\usepackage{cite} 
\usepackage{mathabx} 
\usepackage{hyperref}
\usepackage{tikz-cd} 
\usepackage{tikz}	 %
\usepackage{enumitem}
\usepackage{imakeidx}
\usepackage{etoolbox}
\usepackage{caption}
\usepackage{subcaption}
\usepackage{multicol}
\usepackage{dsfont} 
\usepackage{esvect}
\usepackage{pdfpages}
\usepackage{graphics}
\usepackage{graphicx}
\usepackage{setspace}

\addto\captionsfrench{}


\usetikzlibrary{tikzmark,shapes.misc}

\usepackage{geometry}


\makeindex
\begin{document}
\newtheorem{Thm}{Theorem}[section]
\newtheorem{Prop}[Thm]{Proposition} 
\newtheorem{Lem}[Thm]{Lemma} 
\newtheorem{Cor}[Thm]{Corollary} 
\newtheorem{Clm}[Thm]{Claim} 
\newtheorem{prop}[Thm]{Property} 
\newtheorem*{Thm*}{Theorem} 
\newtheorem*{Cor*}{Corollary} 
\newtheorem*{Prop*}{Proposition}
\theoremstyle{definition} 
\newtheorem{Def}[Thm]{Definition} 
\theoremstyle{remark}
\newtheorem{Rem}[Thm]{Remark} 
\newtheorem{Notation}[Thm]{Notation}
\newtheorem{Ex}[Thm]{Example}  
\newtheorem*{Rem*}{Remark}
\newtheorem*{Ex*}{Example}
\numberwithin{equation}{section}

\newcounter{nmbrexercise}
\newenvironment{Exercise}[1][ ] 
{
  \par\vspace{\baselineskip}%
 {\refstepcounter{nmbrexercise} \color{PineGreen} \noindent \textbf{Exercise~\thenmbrexercise :} \textit{#1}}%
  \par\vspace{\baselineskip}%
}%
{\vspace{\baselineskip}}

\maketitle

\begin{abstract}
This article is one of a triptych composed with \cite{Rtaub} and \cite{Asymp}, that aims at proving an asymptotic expansion to any order of the passage probability of an irreducible equivariant finite range random walk on a tree. In this text we study the analytic and geometric properties of the objects introduced in \cite{Asymp}. To do so we introduce the definition of a flooded cavern tree enabling a precise study of some dependency digraph associated with a given irreducible finite-range random walk on a free group.
\end{abstract}

\begin{center}
	\textbf{\Large Introduction}
\end{center}

The objective of the triptych (\cite{Rtaub}, \cite{Asymp} and the current text), is to prove the theorem \ref{Theoreme_central} below concerning asymptotic of passage probabilities of an irreducible equivariant finite-range random walk on some infinite tree with bounded valence that admits a group of automorphisms acting cofinitely on the set of vertices. A typical example is an irreducible finite-range random walk on the Cayley graph of a free group with at least two generators, with group of automorphisms, the free group itself (See details in \cite{Asymp} Example \ref{Asymp-Ex_ifrrw}).

\begin{Thm}[Theorem \ref{Asymp-Asymptotics_of_probability_Green_General} in \cite{Asymp}]\label{Theoreme_central}
		~
		Let $X=(X_0,X_1)$ be an infinite tree of bounded valence such that any vertex possesses at least three neighbours, $\Gamma<\operatorname{Aut}(X)$ be a subgroup of automorphisms of the tree such that $\Gamma\backslash X_0$ is finite, $p:X_0\times X_0\to [0,1]$ be a finite range $\Gamma$-invariant transition kernel, making $(X_0,p)$ an irreducible Markov chain and let $k\in\mathbb{N}$ be an integer such that 
		$$\forall x,y\in X_0,\, d(x,y)>k\Rightarrow p(x,y)=0.$$
		Denote by $d$ the period of $(X_0,p)$ and $r:X_0\times X_0\to \mathbb{Z}/d\mathbb{Z}$, the periodicity cocycle (Definition \ref{Asymp-DEF:PERIODICITY_COCYCLE} in \cite{Asymp}).
		
		For all $x,y\in X_0$, there exist constants $C>0,\, (c_l)_{l\geq 1}$ such that for any integer $K\geq 1$, we have the asymptotic expansion as $n$ goes to $\infty$:
		$$ p^{(dn+r(x,y))}(x,y) = C R^{-dn}\frac{1}{n^{3/2}}\left( 1 + \sum_{l=1}^{K-1} \frac{c_l}{n^{l}} +O\left(\frac{1}{n^K}\right)\right),$$
		and $p^{(dn+t)}(x,y;\{y\}^\complement)=0$, if $t\neq r(x,y)\,[d]$.
	\end{Thm}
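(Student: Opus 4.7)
The plan is to proceed by singularity analysis of the Green function $G(x,y|z) = \sum_{n\geq 0} p^{(n)}(x,y)\, z^n$, together with its first-passage counterpart $F(x,y|z) = \sum_{n\geq 0} p^{(n)}(x,y; \{y\}^\complement)\, z^n$. Exploiting the tree structure, the $\Gamma$-invariance, and the finite-range hypothesis, one sets up a finite polynomial system satisfied by a distinguished family of $F$-type functions (indexed by pairs of vertices at distance $\leq k$ in prescribed cone configurations), out of which both $F$ and $G$ are rationally expressible. The combinatorial skeleton of this system is precisely the dependency digraph studied in the present paper, and the flooded cavern tree construction keeps track of which excursions contribute to each functional equation.

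Taking this polynomial system as the starting point, the argument then proceeds in three broad steps. First, I would identify the common radius of convergence $R$ of these generating functions and, using irreducibility, show that the singularities on the circle $|z|=R$ are exactly the $d$-th roots of unity times $R$. Second, around each such singularity $R\omega$ (with $\omega^d = 1$) I would extract a Puiseux-type expansion
$$ F(x,y|z) = \sum_{k \geq 0} a_k^\omega(x,y)\, (R - z\omega^{-1})^{k/2} $$
valid on a slit neighbourhood of $R\omega$ (a ``camembert''). Third, a Flajolet--Odlyzko transfer theorem converts the expansion into the stated asymptotic: the half-integer exponents produce the $n^{-3/2-l}$ terms, and recombining the contributions of the various roots of unity isolates a residue class modulo $d$, yielding simultaneously the periodicity cocycle $r(x,y)$ and the vanishing of $p^{(dn+t)}(x,y;\{y\}^\complement)$ for $t \not\equiv r(x,y) \pmod d$.

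The main obstacle is the second step: showing that the relevant branch of the algebraic function defined by the system admits a \emph{full} Puiseux expansion in $(R - z\omega^{-1})^{1/2}$ with no exponents outside $\tfrac{1}{2}\mathbb{N}$, and extends analytically on a slit disc of radius strictly greater than $R$. This combines two ingredients developed respectively in this text and in \cite{Rtaub}. From the present paper, the dependency digraph together with the flooded cavern tree pinpoint a single \textbf{dominant} strongly connected subsystem whose discriminant governs the singular behaviour, ruling out higher-order branching or essential singularities elsewhere on the digraph. From \cite{Rtaub}, a Taub\'erian-type input forces this discriminant to vanish to exactly first order at $R$, so that the branching exponent is $1/2$ and no spurious logarithms, quarter-powers, or similar anomalies appear. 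Irreducibility is used decisively throughout to preclude cancellations between sub-generating functions.
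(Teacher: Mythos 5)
Your high-level plan — express the generating functions through a finite polynomial system (the Lalley curve over the dependency digraph), locate a square-root branch point of the parametrization at the $d$-th roots of $R^d$, and transfer the local expansion to a coefficient asymptotic — is indeed the plan carried out across the triptych. But you have misassigned the two key technical inputs, and the misassignment is a real gap. The claim that the discriminant of the dominant subsystem vanishes to \emph{exactly} first order, equivalently that $\lambda$ has zero first but non-zero second derivative at $u(R)$ so that $v_z$ has a genuine $\sqrt{R-z}$ branching, is \emph{not} a Tauberian input from \cite{Rtaub}: it is Proposition \ref{Prop_derive_seconde_de_lambda} of the present paper, and it is a Perron--Frobenius argument. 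Its proof needs the existence of a unique absorbing strongly connected component $V_\infty$ of the dependency digraph (Proposition \ref{Prop_V_infini_is_absorbent}), Perron irreducibility of $\frac{\partial\psi_\infty}{\partial J^{(\infty)}}$ on $E_\infty$ (Corollary \ref{Cor_partial_psi_infini_is_Perron_Irred}), and — critically — that $\psi_\infty$ has degree at least two in the $V_\infty$-variables (Proposition \ref{degre_infini_de_psi}); this last point is where the flooded cavern tree actually earns its keep, and without it the second derivative of $\lambda$ could vanish and the singularity type would be undetermined (pole, higher-order branch, or no singularity). The role of \cite{Rtaub} is exactly and only what you call "a Flajolet--Odlyzko transfer theorem": converting the local Puiseux expansion on a slit neighbourhood of radius $>R$ into the coefficient asymptotic $C R^{-dn} n^{-3/2}\left(1 + \sum_{l} c_l n^{-l} + O(n^{-K})\right)$, to all orders $K$.

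A secondary but substantive point: you write the Puiseux expansion for the first-passage function $F(x,y|z)$, but the theorem's asymptotic is for $p^{(n)}(x,y)$, i.e.\ for the Green function $G(x,y|z)$, and the distinction matters. By Proposition \ref{Corollary_PSeries_A}, the derivative $D_{u(R)}f_{x,y}$ can vanish on $T_{u(R)}\mathcal{C}$ (exactly when $R_F(x,y)>R$), in which case $F$ is analytic past $R$ and has no square-root term. The Green function is the safe object: Proposition \ref{Prop_derivative_of_g} shows $D_{u(R)}g_{x,y}$ is never zero on $T_{u(R)}\mathcal{C}$, so the $\sqrt{R-z}$ term in $G$ is never cancelled, which is what guarantees the $n^{-3/2}$ for every pair $(x,y)$. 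If you insisted on working with $F$ you would obtain an asymptotic only for certain pairs, not the stated uniform result.
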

	
To prove this theorem, we study the analytic properties of the generating functions associated with the sequence of probabilities $(p^{(n)}(x,y))_{n\in\mathbb{N}}$ for $x,y\in X_0$, called \textit{Green's functions}. In \cite{Asymp} we prove that the Green's functions can be written as the composition of a rational function over an affine algebraic curve, with a parametrization of this algebraic curve $\mathcal{C}$. The curve $\mathcal{C}$ is called the \textit{Lalley's curve} in the name of S. P. Lalley who studied it in \cite{Lalley_1993}. It has the particular form\footnote{To be more rigorous we should consider the Lalley's curve to be the irreducible variety of $\mathcal{C}$, which indeed is a curve (See \cite{Lalley_1993} Proposition 3.1). Besides this irreducible component coincide with $\mathcal{C}$ on the part that is of interest to us.} $\mathcal{C}=\{(z,J)\in\mathbb{C}\times E : J=z\psi(J)\}$, where $E\approx \mathbb{C}^N$ is a $\mathbb{C}$ vectorial space with finite dimension $N\geq 1$, and $\psi:E\to E$ is a polynomial function with positive coefficients (See \cite{Asymp} sections \ref{Asymp-Section_Lalley_s_curve} and \ref{Asymp-Section_Green_function_in_a_tree} for the details).
\begin{Prop}
Under the assumptions of Theorem \ref{Theoreme_central}, there exists an algebraic variety $\mathcal{C}$ of the form 
$$\mathcal{C}=\{(z,J)\in\mathbb{C}\times E : J=z\psi(J)\},$$
with $\psi:E\to E$ a polynomial function with non-negative coefficients, over a complex vector space of finite dimension (See Definition (\ref{Asymp-Def_de_psi}) in \cite{Asymp}). The Lalley's curve $\mathcal{C}$ comes naturally with a projection $\lambda:(z,J)\mapsto z$ from $\mathcal{C}$ to $\mathbb{C}$, that admits a section in a neighbourhood of the origin $u:z\mapsto (z,v_z)\in \mathbb{C}\times E$, i.e that $u$ satisfies $u(0)=(0,0)$ and $\lambda\circ u= Id$.

 For any pair $(x,y)$ of vertices of $X$, there exist rational functions $g_{x,y}, f_{x,y}:\mathbb{C}\times E \to \mathbb{C}$ that are regular near the origin and for any complex number $z$ of modulus sufficiently small, we have
		\begin{equation}\label{Eq_Green_G}
		 G_z(x,y)=g_{x,y}(z,v_z), \; \text{ and }\; F_z(x,y)=f_{x,y}(z,v_z),
		\end{equation}	
		where $z\mapsto (z,v_z)$ is a parametrization.
\end{Prop}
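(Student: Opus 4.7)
The plan is to follow the approach pioneered by Lalley in \cite{Lalley_1993}: exploit the tree structure to reduce the construction of Green's functions to solving a polynomial fixed-point system in finitely many complex variables. The ingredients on which everything is built are the \emph{first-passage generating functions} indexed by local configurations of the random walk at its target, and the key point is that there are only finitely many such configurations up to $\Gamma$-equivalence, which will supply the finite-dimensional vector space $E$.

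First I would define $E$ coordinatewise. For each vertex $y\in X_0$ and each admissible configuration of pending moves obtained by decomposing the walk near $y$ at distance at most $k$, introduce a generating function counting walks that realize the first passage to $y$ compatible with this configuration. The finite-range hypothesis ensures that only finitely many configuration types arise at each vertex; the $\Gamma$-cofiniteness hypothesis collapses the resulting infinite collection to finitely many orbits. Take $E\simeq\mathbb{C}^N$ with one coordinate per orbit, and let $J(z)\in E$ collect the corresponding generating functions evaluated at $z$.

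Next I would derive the equation $J=z\psi(J)$ and the parametrization $u$. Applying a one-step analysis at the starting point of each path, each coordinate of $J(z)$ equals $z$ times a sum of products of other coordinates of $J(z)$, one product per possible first step; because of the tree structure, the decomposition is disjoint and produces a genuine polynomial with non-negative coefficients. Assembling coordinates yields the vectorial identity $J(z)=z\psi(J(z))$ with $\psi:E\to E$ polynomial and non-negative. At $(z,J)=(0,0)$, the differential of $(z,J)\mapsto J-z\psi(J)$ with respect to $J$ is the identity, so the holomorphic implicit function theorem furnishes a unique analytic section $u:z\mapsto(z,v_z)$ of $\lambda$ with $u(0)=(0,0)$ and $\lambda\circ u=\mathrm{Id}$. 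Finally, the first-passage function $F_z(x,y)$ is either a coordinate of $v_z$ (if $(x,y)$ is exactly one of the configurations) or, after a single initial step from $x$, a polynomial combination of such coordinates with coefficients polynomial in $z$; in either case it is of the form $f_{x,y}(z,v_z)$ with $f_{x,y}$ rational and regular at the origin. For the Green's function, use the renewal identity $G_z(x,y)=F_z(x,y)\,G_z(y,y)$ together with $G_z(y,y)=(1-U_z(y,y))^{-1}$, where $U_z(y,y)$ is the first-return generating function. The same first-step analysis expresses $U_z(y,y)$ as a polynomial in $(z,v_z)$ vanishing at $z=0$, so $1-U_z(y,y)$ is a unit at the origin and $g_{x,y}$ is rational and regular there, which establishes (\ref{Eq_Green_G}).

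The principal obstacle is concentrated entirely in the first step: one must choose a set of local configurations large enough for the first-step decomposition to close into a system of the form $J=z\psi(J)$, yet small enough to be finite modulo $\Gamma$. This is where the three hypotheses (finite range, bounded valence, $\Gamma$-cofiniteness) enter together, and where the substantive content of the construction in \cite{Asymp} lies; once the bookkeeping of configurations is fixed, the algebraic manipulations in the remaining steps are straightforward.
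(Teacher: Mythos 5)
Your proposal follows the same Lalley-style fixed-point construction that \cite{Asymp} carries out, and the overall architecture (finite configuration space $\Xi$ modulo $\Gamma$, first-step decomposition yielding $J=z\psi(J)$, implicit function theorem at the origin, renewal identity $G=F\cdot G(y,y)$) is indeed the paper's. So in spirit the approach is the right one.

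There is, however, one concrete gap in the middle step. The coordinates of $v_z$ in the paper are the \emph{restricted} Green's functions $v_z((a,b)_y)=G_z(a,b;\mathcal{B}(y)^\complement)$, counting walks that stay off the whole ball $\mathcal{B}(y)$ of radius $k$ around $y$ (not first-passage functions to $y$, as you describe them). This distinction is not cosmetic: a first-passage path from $x$ to $y$, with $x\in\mathcal{B}(y)\setminus\{y\}$, can re-enter $\mathcal{B}(y)\setminus\{y\}$ arbitrarily many times before finally hitting $y$, doing an excursion into $\mathcal{B}(y)^\complement$ between successive visits. Summing over all such excursions produces a geometric series and hence a matrix inversion $(I-zM_J)^{-1}$ in the variables $(z,J)$, indexed by $\mathcal{B}(y)\setminus\{y\}$ (formulas (\ref{coefficients_of_M_J}) and (\ref{Def_of_f_x,y_when_x_in_Ball_y})). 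Consequently $f_{x,y}$ is genuinely rational, not a ``polynomial combination of coordinates with coefficients polynomial in $z$'' after one step, as you claim; your intermediate description would incorrectly yield a polynomial $f_{x,y}$. The final assertion that $f_{x,y}$ is rational is correct, but your argument does not actually produce the denominator, and the regularity at the origin must then be justified by noting that $I-zM_J$ reduces to $I$ at $(0,0_E)$. For walks of range $k=1$ (Lalley's original nearest-neighbour setting) the ball $\mathcal{B}(y)\setminus\{y\}$ is empty and the matrix inversion degenerates, which is why your simplified picture looks adequate, but for $k\geq 2$ this step is where the extra rational structure enters and your account omits it.
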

\begin{Rem}
	In \cite{Asymp}, admitting the result of Proposition \ref{Regularity_of_C} below, about smoothness of the Lalley's curve $\mathcal{C}$ in a neighbourhood of $\overline{\{u(z):z\in\mathbb{D}(0,R)\}}$, we've proven (Proposition \ref{Asymp-Extension_of_algebraicity_say} in \cite{Asymp}) that the rational functions $g_{x,y}$ for $x,y\in X_0$ are regular in a neighbourhood of $\overline{\{u(z):z\in\mathbb{D}(0,R)\}}$.
\end{Rem}
In the current text, we study the geometric properties of the curve $\mathcal{C}$ in the neighbourhood of $\overline{\{u(z):z\in\mathbb{D}(0,R)\}}$, in particular we prove its smoothness in Proposition \ref{Regularity_of_C}, and we prove that the projection $\lambda:(z,J)\mapsto z$ from $\mathcal{C}$ to $\mathbb{C}$ has a zero first derivative at $u(R)$ and a non-zero second derivative at $u(R)$ (Propositions \ref{Prop_derivative_of_lambda} and \ref{Prop_derive_seconde_de_lambda}). Besides we prove that the first derivative of the rational functions $g_{x,y}$ at $u(R)$ are non-zero. These last results coupled with the Tauberian theorems in \cite{Rtaub} are essential to prove Theorem \ref{Theoreme_central} (See the proof of Theorem \ref{Asymp-Asymptotics_of_probability_Green_General} in \cite{Asymp}).

The key ingredient in the inspection of the Lalley's curve is the study the \textbf{dependency digraph}, associated with the system of polynomial equations defining the Lalley's curve. This directed graph is, using the notation of the above theorem, the simple directed graph with set of vertices $\{1,...N\}$ and with an edge from a vertex $i$ to another vertex $j$, if the polynomial function $J\mapsto \psi_j(J)$ of the $j$-th coordinate of $\psi$, depends on the coordinate $J_i$. We prove that this graph has one \textbf{strong connected component}, which is a connected component of the digraph with the property that, for any vertex in the digraph there exists a path from this vertex to a vertex in the strong connected component. Alan R. Woods in \cite{Woods_1997} encountered the same structure while computing the limit distribution of the ways of colouring the root of a finite simple descending rooted tree, following some types of coloring rules, as the number of vertices goes to infinity. Actually we construct an infinite directed graph, such that its quotient with respect to some group of automorphisms is isomorphic to the dependency digraph defined above. This infinite digraph will be shown to be the limit of some family of descending rooted tree that we call \textit{flooded cavern trees}, which are some particular intervals graphs.

\section{The Dependency Digraph}\label{Section_The_Graph}
Before going further, we invite the reader to take note of the two sections \ref{Asymp-Section_Decomposition_of_admissible_paths} and \ref{Asymp-Section_Lalley_s_curve} of \cite{Asymp}.
	\begin{Def}[Directed Graph]\label{Def_Directed_Graph}\index{Directed graph}
		A \textit{directed graph}, or \textit{digraph}, is the data of a quadruple $\mathcal{G}=(G_0,G_1,s,b)$ constituted of two sets $G_0$ and $G_1$ and of two $G_0$-valued maps defined on $G_1$, $s:G_1\to G_0$ and $a:G_1\to G_0$ called \textit{source} and \textit{aim} respectively. The set $G_0$ is called set of vertices of the graph $\mathcal{G}$, and $G_1$ is called the set of edges of the graph $\mathcal{G}$.
		
		A \textit{(directed) geodesic path} in $\mathcal{G}$ is a finite sequence $(e_1,...,e_n)$ of edges of $\mathcal{G}$, such that for any integer $ 1<i\leq n$ the source $s(e_i)$ of $e_{i}$ equals the aim  $a(e_{i-1})$ of $e_{i-1}$.
		The integer $n$ is called the \textit{length} of the path, $s(e_1)$ is called the \textit{source} of the path, and $a(e_n)$ is called the \textit{aim} of the path.
		
		The directed graph $\mathcal{G}$ will be said to be \textit{half-connected} if for any pair $(\theta,\theta')$ of distinct vertices of $\mathcal{G}$, there exists a path with source $\theta$ and aim $\theta'$, or a path with source $\theta'$ and aim $\theta$.
		
		The directed graph $\mathcal{G}$ will be said to be \textit{connected} if for any pair $(\theta,\theta')$ of distinct vertices of $\mathcal{G}$, there exists a path with source $\theta$ and aim $\theta'$.
		
		A \textit{connected component}\index{Connected component of a directed graph} of the directed graph $\mathcal{G}$, is a \textit{directed subgraph}\footnote{A \textit{directed subgraph} of a directed graph $\mathcal{G}=(G_0,G_1,s,a)$ induced by a subset of vertices $C_0\subset G_0$, is the directed graph $(C_0,C_1,s',a')$, where $C_1=s^{-1}(C_0)\cap a^{-1}(C_0)$, and $s'=s|_{C_1}$, $a'=a|_{C_1}$.} induced by a subset $C\subset G_0$ of vertices of the graph, with the property that for any vertex $\theta$ in $C$, and any vertex $\theta'$ in $G_0$, if there exists a path with source $\theta$ and aim $\theta'$, and a path with source $\theta'$ and aim $\theta$, then $\theta'$ actually belongs to $C$.
		
		The graph will be said to be \textit{simple}\index{Simple graph} if there are no two distinct edges $e_1,e_2$ of the graph, having the same source and the same aim, i.e such that $s(e_1)=s(e_2)$ and $a(e_1)=a(e_2)$. In this case we can actually see the set of edges $G_1$ as a subset of pairs of vertices, $G_1\subseteq G_0\times G_0$, with the source map being the projection on the first coordinate and the aim map, the projection on the second coordinate.
	\end{Def}
	\begin{Rem}
		Edges of directed graph considered here will be denoted with decreasing indices \og $(\theta_1,\theta_0)$ \fg{}. The reason for that comes from Lemma \ref{Proposition_of_the_ecluse}, in which we will consider directed geodesic path $(\theta_n,...,\theta_0)$ starting from a vertex that is at distance $n$ from a base point vertex $\theta_0$, so that the vertex $\theta_j$, for $0\leq j \leq n$, is at distance $j$ from $\theta_0$.
	\end{Rem}
    \begin{Rem}
        In a directed graph, there does not exists path of length zero. The directed subgraph induced by a connected component is a connected directed graph. A directed graph that only have one vertex is always connected by definition. 
    \end{Rem}
We are still working under the assumptions described in the previous section, which we now recall,
 
Let $X=(X_0,X_1)$ be a tree of bounded valence with set of vertices $X_0$, and set of edges $X_1$. Let $\Gamma<\operatorname{Aut}(X)$ be a subgroup of automorphisms of $X$, that acts cofinitely on $X_0$ (i.e the set  $\Gamma\backslash X_0$ of orbits of $X_0$ under the action of $\Gamma$, is finite.) Let $p:X_0\times X_0 \to [0,1]$ be a finite range $\Gamma$-invariant transition kernel that makes the Markov chain $(X_0,p)$ irreducible. 
Let $k$ be a non-negative integer, such that for any vertices $x,y\in X_0$, if $d(x,y)>k$ then $p(x,y)=0$.

Moreover, we suppose that each vertex of $X$ has at least $3$ neighbours, that is to say, for any vertex $x\in X_0$, there exists three distinct vertices $y_1,y_2,y_3 \in X_0$ such that $(x,y_1),(x,y_2)$ and $(x,y_3)$ are edges of $X$. This last assumption, will only play a role in the proof of Proposition \ref{Prop_V_infini_is_absorbent}, which is a key proposition in the study of the Lalley's curve $\mathcal{C}$.
		
Under these assumptions, the $\Gamma$-orbits set $\Gamma\backslash\Xi$ of elements in $\Xi$, is finite (Proposition \ref{Asymp-Prop_Action_cofinite} in \cite{Asymp}), and so the vector space $E$ of $\Gamma$-invariant functions defined on $\Xi$ has finite dimension. We also defined a polynomial map $\psi:\mathcal{F}(\Xi,\mathbb{C})\to\mathcal{F}(\Xi,\mathbb{C})$ leaving stable the subspace $E=\mathcal{F}(\Xi,\mathbb{C})^\Gamma$ of complex $\Gamma$-invariant functions over $\Xi$, used to construct the Lalley's curve $\mathcal{C}$, which is parametrized by restricted Green's functions (Proposition \ref{Asymp-Green_restricted_as_parametrisation_of_C} in \cite{Asymp}). 

In this section we propose to study the iterations $(\psi^{\circ n})_n$. More precisely, for $(a,b)_{y}$ fixed in $\Xi$, we will wonder which elements $(a',b')_{y'}\in\Xi$ are involved in the definition of the polynomial function $J\mapsto\psi^{\circ n}(J)((a,b)_y)$.  Recall that for all $(a,b)_{y}$ in $\Xi$, we have
\begin{equation}\label{eq-t2}
	\begin{split}
		\psi(J)((a,b)_{y})&=p(a,b)+\sum_{c\in\mathcal{B}(y)^\complement}p(a,c)\cdot J_{[c,y]}(c,b)\\
		&=p(a,b)+\sum_{c\in\mathcal{B}(y)^\complement}p(a,c)\left(\sum_{\substack{(c_0,...,c_l)\in\Xi_{[c,y]}\\ c_0=c\,, c_l=b}} J_{c,y}(c_0,...,c_l)\right),
	\end{split}
\end{equation}
where the notations \og $J_{[c,y]}(c,b)$\fg{} and\og $J_{c,y}(c_0,...,c_l)$\fg{} have been introduced in notations \ref{Asymp-notation_J_[z,y]} in \cite{Asymp}.

For our study, we provide $\mathcal{F}(\Xi,\mathbb{C})$ with the base $\{1_\theta\}_{\theta\in\Xi}$, and $E$ with the base $\{1_{[\theta]}\}_{[\theta]\in\Gamma\backslash\Xi}$. We will always think of $E$ as being $\mathbb{C}^{\dim_\mathbb{C}(E)}$.

\begin{Def}\textit{Factors of monomials of $\psi$}\label{Def_monomials_first_order}\index{Factor of monomial}

	For $\theta_0=(a_0,b_0)_{y_0}$ and $\theta_1=(a_1,b_1)_{y_1}$ two elements in $\Xi$, we will say that $J(\theta_1)$ \textit{is a factor in one of the monomials of} $J\mapsto\psi(J)(\theta_0)$ if there exist $c\in\mathcal{B}(y)^\complement$ such that $p(a_0,c)>0$, and $(c_0,...,c_l)\in\Xi_{[c,y_0]}$ with $c_0=c$ and $c_l=b_0$ such that for some $j\in\{1,...,l\}$, we have $\theta_1=(c_{j-1},c_j)_{x_{i_j}}$, where we have denoted $(x_0,...,x_m)$ the geodesic segment $[c,y_0]$ in $X$, and $(i_1,...,i_l)=\iota_{[c,y_0]}(c_0,...,c_l)$ the crossing indices associated with $(c_0,...,c_l)$, along $[c,y_0]$ (See Lemma \ref{Asymp-Lem_2.6} in \cite{Asymp}). We will also say that such a product \og $J_{c,y_0}(c_0,...,c_l)$\fg{} \textit{is a monomial of} $J\mapsto\psi(J)(\theta_0)$ (Compare with Definition \ref{Asymp-Def_monomial} in \cite{Asymp}).

\end{Def}
\begin{Rem}\label{Rem_post_def_Factors_monomial}
Let $\theta_0=(a_0,b_0)_{y_0}$ and $\theta_1=(a_1,b_1)_{y_1}$ be two elements in $\Xi$ such that, $J(\theta_1)$ is a factor in one of the monomials of $J\mapsto \psi(J)(\theta_0)$. Then by definition, we immediately verify that necessarily $y_1$ belongs to the geodesic segment $[a_1,y_0]$.
\end{Rem}
\begin{Ex}
	In (\ref{eq-t2}) for $(c_0,...,c_l)\in\Xi_{[c,y]}$, with $c_0=c$, $c_l=b$ and $p(a,c)>0$,
	$$J_{c,y}(c_0,...,c_l)= J((c_0,c_1)_{x_{i_1}})\cdot J((c_1,c_2)_{x_{i_2}})\cdot\cdot\cdot J((c_{l-1},c_l)_{x_{i_l}}),$$
	with $(i_1,i_2,...,i_l)=\iota_{c,y}(c_0,...,c_l)$ the crossing indices associated with the crossing vertices $(c_0,...,c_l)$ along $[c,y]=(x_0,...,x_m)$. Then for all $j\in\{1,...,l\}$, $J((c_{j-1},c_{j})_{x_{i_j}})$ is a factor in one of the monomials of $J\mapsto\psi(J)((a,b)_y)$.
\end{Ex}

\begin{Def}\textit{The graph $\mathcal{G}_\Xi$, or dependency digraph}:\label{Def_of_G_Xi}\index{$\mathcal{G}_\Xi$}

	Let $\mathcal{G}_\Xi=(\Xi,E(\mathcal{G}_\Xi),s,b)$ be the simple directed graph whose set of vertices is $\Xi$, whose set of edges $E(\mathcal{G}_\Xi)$ is a subset of $\Xi\times \Xi$ defined below, and with $s$ and $b$ being respectively the projection on the first coordinate and the second coordinate.
	
	$E(\mathcal{G}_\Xi)$ is the set of pairs $(\theta_1,\theta_0)\in\Xi\times\Xi$ such that $J(\theta_1)$ is a factor in one of the monomials of $\psi(J)(\theta_0)$. 
\end{Def}

Notice that since $\psi:\mathcal{F}(\Xi,\mathbb{C})\to\mathcal{F}(\Xi,\mathbb{C})$ is $\Gamma$-equivariant, $\Gamma$ acts naturally on the graph $\mathcal{G}_\Xi$. We thus can consider the quotient graph $\Gamma\backslash\mathcal{G}_{\Xi}$, which we will note $\mathcal{V}$:
\begin{Def}\textit{Definition of $\mathcal{V}$}\label{Definition_of_V}\index{$\mathcal{V}$}
	
	$\mathcal{V}=(V,E(\mathcal{V}),s,b)$ is the directed graph whose set of vertices is $\Gamma\backslash\Xi$ and whose set of edges $E(\mathcal{V})$ is the set of edge classes from $\Gamma\backslash E(\mathcal{G}_\Xi)$. The source map $s$, and the aim map $b$ are the maps that sends an edge $[(\theta_1,\theta_0)]\in E(\mathcal{V})$ to respectively $\Gamma\theta_1\in V$, and $\Gamma \theta_0\in V$, where $(\theta_1,\theta_0)$ is some representative of $[(\theta_1,\theta_0)]$. 
\end{Def}
\begin{Rem}
	By definition, an edge exists from a vertex $[\theta_1]$ of $\mathcal{V}$ to another vertex $[\theta_0]$ of $\mathcal{V}$ if, and only if, there are representatives $(a_1,b_1)_{y_1}\in [\theta_1] $ and $(a_0,b_0)_{y_0}\in [\theta_0] $ such that $J((a_1,b_1)_{y_1})$ is a factor in one of the monomials of $J\mapsto\psi(J)((a_0,b_0)_{y_0})$.
\end{Rem}

\begin{Ex*}
	See examples in the Appendix \ref{Section_Examples}.
\end{Ex*}

 We enlarge definition \ref{Def_monomials_first_order} of factors of a monomials:
\begin{Def}\textit{Factors of monomials of $\psi$ (enlarged version)}\label{Def_Factors_Monomial_Enlarged}\index{Factor of monomial}

	Let $n\geq 1$ be an integer. For $\theta_0=(a_0,b_0)_{y_0}$ and $\theta_n=(a_n,b_n)_{y_n}$ two elements of $\Xi$, we will say that $J(\theta_n)$ \textit{is a factor in one of the monomials of} $J\mapsto\psi^{\circ n}(J)(\theta_0)$  if there exists a path in the graph $\mathcal{G}_\Xi$ of length $n$, with source $\theta_n$ and aim $\theta_0$.
\end{Def}

By their definition, the graphs $\mathcal{G}_\Xi$ and $\mathcal{V}$ carry informations about the monomials involved in the iterations $(\psi^{\circ n})_n$.
These graphs are not presented in \cite{Lalley_1993}, but underlie Steven P.Lalley's construction. In his paper, S. P. Lalley showed under the hypothesis (\ref{premiere_hypothese_de_SPLalley}) below, that the graph $\mathcal{V}$ is connected (Proposition 2.7 in \cite{Lalley_1993}).
\begin{equation}\label{premiere_hypothese_de_SPLalley}
	\forall (x,y)\in X_1,\; p(x,y)>0.
\end{equation}

For an irreducible Markov chain on $X$, it is not systematically true that the directed graph $\mathcal{V}$ is connected (see the example in the Appendix \ref{Section_Examples}). Nevertheless, it can be seen from these examples that $\mathcal{V}$ has a unique connected component $V_\infty$, such that for any vertex of the graph, there exists a path in $\mathcal{V}$ from the given vertex to this connected component $V_\infty$. Among other things, we will prove this observation in Proposition \ref{Prop_V_infini_is_absorbent}. Such structure have also been observed in \cite{Woods_1997}, he called it a "\textit{strongly connected component}".

\subsection{Flooded Cavern Tree}

Recall the definition of a directed graph, and the definition of a directed geodesic path in a directed graph (Definition \ref{Def_Directed_Graph}).
\begin{Def}\textit{Graph Morphism}\index{Graph morphism}

Let $\mathcal{G}_1=(V_1,E_1,s_1,b_1)$ and $\mathcal{G}_2=(V_2,E_2,s_2,b_2)$ be two directed graphs. A \textit{graph morphism} from $\mathcal{G}_1$ to $\mathcal{G}_2$ is the data of two maps $\varrho_V:V_1\to V_2$ (resp. $\varrho_E:E_1\to E_2$) from the set of vertices (resp. edges) of $\mathcal{G}_1$ to the set of vertices (resp. edges) of $\mathcal{G}_2$, such that:
$$\varrho_V\circ s_1=s_2\circ \varrho_E \text{ and } \varrho_V\circ b_1=b_2\circ \varrho_E.$$

Note that for simple graphs the map $\varrho_E$ is entirely determined by $\varrho_V$. In practice, for simple graphs, we will only mention of $\varrho_V$.
\end{Def}
\begin{Def}[Simple directed geodesic paths and cycle of a directed graph]~
\index{Cycle}
	A directed geodesic path in a directed graph is called a \textit{cycle} if its source equals its aim.
	
	A directed geodesic path $(e_1,...,e_n)$ in a directed graph $\mathcal{G}=(V,E,s,b)$ is said to be \textit{simple} if the source $s(e_1), ..., s(e_n)$ of each edges $e_1,...,e_n$ of the directed geodesic path are distinct\footnote{This differs from the standard definition, in which we only ask for edges to be distinct. The definitions coincide when the graph itself, is simple.}.
\end{Def}
\begin{Def}\textit{(Simple) Descending Rooted Tree \label{Directed_Rooted_Tree}(See figure \ref{Fig_descending_rooted_tree})}\index{Descending rooted tree}

	A \textit{descending rooted graph} is a directed graph $\mathcal{T}$ that does not possesses any simple cycle of positive length, and that admits a distinguished vertex $r$, called \textit{root} of the tree, such that any simple path in the tree can be extended in a simple path with aim the root $r$.
	
	Such descending rooted graph is said to be a \textit{simple descending rooted tree} if there is only one way to extend a simple path in the graph, to a simple path with aim $r$. Equivalently if there cannot be two distinct directed geodesic paths with same source and same aim.
\end{Def}

	\begin{figure}
		\centering
		\begin{subfigure}[b]{0.45\textwidth}
            \centering
            \includegraphics[width=\textwidth]{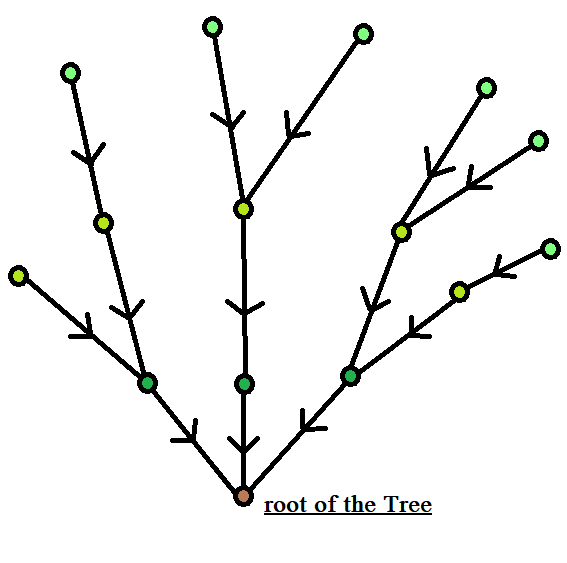}
            \caption{A typical simple descending rooted tree.}\label{Fig_descending_rooted_treea}
        \end{subfigure}
     \hfill
        \begin{subfigure}[b]{0.45\textwidth}
            \centering
            \includegraphics[width=\textwidth]{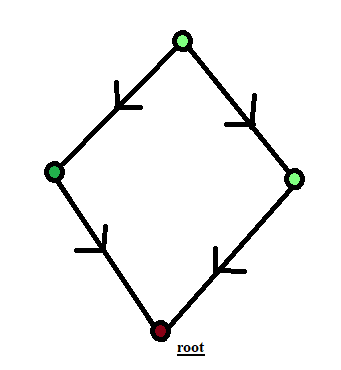}
            \caption{A descending rooted graph that is not a simple descending rooted tree.}\label{Fig_descending_rooted_tree_not_simple}
        \end{subfigure}
        \caption{Some Descending rooted graph.}
        \label{Fig_descending_rooted_tree}
	\end{figure}

We give one example of such simple descending rooted tree:
\begin{Def}\textit{Inclusion Graph associated with a function $g$ (Flooded Cavern Tree)}\label{DEF:FLOODED_CAVERN_TREE}\index{Flooded cavern tree}

Let $i<j$ be two integers, and let $g:\{i,...,j\}\to\mathbb{N}$ be a $\mathbb{N}$-valued function defined on the interval $\{i,...,j\} \subset \mathbb{N}$ such that for any integer $t$ such that $i\leq t < j$, we have:
$$ g(t)\geq g(i)>g(j).$$

We define $\mathcal{T}_g$ to be the graph whose set of vertices $\mathcal{I}$ is defined as the collection of all subintervals $I=\{a,...,b\}$ of $\{i,...,j\}$ such that $\operatorname{Card}(I)\geq 2$ and 
$$ \forall a\leq t <b,\, g(t)\geq g(a) > g(b). $$
See Figure \ref{figure the submerged mountain}. There exists an edge with source $I_1\in\mathcal{I}$ and aim $I_2\in\mathcal{I}$ if $I_1\varsubsetneq I_2$ and there does not exists any subset $I_3\in \mathcal{I}$ such that $I_1\varsubsetneq I_3 \varsubsetneq I_2$. Besides there is at most one edge between two given vertices, so the digraph is simple.
\end{Def}
	
\begin{figure}
	\centering
	\includegraphics[scale=0.4]{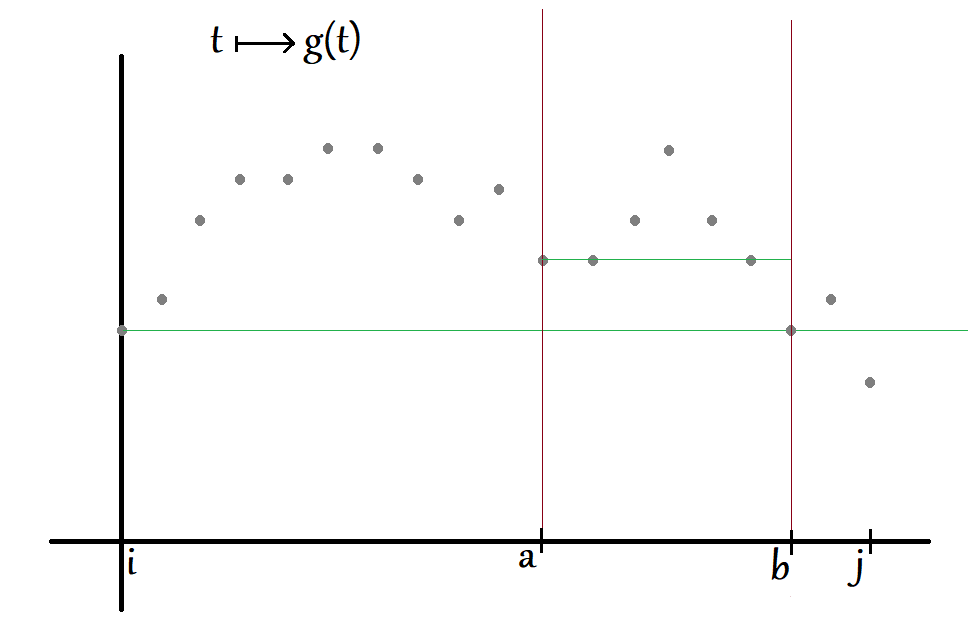}
	\caption{A vertex $[a,b]$ of the Flooded Cavern Tree $\mathcal{T}_g$}\label{figure the submerged mountain}
\end{figure}

\begin{Prop}\label{PROP:FLOODED_CAVERN_TREE_PROPERTY}
We use the notations of the previous definition \ref{DEF:FLOODED_CAVERN_TREE}:

\begin{enumerate}[label=\roman*)]
    \item For any two vertices $I_1,I_2$ in $\mathcal{I}$, there exists a directed path from $I_1$ to $I_2$ if, and only if $I_1\varsubsetneq I_2$;
    \item The leaves of $\mathcal{T}_g$ (i.e vertices such that there does not exist any edge with aim these vertices) are the intervals $I\in\mathcal{I}$ of cardinal two. Besides it corresponds to intervals $I=(t,t+1)$ with $t\in\{i,...,j-1\}$ such that $g(t)>g(t+1)$;
    \item $\mathcal{T}_g$ is a simple descending rooted tree with root $\{i,...,j\}$;
    \item For any two distinct intervals $I_1,I_2\in\mathcal{I}$, if $I_1\cap I_2\neq \emptyset$ then either $I_1\varsubsetneq I_2$, or $I_2\varsubsetneq I_1$, or $\operatorname{Card}(I_1\cap I_2)=1$ and in this last case there exists $I_3\in \mathcal{I}$ such that there are two edges with sources respectively $I_1$ and $I_2$ and with common aim $I_3$.
    \item For any interval $I=\{a,...,b\}$ in $\mathcal{I}$ of cardinal at least $2$, there exists a finite sequence of non-decreasing integers $a+1<i_1<...<i_l=b$ such that the intervals $I_1=\{a+1,...,i_1\}, I_2=\{i_1,...,i_2\}$, ..., $I_l=\{i_{l-1},...,i_l\}$ are in $\mathcal{I}$. Besides these intervals are all possible sources of edge, with aim the interval $I\in\mathcal{I}$.
    \item For any $I\in\mathcal{I}$, let $g_I$ denote the restriction of $g$ to the subinterval $I$. The simple descending rooted tree $\mathcal{T}_{g_I}$ identifies as a subgraph of $\mathcal{T}_g$ with root $I$.
\end{enumerate}
\end{Prop}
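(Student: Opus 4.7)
The plan is to prove (iv) and (v) first, since together they describe the local structure of $\mathcal{T}_g$, and to derive (i)--(iii) and (vi) from them.

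For the intersection dichotomy in (iv), I would argue by elementary case analysis. Let $I_1=\{a_1,\dots,b_1\}$ and $I_2=\{a_2,\dots,b_2\}$ be distinct intersecting elements of $\mathcal{I}$, with $a_1\leq a_2$ up to relabelling. If $a_1=a_2$ the intervals are nested, the shorter one strictly inside the longer. Otherwise $a_1<a_2\leq b_1$. The crossing configuration $a_1<a_2<b_1<b_2$ is ruled out because $a_2$ is then strictly interior to $I_1$, giving $g(a_2)\geq g(a_1)$, and $b_1$ is strictly interior to $I_2$, giving $g(b_1)\geq g(a_2)\geq g(a_1)$, contradicting $g(b_1)<g(a_1)$ from $I_1\in\mathcal{I}$. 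This leaves $I_2\subsetneq I_1$ (when $b_2\leq b_1$) or $a_2=b_1$, the one-point intersection case.

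For (v), I would construct the decomposition by a greedy scan of $g$ starting from $s_1=a+1$. Set $e_1=\min\{t\in\{s_1+1,\dots,b\}:g(t)<g(s_1)\}$, which is well-defined since $s_1$ is strictly interior to $I$, so $g(s_1)\geq g(a)>g(b)$. By minimality of $e_1$ one checks that $\{s_1,\dots,e_1\}\in\mathcal{I}$. If $e_1<b$, then $e_1$ is again strictly interior to $I$, so $g(e_1)\geq g(a)>g(b)$, and iterating with $s_{k+1}=e_k$ produces a terminating sequence $a+1=s_1<e_1=s_2<\cdots<e_l=b$ whose intervals $\{s_k,\dots,e_k\}$ lie in $\mathcal{I}$, meet pairwise only at their shared cut points, and cover $\{a+1,\dots,b\}$. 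Using the dichotomy from (iv), I would show that any $I'\in\mathcal{I}$ with $I'\subsetneq I$ is nested in one of the $\{s_k,\dots,e_k\}$: an interval straddling a cut point $e_k$ would contain $e_k$ in its interior, forcing $g(e_k)\geq g(\alpha)$ for its left endpoint $\alpha$, but the minimality that defines $e_k$ prevents this. Hence the $\{s_k,\dots,e_k\}$ are exactly the maximal proper sub-intervals of $I$ in $\mathcal{I}$, i.e.\ the edge sources with aim $I$.

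The remaining statements follow by short arguments. To close the existence of $I_3$ in the one-point case of (iv), take $I_3$ to be the smallest element of $\mathcal{I}$ containing $I_1\cup I_2$, which exists since $\{i,\dots,j\}\in\mathcal{I}$; applying (v) to $I_3$ and using its minimality, $I_1$ and $I_2$ appear as consecutive children of $I_3$ meeting at $a_2=b_1$, so both are edge sources to $I_3$. Property (i) follows by iterating the parent relation from $I_1$: the chain of strict supersets of $I_1$ in $\mathcal{I}$ is totally ordered by (iv), hence passes through any prescribed $I_2\supsetneq I_1$. Property (ii) is immediate from (v): any $I$ with $|I|\geq 3$ admits the strict sub-interval $\{s_1,\dots,e_1\}$, while $|I|=2$ intervals have no strict sub-interval of cardinal $\geq 2$. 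Property (iii) then packages the above: edges strictly increase cardinality (ruling out cycles of positive length), every vertex has a unique parent (from the totally-ordered chain of supersets), so simple upward paths are unique and terminate at the root $\{i,\dots,j\}$. Property (vi) is immediate since the defining inequality of $\mathcal{I}$ is local to the interval. The principal obstacle is the interplay between (iv) and (v): the common-parent assertion in (iv) relies on (v), while (v)'s maximality characterisation uses the dichotomy from (iv). I resolve this by proving the dichotomy in (iv) first without invoking (v), then (v), and only then closing the existence of $I_3$.
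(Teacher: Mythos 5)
Your treatment of the dichotomy in (iv), the greedy construction in (v), and the remaining points (i)--(iii), (vi) matches the paper's own approach and is correct: the paper likewise rules out the crossing configuration $a_1<a_2<b_1<b_2$ by chasing the inequalities $g(a_2)\geq g(a_1)$, $g(b_1)\geq g(a_2)$ against $g(b_1)<g(a_1)$, and proves (v) by the same greedy scan $i_{s+1}=\min\{t>i_s : g(t)<g(i_s)\}$ followed by the nesting argument.

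The gap is in your proof of the common-parent clause of (iv). You take $I_3$ to be the smallest element of $\mathcal{I}$ containing $I_1\cup I_2$, decompose $I_3$ into its children $K_1,\dots,K_l$ via (v), note by minimality that $I_1$ and $I_2$ cannot lie in the same $K_m$, and then conclude that ``$I_1$ and $I_2$ appear as consecutive children of $I_3$.'' The last step does not follow: $I_1\subseteq K_m$ with $m\neq m'$ only gives containment, not equality $I_1=K_m$. Concretely, take $g=(1,2,2,1,0)$ on $\{0,\dots,4\}$. Then $\mathcal{I}=\{\{0,\dots,4\},\{1,2,3\},\{2,3\},\{3,4\}\}$, and for $I_1=\{2,3\}$, $I_2=\{3,4\}$ (which meet only at $3$), your $I_3$ is $\{0,\dots,4\}$ with children $\{1,2,3\}$ and $\{3,4\}$; here $I_1\subsetneq\{1,2,3\}$ strictly, so $I_1$ is not a child of $I_3$. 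In fact the unique edge out of $I_1$ has aim $\{1,2,3\}$ while the unique edge out of $I_2$ has aim $\{0,\dots,4\}$, so no common aim $I_3$ exists at all: the assertion you are trying to prove is false as literally stated. (You are in good company: the paper's own proof of (iv) establishes only the trichotomy $I_1\subsetneq I_2$ / $I_2\subsetneq I_1$ / $\operatorname{Card}(I_1\cap I_2)=1$ and then declares ``(iv) is proven,'' never addressing the $I_3$ claim; nothing downstream appears to use it, so the harmless fix is to drop the clause or weaken ``edges'' to ``directed paths,'' in which case $I_3=\{i,\dots,j\}$ always works.)
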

\begin{proof}
The three first points $i)$, $ii)$ and $iii)$ as the point $vi)$ are immediate consequences of the definition.

For the point $iv)$, let $I_1=\{i_1,...,j_1\}$ and $I_2=\{i_2,...,j_2\}$ be two intervals in $\mathcal{I}$ with non-empty intersection, such that $I_1\not\subseteq I_2$ and $I_2\not\subseteq I_1$. Without loss of generality, suppose 
$i_1<i_2\leq j_1 < j_2$. Then by definition we have
$$ g(i_2)\geq g(j_1) \text{,  and  } g(j_1)\geq g(i_2)>g(j_2),$$
with equality in the first inequality if, and only if, $i_2=j_1$. We thus deduce that $i_2=j_1$ and $iv)$ is proven.

For the point $v)$, let $a+1=:i_0<i_1<...<i_l=b$ be a finite non-decreasing sequence of integers recursively defined as: 

For any $s\geq 0 $, if $i_s\neq b$ let $i_{s+1}:=\min \{t > i_s : g(t) < g(i_s) \}$. By definition we have that for any $s\in\{1,...,l\}$, the interval $I_s=\{i_{s-1},...,i_s\}$ belongs to $\mathcal{I}$. Now let $s\in\{1,...,l\}$, and let $I'$ be an interval in $\mathcal{I}$ such that $I_s\varsubsetneq I'\subseteq I$. We prove that $I'$ necessarily equals $I$ so that, there exists an edge with source $I_s$ and aim $I$.
If $s=1$, then necessarily $a$ belongs to $I'$, and we thus have $$\sup(I')\geq \min \{t >a : g(a)>g(t)\}=b.$$ Hence $I\subseteq I'$. 

If $s>1$, then $I_{s-1}\cap I'\neq \emptyset$ and is of cardinal at least $2$, for if not then $\min(I')=i_{s-1}$, implying, $\max(I')=i_s$. Thus we get $I_{s-1}\varsubsetneq I'$. An immediate induction gives $I_1\subset I'$, hence $I'=I$ from the case $s=1$.
\end{proof}

\begin{Rem}
We can imagine a cavern whose ceiling height is given by the function $g:\{i, \dots, j\}\to\mathbb{N}$, into which we inject water from the point $(i+1, g(i+1))$. At this point, there is also an air/water evacuation system (in Figure \ref{Figure_of_the_ecluse}, these points are represented as small colored circles at each iteration of the process). Once a sufficient amount of water has been injected, the water level reaches an equilibrium state.

The water level varies at each index involved in the decomposition of intervals in $\mathcal{I}$, presented in assertion $v)$ of Proposition \ref{PROP:FLOODED_CAVERN_TREE_PROPERTY}. More precisely, the different water levels divide the cavern into several sections. The indices of these sections correspond to the pairs \og$(i_{s-1}, i_{s})$\fg{}, for $s\in\{1,...,l\}$, of integers defining the bounds of the intervals in the decomposition described in assertion $v)$. In Figure \ref{Figure_of_the_ecluse}, these indices are represented as small colored squares.

These sections can then be seen as new caverns, on which we can repeat the process until exhaustion.
\end{Rem}
	

	\begin{figure}
		\centering
		\includegraphics[width=\textwidth]{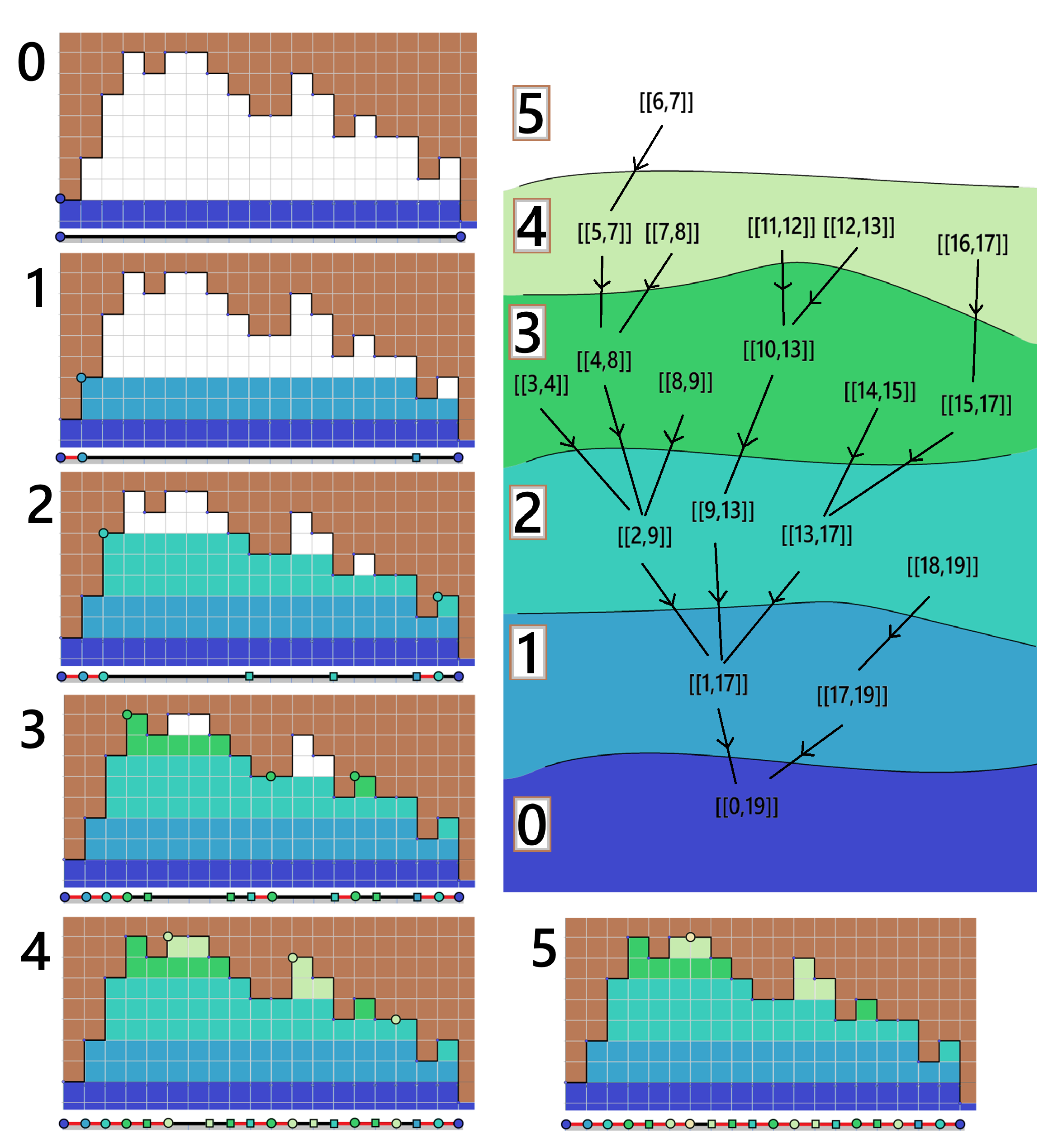}
		\caption{Illustration of the cavern lemma. }\label{Figure_of_the_ecluse}
	\end{figure}

\subsection{Characterisation of paths in \texorpdfstring{$\mathcal{G}_\Xi$}{Graphe GXi} and \texorpdfstring{$\mathcal{V}$}{Graphe V}}
\begin{Def}\textit{The tree $\mathcal{T}_\gamma$}\label{Def:The_T_gammas}

Let $(a,b)_y$ be an element in $\Xi$ and let $\gamma=(\omega_0,...,\omega_n)$ be a $p$-admissible path in $\mathcal{P}h(a,b;\mathcal{B}(y)^\complement)$. Consider the integer-valued function $g:\{0,...,n\}\to \mathbb{N}$ defined as
$$ g:i\mapsto d(\omega_i,y). $$
The simple descending rooted tree $\mathcal{T}_\gamma$ is the graph $T_g$.This graph $\mathcal{T}_\gamma$ comes together with a graph morphism $\phi_\gamma:\mathcal{T}_\gamma\to\mathcal{G}_\Xi$ defined as follow:

For $I=\{i,...,j\}$ a vertex of $\mathcal{T}_\gamma$, let $y_I$ be the only vertex of the geodesic segment $[\omega_i,y]$ at distance $k+1$ from $\omega_i$, then we define the image of $I$ under the graph morphism $\phi_\gamma$ as 
$$\phi_\gamma(I)=(\omega_i,\omega_j)_{y_I}.$$
\end{Def}
\begin{Rem}
    If $I=\{i,...,j\}$ is a vertex of $\mathcal{T}_\gamma$, consider $\gamma'$ to be the $p$-admissible path $(\omega_i,...,\omega_j)\in\mathcal{P}h(a',b';\mathcal{B}(y')^\complement)$, with $(a',b')_{y'}=\phi_\gamma(I)$. Then from assertion $vi)$ of Proposition \ref{PROP:FLOODED_CAVERN_TREE_PROPERTY}, the simple descending rooted tree $\mathcal{T}_{\gamma'}$ identifies as a subtree of $\mathcal{T}_\gamma$:
    $$\mathcal{T}_{\gamma'} \hookrightarrow \mathcal{T}_\gamma.$$
\end{Rem}
\begin{prop}\label{Prop_Phi_gamma_is_well_defined}
With the notations of the above definition \ref{Def:The_T_gammas}, the graph morphism $\phi_\gamma:\mathcal{T}_\gamma\to\mathcal{G}_\Xi$ is well defined. I.e if $I$ is a vertex in $\mathcal{T}_\gamma$, then $\phi_\gamma(I)$ is in $\Xi$, and if $I_1\to I_2$ represents an edge of $\mathcal{T}_\gamma$, then $\phi_\gamma(I_1)\to \phi_\gamma(I_2)$ is an edge in $\mathcal{G}_\Xi$.
\end{prop}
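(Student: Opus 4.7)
The plan is to verify separately the two defining conditions of a graph morphism for $\phi_\gamma$: that it sends each vertex of $\mathcal{T}_\gamma$ into $\Xi$, and that it sends each edge of $\mathcal{T}_\gamma$ to an edge of $\mathcal{G}_\Xi$. For the vertex clause, let $I=\{i,\dots,j\}\in\mathcal{I}$. First, $y_I$ is well defined because $\gamma\in\mathcal{P}h(a,b;\mathcal{B}(y)^\complement)$ forces every $\omega_t$ to satisfy $d(\omega_t,y)>k$, in particular $d(\omega_i,y)\geq k+1$. The defining condition $g(t)\geq g(i)$ for $i\leq t<j$ rewrites as $d(\omega_t,y)\geq (k+1)+d(y_I,y)$, so the triangle inequality yields
$$d(\omega_t,y_I)\;\geq\; d(\omega_t,y)-d(y_I,y)\;\geq\; k+1,\qquad i\leq t<j,$$
while $d(\omega_i,y_I)=k+1$ holds by construction. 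A straightforward induction on $t$ using $d(\omega_t,\omega_{t+1})\leq k<k+1\leq d(\omega_t,y_I)$ shows that all $\omega_t$ with $i\leq t\leq j-1$ lie in the same connected component of $X\setminus\{y_I\}$ as $\omega_i$; in particular $y_I\in[\omega_t,y]$ for such $t$. Hence the sub-path $(\omega_i,\dots,\omega_j)$ of $\gamma$ is $p$-admissible, joins $\omega_i$ to $\omega_j$, and has all of its vertices except possibly $\omega_j$ in $\mathcal{B}(y_I)^\complement$: this is exactly the condition for $(\omega_i,\omega_j)_{y_I}$ to belong to $\Xi$.

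For the edge clause, suppose $I_1\to I_2$ is an edge of $\mathcal{T}_\gamma$ and write $I_2=\{a,\dots,b\}$. By assertion $v)$ of Proposition \ref{PROP:FLOODED_CAVERN_TREE_PROPERTY}, the canonical decomposition gives $a+1=i_0<i_1<\dots<i_l=b$ with $I_1=\{i_{s-1},\dots,i_s\}$ for some $s\in\{1,\dots,l\}$. Unpacking formula (\ref{eq-t2}) at $\phi_\gamma(I_2)=(\omega_a,\omega_b)_{y_{I_2}}$, I would select the summand indexed by $c=\omega_{a+1}$, which is nonzero because $p(\omega_a,\omega_{a+1})>0$ (by $p$-admissibility of $\gamma$) and $\omega_{a+1}\in\mathcal{B}(y_{I_2})^\complement$ (by the vertex clause applied to $I_2$). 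Within the expansion of $J_{[\omega_{a+1},y_{I_2}]}(\omega_{a+1},\omega_b)$, I would then isolate the term indexed by the sequence $(\omega_{i_0},\omega_{i_1},\dots,\omega_{i_l})$; its monomial equals $\prod_{t=1}^{l} J\bigl((\omega_{i_{t-1}},\omega_{i_t})_{x_{\iota_t}}\bigr)$, where $x_{\iota_t}$ denotes the crossing vertex along $[\omega_{a+1},y_{I_2}]$ for the $t$-th step. Since $y_{I_2}\in[\omega_{i_{t-1}},y]$ by the vertex-clause argument applied at $t-1$, the segment $[\omega_{i_{t-1}},y_{I_2}]$ is an initial sub-geodesic of $[\omega_{i_{t-1}},y]$ of length at least $k+1$, so the vertex $y_{I_t}$ at distance $k+1$ from $\omega_{i_{t-1}}$ on $[\omega_{i_{t-1}},y]$ coincides with the vertex at the same distance on $[\omega_{i_{t-1}},y_{I_2}]$, which by definition of $\iota$ is the crossing vertex $x_{\iota_t}$. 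Specializing at $t=s$, $J(\phi_\gamma(I_1))=J((\omega_{i_{s-1}},\omega_{i_s})_{y_{I_s}})$ appears as a factor in a monomial of $\psi(J)(\phi_\gamma(I_2))$, which is the required edge.

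I expect the main obstacle to be this last identification $x_{\iota_t}=y_{I_t}$: one has to check that the ball-avoidance level $k+1$ is preserved when one passes from the parent segment $[\omega_{a+1},y_{I_2}]$ down to a child segment $[\omega_{i_{t-1}},y_{I_t}]$, which is really a compatibility statement between the canonical decomposition of assertion $v)$ of Proposition \ref{PROP:FLOODED_CAVERN_TREE_PROPERTY} and the crossing-index map $\iota$ from \cite{Asymp}. Everything else reduces to a routine unpacking of definitions combined with the triangle inequality in the tree.
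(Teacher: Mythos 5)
Your approach is essentially the same as the paper's: establish the vertex clause by distance/ball considerations and the edge clause by unpacking the path decomposition $\gamma_1\ast\cdots\ast\gamma_l$ of $(\omega_{a+1},\dots,\omega_b)$ along $[\omega_{a+1},y_{I_2}]$ with respect to $\mathcal{B}$, matching the pieces against the canonical decomposition of assertion $v)$ of Proposition \ref{PROP:FLOODED_CAVERN_TREE_PROPERTY}. The one genuine stylistic difference is that the paper invokes Lemma \ref{Asymp-Shadow_Lemma} from \cite{Asymp} for the vertex clause, whereas you replace it by an elementary induction on $t$ using the triangle inequality and $d(\omega_t,\omega_{t+1})\leq k$; this buys a self-contained argument at the cost of re-proving a one-sided shadow containment. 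You also explicitly flag the identification $x_{\iota_t}=y_{I_t}$ as the key compatibility point, which the paper passes over quickly with ``Considering the distance at $y$, \dots'', so your write-up is, if anything, slightly more transparent on the edge clause.

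There is one small incompleteness in your vertex clause. You establish $d(\omega_t,y_I)\geq k+1$ for $i\leq t<j$ and that $\omega_i,\dots,\omega_{j-1}$ all lie in the same component of $X\setminus\{y_I\}$ as $\omega_i$, but you then write ``has all of its vertices except possibly $\omega_j$ in $\mathcal{B}(y_I)^\complement$''. To conclude $(\omega_i,\omega_j)_{y_I}\in\Xi$ you must in fact verify $\omega_j\in\mathcal{B}(y_I)$, i.e.\ $d(\omega_j,y_I)\leq k$, which is what the paper records as the strict inequality $k+1>d(\omega_j,y_I)$. Your argument already has all the ingredients: since $d(\omega_{j-1},y_I)\geq k+1>k\geq d(\omega_{j-1},\omega_j)$, the vertex $\omega_j$ is on the same side of $y_I$ as $\omega_{j-1}$, hence $y_I\in[\omega_j,y]$ and therefore $d(\omega_j,y_I)=g(j)-\bigl(g(i)-(k+1)\bigr)<k+1$ because $g(j)<g(i)$. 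You should make this last step explicit rather than leave $\omega_j$ unresolved, because it is precisely this strict decrease at $j$, coming from the defining condition $g(j)<g(i)$ on the intervals of $\mathcal{I}$, that makes $\phi_\gamma(I)$ land in $\Xi$.
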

\begin{proof}
Firstly let $I=\{i,...,j\}\in\mathcal{I}$ be a vertex of $\mathcal{T}_\gamma$ and let $y_I\in X_0$ be a vertex of $X$ defined as in Definition \ref{Def:The_T_gammas}. Using the Lemma \ref{Asymp-Shadow_Lemma} in \cite{Asymp} and by consideration of the distance we get:
$$\forall i\leq t < j , \, d(\omega_t,y_I)\geq d(\omega_i,y_I)=k+1\,>\, d(\omega_j,y_I).$$
Hence the $p$-admissible path $\gamma'=(\omega_i,...,\omega_j)$ belongs to $\mathcal{P}h(\omega_i,\omega_j;\mathcal{B}(y_I)^\complement)$, and $\phi_\gamma=(\omega_i,\omega_j)_{y_I}$ is in $\Xi$.

All it remains to prove is that edges in $\mathcal{T}_\gamma$ give edges in $\mathcal{G}_\Xi$, through the graph morphism $\phi_\gamma$.

Let $I=\{i,...,j\}\in\mathcal{I}$ be a vertex of $\mathcal{T}_\gamma$, that is not a leaf (that is such that $\operatorname{Card}(I)\geq 2$). Denote by $(a,b)_{y}=\phi_\gamma(I)$, the image of the interval $I$ through the morphism $\phi_\gamma$ and consider the path decomposition of the $p$-admissible path $(\omega_{i+1},...,\omega_{j})$ along the geodesic segment $[\omega_{i+1},y]$ with respect to $\mathcal{B}$ (Proposition \ref{Asymp-Prop_decomposition_of_paths_on_a_tree} and Corollary \ref{Asymp-corollary_of_decomposition_of_paths_on_a_tree}) in \cite{Asymp}, about admissible paths decompositions in a tree:
$$\gamma_1\ast...\ast\gamma_l=(\omega_{i+1},...,\omega_j).$$ 
If we write $\gamma_1=(\omega_{i_0},...,\omega_{i_1})$, ...,  $\gamma_l=(\omega_{i_{l-1}}, ..., \omega_{i_l})$ then $i_0=i+1<i_1<...<i_l=j$, and the tuple of crossing vertices associated with the path decomposition is given by the $(l+1)-$tuple:
$(\omega_{i_0},...,\omega_{i_l})\in\Xi_{[\omega_{i_0},y]}$, we also have $p(\omega_i,\omega_{i_0})>0$.

Considering the distance at $y$, we get that the intervals $I_1=\{i_0,..., i_1\}$, ..., $I_l=\{i_{l-1},..., i_l\}$ are all the intervals in $\mathcal{I}$ involved in assertion $v)$ from Proposition \ref{PROP:FLOODED_CAVERN_TREE_PROPERTY}, and are in particular all the possible source of edges with aim $I$ in $\mathcal{T}_\gamma$.

Thus by definition of the graph morphism $\phi_\gamma$ and by definition of the edges of the simple directed graph $\mathcal{G}_\Xi$ (Definition \ref{Def_of_G_Xi}), we have that for any $s\in\{1,...,l\}$, there is an edge in $\mathcal{G}_\Xi$ with source $\phi_\gamma(I_s)$ and aim $\phi_\gamma(I)$. 
\end{proof}
From the above proof we in particular get the Proposition \ref{Proposition_of_the_ecluse} below:

\begin{Prop}\label{Proposition_of_the_ecluse} ~
	Let $(a,b)_{x,y}$ be in $\Xi$, let $\gamma$ be a $p$-admissible path in $\mathcal{P}h(a,b;\mathcal{B}(y)^\complement)$ and $\mathcal{T}_\gamma$ be the flooded cavern tree defined in Definition \ref{Def:The_T_gammas}.

	For $m\in\mathbb{N}$, let $\Theta_m=\Theta_m(\gamma)$ be the set of sources of path of length $m$ in $\mathcal{T}_\gamma$, whose aim is the root\footnote{Note that the image of this root under $\phi_\gamma$ is $(a,b)_y$.} of $\mathcal{T}_\gamma$. 
	
	Then for any integer $m\geq 1$ such that $\Theta_m\neq\emptyset$, the product
	$$\prod_{I\in\Theta_m} J(\phi_\gamma(I))$$
	is a monomial of $J\mapsto\psi^{\circ m}(J)((a,b)_{y})$ (See Definition \ref{Asymp-Def_monomial} in \cite{Asymp}).
\end{Prop}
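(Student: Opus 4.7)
The overall plan is induction on $m$, exploiting the recursive subtree structure of the flooded cavern tree.

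For the base case $m=1$, the statement essentially repackages the observation already isolated at the end of the proof of Proposition \ref{Prop_Phi_gamma_is_well_defined}: the children of the root of $\mathcal{T}_\gamma$ are exactly the intervals $I_s=\{i_{s-1},\dots,i_s\}$ produced by the path decomposition of $(\omega_{i+1},\dots,\omega_j)$ along $[\omega_{i+1},y]$ (assertion $v)$ of Proposition \ref{PROP:FLOODED_CAVERN_TREE_PROPERTY}). Their images under $\phi_\gamma$ are precisely the crossing factors $J((c_{s-1},c_s)_{x_{i_s}})$ whose product is the monomial $J_{c,y}(c_0,\dots,c_l)$ of $\psi(J)((a,b)_y)$ appearing in (\ref{eq-t2}).

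For the inductive step, I will decompose any length-$m$ path in $\mathcal{T}_\gamma$ with aim the root into its initial edge (from some $I_s\in\Theta_1$ to the root) followed by a length-$(m-1)$ path in the subtree rooted at $I_s$. Using the subtree identification $\mathcal{T}_{\gamma_s}\hookrightarrow\mathcal{T}_\gamma$ of assertion $vi)$ applied to $\gamma_s=(\omega_{i_{s-1}},\dots,\omega_{i_s})$, this yields the disjoint decomposition $\Theta_m(\gamma)=\bigsqcup_{s=1}^l\Theta_{m-1}(\gamma_s)$, with $\phi_\gamma$ restricting to $\phi_{\gamma_s}$ on each piece. Then, writing $\psi^{\circ m}(J)=\psi(\psi^{\circ(m-1)}(J))$ and substituting into the base-case monomial $\prod_{s=1}^l K(\phi_\gamma(I_s))$ of $\psi(K)((a,b)_y)$ the monomials $\prod_{I\in\Theta_{m-1}(\gamma_s)}J(\phi_{\gamma_s}(I))$ of $\psi^{\circ(m-1)}(J)(\phi_\gamma(I_s))$ supplied for each factor by the induction hypothesis, produces $\prod_{I\in\Theta_m(\gamma)}J(\phi_\gamma(I))$ as a monomial of $\psi^{\circ m}(J)((a,b)_y)$.

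The main obstacle will be verifying carefully that the subtree identification of assertion $vi)$ is compatible with the graph morphisms, i.e.\ that $\phi_\gamma$ restricted to the image of $\mathcal{T}_{\gamma_s}$ agrees with $\phi_{\gamma_s}$; this amounts to unwinding the definition of the vertex $y_I$ (the unique point of $[\omega_i,y]$ at distance $k+1$ from $\omega_i$) and checking that it does not depend on whether one works with $\gamma$ globally or with the subpath $\gamma_s$ locally. The other mildly delicate point is that the substitution step truly produces a monomial of $\psi^{\circ m}(J)((a,b)_y)$ rather than a formal product of factors; this is a routine multinomial unwinding of the composition of polynomials, but it requires keeping track of the fact that distinct images $\phi_\gamma(I)$ contribute as independent variables in the expansion.
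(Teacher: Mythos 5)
Your proof is correct and rests on the same ingredients the paper uses: induction on $m$, the subtree identification from assertion $vi)$ of Proposition \ref{PROP:FLOODED_CAVERN_TREE_PROPERTY} (and hence the compatibility of $\phi_\gamma$ with $\phi_{\gamma_s}$ that you rightly flag), and the non-negativity of $\psi$'s coefficients to prevent cancellation when one monomial is substituted into another. The one structural difference is the direction in which the iterate is peeled. The paper writes $\psi^{\circ(m+1)}=\psi^{\circ m}\circ\psi$: it applies the induction hypothesis at rank $m$ to $\gamma$ itself to isolate the monomial indexed by $\Theta_m(\gamma)$, then substitutes $\psi(J)$ for $J$ and invokes the base case on the subpaths $\gamma_{I_j}$ with $I_j\in\Theta_m(\gamma)$, using $\Theta_{m+1}(\gamma)=\bigsqcup_{j}\Theta_1(\gamma_{I_j})$. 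You instead write $\psi^{\circ m}=\psi\circ\psi^{\circ(m-1)}$: you apply the base case at the root and the induction hypothesis on the child subpaths $\gamma_s$, using $\Theta_m(\gamma)=\bigsqcup_{s}\Theta_{m-1}(\gamma_s)$. The two decompositions are dual (one peels a layer off at the leaves of $\mathcal{T}_\gamma$, the other at the root), both rely on the tree being a \emph{simple} descending rooted tree so that the relevant unions are disjoint, and neither buys anything substantive over the other. In particular, the compatibility of $y_I$ with passing to subpaths that you single out as the delicate point is equally needed, implicitly, in the paper's version when it writes $\phi_\gamma(I')$ in place of $\phi_{\gamma_{I_j}}(I')$.
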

\begin{proof}
We prove this result by induction over $m\geq 1$.
Denote by $(\omega_0,...,\omega_n)=\gamma$ the vertices of $\gamma$ and $I_\gamma=\{0,...,n\}$ the root of $\mathcal{T}_\gamma$. Denote by $I_1,...,I_l$ the elements of $\Theta_1(\gamma_I)$. It correspond to all the intervals involved in assertion $v)$ of Proposition \ref{PROP:FLOODED_CAVERN_TREE_PROPERTY}. From the above proof we have that the product 
$$\prod_{j=1}^l J(\phi_\gamma(I_j)),$$
is a monomial of $J\mapsto \psi(J)((a,b)_y)$.

Now fix $m\geq 1$ be such that the result is proven for any rank $\leq m$. We prove it for rank $m+1$.

Let $I_1,...,I_\kappa$ be all the intervals in $\Theta_{m}$. By the induction hypothesis, there exists a positive constant $C_m>0$, and a polynomial function with non-negative coefficients $Q$, such that for any vector $J\in \mathcal{F}(\Xi,\mathbb{C})$, we have
\begin{equation*}
 \psi^{\circ m}(J)((a,b)_y)=C J(\phi_\gamma(I_1))\cdots J(\phi_\gamma(I_\kappa)) + Q(J),
\end{equation*}
thus:
\begin{equation}\label{eq:3666}
 \psi^{\circ m+1}(J)((a,b)_y)=C \psi(J)(\phi_\gamma(I_1))\cdots \psi(J)(\phi_\gamma(I_\kappa)) + Q(\psi(J)).
\end{equation}
Besides for any integers $j\in\{1,...,\kappa\}$, there exists constants $C_j>0$ and a polynomial function with non-negative coefficients $Q_j$.

\begin{equation}\label{eq:3670}
\psi(J)(\phi_\gamma(I_j))=C_j\left(\prod_{I'\in\Theta_1(\gamma_{I_j})} J(\phi_\gamma(I')) \right) + Q_j(J).
\end{equation}

So that we get, by injecting (\ref{eq:3670}) into (\ref{eq:3666}), the existence of a non-negative polynomial function $Q$ and a positive constant $C>0$ such that for any $J\in\mathcal{F}(\Xi,\mathbb{C})$, we have
$$ \psi^{\circ (m+1)}(J)= C \prod_{i=1}^\kappa \left(\prod_{I'\in\Theta_1(\gamma_{I_j})} J(\phi_\gamma(I'))\right) + Q(J). $$ 

Thus the finite product $\prod_{i=1}^\kappa \left(\prod_{I'\in\Theta_1(\gamma_{I_j})} J(\phi_\gamma(I'))\right)$ equals the product $\left(\prod_{I'\in\Theta_{m+1}(\gamma)} J(\phi_\gamma(I'))\right)$, thanks to assertion $vi)$ of Proposition \ref{PROP:FLOODED_CAVERN_TREE_PROPERTY}, and is a monomial of $J\mapsto \psi^{\circ (m+1)}(J)$.
\end{proof}
 \begin{prop}\label{Prop_G_Xi_is_limit_of_T_gamma}
Let $(a,b)_y$ and $(a',b')_{y'}$ in $\Xi$ be two vertices of $\mathcal{G}_\Xi$. Any directed path in $\mathcal{G}_\Xi$ from $(a',b')_{y'}$ to $(a,b)_y$ is the image, under the graph morphism $\phi_\gamma$, of a path in $\mathcal{T}_\gamma$, with aim the root of $\mathcal{T}_\gamma$, for some admissible path $\gamma\in\mathcal{P}h(a,b;\mathcal{B}(y)^\complement)$.
 \end{prop}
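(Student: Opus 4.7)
The plan is to argue by induction on the length $n$ of the directed path $\theta_n\to\theta_{n-1}\to\cdots\to\theta_0$ in $\mathcal{G}_\Xi$, where $\theta_0=(a,b)_y$ is the aim and $\theta_n=(a',b')_{y'}$ is the source. Throughout I write $\theta_k=(a_k,b_k)_{y_k}$.

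For the base case $n=1$, an edge $(\theta_1,\theta_0)$ arises by Definition~\ref{Def_monomials_first_order} from a neighbour $c\in\mathcal{B}(y_0)^\complement$ of $a_0$ with $p(a_0,c)>0$, a tuple of crossing vertices $(c_0,\ldots,c_l)\in\Xi_{[c,y_0]}$ with $c_0=c$, $c_l=b_0$, and an index $j$ such that $\theta_1=(c_{j-1},c_j)_{x_{i_j}}$. I assemble this data into a $p$-admissible path $\gamma\in\mathcal{P}h(a_0,b_0;\mathcal{B}(y_0)^\complement)$ whose crossing decomposition along $[a_0,y_0]$ has exactly $(c_0,\ldots,c_l)$ as its crossing vertices (adding, between consecutive crossings $c_{s-1}$ and $c_s$, any admissible sub-path in $\mathcal{B}(x_{i_s})$ whose existence follows from the membership of $(c_0,\ldots,c_l)$ in $\Xi_{[c,y_0]}$). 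The interval $I_j$ produced by assertion $v)$ of Proposition~\ref{PROP:FLOODED_CAVERN_TREE_PROPERTY} is then a child of the root in $\mathcal{T}_\gamma$ and satisfies $\phi_\gamma(I_j)=\theta_1$ by construction.

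For the inductive step, apply the inductive hypothesis to the sub-path $\theta_{n-1}\to\cdots\to\theta_0$ of length $n-1$ to obtain $\tilde\gamma\in\mathcal{P}h(a_0,b_0;\mathcal{B}(y_0)^\complement)$ together with a strictly decreasing chain $I_{n-1}\varsubsetneq\cdots\varsubsetneq I_0$ of intervals in $\mathcal{T}_{\tilde\gamma}$ with $\phi_{\tilde\gamma}(I_k)=\theta_k$ and $I_0$ the root. Writing $I_{n-1}=\{i,\ldots,j\}$, the restriction $(\tilde\omega_i,\ldots,\tilde\omega_j)$ of $\tilde\gamma$ is an element of $\mathcal{P}h(a_{n-1},b_{n-1};\mathcal{B}(y_{n-1})^\complement)$. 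Apply the base case to the single edge $\theta_n\to\theta_{n-1}$ to produce a second admissible path $\gamma'\in\mathcal{P}h(a_{n-1},b_{n-1};\mathcal{B}(y_{n-1})^\complement)$ together with a child $I_n$ of the root of $\mathcal{T}_{\gamma'}$ satisfying $\phi_{\gamma'}(I_n)=\theta_n$. Define $\gamma$ by splicing $\gamma'$ in place of $(\tilde\omega_i,\ldots,\tilde\omega_j)$ inside $\tilde\gamma$. Point $vi)$ of Proposition~\ref{PROP:FLOODED_CAVERN_TREE_PROPERTY} ensures that the ancestors $I_0,\ldots,I_{n-1}$ survive (after the obvious re-indexing) as vertices of $\mathcal{T}_\gamma$ and that the subtree rooted at $I_{n-1}$ coincides with $\mathcal{T}_{\gamma'}$; consequently, the desired extension $I_n\varsubsetneq I_{n-1}\varsubsetneq\cdots\varsubsetneq I_0$ with $\phi_\gamma(I_n)=\theta_n$ lives in $\mathcal{T}_\gamma$.

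The main obstacle I anticipate is verifying that the spliced path $\gamma$ is still an element of $\mathcal{P}h(a_0,b_0;\mathcal{B}(y_0)^\complement)$, i.e., that the new segment $\gamma'$, which by construction only avoids $\mathcal{B}(y_{n-1})$, also avoids $\mathcal{B}(y_0)$. This will be a tree-geometry argument exploiting the definition $y_{n-1}=y_{I_{n-1}}$, which places $y_{n-1}$ on the geodesic $[a_{n-1},y_0]$ at distance $k+1$ from $a_{n-1}$, so that $y_{n-1}$ separates $a_{n-1}$ from $y_0$ in the tree $X$. Since $\gamma'$ starts at $a_{n-1}$, ends at $b_{n-1}$ (both lying on the same component of $X\setminus\{y_{n-1}\}$), and cannot cross through $y_{n-1}$ without entering $\mathcal{B}(y_{n-1})$, each vertex $v$ of $\gamma'$ remains on the component containing $a_{n-1}$, whence $d(v,y_0)=d(v,y_{n-1})+d(y_{n-1},y_0)>k$ and $v\in\mathcal{B}(y_0)^\complement$, as required.
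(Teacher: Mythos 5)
Your overall strategy (induction on the length of the directed path in $\mathcal{G}_\Xi$, with the base case driven by assertion $v)$ of Proposition~\ref{PROP:FLOODED_CAVERN_TREE_PROPERTY}) matches the paper, and your base case is essentially correct, modulo small slips (the crossing decomposition is along $[c,y_0]$, not $[a_0,y_0]$, and the glue sub-paths live in $\mathcal{P}h(c_{s-1},c_s;\mathcal{B}(x_{i_s})^\complement)$). You have also correctly identified and resolved the admissibility question for the spliced path: the Shadow-Lemma argument that $y_{n-1}$ separates the vertices of $\gamma'$ from $y_0$, giving $d(v,y_0)=d(v,y_{n-1})+d(y_{n-1},y_0)>k$, is exactly right.

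However, your inductive step runs the recursion in the opposite direction from the paper, and this creates a genuine gap. The paper removes the \emph{aim} $\theta_0$: by the induction hypothesis it gets $\gamma^{(1)}\in\mathcal{P}h(a_1,b_1;\mathcal{B}(y_1)^\complement)$ realizing $\theta_{m+1}\to\cdots\to\theta_1$, and then \emph{wraps} $\gamma^{(1)}$ one level outward as $\gamma=\gamma_a\ast\gamma^{(1)}\ast\gamma_b'$. With this organization, $\gamma^{(1)}$ occupies exactly one child interval of the root of $\mathcal{T}_\gamma$ (by assertion $v)$), and assertion $vi)$ applies \emph{verbatim}: the subtree of $\mathcal{T}_\gamma$ rooted at that child is the restriction of $\gamma$ to that interval, which is $\gamma^{(1)}$, so $\mathcal{T}_{\gamma^{(1)}}$ embeds and the whole induction-hypothesis path sits inside it. You instead remove the \emph{source} $\theta_n$ and splice a new $\gamma'$ into the leaf interval $I_{n-1}$ of $\mathcal{T}_{\tilde\gamma}$. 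For this to work you must know that the ancestors $I_0,\dots,I_{n-2}$ (suitably re-indexed) are still vertices of $\mathcal{T}_\gamma$, are still consecutive in the tree (so the unique path from $I_n$ to the root passes through precisely them), and still map to $\theta_0,\dots,\theta_{n-2}$ under $\phi_\gamma$. Assertion $vi)$ does \emph{not} give this: it is a statement about the subtree \emph{below} a vertex (restriction of $g$ to an interval), while you need a statement about the tree \emph{above} a vertex, i.e.\ that the interval structure outside the spliced region is unchanged. That claim is true, but it requires a separate argument about how the flooded cavern tree behaves under replacing the restriction of $g$ to an interval by another function with the same boundary values and still bounded below by $g(i)$ in the interior; you do not supply this, and citing $vi)$ for it is incorrect. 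Either supply that stability lemma, or flip the induction to the paper's direction so that $vi)$ applies as stated.
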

 \begin{proof}
    We prove this theorem by induction on the directed path length.
    
    For $m=1$, consider an edge $(a',b')_{y'} \to (a,b)_y$ in $\mathcal{G}_\Xi$ with source $(a',b')_{y'}$ and aim $(a,b)_y$. By definition of the edges of $\mathcal{G}_\Xi$, let $c\in\mathcal{B}(y)^\complement$ be such that $p(a,c)>0$ and $(c_0,...,c_l)\in\Xi_{[c,y]}$ be such that $c_0=c$, $c_l=b$ and for some integer $j\in \{1,...,l\}$, we have 
    $$ (c_{j-1},c_{j})_{x_{i_j}} = (a',b')_{y'}, $$
    where we have denoted $(x_0,...,x_m)$ the geodesic segment $[c,y]$, and $(i_1,...,i_l)=\iota_{[c,y]}(c_0,...,c_l)$ the associated crossing indices (See Lemma \ref{Asymp-Lem_2.6} in \cite{Asymp}).
    For any $s\in\{1,...,l\}$ such that $s\neq j$, set $\gamma_s$ to be an arbitrary $p-$admissible path in $\mathcal{P}h(c_{s-1},c_s;\mathcal{B}(x_{i_s})^\complement)$, then for any $p$-admissible path $\gamma_j\in\mathcal{P}h(a',b';\mathcal{B}(y')^\complement)$, from Corollary \ref{Asymp-corollary_of_decomposition_of_paths_on_a_tree} in \cite{Asymp}, the $p-$admissible path 
    \begin{equation}\label{eq:3696}
        \gamma'=\gamma_1\ast\cdots\ast\gamma_{l}
    \end{equation}
    is in $\mathcal{P}h(c,b;\mathcal{B}(y)^\complement)$, and the above formula (\ref{eq:3696}) is the path decomposition of $\gamma'$ along the geodesic segment $[c,y]$ with respect to $\mathcal{B}$.
    
    In particular if we denote by $\gamma_a\in\mathcal{P}h(a,a';\mathcal{B}(y)^\complement)$ and $\gamma_b'\in\mathcal{P}h(b',b;\mathcal{B}(y)^\complement)$ the $p$-admissible paths:
    \begin{equation*}
        \gamma_a:=(a,c)\ast\gamma_1\ast\cdots\ast\gamma_{j-1} \quad \text{and}\quad \gamma_b':=\gamma_{j+1}\ast\cdots\ast\gamma_j,
    \end{equation*}
    then for any $p$-admissible path $\gamma^{(1)}\in\mathcal{P}h(a',b';\mathcal{B}(y)^\complement)$ and for the $p$-admissible path $\gamma$ defined as
    \begin{equation}\label{eq:3706}
    \gamma:=\gamma_a \ast\gamma^{(1)}\ast\gamma_b',
    \end{equation}
    the simple descending rooted tree $\mathcal{T}_{\gamma^{(1)}}$ identifies as a subgraph of $\mathcal{T}_\gamma$ (See the sixth assertion in Proposition \ref{PROP:FLOODED_CAVERN_TREE_PROPERTY}) and the edge with source the root of the subgraph $\mathcal{T}_{\gamma^{(1)}}$, in $\mathcal{T}_\gamma$, has its aim equal to the root of $\mathcal{T}_\gamma$. Besides the image of this edge through the graph morphism $\phi_\gamma$ is the edge $(a',b')_{y'}\to (a,b)_y$ in $\mathcal{G}_\Xi$, with source $(a',b')_{y'}$ and aim $(a,b)_{y}$.
    
    Suppose the result verified for $m\geq 1$. Let
    $(a_{m+1},b_{m+1})_{y_{m+1}}\to (a_m,b_m)_{y_m} \to \dots \to  (a_0,b_0)_{y_0}$ represents a directed path in $\mathcal{G}_\Xi$. By induction hypothesis, there exists a $p$-admissible path $\gamma^{(1)}\in\mathcal{P}h(a_1,b_1;\mathcal{B}(y_1)^\complement)$ such that, the directed path $(a_{m+1},b_{m+1})_{y_{m+1}}\to\dots\to (a_1,b_1)_{y_1}$ is the image under the graph morphism $\phi_{\gamma^{(1)}}$, of some directed path in $\mathcal{T}_{\gamma^{(1)}}$ with aim the root of $\mathcal{T}_{\gamma^{(1)}}$.
    
    Using the notation of the previous case $m=1$,  taking $(a',b')_{y'}=(a_1,b_1)_{y_1}$, $(a,b)_y=(a_0,b_0)_{y_0}$ and the $p$-admissible path $\gamma^{(1)}\in\mathcal{P}h(a',b';\mathcal{B}(y')^\complement)$ from the previous paragraph, we obtain the result with $\gamma\in\mathcal{P}h(a,b;\mathcal{B}(y)^\complement)$ defined as in (\ref{eq:3706}).
 \end{proof}
 \begin{Rem}
    In particular the above implies that the graph $\mathcal{G}_\Xi$ is the inductive limit of the simple descending rooted trees $\mathcal{T}_\gamma$, where $\gamma$ runs over all $p$-admissible paths in $\bigcup_{(a,b)_y\in\Xi} \mathcal{P}h(a,b;\mathcal{B}(y)^\complement)$.
 \end{Rem}
\vspace{\baselineskip}

Along this section, we actually proved:
\begin{Cor}\label{Lemma_of_the_ecluse}~

	Let $(a',b')_{y'}$ and $(a,b)_{y}$ be two vertices of $\mathcal{G}_\Xi$. The following assertions are equivalent:
	\begin{enumerate}[label=\roman*)]
	\item There exists a directed geodesic path in $\mathcal{G}_\Xi$ from $(a,b)_y$ to $(a',b')_{y'}$;
	\item The vertex  $y'$ is in the geodesic segment $[a',y]$ and there exist two $p$-admissible paths $\gamma_1\in\mathcal{P}h(a,a';\mathcal{B}(y)^\complement)$, $\gamma_2\in\mathcal{P}h(b',b;\mathcal{B}(y)^\complement)$ and there exists a $p$-admissible path $\gamma'\in\mathcal{P}h(a',b';\mathcal{B}(y')^\complement)$, such that
	$$\gamma_1\ast\gamma'\ast\gamma_2\in \mathcal{P}h(a,b;\mathcal{B}(y)^\complement);$$
	\item  The vertex  $y'$ is in the geodesic segment $[a',y]$ and there exist two $p$-admissible paths $\gamma_1\in\mathcal{P}h(a,a';\mathcal{B}(y)^\complement)$, $\gamma_2\in\mathcal{P}h(b',b;\mathcal{B}(y)^\complement)$ such that, for all $p$-admissible path $\gamma'\in\mathcal{P}h(a',b';\mathcal{B}(y')^\complement)$, we have
	$$\gamma_1\ast\gamma'\ast\gamma_2\in \mathcal{P}h(a,b;\mathcal{B}(y)^\complement).$$
	\end{enumerate}
\end{Cor}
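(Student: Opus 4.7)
The plan is to close the loop (i) $\Rightarrow$ (iii) $\Rightarrow$ (ii) $\Rightarrow$ (i), using primarily what has just been established in the subsection.

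For (i) $\Rightarrow$ (iii), I would apply Proposition \ref{Prop_G_Xi_is_limit_of_T_gamma} to realise the given directed geodesic path as the $\phi_\gamma$-image of a directed path ending at the root of a flooded cavern tree $\mathcal{T}_\gamma$, for some $\gamma=(\omega_0,\dots,\omega_n)\in\mathcal{P}h(a,b;\mathcal{B}(y)^\complement)$. The source of this path in $\mathcal{T}_\gamma$ is an interval $I=\{i,\dots,j\}$ with $\phi_\gamma(I)=(\omega_i,\omega_j)_{y_I}=(a',b')_{y'}$; in particular $y'=y_I$ lies on $[a',y]$ at distance $k+1$ from $a'$. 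Splitting $\gamma$ at the indices $i$ and $j$ yields the three subpaths $\gamma_1=(\omega_0,\dots,\omega_i)$, $\gamma'=(\omega_i,\dots,\omega_j)$, $\gamma_2=(\omega_j,\dots,\omega_n)$, which, by the argument already deployed for Proposition \ref{Prop_Phi_gamma_is_well_defined} (via the shadow lemma), lie respectively in $\mathcal{P}h(a,a';\mathcal{B}(y)^\complement)$, $\mathcal{P}h(a',b';\mathcal{B}(y')^\complement)$, $\mathcal{P}h(b',b;\mathcal{B}(y)^\complement)$. To upgrade to the stronger condition (iii), I would then show that $\gamma'$ can be freely replaced by any $\tilde\gamma\in\mathcal{P}h(a',b';\mathcal{B}(y')^\complement)$: since $y'\in[a',y]$, removing $y'$ from $X$ disconnects $a'$ from every vertex of $\mathcal{B}(y)$, so any such $\tilde\gamma$ has its interior vertices in $\mathcal{B}(y)^\complement$, keeping $\gamma_1\ast\tilde\gamma\ast\gamma_2$ inside $\mathcal{P}h(a,b;\mathcal{B}(y)^\complement)$.

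The implication (iii) $\Rightarrow$ (ii) is immediate, as soon as one notices that $\mathcal{P}h(a',b';\mathcal{B}(y')^\complement)\neq\emptyset$, which is built into the definition of $(a',b')_{y'}\in\Xi$. For (ii) $\Rightarrow$ (i), I would form the flooded cavern tree $\mathcal{T}_\gamma$ associated with the admissible concatenation $\gamma:=\gamma_1\ast\gamma'\ast\gamma_2$, and note that the sub-interval marking the positions of $\gamma'$ inside $\gamma$ is a vertex of $\mathcal{T}_\gamma$ whose $\phi_\gamma$-image is exactly $(a',b')_{y'}$. Proposition \ref{PROP:FLOODED_CAVERN_TREE_PROPERTY}(i) then provides a directed path in $\mathcal{T}_\gamma$ from this vertex to the root, which $\phi_\gamma$ maps to the desired directed geodesic path in $\mathcal{G}_\Xi$.

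The main obstacle is the uniformity step in (i) $\Rightarrow$ (iii): showing that every admissible $\tilde\gamma$ from $a'$ to $b'$ avoiding $\mathcal{B}(y')$ can be inserted, not merely the specific $\gamma'$ coming from the splitting of $\gamma$. This rests on the tree-geometric fact that the vertex $y'\in[a',y]$, together with the distance gap $d(a',y')=k+1$ guaranteed by the definition of $\Xi$, separates the $a'$-side of $X$ from the bulk of $\mathcal{B}(y)$; everything else is bookkeeping already done in the proofs of Propositions \ref{Prop_Phi_gamma_is_well_defined} and \ref{Prop_G_Xi_is_limit_of_T_gamma}.
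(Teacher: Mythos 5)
Your overall architecture matches the paper's exactly: close the cycle $(i)\Rightarrow(iii)\Rightarrow(ii)\Rightarrow(i)$, lifting directed paths via Proposition~\ref{Prop_G_Xi_is_limit_of_T_gamma}, observing non-emptiness of $\mathcal{P}h(a',b';\mathcal{B}(y')^\complement)$ for $(iii)\Rightarrow(ii)$, and for $(ii)\Rightarrow(i)$ building $\mathcal{T}_{\gamma_1\ast\gamma'\ast\gamma_2}$ and noting $\mathcal{T}_{\gamma'}$ sits inside it as a subtree. The paper's proof of $(i)\Rightarrow(iii)$ is very terse (it just points back at equation~(\ref{eq:3706}) in the proof of Proposition~\ref{Prop_G_Xi_is_limit_of_T_gamma}), so your decision to spell out the splitting of $\gamma$ at the interval $I$ and then argue replaceability of $\gamma'$ is a reasonable way to unfold what the paper is pointing at.

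However, the geometric justification you give for the replaceability step contains a genuine error. You assert that since $y'\in[a',y]$, \emph{removing the single vertex $y'$ from $X$ disconnects $a'$ from every vertex of $\mathcal{B}(y)$}. This is false whenever $d(y',y)<k$: after deleting $y'$, the component of $a'$ still contains, for instance, the neighbour $x'$ of $y'$ lying on $[a',y']$, and $d(x',y)=1+d(y',y)\le k$, so $x'\in\mathcal{B}(y)$. Disconnection by $y'$ alone is therefore neither true nor sufficient. What does work — and is exactly what the paper records in Remark~\ref{Rem_10.10} — is the conjunction of two facts: (a) the interior vertices $\omega$ of $\tilde\gamma$ satisfy $d(\omega,y')\ge k+1$ by definition of $\mathcal{P}h(a',b';\mathcal{B}(y')^\complement)$, and (b) they cannot cross to the $y$-side of $y'$, because a single step of length $\le k$ would have to traverse a geodesic through $y'$ of length $\ge 2k+2$; so they stay in the shadow $\Omega_{y'}^y$. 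Combining (a) and (b) gives $d(\omega,y)=d(\omega,y')+d(y',y)\ge k+1$, i.e.\ $\omega\in\mathcal{B}(y)^\complement$. Your closing paragraph gestures at the ``distance gap $d(a',y')=k+1$'' but the argument as written relies only on the false disconnection claim; you need ingredient (a) explicitly in the final distance estimate, not just in ruling out visits to $y'$.
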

\begin{Rem}\label{Rem_10.10}
	In the assertions $ii)$ and $iii)$ the information $\gamma_1\ast\gamma'\ast\gamma_2\in \mathcal{P}h(a,b;\mathcal{B}(y)^\complement)$ is implied by $y'\in [a',y]$ and the membership of $\gamma'$ to $\mathcal{P}h(a',b',\mathcal{B}(y')^\complement)$. Indeed, if $y'\in[a',y]$ then the vertices of the $p$-admissible path $\gamma'$ lie in the shadow $\Omega_{y'}^y$, of $y'$ under $y$. We therefore deduce that $\gamma' \in\mathcal{P}h(a',b';\mathcal{B}(y')^\complement\cap\mathcal{B}(y)^\complement)$, and so the $p$-admissible path $\gamma_1\ast\gamma'\ast\gamma_2$ is in $\mathcal{P}h(a,b;\mathcal{B}(y)^\complement)$. See also remark \ref{Rem_post_def_Factors_monomial}.
\end{Rem}
\begin{proof}[Proof of Corollary \ref{Lemma_of_the_ecluse}]

\og $i)\Rightarrow iii)$ \fg{} : 
    This was proven in the induction used to prove \ref{Prop_G_Xi_is_limit_of_T_gamma}, see (\ref{eq:3706}).
\og $iii)\Rightarrow ii)$ \fg{}: Since $\mathcal{P}h(a',b';\mathcal{B}(y')^\complement)$ is non-empty, we immediately get the implication. 
\og $ii)\Rightarrow i)$\fg{}: Take $\gamma:=\gamma_1\ast\gamma'\ast\gamma_2\in\mathcal{P}h(a,b;\mathcal{B}(y)^\complement)$, as in the assertion $ii)$. By considering the distance from $y$, we get that $\mathcal{T}_{\gamma'}$ identifies as a subtree of $\mathcal{T}_\gamma$. The result then follows from the good definition of the graph morphism $\phi_\gamma:\mathcal{T}_\gamma\to\mathcal{G}_\Xi$ (Proposition \ref{Prop_Phi_gamma_is_well_defined}).
\end{proof}
\begin{Notation}\label{notation_post_lemma_ecluse}
	The remark \ref{Rem_post_def_Factors_monomial} coupled with Lemma \ref{Asymp-Shadow_Lemma} from \cite{Asymp} tells us that if there exists a path with source $(a',b')_{y'}$ and aim $(a,b)_{y}$ in $\mathcal{G}_\Xi$, then the shadow $\Omega_{x'}^{y'}$ is included in the shadow $\Omega_x^y$, where $x$ (resp. $x'$) is the only neighbour of $y$ (resp. $y'$) in the geodesic segment $[a,y]$ (resp. $[a',y']$).
	
	To keep this constraint in mind, we introduce the notation \og $(a,b)_{x,y}$ \fg{} where $(a,b)_y\in\Xi$ and $x$ is the only neighbour of $y$ in the geodesic segment $[a,y]$. we will then write \og $(a,b)_{x,y}\in\Xi$ \fg{} (See remark \ref{Asymp-Rem_Xi_y_as_a_union_of_Xi_x_y} in \cite{Asymp}). For concision's sake, we will retake the notation $(a,b)_y$ after this section.
\end{Notation}

\subsubsection{Another characterisation of paths in \texorpdfstring{$\mathcal{V}$}{Graphe V}}~

We formulate the analogue of Corollary \ref{Lemma_of_the_ecluse} in the quotient graph $\mathcal{V}$:
\begin{Cor}\label{Cor_de_l_ecluse_for_V}~
	Let $[\theta] $ and $[ \theta'] $ be two vertices of $\mathcal{V}$. There exists a path in $\mathcal{V}$ with source $[ \theta'] $ and aim $[\theta]$ if, and only if, there exist representatives $(a,b)_{x,y}$ of $[\theta]$ ( $\Gamma\cdot (a,b)_{x,y}=[\theta]$ ), and $(a',b')_{x',y'}$ of $[ \theta'] $, and $p$-admissible paths $\gamma_1\in\mathcal{P}h(a,a';\mathcal{B}(y)^\complement),\tilde{\gamma}\in\mathcal{P}h(a',b';\mathcal{B}(y')^\complement)$ and $\gamma_2\in\mathcal{P}h(b',b;\mathcal{B}(y)^\complement)$, such that 
	$$y'\in [a',y] \quad \text{ and } \quad \gamma_1\ast\tilde{\gamma}\ast\gamma_2\in \mathcal{P}h(a,b;\mathcal{B}(y)^\complement).$$
\end{Cor}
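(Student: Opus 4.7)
The strategy is to exploit the definition $\mathcal{V}=\Gamma\backslash\mathcal{G}_\Xi$ given in Definition \ref{Definition_of_V} to transfer the question to the already-proven Corollary \ref{Lemma_of_the_ecluse}. One direction will be immediate by projecting a path in $\mathcal{G}_\Xi$ down to $\mathcal{V}$ via the quotient map; the other requires lifting a path in $\mathcal{V}$ up to $\mathcal{G}_\Xi$ in a compatible way, which is possible because the $\Gamma$-equivariance of $\psi$ makes $\Gamma$ act on $\mathcal{G}_\Xi$ by graph automorphisms (as already noted in the comment preceding Definition \ref{Definition_of_V}).

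For the \emph{if} direction, given the representatives $(a,b)_{x,y}\in[\theta]$, $(a',b')_{x',y'}\in[\theta']$ and the $p$-admissible paths $\gamma_1,\tilde{\gamma},\gamma_2$ as in the statement, Corollary \ref{Lemma_of_the_ecluse} (implication $ii)\Rightarrow i)$) provides a directed geodesic path in $\mathcal{G}_\Xi$ with source $(a',b')_{x',y'}$ and aim $(a,b)_{x,y}$. Applying the quotient map $\mathcal{G}_\Xi\to\mathcal{V}$ to the vertices and edges of this path produces a directed geodesic path in $\mathcal{V}$ with source $[\theta']$ and aim $[\theta]$.

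For the \emph{only if} direction, fix a path $(e_1,\dots,e_n)$ in $\mathcal{V}$ with $s(e_1)=[\theta']$ and $a(e_n)=[\theta]$, and lift it inductively from the aim back to the source. Choose any representative $(a,b)_{x,y}$ of $[\theta]$, call it $v_n$, and lift $e_n\in E(\mathcal{V})$ to an edge $\hat{e}_n\in E(\mathcal{G}_\Xi)$ with $a(\hat{e}_n)=v_n$: indeed, $e_n$ is by definition a $\Gamma$-orbit of edges of $\mathcal{G}_\Xi$, so one first picks any representative $(\hat{\sigma}_1,\hat{\sigma}_0)$ of $e_n$, then translates by the element $g\in\Gamma$ satisfying $g\cdot\hat{\sigma}_0=v_n$, obtaining the edge $(g\cdot\hat{\sigma}_1,v_n)$ in $\mathcal{G}_\Xi$. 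Iterating the same construction with $v_{n-1}:=s(\hat{e}_n)$ in place of $v_n$, then with $v_{n-2}:=s(\hat{e}_{n-1})$, and so on, yields lifted edges $\hat{e}_1,\dots,\hat{e}_n$ forming a directed geodesic path in $\mathcal{G}_\Xi$ from some $v_0=:(a',b')_{x',y'}\in[\theta']$ to $(a,b)_{x,y}$. Corollary \ref{Lemma_of_the_ecluse} (implication $i)\Rightarrow iii)$, which by Remark \ref{Rem_10.10} yields $ii)$) then furnishes the required $p$-admissible paths $\gamma_1,\tilde{\gamma},\gamma_2$ together with the condition $y'\in[a',y]$.

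The only point that might look like an obstacle is the existence of the lift at each inductive step; however this reduces to the transitivity of the $\Gamma$-action on each orbit combined with the fact that $\Gamma$ preserves the edge set $E(\mathcal{G}_\Xi)$, a consequence of the $\Gamma$-equivariance of $\psi$. Everything else is bookkeeping and a direct appeal to Corollary \ref{Lemma_of_the_ecluse}.
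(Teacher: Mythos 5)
Your proof is correct. The paper states Corollary \ref{Cor_de_l_ecluse_for_V} without an explicit proof, treating it as an immediate consequence of Corollary \ref{Lemma_of_the_ecluse} and the definition of $\mathcal{V}$ as the quotient $\Gamma\backslash\mathcal{G}_\Xi$; your argument — projecting a path of $\mathcal{G}_\Xi$ down to $\mathcal{V}$ for the \emph{if} direction, and lifting a path of $\mathcal{V}$ edge-by-edge from aim to source using transitivity of $\Gamma$ on orbits and the fact that $\Gamma$ acts by graph automorphisms for the \emph{only if} direction, then invoking the equivalence $i)\Leftrightarrow ii)$ of Corollary \ref{Lemma_of_the_ecluse} — is precisely the intended reduction and is sound. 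One cosmetic slip: the step $iii)\Rightarrow ii)$ in Corollary \ref{Lemma_of_the_ecluse} is the trivial non-emptiness observation, not a consequence of Remark \ref{Rem_10.10} (which goes the other way, showing the concatenation condition is automatic); this does not affect the validity of your argument.
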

 Recall that $\psi:E\to E$ is a polynomial map over $E$ with non-negative coefficients and is, in particular, infinitely differentiable on $E\approx\mathbb{C}^{\Gamma\backslash\Xi}$. We have the following characterisation of directed geodesic paths in $\mathcal{V}$:

\begin{Prop}\label{Prop_Chemin_et_derivee}~

 For any non-negative integer $n$, and any vertices $\alpha,\beta$ of $\mathcal{V}$, we have

$i)$ There exists a path $\vartheta$ of length $n$ in $\mathcal{V}$ with source $\alpha$, and aim $\beta$;\\
if, and only if,

$ii)$ There exists $J\in E$ with non-negative coordinates such that $(D_J \psi)^{\circ n}(1_{\alpha})
 	(\beta)>0$,
 	where $D_J\psi$ denotes the differential  the polynomial map $\psi$ at $J$.\footnote{Here and in the following proof we provide $E$ with the basis $\{1_{[\theta]}\}_{[\theta]\in\Gamma\backslash\Xi}$, viewing $E$ has $\mathbb{C}^{dim(E)}$. Thereby, the operator $D_J\psi:T_J E\to T_{\psi(J)} E$, is considered has a linear operator from $\mathbb{C}^{dim(E)}$ to $\mathbb{C}^{dim(E)}$. This Proposition will be used in the next section to justify some Perron-irreducibility arguments (See Lemma \ref{Lem_7.12})}
 \end{Prop}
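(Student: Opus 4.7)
The plan is to reduce the statement to identifying the matrix of $(D_J\psi)^{\circ n}$ in the basis $\{1_{[\theta]}\}_{[\theta]\in\Gamma\backslash\Xi}$ with a generating sum indexed by directed paths of length $n$ in $\mathcal{V}$, each weighted by a polynomial in $J$ with non-negative coefficients. Since $\psi:E\to E$ is polynomial with non-negative coefficients, each partial derivative
$$P_{[\theta_1],[\theta_0]}:=\partial\psi_{[\theta_0]}/\partial J_{[\theta_1]}$$
is itself a polynomial in $J$ with non-negative coefficients. By Definitions \ref{Def_of_G_Xi} and \ref{Definition_of_V} of the dependency digraph, $P_{[\theta_1],[\theta_0]}$ is not the zero polynomial if and only if there is an edge from $[\theta_1]$ to $[\theta_0]$ in $\mathcal{V}$; and any non-trivial polynomial with non-negative coefficients is strictly positive at every input whose coordinates are all positive.

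In the chosen basis, the matrix of the linear operator $D_J\psi$ has $([\theta_0],[\theta_1])$-entry equal to $P_{[\theta_1],[\theta_0]}(J)$, so by the standard matrix-power formula
\begin{equation*}
 (D_J\psi)^{\circ n}(1_\alpha)(\beta) \,=\, \sum_{\alpha=\alpha_0,\alpha_1,\dots,\alpha_n=\beta} \prod_{i=0}^{n-1} P_{\alpha_i,\alpha_{i+1}}(J),
\end{equation*}
where the sum ranges over all $(n+1)$-tuples of vertices of $\mathcal{V}$ with prescribed endpoints. The full sum is itself a polynomial in $J$ with non-negative coefficients.

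Both implications then follow at once. For $(i)\Rightarrow(ii)$, a length-$n$ path $\alpha=\alpha_0\to\cdots\to\alpha_n=\beta$ contributes a term which is a product of non-trivial polynomials with non-negative coefficients, hence a non-trivial polynomial with non-negative coefficients, and is strictly positive at $J=(1,\dots,1)$; every other term is non-negative, so the whole sum is strictly positive at this $J$. For $(ii)\Rightarrow(i)$, if no length-$n$ path from $\alpha$ to $\beta$ exists, then in every tuple of the sum at least one pair $(\alpha_i,\alpha_{i+1})$ fails to be an edge of $\mathcal{V}$, so the factor $P_{\alpha_i,\alpha_{i+1}}$ vanishes identically; every term is then the zero polynomial, whence $(D_J\psi)^{\circ n}(1_\alpha)(\beta)=0$ for all $J\in E$. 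There is no serious obstacle; the only care required is to correctly match the convention that an edge $(\theta_1,\theta_0)$ of $\mathcal{G}_\Xi$ (source $\theta_1$, aim $\theta_0$) corresponds to the derivative $\partial\psi_{\theta_0}/\partial J_{\theta_1}$, which fixes the direction of composition of the $D_J\psi$.
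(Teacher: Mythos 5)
Your proof is correct and takes essentially the same approach as the paper: both rest on the observation that the $([\theta_0],[\theta_1])$ entry of $D_J\psi$ in the basis $\{1_{[\theta]}\}$ is the partial derivative $\partial\psi(\cdot)(\theta_0)/\partial J(\theta_1)$, a polynomial with non-negative coefficients that is non-zero precisely when $([\theta_1],[\theta_0])$ is an edge of $\mathcal{V}$, combined with positivity at a strictly positive $J$. The paper packages this as an induction on path length, whereas you unfold that induction at once via the explicit matrix-power expansion over $(n+1)$-tuples; this is a cosmetic streamlining, not a different argument.
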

 \begin{proof} 
 	 The non-negativity of the coefficients of $\psi$ implies that, for any $J\geq 0$, the differential $D_J\psi$ is a non-negative operator (Cf. formula (\ref{Asymp-Def_de_psi}) in \cite{Asymp} defining $\psi$). For $\vartheta$ a path in $\mathcal{V}$, denote $\alpha=\alpha_\vartheta$ its source, $\beta=\beta_\vartheta$ its aim and $l(\vartheta)$ its length. We prove the result by induction on $l(\vartheta)=n\in\mathbb{N}$, by proving for each $n$, the implication $i)\Rightarrow ii)$ then its converse $ii) \Rightarrow i)$.
 	
	$\bullet$ For $n=0$, there is nothing to prove. We therefore start the induction at $n=1$:
	Let $\alpha,\beta$ be two vertices of $\mathcal{V}$. By hypothesis, for all $J\in E=\mathcal{F}(\Xi,\mathbb{C})^\Gamma\approx\mathbb{C}^{\Gamma\backslash\Xi}$,
 	$$\psi(J)(\beta)=J(\alpha)\cdot\phi(J) +\chi(J),$$ 
 	with $\phi=\phi_{\alpha,\beta}:E\to\mathbb{C}$ a polynomial function with non-negative coefficients, and $\chi=\chi_{\alpha,\beta}:E\to\mathbb{C}$ a polynomial function with non-negative coefficients, none of whose monomials admits $J(\alpha)$ as a factor. Moreover, $\phi$ is non-zero if, and only if $(\alpha,\beta)$ is an edge of $\mathcal{V}$.
	Therefore, by differentiating the previous equality, for all $J$ in $E$, and evaluating it at $\dot{J}=1_\alpha$, we get:
	$$D_J\psi(1_\alpha)(\beta)=\phi(J)+J(\alpha)\cdot D_J\phi(1_\alpha) + D_J\chi(1_\alpha),$$
	but for $J\in E$ that is strictly positive, since $\phi$ and $\chi$ have non-negative coefficients, and $\chi$ does not depend on the values taken by $J$ at $\alpha$:
	\begin{gather*}
		J(\alpha)\cdot D_J\phi(1_\alpha)\geq 0\\ 
		D_J\chi(1_\alpha)= 0\\
		\text{ and } \\
		\phi(J)\geq 0,\text{ is non-zero, iff }(\alpha,\beta) \text{ is an edge of }\mathcal{V};
	\end{gather*}
	Thus if $(\alpha,\beta)$ is an edge of $\mathcal{V}$, then $D_J\psi^1(1_\alpha)(\beta)>0$. Conversely, if $D_J\psi(1_\alpha)(\beta)>0$, then necessarily $\phi(J)$ (or $D_J\phi(1_\alpha)$ is non-zero, so $\phi(J)>\phi(0)=0$, thus $\phi(J)$) is non-zero, so $(\alpha,\beta)$ is an edge of $\mathcal{V}$.

	$\bullet$ Assuming the result is verified for paths of length $n\geq 1$, let us show that it is also the case for paths of length $n+1$. Let $\vartheta$ be a path of length $n+1$ in $\mathcal{V}$, again let $\alpha=\alpha_\vartheta$ be the source of the path $\vartheta$, and $\beta=\beta_\vartheta$ be its aim. Let $\beta'\in\Gamma\backslash\Xi$ be a vertex of $\mathcal{V}$ and $\vartheta'$ be another path, such that $\beta'=\beta_{\vartheta'}$ and $\vartheta$ is equal to the path $\vartheta'$ to which we have added the edge $(\beta',\beta_\vartheta)$ at the end, i.e $\vartheta=\vartheta'\ast (\beta',\beta)$.
	By definition, 
	$$(D_J\psi)^{n+1}(1_\alpha)(\beta)=\sum_{\delta\in\Gamma\backslash\Xi}\left((D_J\psi)^n(1_\alpha)(\delta)\right) D_J\psi(1_{\delta})(\beta).$$
	For $J\in E$ that is strictly positive and for all $\delta\in\Gamma\backslash\Xi$, by positivity of $D_J\psi$, we have
	$$(D_J\psi)^{n+1}(1_\alpha)(\beta)\geq \left((D_J\psi)^n(1_\alpha)(\delta)\right) D_J\psi(1_{\delta})(\beta),$$
	which, for $\delta=\beta '$ and by applying the induction hypothesis to the paths $\vartheta '$ and $(\beta ',\beta)$, is strictly positive. 

	Conversely, if $(D_J\psi)^{n+1}(1_\alpha)(\beta)$ is a strictly positive real number, then there exists 
	$\delta\in\Gamma\backslash\Xi$ such that $\left((D_J\psi)^n(1_\alpha)(\delta)\right) D_J\psi(1_{\delta})(\beta)$ is strictly positive. Thus by the induction hypothesis, $(\delta,\beta)$ is an edge of $\mathcal{V}$ and there exists $\vartheta$ a path in $\mathcal{V}$ of length $n$, with source $\alpha$ and aim $\delta$. The path $\vartheta\ast (\delta,\beta)$ is a path in $\mathcal{V}$ of length $n+1$, with source $\alpha$ and aim $\beta$. This is what we were looking for.
	The induction is complete;
\end{proof}
\begin{Rem}
	Note that in the above proof we used a vector $J$ with positive coordinates, but the non-negativity of the coefficients of $\psi$ implies that for any vectors $J_1,J_2$ in $E$ such that $J_2\geq J_1\geq 0$, then for any non-negative integer $n$, and any vertices $\alpha,\beta$ of $\mathcal{V}$, we have $(D_{J_2} \psi)^{\circ n}(1_{\alpha})(\beta)\geq (D_{J_1} \psi)^{\circ n}(1_{\alpha})
 	(\beta)$. Hence if $(D_{J_1} \psi)^{\circ n}(1_{\alpha})(\beta)$ is non-zero, so is its analogous with $J_2$. Hence the statement with \og There exists $J\in E$ with non-negative coordinates ...\fg{}.
\end{Rem}

\subsection{Graduation of \texorpdfstring{$\mathcal{V}$}{Graphe V}}

Now that we know more about the directed geodesic paths of $\mathcal{G}_\Xi$, we can precise the structure of the quotient graph $\mathcal{V}$. We separate the vertices of $\mathcal{G}_\Xi$ (and thus the vertices of $\mathcal{V}$) into $2$ categories, the elements in $\Xi_{<\infty}$, and the ones in $\Xi_\infty$ (Introduced in section \ref{Asymp-Section_Asymptotics} in \cite{Asymp} and recalled below in Definition \ref{Def_of_V_N_and_V_infinity}).

We will show by means of Proposition \ref{Lemma_of_the_ecluse} that the subgraph of $\mathcal{V}$ induced by the vertices of this second category is connected.

\begin{Rem}\label{Remark_1_Lalley}
	Following Steven P.Lalley's work in \cite{Lalley_1993}, if we assume that for any pair of neighbours $(x,y)$ in the tree $X$, we have $p(x,y)>0$, then any element $(a,b)_{x,y}\in\Xi$ is in this second category $\Xi_\infty$.
\end{Rem}
\begin{Def}\label{Def_of_V_N_and_V_infinity}\textit{The sets $\Xi_{<\infty}$ and $\Xi_\infty$, and the directed subgraphs $\mathcal{V}_{<\infty}$ and $\mathcal{V}_\infty$,}\index{$\Xi_{<\infty},\Xi_\infty$}\index{$V_N, V_\infty$}\index{$\mathcal{V}_{<\infty},\mathcal{V}_\infty$}

	For $N\in\mathbb{N}$, we define the subset $\Xi_N$ of vertices $(a,b)_{x,y}$ of $\Xi$ for which the $p$-admissible paths of $\mathcal{P}h(a,b;\mathcal{B}(y)^\complement)$ have their vertices at most at distance $N$ away from $y$. I.e $\forall\gamma=(\omega_0,...,\omega_n)\in\mathcal{P}h(a,b;\mathcal{B}(y)^\complement)$, $$\max_i(d(\omega_i,y))\leq N.$$

	We then define $V_N$ as the subset of vertices of $\mathcal{V}$ consisting of the vertices $[(a,b)_{x,y}]$ in $\Gamma\backslash\Xi $ admitting a representative $(a,b)_{x,y}$ in $\Xi_N$.

	Then let $\Xi_{<\infty}:=\displaystyle\bigcup_{N\in\mathbb{N}} \Xi_N$, and $V_{<\infty}:=\displaystyle\bigcup_{N\in\mathbb{N}} V_N=\Gamma\backslash\Xi_N$.

	Finally, we define the subset $\Xi_\infty:=\Xi\setminus \Xi_{<\infty}$ consisting of the vertices $(a,b)_{x,y}\in\Xi$ such that for any integer $N>0$, there exists a $p$-admissible path $\gamma=(\omega_0,...,\omega_n)$ in $\mathcal{P}h(a,b;\mathcal{B}(y)^\complement)$ such that $\max_i(d(\omega_i,y))\geq N$. We will denote $V_\infty$ the subset of vertices of $\mathcal{V}$ consisting of vertices $[(a,b)_{x,y}]\in\Gamma\backslash\Xi$ admitting a representative $(a,b)_{x,y}$ in $\Xi_\infty$.

	For $N\in\mathbb{N}$, let $\mathcal{V}_N$ be the subgraph of $\mathcal{V}$ induced by the set of vertices $V_N$, also note $\mathcal{V}_{<\infty}$ the subgraph induced by $V_{<\infty}$, and $\mathcal{V}_\infty$ the subgraph induced by $V_\infty$.
\end{Def}
\begin{Rem}
Let $g\in\Gamma$ be an automorphism that preserves the transition kernel $p$, let $(a,b)_y$ be an element in $\Xi$ and let $\gamma=(\omega_0,...,\omega_n)$ be a $p$-admissible path in $\mathcal{P}h(a,b;\mathcal{B}(y)^\complement)$. 
The $p$-admissible path $g\gamma=(g\omega_0,...,g\omega_n)$ belongs to $\mathcal{P}h(ga,gb;\mathcal{B}(gy)^\complement)$, besides, since $g$ is an automorphism of $X$, for any integer $i\in\{0,...,n\}$, we have $d(\omega_i,y)=d(g\omega_i,gy)$. We deduce that if $g (a,b)_y$ belongs to $\Xi_N$, for some non-negative integer $N$, then so do $(a,b)_y$. And by extension, for any non-negative integer $N$, the subset $\Xi_N$ is a $\Gamma$-invariant subset of $\Xi$. And so is $\Xi_\infty$.
\end{Rem}
\begin{Lem}\label{Graduation_of_Xi}
Let $N$ be a non-negative integer. We have the following result:
\begin{itemize}
	\item Any edge with aim a vertex in $\Xi_N$ takes its source in $\Xi_N$.
	
	\item Any edge with source a vertex of $\Xi_\infty$ has its aim in $\Xi_\infty$.
\end{itemize}
	In other words, for any integers $M>N\geq 0$, there is no edge with source in $\Xi_M\setminus \Xi_{M-1}$ and aim in $\Xi_N$, nor any edge with source in $\Xi_\infty$ and aim in $\Xi_N$.
\end{Lem}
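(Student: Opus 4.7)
The plan is to derive both assertions from a single length estimate, relying on the characterization of edges in $\mathcal{G}_\Xi$ provided by Corollary \ref{Lemma_of_the_ecluse} together with the geometric observation recorded in Remark \ref{Rem_10.10}.

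First I would address the first bullet. Let $\theta_1=(a',b')_{y'}$ and $\theta_0=(a,b)_y$ be such that $(\theta_1,\theta_0)$ is an edge of $\mathcal{G}_\Xi$, and assume $\theta_0\in\Xi_N$. By assertion $iii)$ of Corollary \ref{Lemma_of_the_ecluse}, the vertex $y'$ lies on the geodesic segment $[a',y]$, and there exist $\gamma_1\in\mathcal{P}h(a,a';\mathcal{B}(y)^\complement)$ and $\gamma_2\in\mathcal{P}h(b',b;\mathcal{B}(y)^\complement)$ such that for every $\gamma'\in\mathcal{P}h(a',b';\mathcal{B}(y')^\complement)$, the concatenation $\gamma_1\ast\gamma'\ast\gamma_2$ belongs to $\mathcal{P}h(a,b;\mathcal{B}(y)^\complement)$.

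Now fix any $\gamma'=(\omega_0,\dots,\omega_n)\in\mathcal{P}h(a',b';\mathcal{B}(y')^\complement)$. Since $\theta_0\in\Xi_N$, every vertex of the concatenation $\gamma_1\ast\gamma'\ast\gamma_2$ lies at distance at most $N$ from $y$; in particular $d(\omega_i,y)\leq N$ for all $i$. By Remark \ref{Rem_10.10}, the vertices $\omega_i$ all lie in the shadow $\Omega_{y'}^y$, so $y'\in[\omega_i,y]$ and the distance decomposes as $d(\omega_i,y)=d(\omega_i,y')+d(y',y)$. Hence $d(\omega_i,y')\leq d(\omega_i,y)\leq N$ for every $i$, which shows that $\theta_1\in\Xi_N$, as required.

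The second bullet is then the contrapositive of the first, ranging over all $N$. Indeed, suppose an edge of $\mathcal{G}_\Xi$ had source $\theta_1\in\Xi_\infty$ and aim $\theta_0\in\Xi_{<\infty}$. Then $\theta_0\in\Xi_N$ for some $N$, and by what we just proved $\theta_1\in\Xi_N\subseteq\Xi_{<\infty}$, contradicting $\theta_1\in\Xi_\infty$. The reformulation with $\Xi_M\setminus\Xi_{M-1}$ follows immediately from the monotonicity of $N\mapsto\Xi_N$.

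I do not expect any real obstacle beyond correctly invoking Corollary \ref{Lemma_of_the_ecluse}: the key feature it delivers is that the prefix $\gamma_1$ and suffix $\gamma_2$ may be chosen \emph{uniformly} over all $\gamma'\in\mathcal{P}h(a',b';\mathcal{B}(y')^\complement)$, so that the length bound guaranteed by $\theta_0\in\Xi_N$ transfers to every admissible path joining $a'$ to $b'$ outside $\mathcal{B}(y')$. The remainder is nothing more than the additivity of distance along geodesics in the tree, applied via the shadow inclusion of Remark \ref{Rem_10.10}.
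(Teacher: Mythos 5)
Your proof is correct and follows essentially the same route as the paper's: both invoke the characterisation of edges from Corollary~\ref{Lemma_of_the_ecluse}(i)$\Rightarrow$(iii) to obtain a prefix $\gamma_1$ and suffix $\gamma_2$ that work uniformly over all $\gamma'\in\mathcal{P}h(a',b';\mathcal{B}(y')^\complement)$, then exploit the additivity of distance along geodesics in the tree to transfer the bound from $y$ to $y'$. The only cosmetic difference is that you quote Remark~\ref{Rem_10.10} for the shadow inclusion while the paper invokes the Shadow Lemma from~\cite{Asymp} directly (noting explicitly that the terminal vertex $b'$ is excluded from the shadow, which is harmless since $d(b',y')\leq k<N$); the distance computation and the contraposition for the second bullet are identical.
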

Passing to the quotient modulo $\Gamma$ we obtain:
\begin{Lem}\label{Graduation_de_V}~
Let $N$ be a non-negative integer. We have the following results:
\begin{itemize}
	\item Any edge with aim a vertex in $V_N$ takes its source in $V_N$.
	
	\item Any edge with source a vertex of $V_\infty$ has its aim in $V_\infty$.
\end{itemize}
	In other words, for any integers $M>N\geq 0$, there is no edge with source in $V_M\setminus V_{M-1}$ and aim in $V_N$, nor any edge with source in $V_\infty$ and aim in $V_N$.
\end{Lem}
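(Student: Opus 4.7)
The plan is to pass the proof of Lemma \ref{Graduation_of_Xi} through the quotient map $\mathcal{G}_\Xi\to\mathcal{V}=\Gamma\backslash\mathcal{G}_\Xi$ by lifting edges in $\mathcal{V}$ to well-chosen representatives in $\mathcal{G}_\Xi$. The content is essentially already contained in Lemma \ref{Graduation_of_Xi}; what must be checked is that the $\Gamma$-action is compatible with the stratification. This is guaranteed by the Remark preceding Lemma \ref{Graduation_of_Xi}, which states that $\Xi_N$ (for every non-negative integer $N$) and $\Xi_\infty$ are $\Gamma$-stable subsets of $\Xi$. In particular $V_N=\Gamma\backslash\Xi_N$ and $V_\infty=\Gamma\backslash\Xi_\infty$ are unambiguously defined as subsets of $V=\Gamma\backslash\Xi$, and a class $[\theta]$ belongs to $V_N$ (resp.\ $V_\infty$) if and only if \emph{every} one of its representatives lies in $\Xi_N$ (resp.\ $\Xi_\infty$).

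For the first assertion, I would start with an edge $[e]\in E(\mathcal{V})$ whose aim is a vertex $[\theta_0]\in V_N$, and pick any representative $(\theta_1',\theta_0')\in E(\mathcal{G}_\Xi)$ of $[e]$. Since $\theta_0'$ and $\theta_0$ lie in the same $\Gamma$-orbit, there is some $g\in\Gamma$ with $g\cdot\theta_0'=\theta_0$. The $\Gamma$-equivariance of $\psi$ (which is what makes the quotient graph $\mathcal{V}$ make sense in Definition \ref{Definition_of_V}) implies that $\Gamma$ acts on $\mathcal{G}_\Xi$ by graph automorphisms; consequently $(g\theta_1',g\theta_0')=(g\theta_1',\theta_0)$ is again an edge of $\mathcal{G}_\Xi$ representing $[e]$. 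Its aim $\theta_0$ now lies in $\Xi_N$, so Lemma \ref{Graduation_of_Xi} gives $g\theta_1'\in\Xi_N$, and therefore $[\theta_1]=\Gamma\cdot(g\theta_1')\in V_N$, as required.

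The second assertion is proven by the mirror argument. Given an edge $[e]\in E(\mathcal{V})$ with source $[\theta_1]\in V_\infty$, one lifts $[e]$ to a representative $(\theta_1',\theta_0')\in E(\mathcal{G}_\Xi)$, then translates by a suitable $g\in\Gamma$ so that the new source $g\theta_1'$ belongs to $\Xi_\infty$ (possible because $[\theta_1]\in V_\infty$ supplies such a representative, and the whole $\Gamma$-orbit of $\theta_1'$ equals $[\theta_1]$). Lemma \ref{Graduation_of_Xi} then forces $g\theta_0'\in\Xi_\infty$, whence $[\theta_0]\in V_\infty$. The reformulation \og for any integers $M>N\geq 0$, there is no edge with source in $V_M\setminus V_{M-1}$ and aim in $V_N$ \fg{} follows directly from the first bullet by noting that if the aim is in $V_N\subseteq V_{M-1}$, then the source must also lie in $V_{M-1}$.

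There is no real obstacle in the argument — only bookkeeping around the quotient structure. The single substantive point is the $\Gamma$-invariance of the strata $\Xi_N,\Xi_\infty$, and this has already been recorded in the Remark just before Lemma \ref{Graduation_of_Xi}; everything else is formal transport along the quotient map combined with the $\Gamma$-equivariance of $\psi$.
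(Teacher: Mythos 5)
Your proof is correct and fills in the details of what the paper leaves implicit (the paper simply says ``Passing to the quotient modulo $\Gamma$ we obtain:''), namely that the $\Gamma$-invariance of $\Xi_N$ and $\Xi_\infty$ lets one lift edges of $\mathcal{V}$ to $\mathcal{G}_\Xi$, apply Lemma \ref{Graduation_of_Xi}, and push back down. This is exactly the paper's intended argument; note only that the translation by $g\in\Gamma$ is superfluous once you have observed that $\Gamma$-invariance of the strata means \emph{every} representative of a class in $V_N$ already lies in $\Xi_N$.
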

\begin{proof}[Proof of Lemma \ref{Graduation_of_Xi}]
 	Consider an edge of $\mathcal{G}_\Xi$ with source $(a,b)_{x,y}\in\Xi$ and aim $(a_0,b_0)_{x_0,y_0}\in\Xi$. By Proposition \ref{Lemma_of_the_ecluse} (More precisely the implication $i)\Rightarrow ii)$ and $iii)$), $y$ belongs to the geodesic segment $[a,y_0]$, so do $x$ and $x_0$ by definition (See notation \ref{notation_post_lemma_ecluse}), and there exist two $p$-admissible paths $\gamma_s\in\mathcal{P}h(a_0,a;\mathcal{B}(y_0)^\complement)$, $\gamma_b\in\mathcal{P}h(b,b_0;\mathcal{B}(y_0)^\complement)$ such that for any $p$-admissible path $\gamma\in\mathcal{P}h(a,b;\mathcal{B}(y)^\complement)$,
	$$\gamma_s\ast\gamma\ast\gamma_b\in\mathcal{P}h(a_0,b_0;\mathcal{B}(y_0)^\complement).$$

	If $(a_0,b_0)_{x_0,y_0}\in \Xi_N$, then any vertex of the previous $p$-admissible path $\gamma_s\ast\gamma\ast\gamma_b$ is at distance at most $N$ from $y_0$. In particular, any $p$-admissible path $\gamma\in\mathcal{P}h(a,b;\mathcal{B}(y)^\complement)$ is at distance at most $N$ from $y_0$.
	Moreover, since $y,x_0\in[a,y_0]$ and $x\in [a,y]$, we have the inclusion of shadows: $$\Omega_x^y\subset\Omega_{x_0}^{y_0}.$$
	Remember from Lemma \ref{Asymp-Shadow_Lemma} in \cite{Asymp}, that all vertices of $\gamma$ distinct from $b$ are in the shadow $\Omega_x^y$ and thus at distance at most $N-d(y_0,y)\leq N$ from $y$. Hence, by definition $(a,b)_{x,y}\in \Xi_N$.

	The second point follows from the first by contraposition.
	\end{proof}

\begin{figure}
	\centering
	\includegraphics[scale=0.45]{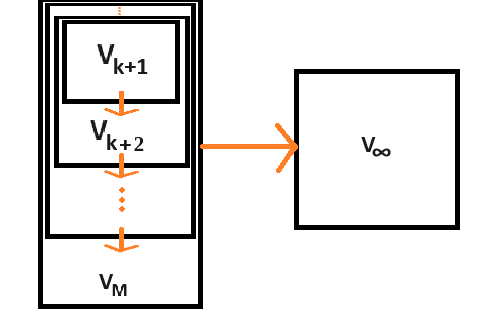}
	\caption{Structure of $\mathcal{V}$}
\end{figure}
 
\begin{Rem}\label{Connexite_et_palliers}
	Since for all non-negative integer $N\in\mathbb{N}$, there are no edge from a vertex of $\Xi_N\setminus \Xi_{N-1}$ to a vertex of $\Xi_{N-1}$, the connected components of $\mathcal{G}_\Xi$ take all their vertices in $\Xi_\infty$ or take all their vertices in a \og\textit{level set}\fg{} $\Xi_{N}\setminus \Xi_{N-1}$, with $N$ depending on the related component.
\end{Rem}
\begin{Cor}\label{Cor_pre_Prop_linearity}
 Let $N$ be a non-negative integer and let $\theta=(a,b)_{x,y}\in \Xi_N\setminus \Xi_{N-1}$ for some integer $N$ and 
let $c\in\mathcal{B}(y)^\complement$ and $(c_0,c_1,...,c_l)\in \Xi_{[c,y]}$, with $c_0=c$ and $c_l=b$ be such that $p(a,c)>0$. And for $j\in\{1,...,l\}$ denote by $\theta_j\in\Xi$ the element 
$$\theta_j=(c_{j-1},c_j)_{x_{i_j}},$$
where $(i_1,...,i_l)=\iota_{[c,y]}(c_0,...,c_l)$ are the crossing indices associated with $(c_0,...,c_l)$, along the geodesic segment $[c,y]=(x_0,...,x_m)$. 
 
 If for some $1\leq j \leq l$, the vertex $\theta_j$ is in $\Xi_N\setminus \Xi_{N-1}$ then necessarily $j=l$. Equivalently, we always have that $\theta_1$, ...,$\theta_{l-1}$ are in $\Xi_{N-1}$.
\end{Cor}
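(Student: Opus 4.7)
The plan is to combine Lemma \ref{Graduation_of_Xi}, Corollary \ref{Lemma_of_the_ecluse}, and a tree-distance estimate coming from the Shadow Lemma. Since each $\theta_j=(c_{j-1},c_j)_{x_{i_j}}$ is by construction a factor in one of the monomials of $J\mapsto\psi(J)(\theta)$, the pair $(\theta_j,\theta)$ is an edge of $\mathcal{G}_\Xi$; Lemma \ref{Graduation_of_Xi} then immediately yields $\theta_j\in\Xi_N$ for every $j\in\{1,\dots,l\}$. The real content of the corollary is to strengthen this to $\theta_j\in\Xi_{N-1}$ whenever $j<l$.

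I would argue by contradiction. Suppose that for some $j<l$, $\theta_j\in\Xi_N\setminus\Xi_{N-1}$; then by definition there is a $p$-admissible path $\gamma'\in\mathcal{P}h(c_{j-1},c_j;\mathcal{B}(x_{i_j})^\complement)$ containing a vertex $\omega^\ast$ with $d(\omega^\ast,x_{i_j})=N$. Using the implication $i)\Rightarrow iii)$ of Corollary \ref{Lemma_of_the_ecluse} applied to the edge $\theta_j\to\theta$, I obtain $p$-admissible paths $\gamma_s\in\mathcal{P}h(a,c_{j-1};\mathcal{B}(y)^\complement)$ and $\gamma_b\in\mathcal{P}h(c_j,b;\mathcal{B}(y)^\complement)$ such that $\gamma_s\ast\gamma'\ast\gamma_b\in\mathcal{P}h(a,b;\mathcal{B}(y)^\complement)$. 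The vertex $\omega^\ast$ then appears on this composed path, so the assumption $\theta\in\Xi_N$ forces $d(\omega^\ast,y)\leq N$.

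Next I invoke the Shadow Lemma (Lemma \ref{Asymp-Shadow_Lemma} in \cite{Asymp}) on $\gamma'$ itself: it guarantees that $\omega^\ast$ lies in the shadow $\Omega_{x''}^{x_{i_j}}$, where $x''$ is the neighbor of $x_{i_j}$ in $[c_{j-1},x_{i_j}]$. Provided $x_{i_j}\neq y$, the vertex $y$ then lies in a different connected component of $X\setminus\{x_{i_j}\}$ than $\omega^\ast$, so the geodesic from $\omega^\ast$ to $y$ must pass through $x_{i_j}$ and
$$d(\omega^\ast,y)=d(\omega^\ast,x_{i_j})+d(x_{i_j},y)\geq N+1,$$
contradicting $d(\omega^\ast,y)\leq N$. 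This rules out the hypothesis $\theta_j\in\Xi_N\setminus\Xi_{N-1}$ and yields $\theta_j\in\Xi_{N-1}$.

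The main obstacle is thus to justify the structural fact that $x_{i_j}\neq y$ for every $j<l$, i.e. that the crossing index $i_l$ equals $m$ while each earlier $i_j$ is strictly less than $m$. This is a feature of the crossing decomposition $(i_1,\ldots,i_l)=\iota_{[c,y]}(c_0,\ldots,c_l)$ from Lemma \ref{Asymp-Lem_2.6} in \cite{Asymp}: the last piece $(c_{l-1},c_l)=(c_{l-1},b)$ is forced to have reference $y$ because its admissible connecting paths must be allowed to avoid the full ball $\mathcal{B}(y)$, whereas the earlier pieces correspond to strictly shallower references on $[c,y]$. Once this structural input is granted, the distance estimate above closes the argument.
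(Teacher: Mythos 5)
Your proposal follows essentially the same route as the paper's proof: both first deduce $\theta_j\in\Xi_N$ from Lemma \ref{Graduation_of_Xi}, then use the decomposition from Corollary \ref{Lemma_of_the_ecluse} together with the Shadow Lemma to get a distance bound that forces $\theta_j\in\Xi_{N-1}$ whenever $x_{i_j}\neq y$. The only stylistic difference is that the paper directly bounds $N_j\leq N-d(y,x_{i_j})$ whereas you argue by contradiction from $N_j=N$, and you explicitly flag the structural fact that $x_{i_j}=y$ only when $j=l$ (which the paper's proof also uses but leaves implicit).
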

\begin{proof}
With the above notation, for any $1\leq j \leq l$, let $x_{i_j}^*$ be the only neighbour of $x_{i_j}$ in the geodesic segment $[c_{j-1},x_{i_j}]$, so that the vertex $\theta_j$ is $(c_{j-1},c_j)_{x_{i_j}^*,x_{i_j}}\in \Xi$. By the previous Lemma \ref{Graduation_of_Xi}, the vertices $\theta_1,...,\theta_l$ of $\mathcal{G}_\Xi$ necessarily belong to $\Xi_N$.

For any $1\leq j \leq l$, let $N_j\leq N $ be a non-negative integer such that $\theta_j$ belongs to $\Xi_{N_j}\setminus\Xi_{N_j -1}$ and let $\gamma_j$ be a $p$-admissible path of $\mathcal{P}h(c_{j-1},c_j;\mathcal{B}(x_{i_j})^\complement)$, such that one of its vertices is at a distance exactly $N_j$ from $x_{i_j}$. Reasoning as in the proof of Lemma \ref{Graduation_of_Xi}, the $p$-admissible path $\gamma_j$ cannot go at distance more than  $N - d(y,x_{i_j})$ from $x_{i_j}$. Thus if $x_{i_j}$ is different from $y$, $N_j\leq N - d(y,x_{i_j})\leq N-1$, and $\theta_j$ is in $\Xi_{N-1}$.
The result follows.

\end{proof}
\begin{Def}\textit{Reachable vertex}

	Let $(a,b)_{x,y}\in\Xi$, we will say that a vertex $\omega\in X_0$ is reachable for $(a,b)_{x,y}$, if there exists a $p$-admissible path $\gamma=(\omega_0,...,\omega_n)\in\mathcal{P}h(a,b;\mathcal{B}(y)^\complement)$ such that $\omega=\omega_i$ for some $i\in\{0,...,n\}$.
\end{Def}

\begin{Rem}
	For $N\in\mathbb{N}$, and $(a,b)_{x,y}\in\Xi_N$, the set of vertices reachable for $(a,b)_{x,y}$ is included in the ball $\mathcal{B}_N(y)$. If $(a,b)_{x,y}\in\Xi_\infty$ then there are reachable vertices for $(a,b)_{x,y}$ at arbitrarily large distance away from $y$.
\end{Rem}

\begin{Prop}\label{Stagnation_of_V_N}~

	There exists $M>0$ such that 
	$$\forall N \geq M,\; V_N=V_M.$$
	In other words, there exists $M>0$ such that for any $(a,b)_{x,y}\in\Xi$, if there exists a $p$-admissible path $\gamma$ in $\mathcal{P}h(a,b;\mathcal{B}(y)^\complement)$ that goes at distance at least $M+1$ from $y$, then there are $p$-admissible paths in $\mathcal{P}h(a,b;\mathcal{B}(y)^\complement)$ going arbitrarily far away from $y$.

	Furthermore, if $M$ is large enough then for any vertex $\omega'$ of $X$ reachable for $(a,b)_{x,y}$ and at distance $d(\omega',y)>M$ from $y$, we have that all vertices in the shadow $\Omega^y_{\omega'}$ of $\omega'$ under $y$, are also reachable for $(a,b)_{x,y}$.
\end{Prop}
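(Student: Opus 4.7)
Here is my plan for this proposition. The first assertion is a direct finiteness argument, and the furthermore requires bootstrapping from it by constructing auxiliary excursions inside the shadow.

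For the existence of $M$, I would argue that $V=\Gamma\backslash\Xi$ is finite, since $\Gamma$ acts cofinitely on $X_0$ (and hence on $\Xi$, as already invoked earlier in the excerpt). The sets $V_0\subseteq V_1\subseteq V_2\subseteq\cdots$ form an increasing chain inside this finite set, so they stabilize at some $M_0\in\mathbb{N}$. Each $\Xi_N$ being $\Gamma$-invariant, the stabilization $V_N=V_{N+1}$ automatically lifts to $\Xi_N=\Xi_{N+1}$: given $\theta\in\Xi_{N+1}$, its class $[\theta]\in V_{N+1}=V_N$ admits a representative $g\theta\in\Xi_N$, and $\Gamma$-invariance of $\Xi_N$ puts $\theta$ itself in $\Xi_N$. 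Hence $\Xi_{<\infty}=\Xi_{M_0}$ and $\Xi$ decomposes as the disjoint union $\Xi_{M_0}\sqcup\Xi_\infty$. The first reformulation follows at once: if some $p$-admissible path in $\mathcal{P}h(a,b;\mathcal{B}(y)^\complement)$ reaches distance at least $M_0+1$ from $y$, then $(a,b)_{x,y}$ is not in $\Xi_{M_0}$, hence lies in $\Xi_\infty$, so by definition admissible paths go arbitrarily far from $y$.

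For the furthermore I would enlarge $M$ to $M_0+k+D$, where $D$ bounds the diameter of a $\Gamma$-fundamental domain for $X_0$, finite by cofiniteness. Given $(a,b)_{x,y}\in\Xi_\infty$ and $\omega'$ reachable with $d(\omega',y)>M$, pick a witnessing $\gamma\in\mathcal{P}h(a,b;\mathcal{B}(y)^\complement)$ visiting $\omega'$ and split it at $\omega'$ as $\gamma=\gamma_1\ast\gamma_2$ with $\gamma_1\in\mathcal{P}h(a,\omega';\mathcal{B}(y)^\complement)$ and $\gamma_2\in\mathcal{P}h(\omega',b;\mathcal{B}(y)^\complement)$. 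To witness an arbitrary $z\in\Omega_{\omega'}^y$ as reachable, I would construct a $p$-admissible excursion $\sigma$ from $\omega'$ to $\omega'$ passing through $z$ and staying inside the shadow $\Omega_{\omega'}^y$, which lies entirely in $\mathcal{B}(y)^\complement$ since $d(\omega',y)>k$. The concatenation $\gamma_1\ast\sigma\ast\gamma_2$ then belongs to $\mathcal{P}h(a,b;\mathcal{B}(y)^\complement)$ and visits $z$.

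The main obstacle is the construction of $\sigma$. My plan is to exploit $\Gamma$-cofiniteness: translate by some $g\in\Gamma$ so that $g\cdot\omega'$ sits in a fixed fundamental region where only finitely many local configurations occur. For $d(\omega',y)$ exceeding the enlarged $M$, I would show that the class encoding admissible loops based at $\omega'$ that stay inside the shadow must lie in $V_\infty$ rather than in any bounded $V_N$; indeed, were it in some $V_N$ with $N$ finite, the concatenation structure provided by Corollary \ref{Lemma_of_the_ecluse} together with the reachability of $\omega'$ would force $(a,b)_{x,y}$ into $\Xi_{N'}$ for a controlled $N'$, contradicting $(a,b)_{x,y}\in\Xi_\infty$. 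Once this placement in $V_\infty$ is secured, the first assertion of the proposition, combined with irreducibility of $(X_0,p)$ and the three-neighbour assumption guaranteeing the shadow is not degenerate, produces admissible loops at $\omega'$ that stay inside $\Omega_{\omega'}^y$ and reach any prescribed interior vertex, in particular $z$. The combinatorial backbone that turns these reachability statements into genuine admissible paths is provided by the flooded cavern trees $\mathcal{T}_\gamma$ and their graph morphism $\phi_\gamma:\mathcal{T}_\gamma\to\mathcal{G}_\Xi$ from Proposition \ref{Prop_Phi_gamma_is_well_defined}.
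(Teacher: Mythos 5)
Your proof of the first assertion is correct and cleaner than the paper's: you observe that the increasing chain $V_0\subseteq V_1\subseteq\cdots$ inside the finite set $\Gamma\backslash\Xi$ must stabilize at some $M_0$, and that $\Gamma$-invariance of each $\Xi_N$ lifts $V_N=V_{N+1}$ to $\Xi_N=\Xi_{N+1}$. This is a purely abstract finiteness argument and it gives the first assertion at once. The paper's own route is different — constructive rather than abstract — and, crucially, it is designed so that the \emph{same} construction simultaneously proves the \og furthermore\fg{} part; more on this below.

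Your argument for the \og furthermore\fg{} has a genuine gap. You acknowledge that \og the main obstacle is the construction of $\sigma$\fg{}, an admissible excursion from $\omega'$ to $\omega'$ passing through the prescribed vertex $z$ inside the shadow — but the argument you then sketch does not actually deliver it. The step \og Once this placement in $V_\infty$ is secured, the first assertion of the proposition \ldots produces admissible loops at $\omega'$ that stay inside $\Omega_{\omega'}^y$ and reach any prescribed interior vertex $z$\fg{} is circular: the first assertion only says that paths go arbitrarily far from the base, not that they hit a prescribed vertex. In a tree with branching, paths can escape to infinity in one direction while never visiting a vertex $z$ sitting in another subtree of the shadow. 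Turning \og paths go far\fg{} into \og paths hit $z$\fg{} is precisely the content of the \og furthermore\fg{}, so you cannot invoke it to prove it. Moreover, your enlargement constant $D$, the diameter of a $\Gamma$-fundamental domain, is not the right quantitative control: the relevant bound is not a diameter but the maximum excursion distance from $y$ of some chosen path in $\mathcal{P}h(x,y)$ for each of the finitely many $\Gamma$-classes of edges $(x,y)$. (An admissible path from $x$ to a neighbour $y$ can wander much farther than any fundamental domain, since irreducibility gives its existence but not any diameter bound.)

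The paper's proof resolves the obstacle constructively: pick a finite set of representatives $S_1$ of $\Gamma\backslash X_1$; for each $(x,y)\in S_1$, fix a path $\gamma_{(x,y)}\in\mathcal{P}h(x,y)$ (which exists by irreducibility) and set $M_{(x,y)}$ to be the maximum distance from $y$ of its vertices; let $M:=\max_{(x,y)\in S_1} M_{(x,y)}+k$. Then for \emph{any} edge of $X$ there is an admissible crossing path whose excursion distance is at most $M-k$. Given $\omega'$ reachable at distance $>M$ from $y$ and $z\in\Omega_{\omega'}^y$, split the witnessing path at $\omega'$, write the geodesic $[\omega',z]=(x_0,\dots,x_m)$, and concatenate bounded excursions $\gamma_s\in\mathcal{P}h(x_{s-1},x_s)$ and $\gamma_s'\in\mathcal{P}h(x_s,x_{s-1})$ along the geodesic and back. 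The distance condition $d(x_{s-1},y)\geq M+1$ together with the bound $M-k$ on the excursions guarantees the concatenation stays in $\mathcal{B}(y)^\complement$ and visits $z=x_m$. This explicit construction is the missing ingredient in your sketch; without it, the \og furthermore\fg{} does not follow.
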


\begin{proof}
	We give ourselves a finite system of representatives $S_1$ of $\Gamma\backslash X_1$, the set of edge classes of $X$ under the action of $\Gamma$. For each pair of representatives $(x,y)\in S_1$, choose a $p$-admissible path $\gamma_{(x,y)}$ in $\mathcal{P}h(x,y)$, which exists by irreducibility of the Markov chain $(X_0,p)$. And set:
	\begin{gather*}
		M_{(x,y)}:=\max_{\omega\in\gamma_{(x,y)}} d(\omega,y)\\
		M:=\max_{(x,y)\in S_1} (M_{(x,y)})+k;
	\end{gather*}
	Where $k$ is, as you may recall, a non-negative integer such that 
	$$\forall x,y\in X_0,\, d(x,y)>k\Rightarrow p(x,y)=0 .$$
	Therefore, by finiteness of the set $\Gamma\backslash X_1$ and $\Gamma$-invariance of the transition kernel $p:X_0\times X_0\to[0,1]$, for any pair of neighbours $(x,y)$ in $X_1$, there exists a $p$-admissible path in $\mathcal{P}h(x,y)$ whose vertices are at most at distance $M-k$ away from $y$.

	We fix an element $(a,b)_{x,y}$ of $\Xi$ for which there exists a $p$-admissible path $\gamma\in\mathcal{P}h(a,b;\mathcal{B}(y)^\complement)$, one of whose vertices, say $\omega'$, satisfies 
	$$d(\omega',y)\geq M+1.$$
	we will show that $(a,b)_{x,y}$ belongs to $\Xi_\infty$.

	We decompose $\gamma$ by one of its passage at $\omega'$: Let $\gamma_a\in\mathcal{P}h(a,\omega';\mathcal{B}(y)^\complement)$ and $\gamma_b'\in\mathcal{P}h(\omega',b;\mathcal{B}(y)^\complement)$ such that 
	$$\gamma=\gamma_a\ast\gamma_b'.$$

	Let $\omega$ be a vertex of $X$ in the shadow $\Omega_{\omega'}^{y}$ of $\omega'$ under $y$. The geodesic segment $[\omega',\omega]=:(x_0,...,x_m)$ is included in this same shadow. For each $s\in\{1,...,m\}$ let $\gamma_s$ and $\gamma_s'$ be two $p$-admissible paths in $\mathcal{P}h(x_{s-1},x_s)$ and $\mathcal{P}h(x_s,x_{s-1})$ respectively, whose vertices are at most at distance $M-k$ away from $x_{s-1}$ and $x_s$ respectively.
	By considering the distances to $y$, we check that the $p$-admissible path
	$$\gamma_a\ast\gamma_1\ast\cdot\cdot\cdot\ast\gamma_m
	\ast\gamma_m'\ast\cdot\cdot\cdot\ast\gamma_1'\ast\gamma_b'$$
	is in $\mathcal{P}h(a,b;\mathcal{B}(y)^\complement)$ and passes through the vertex $\omega=x_m$.

	We just have shown that if $(a,b)_{x,y}\in\Xi$ is such that there exists a $p$-admissible path of the set $\mathcal{P}h(a,b;\mathcal{B}(y)^\complement)$, passing through a vertex $\omega'$ sufficiently far away from $y$ then, all elements $\omega$ in the shadow $\Omega_{\omega'}^y$, of $\omega'$ under $y$, are reachable for $(a,b)_{x,y}$ and in particular $(a,b)_{x,y}$ belongs to $\Xi_\infty$.
\end{proof}

A little extension of the preceding proof shows that:
\begin{Cor}\label{Stagnation_of_VN_upgraded}
	With the notation of the (previous) proof of Proposition \ref{Stagnation_of_V_N}, given $(\varpi_1,...,\varpi_l)$ a tuple of vertices of $X$ that belong to the shadow $\Omega_{\omega'}^y$, there exists a $p$-admissible path $\gamma=(\omega_0,...,\omega_n)$ in $\mathcal{P}h(a,b;\mathcal{B}(y)^\complement)$ such that for some non-decreasing sequence of indices $i_1\leq...\leq i_l$, we have:
	$$\omega_{i_1}=\varpi_1, \, ...,\, \omega_{i_l}=\varpi_l.$$
\end{Cor}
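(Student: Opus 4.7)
The plan is to iterate, $l$ times, the round-trip construction appearing in the proof of Proposition \ref{Stagnation_of_V_N}. Keeping the notation of that proof, I start from a $p$-admissible path $\gamma\in\mathcal{P}h(a,b;\mathcal{B}(y)^\complement)$ passing through a vertex $\omega'$ with $d(\omega',y)\geq M+1$, and decompose it at this passage as $\gamma=\gamma_a\ast\gamma_b'$, with $\gamma_a\in\mathcal{P}h(a,\omega';\mathcal{B}(y)^\complement)$ and $\gamma_b'\in\mathcal{P}h(\omega',b;\mathcal{B}(y)^\complement)$.

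For each $s\in\{1,\dots,l\}$ I apply the earlier proof's construction with $\omega=\varpi_s$: write the geodesic segment $[\omega',\varpi_s]=(x_0^{(s)},\dots,x_{m_s}^{(s)})$, and for $1\leq j\leq m_s$ pick short $p$-admissible paths $\gamma_j^{(s)}\in\mathcal{P}h(x_{j-1}^{(s)},x_j^{(s)})$ and $(\gamma_j^{(s)})'\in\mathcal{P}h(x_j^{(s)},x_{j-1}^{(s)})$ whose vertices stay at distance at most $M-k$ from their endpoints. Concatenate them into the closed loop
$$\delta_s := \gamma_1^{(s)}\ast\cdots\ast\gamma_{m_s}^{(s)}\ast(\gamma_{m_s}^{(s)})'\ast\cdots\ast(\gamma_1^{(s)})',$$
which begins and ends at $\omega'$ and visits $\varpi_s=x_{m_s}^{(s)}$ at its middle vertex.

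I would then set
$$\tilde\gamma := \gamma_a\ast\delta_1\ast\delta_2\ast\cdots\ast\delta_l\ast\gamma_b',$$
a concatenation that makes sense because each $\delta_s$ opens and closes at $\omega'$. The verification that $\tilde\gamma\in\mathcal{P}h(a,b;\mathcal{B}(y)^\complement)$ is a rerun of the distance estimate of Proposition \ref{Stagnation_of_V_N}: every vertex of $[\omega',\varpi_s]$ lies in the shadow $\Omega_{\omega'}^y$, hence at distance at least $M+1$ from $y$, so every vertex of $\delta_s$ sits at distance at least $(M+1)-(M-k)=k+1>k$ from $y$, i.e.\ outside $\mathcal{B}(y)$. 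Listing the visits to $\varpi_1,\dots,\varpi_l$ in the order dictated by the concatenation yields strictly increasing indices $i_1<\cdots<i_l$ along $\tilde\gamma$ with $\tilde\omega_{i_s}=\varpi_s$, which is in particular non-decreasing.

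I do not expect any genuine obstacle here: the argument is a routine book-keeping extension of the single-vertex case already handled. The only point that requires care, and which is already supplied by the uniform lower bound above, is to check that successive loops $\delta_s$ can be glued at $\omega'$ without leaving $\mathcal{B}(y)^\complement$.
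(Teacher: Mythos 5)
Your construction is precisely the ``little extension'' of the proof of Proposition \ref{Stagnation_of_V_N} that the paper invokes without spelling out: iterate the round-trip through the shadow once per $\varpi_s$, glue the resulting loops $\delta_s$ at $\omega'$, and observe that the single-vertex distance estimate (every vertex of $\delta_s$ lies within $M-k$ of some point of $[\omega',\varpi_s]\subset\Omega_{\omega'}^y$, hence at distance at least $k+1$ from $y$) carries over verbatim. This is correct and matches the intended argument.
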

\begin{prop}~

	The set $\Xi_\infty$ (thus also the set $V_\infty$) is non-empty.
\end{prop}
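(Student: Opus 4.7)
The strategy is to apply Proposition \ref{Stagnation_of_V_N} in its contrapositive form: it suffices to exhibit a single element $(a,b)_{x,y}\in\Xi$ carrying a $p$-admissible path $\gamma\in\mathcal{P}h(a,b;\mathcal{B}(y)^\complement)$ that reaches distance at least $M+1$ from $y$, where $M$ is the constant produced by that proposition; such an element then lies automatically in $\Xi_\infty$, giving the result.

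First I would fix any $y\in X_0$ and exploit the three-neighbour hypothesis. Because every vertex of $X$ has at least three neighbours, no finite subtree of $X$ can exist (a finite tree would have a leaf, which would be a vertex of degree one), so every shadow $\Omega_x^y$ at $y$ is infinite. Pick one neighbour $x$ of $y$ and a vertex $z\in\Omega_x^y$ at distance at least $M+k+2$ from $y$. The irreducibility of $(X_0,p)$ then provides a $p$-admissible path $\sigma$ from $y$ to $z$ and a $p$-admissible path $\tau$ from $z$ back to $y$. Let $a$ be the vertex of $\sigma$ immediately following the last occurrence of a vertex of $\mathcal{B}(y)$ on $\sigma$, and let $b$ be the vertex of $\tau$ immediately preceding the first occurrence of a vertex of $\mathcal{B}(y)$ on $\tau$. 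Concatenating the truncated subpaths of $\sigma$ (from $a$ to $z$) and $\tau$ (from $z$ to $b$) produces a $p$-admissible path $\gamma$ from $a$ to $b$ lying entirely in $\mathcal{B}(y)^\complement$ and passing through $z$.

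Next I would verify the shadow-confinement: because the walk has range $k$ and $\mathcal{B}(y)$ contains the closed $k$-ball around $y$, any two vertices of $\mathcal{B}(y)^\complement$ lying in distinct shadows of $y$ are separated by distance strictly greater than $k$, so the walk cannot hop directly between shadows while remaining in $\mathcal{B}(y)^\complement$. Consequently $\gamma$ is confined to a single shadow of $y$, and since $z\in\Omega_x^y$, both endpoints $a$ and $b$ belong to $\Omega_x^y\cap\mathcal{B}(y)^\complement$. This places $(a,b)_{x,y}$ in $\Xi$, with $\gamma\in\mathcal{P}h(a,b;\mathcal{B}(y)^\complement)$ attaining the distance $d(z,y)\geq M+k+2>M+1$ from $y$. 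Proposition \ref{Stagnation_of_V_N} then places $(a,b)_{x,y}$ in $\Xi_\infty$, proving the non-emptiness.

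The most delicate step is the confinement claim ensuring that $\gamma$ remains in a single shadow of $y$: it rests on the precise choice of $\mathcal{B}(y)$ imported from \cite{Asymp}, which is tailored so that the finite-range walk cannot straddle $y$ while staying in $\mathcal{B}(y)^\complement$. Once this geometric input is in hand, the construction of $(a,b)_{x,y}$ and $\gamma$ is direct, and the appeal to Proposition \ref{Stagnation_of_V_N} concludes the argument.
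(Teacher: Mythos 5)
Your overall strategy is correct — and, up to the construction at its heart, it is the same one the paper uses: one produces a $p$-admissible path that escapes far from a ball $\mathcal{B}(y)$, cuts out the wandering middle part, and invokes Proposition~\ref{Stagnation_of_V_N} to conclude that the resulting triple lies in $\Xi_\infty$. However, the construction you give does not produce an element of $\Xi$, for two reasons.

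First, membership $(a,b)_{x,y}\in\Xi$ requires more than $a\notin\mathcal{B}(y)$: from the way elements of $\Xi$ arise in Definition~\ref{Def:The_T_gammas} and Proposition~\ref{Prop_Phi_gamma_is_well_defined}, one must have $d(a,y)=k+1$ exactly. Your $a$ is merely the vertex just after the last visit to $\mathcal{B}(y)$ along $\sigma$, so it only satisfies $k+1\leq d(a,y)\leq 2k$ (the walk can hop from a vertex at distance $\leq k$ to one at distance up to $2k$ in a single $k$-range step). Nothing in your construction forces $d(a,y)=k+1$. The paper avoids this by going in the other direction: it first selects $a_N$ as the last vertex of a loop $\gamma_N$ inside $\mathcal{B}(x_0)$ before escape, and only then \emph{constructs} $y_N$ by moving $k+1$ steps from $a_N$ through $x_0$, so that $d(a_N,y_N)=k+1$ by fiat. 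Fixing $y$ in advance, as you do, destroys this freedom.

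Second, your choice of $b$ is on the wrong side of the ball. Paths of $\mathcal{P}h(a,b;\mathcal{B}(y)^\complement)$ have all their vertices outside $\mathcal{B}(y)$ \emph{except} the terminal one, which lands inside (this is precisely the bound $d(\omega_j,y_I)<k+1$ in the proof of Proposition~\ref{Prop_Phi_gamma_is_well_defined}, and it is why the paper sets $b_N$ to be ``the first vertex of $\mathcal{B}(y_N)$ visited by $\gamma_N$ after $\omega_N$''). You instead take $b$ to be the last vertex of $\tau$ \emph{before} re-entering $\mathcal{B}(y)$, so $b\notin\mathcal{B}(y)$, and your $\gamma$ never reaches the ball at all. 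So even granting your shadow-confinement argument (which is correct, though the three-neighbour hypothesis is not actually needed for it), the assertion ``this places $(a,b)_{x,y}$ in $\Xi$ with $\gamma\in\mathcal{P}h(a,b;\mathcal{B}(y)^\complement)$'' is unjustified on both ends of the path, and the appeal to Proposition~\ref{Stagnation_of_V_N} does not go through as written.
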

\begin{proof}
	We will show that for any non-negative integer $N$, the set $\Xi\setminus \Xi_N$ is non-empty, which together with Proposition \ref{Stagnation_of_V_N} implies the result.
	
	Let $x_0\in X_0$. By irreducibility of the Markov chain $(X_0,p)$, for any integer $N\geq 1$ there is a $p$-admissible path $\gamma_N$ of $\mathcal{P}h(x_0,x_0)$ going at distance at least $N$ away from $x_0$. For each $\gamma_N$ denote by $\omega_N$ the first vertex visited by $\gamma_N$ at distance at least $N$ from $x_0$.

	For each $N$, let $a_N\in\mathcal{B}(x_0)\setminus\{x_0\}$, be the last vertex in $\mathcal{B}(x_0)$ visited by $\gamma_N$ before visiting $\omega_N$ (such an $a_N$ exists since $\omega_N\neq x_0$ and for all $c\notin\mathcal{B}(x_0)$, we have $p(x_0,c)=0$).
	
	Next, let $x_N\in X_0$ be such that $d(x_N,a_N)=k$ and $x_0\in [x_N,a_N]$, then let $y_N$ be a neighbour of $x_N$ such that $x_N$ belongs to the geodesic segment $[y_N,a_N]$. Note that $d(y_N,x_0)=d(y_N,a_N)-d(a_N,x_0)\leq (k+1)-1\leq k$.

	Finally, let $b_N$ be the first vertex of $\mathcal{B}(y)$ visited by $\gamma_N$ after $\omega_N$.
	By construction $(a_N,b_N)_{x_N,y_N}$ is in $\Xi\setminus\Xi_{N-1}$. In particular for $N>M$, where $M$ comes from Proposition \ref{Stagnation_of_V_N}, we have $(a_N,b_N)_{x_N,y_N}\in\Xi_\infty$. Thus $\Xi_\infty$ (and also $V_\infty$) is not empty.
\end{proof}

\subsection{Connectedness of \texorpdfstring{$\mathcal{V}_\infty$}{Graphe Vinfini}}
~ 

We now prove that $\mathcal{V}_\infty$ is connected and that from any vertex $[\theta]$ of $\mathcal{V}$, there exists a path in $\mathcal{V}$ with source $[\theta]$ and with aim a vertex of $\mathcal{V}_\infty$.

\begin{Def}\textit{Visible edge from another}\label{Def_visibility}

	Let $(x_0,y_0), (x,y)\in X_1$ be two edges of the tree $X$. The edge $(x,y)$ is said to be \textit{ visible} from $(x_0,y_0)$ if the vertices $x_0$ and $y$ lie within the geodesic segment $[x,y_0]$.
	Equivalently, if the shadow of $x$ under $y$ is included in the shadow of $x_0$ under $y_0$, $$\Omega_x^y\subseteq\Omega_{x_0}^{y_0}.$$
\end{Def}

In the following lemma, and only in this one, we use the assumption that every vertex of $X$ possesses at least $3$ neighbours.
\begin{Lem}\label{Lem_of_visible_edges}
	For any pair of edges $(x,y)\in X_1$, $(x_0,y_0)\in X_1$, there exists $g\in\Gamma$ such that $(gx,gy)$ is visible from $(x_0,y_0)$, i.e. such that 
	$$\Omega_{gx}^{gy}\subseteq\Omega_{x_0}^{y_0}.$$
\end{Lem}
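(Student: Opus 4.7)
The plan is to exploit the cofinite action of $\Gamma$ on $X_0$ together with the branching provided by the $\geq 3$-valence hypothesis (used here and nowhere else). Since $\Gamma\backslash X_0$ is finite, a finite connected subtree $F\subset X$ meeting every $\Gamma$-orbit of vertices yields a uniform displacement constant $R_0=\operatorname{diam}(F)$: for every pair $u,v\in X_0$, there exists $g\in\Gamma$ with $d(u,gv)\leq R_0$. On the geometric side, the valence $\geq 3$ assumption forces $\Omega_{x_0}^{y_0}$ to be an infinite subtree in which every vertex has at least two neighbours farther from $y_0$, so $\Omega_{x_0}^{y_0}$ contains vertices at arbitrarily large depth and branches at every vertex.

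First I would pick a vertex $v_0\in\Omega_{x_0}^{y_0}$ at depth at least $R_0+2$ from $y_0$ and use the displacement bound, applied to the orbit $\Gamma\cdot y$, to produce $g\in\Gamma$ with $d(gy,v_0)\leq R_0$. Such a $gy$ lies in $B(v_0,R_0)\subset\Omega_{x_0}^{y_0}$ at depth at least $2$, so its parent in the tree rooted at $y_0$ is well-defined; moreover the neighbour $gx$ also lies in $\Omega_{x_0}^{y_0}$. The visibility condition $\Omega_{gx}^{gy}\subseteq\Omega_{x_0}^{y_0}$ is then equivalent to requiring that $gx$ be the parent of $gy$, i.e., the neighbour of $gy$ lying on the geodesic $[gy,y_0]$.

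The hard part is controlling this orientation. If the $g$ produced above already has $gx$ as the parent of $gy$, we are done; otherwise $gx$ is a child (strictly deeper than $gy$) and the edge is oriented the wrong way. Here the $\geq 3$-valence hypothesis is crucial, supplying two kinds of flexibility that must be combined: on the one hand, relocating $gy$ into different sub-branches of $\Omega_{x_0}^{y_0}$ (available because at least two children exist at $y_0$ and at every vertex below) changes which neighbour of $gy$ is the parent; on the other hand, precomposing $g$ by elements $h\in\operatorname{Stab}_\Gamma(y)$ keeps $gy$ fixed but replaces $gx$ by $g(hx)$, rotating it through the $\operatorname{Stab}_\Gamma(y)$-orbit of neighbours of $y$. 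A pigeonhole argument combining cofinite action, the branching of $\Omega_{x_0}^{y_0}$ and the finite orbit structure of the neighbours of $y$ under $\operatorname{Stab}_\Gamma(y)$ should then force the existence of some $g\in\Gamma$ realising $gx$ as the parent of $gy$. I expect this last orientation step to be the main technical obstacle: the displacement bound easily places $gy$ in the shadow, but aligning the specific neighbour $gx$ with the parent direction is where the $\geq 3$-valence hypothesis is genuinely used.
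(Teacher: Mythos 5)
Your argument takes a genuinely different route from the paper's, which gives no self-contained proof but simply cites Corollary~5.5 of \cite{Quint_Trees_2}, adding that the properness hypothesis there is automatic in this setting. Your setup is sound --- the uniform displacement constant $R_0$ from cofiniteness, the infinite branching of $\Omega_{x_0}^{y_0}$ supplied by the valence~$\geq 3$ hypothesis, the placement of $gy$ deep inside the shadow --- but you stop exactly where the real content of the lemma lies, and you say so yourself.

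The sentence beginning ``A pigeonhole argument $\ldots$ should then force the existence'' names the obstacle without clearing it, and the concrete mechanism you offer cannot do the job. Precomposing by $h \in \operatorname{Stab}_\Gamma(y)$ has no effect at all when $\Gamma$ acts freely on $X_0$, which is precisely the motivating case of the paper (a free group acting on its Cayley graph): there the vertex stabilizers are trivial and there is nothing to rotate by. Relocating $gy$ via cofiniteness controls where $gy$ lands but gives no grip on which neighbour of $gy$ the vertex $gx$ ends up being, since the translating element $g$ --- and hence the image of the whole edge --- is chosen by the displacement bound as a package. To orient the translated edge correctly one needs to exploit the finiteness of $\Gamma\backslash X_1$ (oriented edges), not merely $\Gamma\backslash X_0$, and in essence produce a hyperbolic element of $\Gamma$ whose dynamics pushes $(x,y)$ into the shadow the right way round; this is what Quint's corollary packages and what your sketch does not supply. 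A small related slip worth flagging: visibility of $(gx,gy)$ from $(x_0,y_0)$ means, by the definition you quote, that $gy$ and $x_0$ lie on $[gx,y_0]$, so it is $gy$, not $gx$, that is the neighbour closer to $y_0$ --- the reverse of what you write.
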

 This result comes from \cite{Quint_Trees_2} Corollary 5.5. In this corollary, the result is stated under the additional assumption that the action of $\Gamma$ on $X$ is proper. But actually, this is automatically satisfied for $\Gamma$ is a closed subset of the compact\footnote{Note that the natural topology to put on $\operatorname{Aut}(X)$ is the topology of the pointwise convergence, which makes $\operatorname{Aut}(X)$ a compact Hausdorff set.} $\operatorname{Aut}(X)$.
	
\begin{Prop}\label{Prop_V_infini_is_absorbent}~

	Let $[\theta_0]$ be a vertex of $\mathcal{V}$ that is in $V_\infty$, then for any vertex $[\theta]$ of $\mathcal{V}$, there exists a path in $\mathcal{V}$ with source $[\theta] $ and aim $[\theta_0]$.
\end{Prop}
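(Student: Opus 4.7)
The plan is to produce, for an arbitrary representative of $[\theta]$, a $\Gamma$-translate whose data fits the characterization of paths in $\mathcal{V}$ given by Corollary \ref{Cor_de_l_ecluse_for_V}. Fix a representative $(a_0,b_0)_{x_0,y_0}\in\Xi_\infty$ of $[\theta_0]$, and let $M$ be the constant produced by Proposition \ref{Stagnation_of_V_N}. Because $(a_0,b_0)_{x_0,y_0}$ lies in $\Xi_\infty$, the proposition supplies a reachable vertex $\omega'$ with $d(\omega',y_0)>M$, and then every vertex of the shadow $\Omega_{\omega'}^{y_0}$ is reachable for $(a_0,b_0)_{x_0,y_0}$.

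Next, take any representative $(a',b')_{x',y'}$ of $[\theta]$, and denote by $\omega^*$ the neighbour of $\omega'$ on the geodesic segment $[y_0,\omega']$. Applying Lemma \ref{Lem_of_visible_edges} to the pair of edges $(x',y')$ and $(\omega',\omega^*)$ produces a $g\in\Gamma$ with $\Omega_{gx'}^{gy'}\subseteq\Omega_{\omega'}^{\omega^*}=\Omega_{\omega'}^{y_0}$. In particular, $ga'$ and $gb'$ both lie in $\Omega_{\omega'}^{y_0}$ and are therefore reachable for $(a_0,b_0)_{x_0,y_0}$. Corollary \ref{Stagnation_of_VN_upgraded} then yields a single $p$-admissible path in $\mathcal{P}h(a_0,b_0;\mathcal{B}(y_0)^\complement)$ that visits $ga'$ before $gb'$; splitting it at these two passages produces $\gamma_1\in\mathcal{P}h(a_0,ga';\mathcal{B}(y_0)^\complement)$ and $\gamma_2\in\mathcal{P}h(gb',b_0;\mathcal{B}(y_0)^\complement)$.

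It remains to verify that $gy'\in[ga',y_0]$ and to plug in an arbitrary middle piece. The shadow inclusion gives $\omega'\in[y_0,gx']$, and since $gx'$ is the neighbour of $gy'$ on $[ga',gy']$, the geodesic $[y_0,ga']$ passes successively through $\omega'$, $gy'$, $gx'$ before reaching $ga'$, so indeed $gy'\in[ga',y_0]$. Choose any $\tilde\gamma\in\mathcal{P}h(ga',gb';\mathcal{B}(gy')^\complement)$, which is non-empty thanks to $\Gamma$-equivariance from the non-emptiness of $\mathcal{P}h(a',b';\mathcal{B}(y')^\complement)$. Remark \ref{Rem_10.10} then ensures that the concatenation $\gamma_1\ast\tilde\gamma\ast\gamma_2$ still lies in $\mathcal{P}h(a_0,b_0;\mathcal{B}(y_0)^\complement)$. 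Invoking Corollary \ref{Cor_de_l_ecluse_for_V} with the representatives $(a_0,b_0)_{x_0,y_0}$ of $[\theta_0]$ and $(ga',gb')_{gx',gy'}$ of $[\theta]$ delivers the desired path in $\mathcal{V}$ from $[\theta]$ to $[\theta_0]$.

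The main obstacle to keep track of is the correct orientation in the application of Lemma \ref{Lem_of_visible_edges}: one must use the edge $(\omega',\omega^*)$ (not $(\omega^*,\omega')$) so that the resulting shadow $\Omega_{gx'}^{gy'}$ sits on the far side of $\omega'$ from $y_0$, which is precisely what makes $gy'$ land on $[ga',y_0]$ and lets us invoke Remark \ref{Rem_10.10}. Everything else is routine bookkeeping with shadows, concatenations and the cavern-tree machinery already developed in the section, plus the hypothesis that each vertex of $X$ has at least three neighbours, which enters only through Lemma \ref{Lem_of_visible_edges}.
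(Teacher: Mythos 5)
Your proof is correct and follows essentially the same strategy as the paper: produce a distant reachable vertex via Proposition \ref{Stagnation_of_V_N}, use Lemma \ref{Lem_of_visible_edges} to place a $\Gamma$-translate of the given edge inside the corresponding shadow, and then invoke the cavern-lemma characterization of paths in $\mathcal{V}$. Your explicit choice of the reference edge $(\omega',\omega^*)$ when applying Lemma \ref{Lem_of_visible_edges}, together with the check that $gy'\in[ga',y_0]$, makes precise two points the paper leaves implicit, but the underlying argument is the same.
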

\begin{proof}
	Fix two vertices, $[(a_0,b_0)_{x_0,y_0}]$ from $\mathcal{V}_\infty$, $[(a,b)_{x,y}]$ from $\mathcal{V}$. Let us show that there is a path in $\mathcal{V}$ from $[(a,b)_{x,y}] $ to $[(a_0,b_0)_{x_0,y_0}]$.

	According to Proposition \ref{Lemma_of_the_ecluse}, we only need to find $(a',b')_{x',y'}\in [(a,b)_{x,y}] $ such that $y'\in [a',y_0]$, and two $p$-admissible paths $\gamma_1\in\mathcal{P}h(a_0,a';\mathcal{B}(y_0)^\complement)$ and  $\gamma_2\in\mathcal{P}h(b',b_0;\mathcal{B}(y_0)^\complement)$ (See remark \ref{Rem_10.10}).  
 
 	Let $M>0$ coming from the Proposition \ref{Stagnation_of_V_N} and let $\omega_0$ be reachable for $(a_0,b_0)_{x_0,y_0}$ at distance $d(\omega_0,y)>M$. So all vertices of the shadow $\Omega_{\omega_0}^{y_0}$ are reachable for $(a_0,b_0)_{x_0,y_0}$. By the previous Lemma \ref{Lem_of_visible_edges}, there exists an edge $(x',y')\in\Gamma\cdot (x,y)\subset X_1$ visible for $(x_0,y_0)$ such that the vertices $x'$ and $y'$ of $X$ are in the shadow of $\omega_0$ under $y_0$, and in particular are reachable for $(a_0,b_0)_{x_0,y_0}$. Then let $a',b'\in\Omega_{x'}^{y'}$ be such that $(a',b')_{x',y'}\in [(a,b)_{x,y}]$. These two vertices $a'$ and $b'$ are also reachable vertices for $(a_0,b_0)_{x_0,y_0}$ (given the inclusion $\Omega_{x'}^{y'}\subset\Omega_{\omega_0}^{y_0}$).

	Then, by the definition of a reachable vertex, there exist two $p$-admissible paths $\gamma_{1}$ in the set $\mathcal{P}h(a_0,a';\mathcal{B}(y_0)^\complement)$ and $\gamma_{2}$ in $\mathcal{P}h(b',b_0;\mathcal{B}(y_0)^\complement)$. This is what we wanted.

	So, by Proposition \ref{Lemma_of_the_ecluse}, there is a path in $\mathcal{V}$ with source $[(a,b)_{x,y}] $ and aim $[(a_0,b_0)_{x_0,y_0}] $. The result follows by arbitrariness of the vertices $[(a,b)_{x,y}]$ of $\mathcal{V}$ and $[(a_0,b_0)_{x_0,y_0}]$ of $\mathcal{V}_\infty$.
\end{proof}

\begin{Cor}\label{Cor_connexite_de_V_infini}~

	The graph $\mathcal{V}_\infty$ is connected.
\end{Cor}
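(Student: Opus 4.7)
The plan is to derive the corollary as a direct consequence of the absorbency result in Proposition \ref{Prop_V_infini_is_absorbent}, together with the graduation lemma \ref{Graduation_de_V} that guarantees paths starting in $V_\infty$ stay in $V_\infty$.

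Concretely, I would take two arbitrary distinct vertices $[\theta_1],[\theta_2]\in V_\infty$ and show there is a directed path in $\mathcal{V}_\infty$ from $[\theta_1]$ to $[\theta_2]$, which is exactly the notion of connectedness from Definition \ref{Def_Directed_Graph}. Since $[\theta_2]\in V_\infty$, Proposition \ref{Prop_V_infini_is_absorbent} applied with $[\theta_0]:=[\theta_2]$ and $[\theta]:=[\theta_1]$ (viewed as a vertex of $\mathcal{V}$) already produces a directed path $\vartheta$ in $\mathcal{V}$ with source $[\theta_1]$ and aim $[\theta_2]$.

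It remains to check that every vertex visited by $\vartheta$ lies in $V_\infty$, so that $\vartheta$ is actually a path in the induced subgraph $\mathcal{V}_\infty$. This is where Lemma \ref{Graduation_de_V} does the work: any edge whose source belongs to $V_\infty$ has its aim in $V_\infty$. Applying this inductively along $\vartheta$, starting from the source $[\theta_1]\in V_\infty$, we conclude that every successive vertex reached by $\vartheta$ belongs to $V_\infty$. Hence $\vartheta$ is a directed path in $\mathcal{V}_\infty$ with source $[\theta_1]$ and aim $[\theta_2]$.

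I do not expect any serious obstacle: both Proposition \ref{Prop_V_infini_is_absorbent} and Lemma \ref{Graduation_de_V} have already been established, and the corollary is essentially a one-line assembly. The only mild care needed is to invoke the exact form of the definition of connectedness for a directed graph (a path in each ordered direction is not required, only one for each ordered pair), and to notice that we may apply Proposition \ref{Prop_V_infini_is_absorbent} with $[\theta]$ itself taken in $V_\infty\subset V$, which the statement of that proposition allows since $[\theta]$ is any vertex of $\mathcal{V}$.
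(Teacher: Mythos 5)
Your proposal is correct and takes exactly the same route as the paper: the paper's proof invokes Proposition \ref{Prop_V_infini_is_absorbent} to produce a path from one vertex of $V_\infty$ to another, and Lemma \ref{Graduation_de_V} to ensure that path stays inside $V_\infty$, which is precisely your argument spelled out in more detail.
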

\begin{proof}
	This is an immediate consequence of the previous proposition and Lemma \ref{Graduation_de_V} stating that any edge with a source in $\mathcal{V}_\infty$ has its aim in $\mathcal{V}_\infty$.
\end{proof}

\begin{Rem}
	Under the additional technical assumption of nearest-neighbour walking ($(x,y)\in X_1 \Rightarrow p(x,y)>0$), S. P. Lalley in the Proposition 2.7 of \cite{Lalley_1993}, proved that the graph $\mathcal{V}=\mathcal{V}_\infty$ is connected (See Remark \ref{Remark_1_Lalley}).
\end{Rem}

\subsection{Degree of \texorpdfstring{$\psi_E$}{PsiE}}
~

Before studying the structure of the Lalley's curve $\mathcal{C}$, we state one last result concerning the degree of the polynomial functions $J\mapsto\psi_E(J)(\theta)$ for $\theta\in\Xi_\infty$.

	We provide the vector space $E$ identified with $\mathcal{F}(\Gamma\backslash\Xi,\mathbb{C})$, with the basis $\{1_{[\theta]}\}_{[\theta]\in\Gamma\backslash\Xi}$. 
We now show using the Proposition \ref{Proposition_of_the_ecluse} that $\psi_E$ is of degree at least $2$ with respect to the variables $(J([\theta]))_{[\theta]\in V_\infty}$, in the following sense:

There exist elements $[\theta] ,[\theta_1] , [\theta_2]$ in $V_\infty$, polynomial functions with non-negative coefficients $\phi$ and $\chi$, with $\phi$ non-zero such that:
$$\psi_E(J)([\theta])=J([\theta_1] )J([\theta_2] )\phi(J) +\chi(J).$$
We will say that $J\mapsto\psi_E(J)([\theta] )$ admits $J([\theta_1]) \cdot J([\theta_2] )$ as a factor of one of its monomials. 
Said differently, if we denote by $(a,b)_y=\theta$ a representative of the orbit $[\theta]=\Gamma\theta$, if there exist a vertex $c\in\mathcal{B}(y)^\complement$, a tuple $(c_0,...,c_l)\in\Xi_{[c,y]}$ such that $p(a,c)>0$, $c_0=c$ and $c_l=b$, then denoting by $\theta_j:=(c_{j-1},c_j)_{x_{i_j}}$ for any $j\in\{1,...,l\}$ and $(i_1,...,i_l)=\iota_{[c,y]}(c_0,...,c_l)$, we will say that $J(\theta_i)J(\theta_j)$ is a \textit{factor of one of the monomials} of $J\mapsto \psi(J)(\theta)$, for any distinct integers $i\neq j$ in $\{1,...,l\}$. And passing to the quotient by $\Gamma$, that $J([\theta_i])J([\theta_j])$ is a \textit{factor of one of the monomials} of $J\mapsto \psi_E(J)([\theta])$.
We aim to find vertices $\theta,\theta_1,\theta_2$ in $\Xi_\infty$ such that 
$J(\theta_i)J(\theta_j)$ is a factor of one of the monomials of $J\mapsto \psi(J)(\theta)$ (Compare with Corollary \ref{Cor_pre_Prop_linearity}).

Given the Proposition \ref{Graduation_de_V} and Proposition \ref{Proposition_of_the_ecluse}, we wish to prove that there exists a $p$-admissible path $\gamma$ and a positive integer $n$ such that the set $\Theta_n(\gamma)$ contains two elements whose image by the graph morphism $\mathcal{T}_\gamma\to\mathcal{G}_\Xi$ are in $\Xi_\infty$. 

\begin{Prop}\label{degre_infini_de_psi}
The polynomial map $\psi_E$ is of degree at least $2$ with respect to the variables $(J(\theta))_{\theta\in V_\infty}$
\end{Prop}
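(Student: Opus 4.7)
My plan is to exhibit $[\theta], [\theta_1], [\theta_2] \in V_\infty$ such that $J([\theta_1]) \cdot J([\theta_2])$ is a factor of some monomial of $\psi_E(J)([\theta])$. I begin with a reduction to iterates: suppose for contradiction that every monomial of every $\psi_E(J)([\theta'])$ contains at most one $V_\infty$-factor. An induction on $n$ then yields that the same property holds for every $\psi_E^{\circ n}(J)([\theta])$. Indeed, expanding $\psi_E^{\circ(n+1)}(J)([\theta]) = \psi_E(\psi_E^{\circ n}(J))([\theta])$ and substituting in each monomial of $\psi_E(K)([\theta])$ (for $K = \psi_E^{\circ n}(J)$): a factor $K([\theta_j])$ with $[\theta_j] \in V_{<\infty}$ unfolds, by Lemma \ref{Graduation_de_V} and the induction hypothesis, into monomials involving only $V_{<\infty}$ variables, while the at most one factor $K([\theta_j])$ with $[\theta_j] \in V_\infty$ contributes at most one $V_\infty$-variable per monomial. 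By Proposition \ref{Proposition_of_the_ecluse}, it therefore suffices to construct a $p$-admissible path $\gamma \in \mathcal{P}h(a, b; \mathcal{B}(y)^\complement)$ with $(a,b)_y \in \Xi_\infty$ and an integer $n \geq 1$ such that $\Theta_n(\gamma)$ contains two intervals $I_1, I_2$ with $\phi_\gamma(I_1), \phi_\gamma(I_2) \in \Xi_\infty$.

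To construct $\gamma$, I would start from any $\theta_0 = (a_0, b_0)_{x_0, y_0} \in \Xi_\infty$ (which exists by the preceding proposition). Proposition \ref{Stagnation_of_V_N} yields a vertex $\omega^*$ reachable for $\theta_0$ with $d(\omega^*, y_0)$ arbitrarily large and whose entire shadow $\Omega_{\omega^*}^{y_0}$ consists of reachable vertices. The three-neighbours hypothesis lets me pick two disjoint sub-shadows $\Omega_{z_1}^{z_1'}, \Omega_{z_2}^{z_2'} \subset \Omega_{\omega^*}^{y_0}$; Lemma \ref{Lem_of_visible_edges} then furnishes $g_1, g_2 \in \Gamma$ with $\Omega_{g_\nu x_0}^{g_\nu y_0} \subseteq \Omega_{z_\nu}^{z_\nu'}$ for $\nu = 1, 2$. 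The translates $g_\nu \theta_0 = (g_\nu a_0, g_\nu b_0)_{g_\nu x_0, g_\nu y_0}$ then belong to $\Xi_\infty$, and $g_\nu a_0, g_\nu b_0$ are reachable for $\theta_0$.

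I would then glue $\gamma := \gamma_a \ast \tilde\gamma_1 \ast \gamma_m \ast \tilde\gamma_2 \ast \gamma_b$, where $\tilde\gamma_\nu \in \mathcal{P}h(g_\nu a_0, g_\nu b_0; \mathcal{B}(g_\nu y_0)^\complement)$ and the connecting pieces $\gamma_a, \gamma_m, \gamma_b$ (supplied by Corollary \ref{Stagnation_of_VN_upgraded}) stay in $\mathcal{B}(y_0)^\complement$, producing $\gamma \in \mathcal{P}h(a_0, b_0; \mathcal{B}(y_0)^\complement)$. By assertion $vi)$ of Proposition \ref{PROP:FLOODED_CAVERN_TREE_PROPERTY}, the cavern trees $\mathcal{T}_{\tilde\gamma_\nu}$ embed into $\mathcal{T}_\gamma$ through two intervals $I_1, I_2$. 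Since $d(g_\nu a_0, g_\nu y_0) = k+1$ and $g_\nu y_0$ lies on $[g_\nu a_0, y_0]$ (by visibility), the definition of $\phi_\gamma$ gives $\phi_\gamma(I_\nu) = g_\nu \theta_0 \in \Xi_\infty$. By choosing the connecting piece $\gamma_m$ so that the depth function $g(t) = d(\omega_t, y_0)$ drops, between the two excursions, to a common level strictly between $0$ and $\min_\nu d(g_\nu y_0, y_0)$, the intervals $I_1$ and $I_2$ become children of a common interval in $\mathcal{T}_\gamma$ via assertion $v)$ of Proposition \ref{PROP:FLOODED_CAVERN_TREE_PROPERTY}, hence lie at the same distance $n$ from the root of $\mathcal{T}_\gamma$ and both belong to $\Theta_n(\gamma)$.

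The main obstacle I expect is precisely this last cavern-tree bookkeeping: matching the depth profile of the concatenated path $\gamma$ to the combinatorial decomposition of $\mathcal{I}$ given in assertion $v)$, so that $I_1$ and $I_2$ sit at a common depth in $\mathcal{T}_\gamma$. The reduction to $\psi_E^{\circ n}$, the choice of visible translates, and the identification of $\phi_\gamma(I_\nu)$ with $g_\nu \theta_0$ all follow rather directly from the tools already assembled in this section.
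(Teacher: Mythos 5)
Your overall strategy is the same as the paper's: reduce the claim on $\psi_E$ to one on an iterate $\psi_E^{\circ N}$ via the contrapositive (using Lemma \ref{Graduation_de_V}), then invoke Proposition \ref{Proposition_of_the_ecluse} to translate the problem into constructing a $p$-admissible path $\gamma$ whose flooded cavern tree $\mathcal{T}_\gamma$ carries two intervals mapping into $\Xi_\infty$. The arguments differ on two points.

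First, your path construction is heavier than needed. The paper fixes one vertex $\omega$ reachable for $(a,b)_{x,y}$ at depth $>M$, then picks \emph{two further vertices} $\varpi_1,\varpi_2$ in the shadow $\Omega_\omega^y$ with $d(\varpi_\nu,\omega)>M+k$, and takes (via Corollary \ref{Stagnation_of_VN_upgraded}) a single path passing through $\varpi_1,\omega,\varpi_2$ in order; the two excursions past $\omega$ supply the two intervals directly. You instead reach for Lemma \ref{Lem_of_visible_edges} to plant two $\Gamma$-translates $g_\nu\theta_0$ inside disjoint sub-shadows. This is sound (and the identification $\phi_\gamma(I_\nu)=g_\nu\theta_0$ is correct once one checks $g_\nu y_0\in[g_\nu a_0,y_0]$), but it imports the three-neighbours hypothesis, which the paper explicitly confines to Proposition \ref{Prop_V_infini_is_absorbent}; the paper's construction avoids both the visibility lemma and that hypothesis.

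Second, and more importantly, you flag as ``the main obstacle'' the need to tune $\gamma_m$ so that $I_1$ and $I_2$ sit at a common depth in $\mathcal{T}_\gamma$ (i.e.\ belong to $\Theta_n(\gamma)$ for the same $n$). The paper shows this worry is unnecessary: since by Lemma \ref{Graduation_of_Xi} any ancestor of a $\Xi_\infty$-interval in $\mathcal{T}_\gamma$ also maps into $\Xi_\infty$, it suffices to produce two \emph{non-nested} intervals $I_1,I_2$ with $\phi_\gamma(I_\nu)\in\Xi_\infty$; replacing each $I_\nu$ by the child of their least common ancestor containing it yields two distinct children at the same depth, both still with image in $\Xi_\infty$. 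With that observation your depth-profile bookkeeping disappears entirely, and the construction goes through without any fine control of $\gamma_m$.
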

\begin{proof}
    We start by proving that for some $(a,b)_y\in\Xi_\infty$ and some positive integer $N\geq 1$, the polynomial function $J\mapsto\psi^{\circ N}(J)(a,b)_y)$ is of degree at least $2$ with respect to the variables $(J(\theta))_{\theta\in\Xi_\infty}$.
    
    Fix an arbitrary element $(a,b)_y$ in $\Xi_\infty$. Using Proposition \ref{Proposition_of_the_ecluse} and Proposition \ref{Graduation_of_Xi} it is sufficient to prove that there exists a $p$-admissible path $\gamma\in\mathcal{P}h(a,b;\mathcal{B}(y)^\complement)$ such that there exists a positive integer $N\geq 1$, with $\Theta_N(\gamma)$ containing two intervals/vertices of the simple descending rooted tree $\mathcal{T}_\gamma$ whose images under the graph morphism $\phi_\gamma:\mathcal{T}_\gamma\to\mathcal{G}_\Xi$ belong to $\Xi_\infty$. 
    
    Actually, thanks to Proposition \ref{Graduation_of_Xi}, since given a vertex $I$ of $\mathcal{T}_\gamma$, whose image by $\phi_\gamma$ is in $\Xi_\infty$ has all its descendent in $\Xi_\infty$ (That is vertices $I'$ of $\mathcal{T}_\gamma$ strictly containing the interval $I$, see the first assertion in Proposition \ref{PROP:FLOODED_CAVERN_TREE_PROPERTY}), all we need to prove is that there exist two intervals/vertices $I_1, I_2$ of $\mathcal{T}_\gamma$ with $I_1\not\subset I_2$ and $I_2\not\subset I_1$, such that $\phi_\gamma(I_1)$ and $\phi_\gamma(I_2)$ are in $\Xi_\infty$.
    
    We construct such $p$-admissible path $\gamma$. Let $M\geq 1$ be as in the Proposition \ref{Stagnation_of_V_N}. Let $[(a,b)_{x,y}]$ be in $V_\infty$ and let $\omega$ in $X_0$ be reachable for $(a,b)_{x,y}$, such that $d(\omega,y)>M$. Then let $\varpi_1$, $\varpi_2$ be in the shadow $\Omega_\omega^y$ of $\omega$ under $y$, such that $d(\varpi_1,\omega)>M+k$ and $d(\varpi_2,\omega)>M+k$. Set $\gamma=(\omega_0,...,\omega_n)$ to be a $p$-admissible path in $\mathcal{P}h(a,b;\mathcal{B}(y)^\complement)$ passing through the vertices $\varpi_1$, $\omega$, $\varpi_2$ in this order at least once (See figure \ref{Mountain_School_Figure}). Such $p$-admissible path $\gamma$ exists thanks to the extension of Proposition \ref{Stagnation_of_V_N}, see Corollary \ref{Stagnation_of_VN_upgraded}.
    
    Denote by $l_1<l<l_2$ three integers in $\{0,...,n\}$, such that $\omega_{l_1}=\varpi_1$, $\omega_l=\omega$ and $\omega_{l_2}=\varpi_2$. Then set 
    $i_1:=\max\{ i : 1\leq i <l_1 : d(y,\omega) \leq d(y,\omega_i) < d(y,\omega) + k \}$ and 
    $i_2:=\max\{ i : l\leq i <l_2 : d(y,\omega) \leq d(y,\omega_i) < d(y,\omega) + k \}$. So that if we denote $j_1$ (resp $j_2$) the integer $\min\{ t> i_1 : d(y,\omega_t)<d(y,\omega_{i_1}) \}$ (resp. $\min\{ t> i_1 : d(y,\omega_t)<d(y,\omega_{i_2}) \}$) then the intervals $I_1$ and $I_2$ are in $\mathcal{T}_\gamma$. Now by construction $l_1\in I_1$ and $l_2\in I_2$, and we also have $d(y,\varpi_1)-d(\omega_{i_1},\varpi_1)>M$ and $d(y,\varpi_2)-d(\omega_{i_2},\varpi_2)>M$, thus we deduce, via the Proposition \ref{Stagnation_of_V_N}, that $\phi_\gamma(I_1)$ and $\phi_\gamma(I_2)$ belong to $\Xi_\infty$.
    
    Finally by contradiction, suppose that $\psi_E$ is at most of degree $1$ with respect to the variables $(J([\theta] ))_{[\theta]\in V_\infty}$ then because of the Lemma \ref{Graduation_de_V}, necessarily for any integer $n\geq 1$, the polynomial map $\psi_E^{\circ n}$ would also have degree at most $1$ with respect to the variables $(J([\theta] ))_{[\theta]\in V_\infty}$. But this is not the case for $n=N$. Thus necessarily $\psi_E$ is of degree at least $2$ with respect to the variables $(J([\theta]))_{[\theta]\in V_\infty}$.
    \end{proof}
    
	\begin{figure}
		\centering
		\includegraphics[scale=0.4]{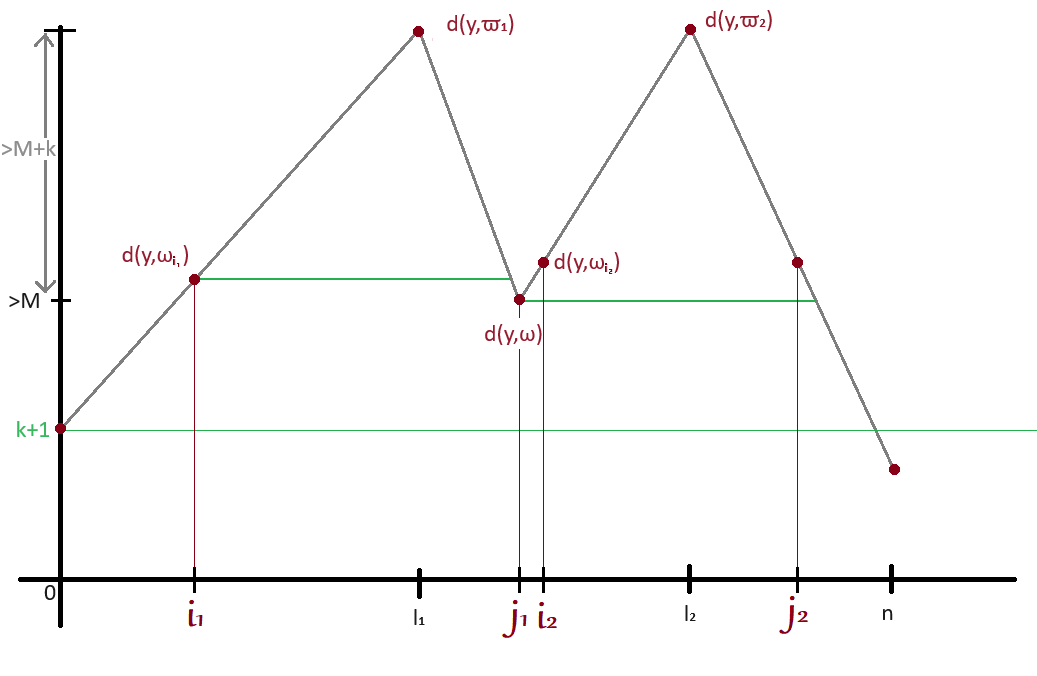}
		\caption{$i\mapsto d(\omega_i,y)$}
		\label{Mountain_School_Figure}
	\end{figure}
	
We have a result analogous to the Proposition \ref{Prop_Chemin_et_derivee}:
\begin{Prop}~

	We identify $E$ as $\mathcal{F}(\Gamma\backslash\Xi,\mathbb{C})\approx\mathbb{C}^{\Gamma\backslash\Xi}$. With the previous assumptions and notations, for three vertices $[\theta] ,[\theta_1] ,[\theta_2]$ of $\mathcal{V}$, the polynomial function $J\mapsto\psi(J)([\theta] )$ admits $J([\theta_1]) \cdot J([\theta_2] )$ as a factor of one of its monomials if, and only if the second differential of $\psi_E$ at $[\theta] $ evaluated at $(1_{[\theta_1] },1_{[\theta_2] })\in E\times E$ is strictly positive: 
	$$D^2 \psi_E(1_{[\theta_1] },1_{[\theta_2] })([\theta] )>0.$$
\end{Prop}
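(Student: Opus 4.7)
The plan is to mirror the argument of Proposition \ref{Prop_Chemin_et_derivee}, promoting it from the first to the second differential, and to exploit once more the non-negativity of the coefficients of $\psi_E$ so that no cancellation can occur. The notation $D^2\psi_E(1_{[\theta_1]},1_{[\theta_2]})([\theta])$ should be understood as the value at some (equivalently, any) $J\in E$ with strictly positive coordinates; as in the footnote of Proposition \ref{Prop_Chemin_et_derivee} we identify $E$ with $\mathbb{C}^{\Gamma\backslash\Xi}$ via the basis $\{1_{[\theta']}\}_{[\theta']\in\Gamma\backslash\Xi}$.

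First I would split the argument into the two cases $[\theta_1]\neq [\theta_2]$ and $[\theta_1]=[\theta_2]$. In the case $[\theta_1]\neq[\theta_2]$, I would group the monomials of $J\mapsto\psi_E(J)([\theta])$ according to their joint degree in $J([\theta_1])$ and $J([\theta_2])$, obtaining a decomposition
$$\psi_E(J)([\theta])=\sum_{i,j\geq 0}J([\theta_1])^{i}\,J([\theta_2])^{j}\,Q_{i,j}(J),$$
in which each $Q_{i,j}$ is a polynomial with non-negative coefficients in the remaining variables $\{J([\theta']) : [\theta']\neq [\theta_1],[\theta_2]\}$. By the definition given just before the statement, $J([\theta_1])\cdot J([\theta_2])$ is a factor of one of the monomials of $\psi_E(J)([\theta])$ if and only if $Q_{i,j}\neq 0$ for some $i,j\geq 1$. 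Differentiating twice yields
$$\frac{\partial^{2}\psi_E(J)([\theta])}{\partial J([\theta_1])\,\partial J([\theta_2])}=\sum_{i,j\geq 1}i\,j\,J([\theta_1])^{i-1}J([\theta_2])^{j-1}\,Q_{i,j}(J),$$
which is itself a polynomial with non-negative coefficients. Evaluated at any $J$ with strictly positive coordinates, it is strictly positive precisely when some $Q_{i,j}$ with $i,j\geq 1$ is non-zero, which gives both implications simultaneously.

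The case $[\theta_1]=[\theta_2]$ is handled analogously: one groups by the power of $J([\theta_1])$ alone, writing $\psi_E(J)([\theta])=\sum_{i\geq 0}J([\theta_1])^{i}R_{i}(J)$, and observes that $J([\theta_1])^{2}$ is a factor of one of the monomials if and only if some $R_{i}$ with $i\geq 2$ is non-zero, which in turn is equivalent to the strict positivity of $\partial^{2}\psi_E(J)([\theta])/\partial J([\theta_1])^{2}=\sum_{i\geq 2}i(i-1)J([\theta_1])^{i-2}R_{i}(J)$ at any strictly positive $J$.

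There is no real obstacle: the argument is purely a translation between the coefficient description of a polynomial with non-negative entries and the values of its second partial derivatives at positive points. The statement is strictly the quadratic analogue of Proposition \ref{Prop_Chemin_et_derivee}, and the non-negativity of coefficients rules out any spurious cancellation that could otherwise break either of the two implications.
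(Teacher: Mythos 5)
Your argument is correct and is exactly what the paper intends: the paper's proof is the one-line remark that it ``follows the same reasoning as in the proof of Proposition~\ref{Prop_Chemin_et_derivee}'', and your write-up is precisely that reasoning lifted to the second differential, with the decomposition by joint degree in $J([\theta_1]),J([\theta_2])$ (and the separate treatment of $[\theta_1]=[\theta_2]$) playing the role that the decomposition $\psi(J)(\beta)=J(\alpha)\phi(J)+\chi(J)$ plays there. The non-negativity of coefficients is used in the same way to rule out cancellations in both directions.
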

\begin{proof}
	The proof follows the same reasoning as in the proof of Proposition \ref{Prop_Chemin_et_derivee}.
\end{proof}

\section{The structure of \texorpdfstring{$\mathcal{C}$}{Lalley's Curve} and critical point of \texorpdfstring{$u$}{the function u}}\label{Section Structure of C}

The aim of this section is to refine our understanding of the Lalley's curve $\mathcal{C}$. We prove in this section that $\mathcal{C}$ is a smooth complex curve in the neighbourhood of $\overline{\{u(z):z\in\mathbb{D}(0,R)\}}$ (Proposition \ref{Regularity_of_C}), and compute the tangent space of $\mathcal{C}$ at $u(R)$ (Proposition \ref{Prop_Spectrum_belonging_C}). To do so, we will need to study the spectrum of the differentials $D_{v_z}\psi$ where $v:\mathbb{D}(0,R)\to E, z\mapsto v_z$ is the map in $E=\mathcal{F}(\Xi,\mathbb{C})^\Gamma$ defined by:
\begin{equation}\label{DEF:V_GREEN2}
		v_z:(a,b)_{y}\mapsto G_z(a,b;\mathcal{B}(y)^\complement).
	\end{equation}
 For that we will use our study of the iterations $(\psi^{\circ n})_n$  from the previous section and an order relation on the connected components of the dependency digraph $\mathcal{V}$. Let us fix some notations:
\subsection{The functions \texorpdfstring{$v_z^{(\alpha)}:\Xi_\alpha\to \mathbb{C}$}{function vzalpha}}~

We begin by enumerating each connected component $(C_\alpha)_{\alpha\in\Lambda}$\index{$C_\alpha$} of $\mathcal{V}$, where $\Lambda$ is a set in bijection with the set of connected components of $\mathcal{V}$. Let $\preceq$\index{$\preceq$} be the partial order relation on $\Lambda$ defined by: for $\alpha,\beta\in\Lambda$, $\alpha\preceq\beta $ if there is a path in $\mathcal{V}$ with source a vertex in the connected component $C_\alpha$, and aim a vertex in the connected component $C_\beta$. We will write $\alpha\prec\beta$ for $\alpha\preceq\beta$ and $\alpha\neq\beta$. Note that this order relation is well defined and its structure is partially described by Proposition \ref{Graduation_de_V} (See remark \ref{Connexite_et_palliers}).

By the Proposition \ref{Prop_V_infini_is_absorbent}, there exists a greatest element for this order relation:
$$\alpha_{max}\in\Lambda\text{ such that } C_{\alpha_{max}}=V_\infty.$$
we will note \og $\infty$ \fg{} this greatest element. Beside, since $\mathcal{V}$ is finite, $\Lambda$ is also finite and therefore, admits minimal elements.

\begin{Def}\label{Def_sets_Xi_alpha_spaces_E_alpha}

	For $\alpha\in\Lambda$, set:
	\begin{equation}\index{$\Xi_\alpha$}
		\Xi_\alpha:=\Big\{(a,b)_{y}\in\Xi: [(a,b)_{y}]\in C_\alpha\Big\},
	\end{equation}
	\begin{equation}\index{$E_\alpha$}
	\text{ and }	E_\alpha:=\mathcal{F}(\Xi_\alpha,\mathbb{C})^\Gamma
	\end{equation}
	the vector space of $\Gamma$-invariant complex-valued functions defined on $\Xi_\alpha$. $E_\alpha$ identifies as a vector subspace of $E$, by extending maps in $E_\alpha$ by $0$ outside $\Xi_\alpha$. 

Analogously set $\Xi_{\prec\alpha}:=\bigcup_{\beta\prec\alpha}\Xi_\beta$ and $E_{\prec\alpha}:=(\bigoplus_{\beta\prec\alpha} E_\beta)$ which is isomorphic to $\mathcal{F}(\Xi_{\prec\alpha},\mathbb{C})^\Gamma$.\index{$\Xi_{\prec\alpha}$}\index{$\Xi_{\prec\alpha}$}
	Let $\pi_\alpha:E\to E_\alpha$ \index{$\pi_\alpha$}be the projection of $E$ onto $E_\alpha$ with respect to the base $(1_{[\theta]})_{[\theta]\in\Gamma\backslash\Xi}$.
\end{Def}

\begin{Rem}
	Note that $\bigcup_{\alpha\prec\infty} C_\alpha=V_{<\infty}$. As a result, we find it consistent to denote $\Xi_{<\infty}$ the union $\bigcup_{\alpha\prec\infty}\Xi_\alpha$ and $E_{<\infty}$ the complex vector space $\mathcal{F}(\Xi_{<\infty},\mathbb{C})^\Gamma$.
\end{Rem}

\begin{Def}\label{Def_v_alpha_R_alpha}\textit{The maps $v^{(\alpha)}$ and $v^{(\prec\alpha)}$}\index{$v^{(\alpha)}$}\index{$v^{(\prec\alpha)}$}

Recall the definition of the map $v:\mathbb{D}(0,R)\to E$ from (\ref{DEF:V_GREEN2}). Let $\alpha$ be an element of $\Lambda$.
	We will write $v^{(\alpha)}$, respectively $v^{(\prec\alpha)}$ for the germs of holomorphic map with values in $E_\alpha$, respectively $E_{\prec\alpha}$, given by the composition $v^{(\alpha)}=\pi_{\alpha}\circ v$, respectively $v^{(\prec\alpha)}=\pi_{\prec\alpha}\circ v$. Note that if $\alpha$ is minimal, then $E_{\prec\alpha}=\{0\}$ by convention and thus $v^{(\prec\alpha)}\equiv 0$.

Let $R_\alpha>0$ \index{$R_\alpha$}be the possibly infinite radius of the largest open disk $\mathbb{D}(0,R_\alpha)$ on which $v^{(\alpha)}$ admits a holomorphic extension.
Note that by the interrelations between the maps \og $v^{(\alpha)}$ \fg{}, from Proposition \ref{Graduation_de_V} (See Lemma \ref{Comparison_Lemma_alpha_beta} below) we deduce that the sequence $(R_\alpha)_\alpha$ is non-increasing with respect to the order relation $\preceq$:$$\forall\beta\prec\alpha, \, R_\alpha\leq R_\beta.$$

Denote $R_{\prec \alpha}:=\min_{\beta\prec \alpha} (R_\beta) \in [1,+\infty]$ (with $\min (\emptyset) = \infty$ )\index{$R_{\prec\alpha}$}, so, by definition $v^{(\prec\alpha)}$ admits a holomorphic extension over the disk $\mathbb{D}(0,R_{\prec\alpha})$.
\end{Def}
\begin{Rem}\label{Rem_R_infini_is_min}
	By monotonicity of $(R_\alpha)_\alpha$ with respect to $\preceq$, $$R_\infty=\min_\alpha (R_\alpha)=R_G\geq R.$$ 
	Admitting Proposition \ref{Regularity_of_C}, about smoothness of $\mathcal{C}$ in a neighbourhood of $\overline{\{u(z): z\in\mathbb{D}(0,R)\}}$, we proved the equality $R_G=R$, see (\ref{Asymp-equality_radius}) in \cite{Asymp} for a justification. 
\end{Rem}

For $\alpha\in\Lambda$, by definition of the edges of $\mathcal{V}$, for any $\theta\in\Xi_\alpha$, the polynomial function $J\mapsto\psi(J)(\theta)$ depends only on the variables $(J(\theta'))_{\theta'\in\Xi_{ \preceq \alpha}}$. For $z\in \mathbb{D}(0,R_{\prec\alpha})$, let $$s_\alpha(z):E_\alpha\to E$$ be the section that extends any map in $E_\alpha$ to the whole set $\Xi$, by giving it the same values as $v^{(\prec\alpha)}(z)$ on $\Xi_{\prec\alpha}$, and extending it by $0$ on $\Xi\setminus (\Xi_\alpha\cup\Xi_{\prec\alpha})$.
For $J^{(\alpha)}\in E_\alpha$, we will write
$$s_\alpha(z)(J^{(\alpha)})=J^{(\alpha)} + v_z^{(\prec\alpha)}\in E.$$

We then define the map $\psi_\alpha: \mathbb{D}(0,R_{\prec_\alpha})\times E_\alpha\to E_\alpha$ by:
\begin{equation} \label{Def_de_psi_alpha_induction}\index{$\psi_\alpha$}
	\psi_\alpha:(z,J^{(\alpha)})\mapsto\pi_{\alpha}(\psi(J^{(\alpha)}+v^{(\prec\alpha)}(z))).
\end{equation}

At last let $\mathcal{C}_\alpha$ be the set of zero of the map \index{$W_\alpha$}
$$W_\alpha:(z,J^{(\alpha)})\mapsto J^{(\alpha)}-z\cdot\psi_\alpha(z,J^{(\alpha)}),$$
\begin{equation}\label{Def_of_C_alpha}
	\mathcal{C}_\alpha:=W_\alpha^{-1}(\{0_\alpha\})=\Big\{(z,J^{(\alpha)}): J^{(\alpha)} = z \psi_\alpha(z,J^{(\alpha)})\Big\}.
\end{equation}

\begin{Rem}
	At $(0,0_\alpha)\in \mathbb{D}(0,R_{\prec\alpha})\times E_\alpha$, we have $W_\alpha(0,0_\alpha)=0_\alpha$ and $\dfrac{\partial W_\alpha}{\partial J^{(\alpha)}}(0,0_\alpha)=Id_{E_\alpha}$. In particular, $(0,0_\alpha)$ is a regular point of $W_\alpha$ and therefore $\mathcal{C}_\alpha$ is a smooth complex curve in the neighbourhood of $(0,0_\alpha)$. 
\end{Rem}
\begin{Rem}
Since $v^{(\prec\alpha)}$ is a rational map (Lemma \ref{Asymp-Rationality_of_g_xi_<_infty} in \cite{Asymp}) so is $s_\alpha$. Besides $\psi:E\to E$ is polynomial and $\pi_\alpha$ is linear, thus the previous map $\psi_\alpha$ is also a rational map from $\mathbb{C}\times E_\alpha$ to $E_\alpha$. And thus $\mathcal{C}_\alpha$ is an algebraic curve in $\mathbb{C}\times E_{\alpha}$, that is smooth in the neighbourhood of $(0,0_\alpha)$.
\end{Rem}

\begin{Lem}\textit{Comparison Lemma}\label{Comparison_Lemma_alpha_beta}

	We identify $E$ with the vector space $\mathcal{F}(\Gamma\backslash\Xi,\mathbb{C})$. 
	Let $\alpha,\beta\in\Lambda$, $\beta\preceq\alpha$, for any $[\theta_s]\in C_\beta$, $[\theta_a]\in C_\alpha$, there exists positive integers $\kappa, e \in\mathbb{N}$ and a positive constant $C>0$ such that:

	For any $J^{(\alpha)}\in E_{\alpha}$ with non-negative coordinates, and $r\geq 0$ such that $J^{(\alpha)}=r\psi_\alpha (r,J^{(\alpha)})$, we have:
	\begin{equation}
		J^{(\alpha)}([\theta_a] )\geq C r^e  s_\alpha(J^{(\alpha)})([\theta_s] ) \left(\min_{\Gamma\backslash\Xi_{\preceq \alpha}}(s_\alpha(J^{(\alpha)}))\right)^{\kappa}
	\end{equation}
\end{Lem}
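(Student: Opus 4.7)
The plan is to reduce the inequality to a telescoping bound along a directed path in $\mathcal{V}$ from $[\theta_s]$ to $[\theta_a]$. I first introduce $\tilde J := s_\alpha(r)(J^{(\alpha)}) = J^{(\alpha)} + v_r^{(\prec\alpha)} \in E$, which is non-negative, $\Gamma$-invariant, and supported on $\Xi_{\preceq\alpha}$. The crucial preliminary identity is
\[ \tilde J([\theta]) = r\, \psi(\tilde J)([\theta]) \qquad \text{for every } [\theta]\in\Gamma\backslash\Xi_{\preceq\alpha}. \]
For $[\theta]\in C_\alpha$ this is just the fixed-point hypothesis $J^{(\alpha)} = r\psi_\alpha(r, J^{(\alpha)})$ unwound through the definition $\psi_\alpha(z,\cdot) = \pi_\alpha\circ\psi\circ s_\alpha(z)$. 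For $[\theta]$ in a strictly lower component $C_\gamma$ with $\gamma\prec\alpha$, the identity becomes $v_r([\theta]) = r\psi(v_r)([\theta])$ (the parametrization equation), and Lemma \ref{Graduation_de_V} ensures that $\psi$ at the coordinate $[\theta]$ depends only on coordinates in $\Gamma\backslash\Xi_{\preceq\gamma}\subset\Gamma\backslash\Xi_{\prec\alpha}$, where $v_r$ coincides with $\tilde J$.

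Since $\beta\preceq\alpha$ and each of the connected components $C_\beta,C_\alpha$ is internally connected as a directed graph, concatenating a path inside $C_\beta$, one from $C_\beta$ to $C_\alpha$, and one inside $C_\alpha$ yields a directed path $[\theta_0]=[\theta_s],[\theta_1],\ldots,[\theta_n]=[\theta_a]$ in $\mathcal{V}$ of some length $n\geq 0$, with the components of the $[\theta_k]$ increasing weakly along $\preceq$ and in particular all lying in $\Gamma\backslash\Xi_{\preceq\alpha}$. For each edge $[\theta_{k-1}]\to[\theta_k]$, the definitions of $\mathcal{G}_\Xi$ and $\mathcal{V}$ together with the $\Gamma$-equivariance of $\psi$ furnish a monomial of $J\mapsto\psi(J)([\theta_k])$ of the form $c_k\,J([\theta_{k-1}])\prod_j J([\theta_{k,j}'])$ with $c_k>0$, all extra factors lying in $\Gamma\backslash\Xi_{\preceq\alpha}$. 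Non-negativity of every other monomial yields
\[ \psi(\tilde J)([\theta_k]) \;\geq\; c_k\,\tilde J([\theta_{k-1}])\cdot\Bigl(\min_{\Gamma\backslash\Xi_{\preceq\alpha}}\tilde J\Bigr)^{m_k}, \]
and multiplying by $r$ and invoking the preliminary identity gives $\tilde J([\theta_k])\geq c_k r\,\tilde J([\theta_{k-1}])\,(\min \tilde J)^{m_k}$.

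Iterating this bound for $k=n,n-1,\ldots,1$ telescopes into
\[ J^{(\alpha)}([\theta_a]) = \tilde J([\theta_n]) \;\geq\; \Bigl(\prod_{k=1}^n c_k\Bigr)\,r^n\,\tilde J([\theta_s])\cdot\Bigl(\min_{\Gamma\backslash\Xi_{\preceq\alpha}} s_\alpha(J^{(\alpha)})\Bigr)^{\sum_{k=1}^n m_k}, \]
which is exactly the required inequality, with $C=\prod_k c_k>0$, $e=n$ and $\kappa=\sum_k m_k$. The degenerate case $n=0$ (i.e.\ $[\theta_s]=[\theta_a]$) is trivial with $C=1$, $e=\kappa=0$. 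The only real obstacle is the preliminary identity of the first paragraph: unifying the branch equation for $\mathcal{C}_\alpha$ and the global parametrization of $\mathcal{C}$ on strictly lower components into a single uniform fixed-point relation on $\Xi_{\preceq\alpha}$, which relies essentially on the gradation of $\mathcal{V}$ along $\preceq$ provided by Lemma \ref{Graduation_de_V}.
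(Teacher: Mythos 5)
Your proof is correct and follows essentially the same route as the paper: establish the inequality for a single edge of $\mathcal{V}$ using a positive monomial witnessing the edge, then propagate along a directed path from $[\theta_s]$ to $[\theta_a]$. The paper treats the single-edge case explicitly (yielding the factor $r^1$ and exponent $l-1$) and then simply declares ``an immediate induction on the path length gives the desired result''; your ``preliminary identity'' $\tilde J([\theta]) = r\,\psi(\tilde J)([\theta])$ for all $[\theta]\in\Gamma\backslash\Xi_{\preceq\alpha}$ is exactly the ingredient that makes that induction run, since it unifies the branch equation $J^{(\alpha)}=r\psi_\alpha(r,J^{(\alpha)})$ on $C_\alpha$ with the fixed-point equation $v_r^{(\gamma)}=r\psi_\gamma(r,v_r^{(\gamma)})$ on the lower components $\gamma\prec\alpha$ (the latter being a consequence of formula (\ref{forme_fractionelle_de_v_alpha}) and Lemma \ref{Graduation_de_V}). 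So your write-up is, if anything, slightly more complete than the paper's, but the underlying argument is the same.
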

\begin{proof}
	First, let us assume that there is an edge with source $[\theta_s]$ and aim $[\theta_a]$ in $\mathcal{V}$.
	By definition of the edges of $\mathcal{V}$, there exist a polynomial function $Q$ with non-negative coefficients, elements $\theta_1,...,\theta_l \in \Xi$ and $C>0$ such that for some $i\in\{1,...,l\}$, $\theta_i$ is a representative of $[\theta_s]\in\Gamma\backslash\Xi$ and for any $J\in E$,
	$$C J(\theta_1)\cdot\cdot\cdot J(\theta_l)+Q(J)= \psi(J)(\theta_a).$$
	
	By definition of the order relation $\preceq$, the $\Gamma$-orbits $[\theta_1],...,[\theta_l]$ belong to $\Gamma\backslash\Xi_{\preceq\alpha}$. Then for all real vector $J^{(\alpha)}\in E_{\alpha}$ with non-negative coordinates, and all non-negative real number $r\geq 0$, such that $J^{(\alpha)}=r\psi_\alpha(r,J^{(\alpha)})$, we have for any $\theta\in\Xi_{\preceq \alpha}$, by definition $J^{(\alpha)}([\theta_a] )=r\psi(J^{(\alpha)}+v^{(\prec\alpha)}(r))([\theta_a])$, thus
	\begin{align*}
		J^{(\alpha)}([\theta_a])=r \psi(J^{(\alpha)}+v^{(\prec\alpha)}(r))([\theta_a])&\geq r C s_\alpha(J^{(\alpha)})([\theta_1])\cdot\cdot\cdot s_\alpha(J^{(\alpha)})([\theta_l])\\
		&\geq r C s_\alpha(J^{(\alpha)})([\theta_s] ) \left(\min_{\Gamma\backslash\Xi_{\preceq \alpha}}(s_\alpha(J^{(\alpha)}))\right)^{l-1}.
	\end{align*}

	If there is no edge with source $[\theta_s]$ and aim $[\theta_a]$, then by definition of $\preceq$ there must exists a path with source $[\theta_s]$ and aim $[\theta_a]$. An immediate induction on the path length gives the desired result.
\end{proof}

\begin{Cor}
	Let $\beta\preceq\alpha$ be elements of $\Lambda$. Taking $J=\pi_{\alpha}(v_r)= v_r^{(\alpha)}+v_r^{(\prec \alpha)}$, in the previous lemma we get for any $0<r\leq R_\alpha$
	\begin{equation}\label{The_previous_equality_1}
		v_r([\theta_a])\geq r^e C v_r([\theta_s])\left(\min_{\Gamma\backslash\Xi_{\preceq \alpha}}\{v_r\}\right)^{\kappa}.
	\end{equation}
\end{Cor}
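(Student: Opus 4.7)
The plan is to apply the Comparison Lemma directly with the substitution $J^{(\alpha)} := v_r^{(\alpha)}$, checking that the hypotheses are met and that the output of the lemma reduces to the announced inequality. Two verifications are needed: (a) that $v_r^{(\alpha)}$ has non-negative coordinates, and (b) that $(r, v_r^{(\alpha)})$ satisfies the fixed-point equation $J^{(\alpha)} = r \psi_\alpha(r, J^{(\alpha)})$ that appears in the hypothesis of the lemma.

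For (a), recall that by definition $v_z(\theta) = G_z(a,b;\mathcal{B}(y)^\complement)$ is a first-passage Green's function, whose Taylor series at $0$ has non-negative coefficients (it generates passage probabilities). Hence for real $r \in [0, R_\alpha]$ the extended values $v_r(\theta)$ are non-negative, and the same is true of their projection $v_r^{(\alpha)} = \pi_\alpha(v_r)$.

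For (b), the Lalley curve equation $v_z = z\,\psi(v_z)$ holds in a neighbourhood of $0$ and propagates by analytic continuation to $\mathbb{D}(0, R_G)$, hence in particular to $\mathbb{D}(0, R_\alpha)$ by Remark \ref{Rem_R_infini_is_min}. Projecting onto $E_\alpha$ and using Lemma \ref{Graduation_de_V}, which guarantees that the component $\psi(\cdot)(\theta)$ for $\theta \in \Xi_\alpha$ depends only on coordinates in $\Xi_{\preceq \alpha}$, we may rewrite
$$v_z^{(\alpha)} = \pi_\alpha\bigl(z\,\psi(v_z^{(\alpha)} + v_z^{(\prec \alpha)})\bigr) = z\,\psi_\alpha(z, v_z^{(\alpha)}),$$
where the last equality is the definition (\ref{Def_de_psi_alpha_induction}) of $\psi_\alpha$. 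Specialising at $z = r$ gives the fixed-point relation required by the lemma.

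Once these checks are in place, the Comparison Lemma yields
$$v_r^{(\alpha)}([\theta_a]) \geq C\, r^e\, s_\alpha(v_r^{(\alpha)})([\theta_s])\, \Bigl(\min_{\Gamma\backslash\Xi_{\preceq \alpha}} s_\alpha(v_r^{(\alpha)})\Bigr)^{\kappa}.$$
It remains to translate each term back into a statement about $v_r$. Since $[\theta_a] \in C_\alpha$, by definition $v_r^{(\alpha)}([\theta_a]) = v_r([\theta_a])$. By construction of the section, $s_\alpha(v_r^{(\alpha)})$ extends $v_r^{(\alpha)}$ by the values of $v_r^{(\prec\alpha)}$ on $\Xi_{\prec\alpha}$, so on $\Gamma\backslash\Xi_{\preceq \alpha}$ it agrees with $v_r$; in particular $s_\alpha(v_r^{(\alpha)})([\theta_s]) = v_r([\theta_s])$ for $[\theta_s] \in C_\beta$ with $\beta \preceq \alpha$, and the two minima coincide. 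This yields the stated inequality. I do not expect any serious obstacle: the only delicate point is handling the boundary value $r = R_\alpha$, where $v_r^{(\alpha)}$ is defined through analytic continuation rather than absolutely convergent power series, but the inequality at that point follows by continuity from its validity on $[0, R_\alpha)$.
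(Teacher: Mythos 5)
Your proof is correct and carries out exactly the intended substitution: the paper offers no separate argument for this corollary, treating it as an immediate specialization of the Comparison Lemma to $J^{(\alpha)} = v_r^{(\alpha)}$, and you supply precisely the two routine verifications (non-negativity of $v_r^{(\alpha)}$ for real $r$, and the fixed-point identity $v_r^{(\alpha)} = r\psi_\alpha(r, v_r^{(\alpha)})$ obtained by projecting the Lalley equation and invoking Lemma \ref{Graduation_de_V}) that the paper leaves implicit. Your remark about the endpoint $r = R_\alpha$ is a fair observation of a point the paper glosses over; if $v^{(\alpha)}_{R_\alpha}$ is finite the inequality follows by continuity, and otherwise the left-hand side diverges and the bound is vacuous, so nothing is lost.
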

\begin{Rem}
In particular, if $\alpha \prec \infty$ is an element of $\Lambda$, with the notations of the previous corollary, for any $\beta\preceq \alpha$, since the maps $v^{(\beta)}$ are rational maps (Lemma \ref{Asymp-Rationality_of_g_xi_<_infty} in \cite{Asymp}), singular points of $v^{(\beta)}$ are poles, we have that $\lim_{r\to R_\beta^-} v^{(\beta)}(r)=\infty$. We find again through the previous inequality (\ref{The_previous_equality_1}) that we must have $$R_\alpha\leq R_\beta.$$
\end{Rem}

\begin{Prop}~
	For any $\alpha\in\Lambda$ such that $\alpha\prec\infty$, and for any complex number $z\in \mathbb{D}(0,R_\alpha)$, the map $J^{(\alpha)}\mapsto\psi_\alpha(z,J^{(\alpha)})$ is affine on $E_\alpha$. We denote by $\vec{\psi_\alpha}(z)$ its linear part.
	
	We have the following formula for the rational map $z\mapsto v_z^{(\alpha)}$:
\begin{equation}\label{forme_fractionelle_de_v_alpha}
	v^{(\alpha)}_z=\left(I-z\vec{\psi}_\alpha(z)\right)^{-1}(z \psi_\alpha(z,0_\alpha)).
\end{equation}
\end{Prop}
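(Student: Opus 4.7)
My plan is to first establish the affineness of $J^{(\alpha)} \mapsto \psi_\alpha(z, J^{(\alpha)})$ via Corollary \ref{Cor_pre_Prop_linearity}, and then to derive the formula by projecting the Lalley fixed-point equation $v_z = z\psi(v_z)$ onto $E_\alpha$ and inverting the linear part.

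\textbf{Affineness step.} Since $\alpha \prec \infty$, the strongly connected component $C_\alpha$ sits inside $V_{<\infty}$. Lemma \ref{Graduation_de_V} forbids edges that decrease level, so any two vertices of a common strongly connected component must share the same level; hence there is an integer $N \geq 1$ with $\Xi_\alpha \subset \Xi_N \setminus \Xi_{N-1}$. Fix $\theta = (a,b)_{x,y} \in \Xi_\alpha$ and consider any monomial $J(\theta_1)\cdots J(\theta_l)$ of $J \mapsto \psi(J)(\theta)$ coming from a tuple $(c_0,\dots,c_l) \in \Xi_{[c,y]}$ as in Corollary \ref{Cor_pre_Prop_linearity}. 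That corollary gives $\theta_1,\dots,\theta_{l-1} \in \Xi_{N-1}$, so at most one factor, namely $\theta_l$, can belong to $\Xi_N \setminus \Xi_{N-1}$. Moreover each $\theta_j$ satisfies $[\theta_j] \preceq \alpha$ by definition of $\preceq$, since the edge $\theta_j \to \theta$ in $\mathcal{G}_\Xi$ yields a corresponding path in $\mathcal{V}$; so any $\theta_j$ with $[\theta_j] \neq \alpha$ lies in $\Xi_{\prec \alpha}$. Substituting $J = J^{(\alpha)} + v^{(\prec\alpha)}(z)$ then turns factors in $\Xi_\alpha$ into coordinates of $J^{(\alpha)}$ and factors in $\Xi_{\prec\alpha}$ into scalars $v^{(\prec\alpha)}_z(\theta_j)$, so each monomial of $\psi_\alpha(z, J^{(\alpha)})(\theta)$ is at most linear in $J^{(\alpha)}$, which proves affineness.

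\textbf{Formula step.} The identity $v_z = z\psi(v_z)$ holds on a neighbourhood of $0$. Since for $\theta \in \Xi_\alpha$ the polynomial $\psi(J)(\theta)$ depends only on coordinates indexed by $\Xi_{\preceq\alpha}$, applying $\pi_\alpha$ yields
\begin{equation*}
v_z^{(\alpha)} \;=\; z\,\pi_\alpha\bigl(\psi(v_z^{(\alpha)} + v_z^{(\prec\alpha)})\bigr) \;=\; z\,\psi_\alpha(z, v_z^{(\alpha)}).
\end{equation*}
Using the affine decomposition $\psi_\alpha(z, J^{(\alpha)}) = \psi_\alpha(z, 0_\alpha) + \vec{\psi}_\alpha(z)(J^{(\alpha)})$, this rearranges into $\bigl(I - z\vec{\psi}_\alpha(z)\bigr)\, v_z^{(\alpha)} = z\,\psi_\alpha(z, 0_\alpha)$. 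At $z=0$ we have $I - z\vec{\psi}_\alpha(z) = I$, so $\det(I - z\vec{\psi}_\alpha(z))$ is a rational function of $z$ non-vanishing at the origin; inverting on a neighbourhood of $0$ delivers formula (\ref{forme_fractionelle_de_v_alpha}). Both sides then continue as rational maps on $\mathbb{D}(0, R_{\prec\alpha})$ away from the discrete zero set of this determinant, and since $v^{(\alpha)}$ is by definition holomorphic on $\mathbb{D}(0,R_\alpha)$, the equality extends there in the meromorphic sense.

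\textbf{Main obstacle.} The bookkeeping in the affineness step is the delicate point: one must verify that a factor $\theta_j \notin \Xi_\alpha$ cannot fall into an unrelated component sitting at the same level $N$ as $\alpha$. This is exactly what the conjunction of Corollary \ref{Cor_pre_Prop_linearity} (which places all factors except possibly $\theta_l$ in the lower level $\Xi_{N-1}$) and the general observation $[\theta_j] \preceq [\theta] = \alpha$ (forcing any strictly lower class into $\Xi_{\prec\alpha}$) rules out, so that no residual quadratic-in-$J^{(\alpha)}$ contribution can survive the substitution.
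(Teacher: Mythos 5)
Your proof is correct and follows essentially the same route as the paper's: the affineness rests on Corollary \ref{Cor_pre_Prop_linearity} (with the level‑set observation from Remark \ref{Connexite_et_palliers} placing $\Xi_\alpha$ in a single layer $\Xi_N\setminus\Xi_{N-1}$), and the formula follows from projecting the fixed‑point equation and inverting $I-z\vec{\psi}_\alpha(z)$ near $z=0$, then extending by rationality. You simply spell out more carefully the bookkeeping that the paper leaves implicit — namely that the non‑$\alpha$ factors of each monomial land in $\Xi_{\prec\alpha}$ and are therefore converted into the known scalars $v^{(\prec\alpha)}_z(\theta_j)$ under the substitution.
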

\begin{proof}
	Fix $\theta$ in $\Xi_\alpha$. By Corollary \ref{Cor_pre_Prop_linearity},  we have that $J\mapsto\psi(J)(\theta)$ is affine with respect to the variables $\{J(\theta_\alpha)\}_{\theta_\alpha\in\Xi_\alpha}$. It follows by definition of $\psi_\alpha$ (defined in (\ref{Def_de_psi_alpha_induction})), that $\psi_\alpha$ is also affine with respect to the variables $\{J([\theta_\alpha])\}_{[\theta_\alpha]\in\Gamma\backslash \Xi_\alpha}$.

Now since $(I-0\cdot\vec{\psi}_\alpha(0))=I$, we have in a neighbourhood of $z=0$, the formula \ref{forme_fractionelle_de_v_alpha}, which generalizes to any complex number $z\in\mathbb{C}$.
\end{proof}

\subsection{Spectrum of the differentials of \texorpdfstring{$\psi$}{Psi}}
	In this subsection we study the spectrum of the differential $D_{v_z}\psi$ of $\psi$ at $v_z$, for $z\in\overline{\mathbb{D}}(0,R)$. Note that from Proposition \ref{Graduation_de_V} and definition of $\preceq$ the differential of $\psi$ at any vector $J\in E$ is block triangular, that is, it has the matrix form:  
$$
	D_{v_z}\psi=\begin{pmatrix}
\vec{\psi}_{\alpha_1}(z) &0&\dots&0\\
\ast  &\ddots&\ddots&\vdots\\
\vdots&\ddots&\vec{\psi}_{\alpha_N}(z)&0\\
\ast  &\dots &\ast&\dfrac{\partial \psi_\infty}{\partial J^{(\infty)}}(z,v_z)
\end{pmatrix}.$$

	We prove in Corollary \ref{Cor_Spectrum_alpha}, that any non-zero complex number $z\in\mathbb{D}(0,R_{<\infty})$ satisfies:
$$\frac{1}{z}\notin Spec(\vec{\psi_\alpha}(z)),$$
	for any $\alpha\prec\infty$.
	So that if $z\in\overline{\mathbb{D}}(0,R)$, $z\neq 0$ has its inverse $\frac{1}{z}$ in the Spectrum of $D_{v_z}\psi$, then $\frac{1}{z}$ must belong to $Spec\left(\frac{\partial\psi_\infty}{\partial J^{(\infty)}}(v_z^{(\infty)})\right)$.
	
	We also characterize in Proposition \ref{Prop_Spectrum_belonging_A}, the non-zero complex number $z\in\overline{\mathbb{D}}(0,R)$, such that its inverse $\frac{1}{z}$ is in the spectrum of $\frac{\partial\psi_\infty}{\partial J^{(\infty)}}(v_z^{(\infty)})$.
	
These points will be useful to prove the regularity of $\mathcal{C}$ over $\{u(z): z \in \overline{\mathbb{D}(0,R)}\}$.
	
	We use the notations from the definitions \ref{Def_sets_Xi_alpha_spaces_E_alpha} and \ref{Def_v_alpha_R_alpha}. So we have
	$$R_{<\infty}:=\min_{\alpha\prec\infty}(R_\alpha).$$
	From Lemma \ref{Asymp-Rationality_of_g_xi_<_infty} in \cite{Asymp} the map $v^{(<\infty)}$ is rational over $\mathbb{C}$ with values in the complex finite dimension vector space $E_{<\infty}$. And for $z \neq R_{<\infty}$, 
	$$\lim_{z\to R_{<\infty}} \sup_{\Xi_{<\infty}}|v^{(<\infty)}_z|=\infty.$$
	
	Recall that by definition, for any $z\in \mathbb{D}(0,R_{<\infty})$, 
	$$\forall (a,b)_y\in\Xi_{<\infty},\, v^{(<\infty)}_z((a,b)_y)=G_z(a,b;\mathcal{B}(y)^\complement).$$
	
	Thus by finiteness of the Green's functions at $z=R$ (See \cite{Woess_2000} Chapter 2) and since $R_\infty \geq R$ (see remark \ref{Rem_R_infini_is_min}), we get the inequalities:
	\begin{equation}
		R\leq R_\infty < R_{<\infty}.
	\end{equation}
In particular we have the following characterisation of the radius of convergence of restricted Green's functions, that we state as a remark and will not be used in the sequel:
\begin{Prop}\label{Charac_radius}
Let $(a,b)_y$ be in $\Xi$ and denote by $R(a,b;\mathcal{B}(y)^\complement)$ the radius of convergence of the restricted Green's function $z\mapsto G_z(a,b;\mathcal{B}(y)^\complement)$. We have the strict inequality $R(a,b;\mathcal{B}(y)^\complement)>R$ if, and only if, there exists a non-negative integer $N$ such that, any $p$-admissible path $\gamma=(\omega_0,...,\omega_n)$ in $\mathcal{P}h(a,b;\mathcal{B}(y)^\complement)$ verifies:

$$ \max_{0\leq i \leq n} d(\omega_i,y)\leq N.$$
\end{Prop}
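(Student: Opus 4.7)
My plan is to prove the two implications separately, leveraging the hierarchical structure of $\mathcal{V}$ (Proposition \ref{Graduation_de_V}), the connectedness of $\mathcal{V}_\infty$ (Corollary \ref{Cor_connexite_de_V_infini}), the Comparison Lemma \ref{Comparison_Lemma_alpha_beta}, and the equality $R_\infty = R$ from Remark \ref{Rem_R_infini_is_min}.

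For the direction \og$\exists N\Rightarrow R(a,b;\mathcal{B}(y)^\complement)>R$\fg{}, the hypothesis reads exactly $(a,b)_y\in\Xi_N$, hence $[(a,b)_y]\in V_{<\infty}$, so $[(a,b)_y]\in C_\alpha$ for some $\alpha\prec\infty$. The function $z\mapsto G_z(a,b;\mathcal{B}(y)^\complement)$ is then a coordinate of the holomorphic map $v^{(\alpha)}$, which by definition of $R_\alpha$ admits a holomorphic extension to $\mathbb{D}(0,R_\alpha)$. Since $R_\alpha\geq R_{<\infty}>R$, this yields $R(a,b;\mathcal{B}(y)^\complement)\geq R_{<\infty}>R$.

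For the converse I argue by contrapositive: assume $(a,b)_y\in\Xi_\infty$, so that $[(a,b)_y]\in V_\infty$, and show $R(a,b;\mathcal{B}(y)^\complement)=R$ (which combined with the general lower bound $R(a,b;\mathcal{B}(y)^\complement)\geq R$ rules out strict inequality). Since $V_\infty$ is finite and $R_\infty=\min_{[\theta]\in V_\infty}R_{[\theta]}$, the minimum is attained at some $[\theta_*]\in V_\infty$. By connectedness of $\mathcal{V}_\infty$ there exist directed paths in $\mathcal{V}_\infty$ in both directions between $[\theta_*]$ and $[(a,b)_y]$. Applying the Corollary of Lemma \ref{Comparison_Lemma_alpha_beta} (with $\alpha=\infty$) in both directions, I obtain for all $r\in(0,R_\infty)$ the two-sided bounds
$$v_r([(a,b)_y])\geq C r^e v_r([\theta_*]) m(r)^\kappa, \qquad v_r([\theta_*])\geq C' r^{e'} v_r([(a,b)_y]) m(r)^{\kappa'},$$
where $m(r):=\min_{[\theta']\in\Gamma\backslash\Xi}v_r([\theta'])$ is strictly positive for $r>0$ (by irreducibility of $(X_0,p)$ and non-negativity of the Taylor coefficients of each $v_r$) and non-decreasing in $r$. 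On each compact subinterval of $(0,R_\infty)$, $m$ is therefore bounded below by a positive constant, so the above become genuine multiplicative comparisons forcing $v_r([(a,b)_y])$ and $v_r([\theta_*])$ to share the same real-axis behavior on $(0,R_\infty)$. Since the radius of convergence of a power series with non-negative coefficients is determined by its behavior along the positive real axis, it follows that $R_{[(a,b)_y]}=R_{[\theta_*]}=R_\infty$. Invoking the equality $R_\infty=R$ from Remark \ref{Rem_R_infini_is_min} finally gives $R(a,b;\mathcal{B}(y)^\complement)=R$.

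The main delicate step is the deduction of equal radii of convergence from the pointwise two-sided comparison on $(0,R_\infty)$: it rests on the strict positivity and lower boundedness of $m(r)$ on compact subintervals, which follows from irreducibility combined with non-negativity of coefficients. Everything else is a direct unwinding of the definitions and of results already established in the preceding subsections, the deep input being the equality $R_\infty=R$ (which itself relies on Proposition \ref{Regularity_of_C}).
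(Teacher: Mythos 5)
Your forward direction ($(a,b)_y\in\Xi_{<\infty}\Rightarrow R(a,b;\mathcal{B}(y)^\complement)\geq R_{<\infty}>R$) is correct and matches the paper. The converse, however, has a genuine gap. You argue by contrapositive: take $(a,b)_y\in\Xi_\infty$, pick $[\theta_*]\in V_\infty$ realizing the minimum $R_{[\theta_*]}=R_\infty$, use connectedness of $\mathcal{V}_\infty$ and the Comparison Lemma \ref{Comparison_Lemma_alpha_beta} in both directions to get a two-sided multiplicative bound $c_1\,v_r([\theta_*])\leq v_r([(a,b)_y])\leq c_2\,v_r([\theta_*])$ on compact subintervals of $(0,R_\infty)$, and then claim this ``forces'' the radii of convergence to coincide. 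That last step fails: a two-sided multiplicative comparison on the positive real axis does \emph{not} determine the radius of convergence. The crucial feature of this setting is that the restricted Green's functions are dominated by the Green's function $G_z(a,b)$, which is \emph{finite} at $z=R$ (see \cite{Woess_2000}), so every coordinate of $v_r$ stays bounded as $r\to R_\infty^-=R^-$: the singularity at $R$ is a branch point, not a pole. Consequently your two-sided bound gives only that both sides are bounded, and a bounded power series with non-negative coefficients on $(0,R)$ can perfectly well have radius of convergence exactly $R$ (e.g. $\sum z^n/n^2$) or radius strictly larger than $R$; the comparison cannot distinguish these. So the hypothetical counterexample ``$v_z([(a,b)_y])$ extends past $R$ while $v_z([\theta_*])$ does not'' is not ruled out by your argument.

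What actually closes the gap — and is the paper's implicit argument — is not a bound on $v_r$ itself but on its derivative. From the differentiated fixed-point equation, one has (equation (\ref{eq4251}) in the proof of Corollary \ref{1surR=rho_D_v_R_psi})
\begin{equation*}
\frac{d v_r^{(\infty)}}{dz}=\Bigl(I-r\,\tfrac{\partial\psi_\infty}{\partial J^{(\infty)}}(r,v_r^{(\infty)})\Bigr)^{-1} q(r),
\end{equation*}
where $q(r)$ has strictly positive coordinates for $r\in(0,R)$. As $r\to R^-$, the determinant of $I-r\frac{\partial\psi_\infty}{\partial J^{(\infty)}}(r,v_r^{(\infty)})$ tends to $0$ while its adjugate, by the Perron--Frobenius argument spelled out in the footnote to the proof of Corollary \ref{Cor_Spectrum_alpha}, converges to a rank-one matrix with entries all of the same strict sign; applied to the positive vector $q(r)$, this forces \emph{every} coordinate of $\frac{d v_r^{(\infty)}}{dz}$ to diverge to $+\infty$. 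Hence none of the maps $z\mapsto v_z([\theta])$, $[\theta]\in V_\infty$, can extend holomorphically past $R$, which gives $R_{[\theta]}\leq R$ and, combined with $R_{[\theta]}\geq R_\infty=R$, the desired equality $R(a,b;\mathcal{B}(y)^\complement)=R$ for all $(a,b)_y\in\Xi_\infty$. In short, the irreducibility of the Perron operator $\frac{\partial\psi_\infty}{\partial J^{(\infty)}}(R,v_R^{(\infty)})$ (Corollary \ref{Cor_partial_psi_infini_is_Perron_Irred}) does play the role you attributed to the Comparison Lemma, but it must be applied to the derivative and not to $v_r$ itself.
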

\begin{proof}
We've just proven the reciprocal direction ($(a,b)_y\in\Xi_{<\infty}\Rightarrow R(a,b;\mathcal{B}(y)^\complement)>R$). The direct one will be a consequence of the equalities $R_\infty=R(a,b;\mathcal{B}(y)^\complement)$ for any $(a,b)_y\in\Xi_\infty$, and of the equality $R=R_\infty$.
\end{proof}

For any $\alpha\prec\infty$, from formula (\ref{forme_fractionelle_de_v_alpha}), we can say that any singularity $z\neq 0$ of the rational map $v^{(\alpha)}$ is either a singularity of $v^{(\beta)}$ for some $\beta\prec\alpha$, or has its inverse in the spectrum of the linear map $\vec{\psi}_\alpha(z)$,
\begin{equation}\label{belong_spectrum_alpha}
\frac{1}{z}\in Spec \left(\vec{\psi}_\alpha(z) \right).
\end{equation}

We deduce that if $z\in\mathbb{C}\setminus\{0\}$ is a singularity of $v^{(<\infty)}$, then its inverse $\frac{1}{z}$ is in the spectrum of some $\vec{\psi}_\alpha(z)$, for some $\alpha\prec\infty$. 

In particular, by the definition of $R_{<\infty}$, we know that $v^{(<\infty)}$ does not possess any pole in $\mathbb{D}(0,R_{<\infty})$. We would like to assert that this implies that, $\frac{1}{z}\notin Spec(\vec{\psi}_\alpha(z))$ for any non-zero $z\in\mathbb{D}(0,R_{<\infty})$. However, this may not be true in general. What allows us to conclude is the Lemma:

\begin{Lem}\label{Lem_7.12}
	Let $\alpha\in\Lambda$ be such that $\alpha\prec\infty$. If $\vec{\psi}_\alpha$ is not identically $0$, then for any real number $r\in ]0,R_{\prec\alpha}[$, the operator $\vec{\psi}_\alpha(r): E_\alpha\to E_\alpha$ is Perron irreducible.
\end{Lem}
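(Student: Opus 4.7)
The plan is to realize $\vec{\psi}_\alpha(r)$ as a non-negative matrix indexed by $C_\alpha$ whose associated directed graph is exactly the subgraph of $\mathcal{V}$ induced by $C_\alpha$, and then invoke the standard equivalence between Perron irreducibility and strong connectedness of the associated digraph. Concretely, I would proceed in three steps: first compute the entries of $\vec{\psi}_\alpha(r)$, second use Corollary \ref{Cor_pre_Prop_linearity} to identify the support of these entries with edges of $\mathcal{V}$ inside $C_\alpha$, and third use the definition of connected component of $\mathcal{V}$ to conclude strong connectedness.

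For the first step I would observe that, since $C_\alpha \subseteq V_N\setminus V_{N-1}$ for some $N$ by Remark \ref{Connexite_et_palliers} and $\alpha\prec\infty$, Corollary \ref{Cor_pre_Prop_linearity} says that for any $[\theta_a]\in C_\alpha$ each monomial of $J\mapsto \psi(J)([\theta_a])$ contains at most one factor $J([\theta'])$ with $[\theta']\in \Xi_N\setminus \Xi_{N-1}$. Consequently, for $[\theta_s]\in C_\alpha$ the partial derivative $\partial_{J([\theta_s])}\psi([\theta_a])$ is a polynomial in the variables $(J([\theta']))_{[\theta']\in \Xi_{\prec\alpha}}$ alone, with non-negative coefficients. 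Hence
$$\bigl(\vec{\psi}_\alpha(r)\bigr)_{[\theta_s],[\theta_a]} \;=\; \partial_{J([\theta_s])}\psi([\theta_a])\bigl(v^{(\prec\alpha)}(r)\bigr).$$

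For the second step, I would note that for $r\in ]0,R_{\prec\alpha}[$ every coordinate of $v^{(\prec\alpha)}(r)$ is strictly positive: indeed, for $[\theta']=[(a,b)_y]\in\Xi_{\prec\alpha}$ the set $\mathcal{P}h(a,b;\mathcal{B}(y)^\complement)$ is non-empty, so $G_r(a,b;\mathcal{B}(y)^\complement)>0$. Evaluating a non-negative polynomial at a point with strictly positive coordinates gives a positive value if and only if the polynomial is non-zero, so the entry above is positive exactly when $\partial_{J([\theta_s])}\psi([\theta_a])$ is not identically zero, which by Definition \ref{Def_of_G_Xi} and the remark following Definition \ref{Definition_of_V} amounts to saying that $([\theta_s],[\theta_a])$ is an edge of $\mathcal{V}$. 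The support digraph of $\vec{\psi}_\alpha(r)$ is therefore independent of $r\in ]0,R_{\prec\alpha}[$ and is precisely the directed subgraph of $\mathcal{V}$ induced by $C_\alpha$.

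For the third step I would invoke the definition of connected component: any path in $\mathcal{V}$ joining two vertices of $C_\alpha$ must stay inside $C_\alpha$ (otherwise a detour through $[\theta'']\notin C_\alpha$ would, together with strong connectedness inside $C_\alpha$, produce a cycle through $[\theta'']$ and a vertex of $C_\alpha$, forcing $[\theta'']\in C_\alpha$ by Definition \ref{Def_Directed_Graph}). Hence the induced subgraph on $C_\alpha$ is itself (strongly) connected whenever $|C_\alpha|\geq 2$; and if $|C_\alpha|=1$ then the hypothesis that $\vec{\psi}_\alpha\not\equiv 0$ forces the existence of a self-loop, so in all cases the support digraph of $\vec{\psi}_\alpha(r)$ is strongly connected. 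By the standard characterization of Perron irreducibility for non-negative matrices via strong connectedness of the associated digraph, this concludes the proof. The only mild obstacle I anticipate is bookkeeping in the first step—namely, verifying that the partial derivative does not secretly involve variables from other components of $V_N\setminus V_{N-1}$ that lie outside $\Xi_{\preceq\alpha}$—and this is exactly what the combination of Corollary \ref{Cor_pre_Prop_linearity} with the partial order $\preceq$ on components rules out.
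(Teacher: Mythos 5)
Your proof is correct and follows essentially the same route as the paper's: both compute the entries of $\vec{\psi}_\alpha(r)$ as evaluations of partial derivatives of $\psi$ at a strictly positive point, identify the support digraph of this non-negative matrix with the induced subgraph of $\mathcal{V}$ on $C_\alpha$, and conclude via the strong-connectedness characterization of Perron irreducibility. The paper packages the "paths give positivity" step by invoking Proposition \ref{Prop_Chemin_et_derivee} directly for the full operator $D_{s_\alpha(z)(J^{(\alpha)})}\psi$ with $J^{(\alpha)}>0$, whereas you reproduce its content by hand at the level of individual matrix entries and then cite the standard graph-theoretic criterion; these are the same argument in slightly different packaging.
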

\begin{Rem}
Suppose that for any complex number $z\in\mathbb{D}(0,R_\alpha)$, the linear operator $\vec{\psi}_\alpha(z)$ on $E_\alpha$ is zero. By definition, there cannot be any edges in $\mathcal{V}$ between vertices of $C_\alpha$, otherwise $\vec{\psi}_\alpha(z)$ would not be zero. Since $C_\alpha$ is a connected component, it possesses one, and only one element, that does not admit a loop edge (i.e an edge in $\mathcal{V}$ with its source equals to its aim). In this case $v^{(\alpha)}$ is a polynomial function of $z$ and $v^{(\beta)}$ for $\beta\prec\alpha$.
\end{Rem}
\begin{proof}
	We identify $E_\alpha$ with the finite-dimensional vector space $\mathcal{F}(\Gamma\backslash\Xi_\alpha,\mathbb{C})$, endowed with the basis $(1_{[\theta] })_{[\theta]\in\Gamma\backslash\Xi_\alpha}$. Recall that $\vec{\psi}_\alpha(r)$ is an irreducible Perron operator if there exists an integer $N$ such that for any $[\theta_s] ,[\theta_a]\in C_\alpha$, we have  (see Definition \ref{Asymp-Def_Perron_operator} in the Appendix of \cite{Asymp}):
	\begin{equation}\label{equation_temp_2b}
		\sum_{n=0}^N \vec{\psi}_\alpha^{\circ n}(z)(1_{[\theta_s] })([\theta_a] ) >0.
	\end{equation}

	We fix $J^{(\alpha)}\in E_\alpha$ strictly positive over $\Xi_\alpha$. Since $\vec{\psi}_\alpha(z)$ is linear, we have that $D_{J^{(\alpha)}}(\vec{\psi}_\alpha(z)):T_{J^{(\alpha)}}E \to T_{\psi_\alpha(z,J^{(\alpha)})}E$ equals $\vec{\psi}_\alpha(z)$, when we identify the tangent spaces of $E_\alpha$ at a point to $E_\alpha$ itself.
	Hence:
\begin{equation}\label{eqts1}
	 \vec{\psi}_\alpha(z)=D_{J^{(\alpha)}}(\vec{\psi}_\alpha(z))=\pi_\alpha \circ D_{s_\alpha(z)(J^{(\alpha)})}\psi \circ D_z s_\alpha(z),
 \end{equation}
 	with $D_z s_\alpha$ identified with the inclusion $E_\alpha \hookrightarrow E$.
	For all $([\theta_s] ,[\theta_a]) \in C_\alpha\times C_\alpha$:
	\begin{equation}\label{eq-t4}
		\vec{\psi}_\alpha(z)(1_{[\theta_s] })([\theta_a] )=(\pi_\alpha \circ D_{s_\alpha(z)(J^{(\alpha)})}\psi)(1_{[\theta_s] })([\theta_a] )=D_{s_\alpha(z)(J^{(\alpha)})}\psi(1_{[\theta_s] })([\theta_a] ).
	\end{equation}
	Finally, for each pair $([\theta_s] ,[\theta_a]) \in C_\alpha\times C_\alpha$, by connectedness of $C_\alpha$, let $N([\theta_s] ,[\theta_a] )$ be such that there exists a path of length $N([\theta_s] ,[\theta_a] )$ in $\mathcal{V}$ from $[\theta_s] $ to $[\theta_a] $. Then set $N:=\max_{([\theta_s] ,[\theta_a] )}(N([\theta_s] ,[\theta_a] ))$.
	Given the above equalities (\ref{eq-t4}), Proposition \ref{Prop_Chemin_et_derivee} immediately gives (\ref{equation_temp_2b}). Hence $\vec{\psi}_\alpha(r)$ is an irreducible Perron operator.
\end{proof}
Note that the previous proof adapts to the case $\alpha=\infty$, taking $\frac{\partial\psi_\infty}{\partial J^{(\infty)}}(z,J^{(\infty)})$ instead of $\vec{\psi}_\alpha$ and replacing equality (\ref{eqts1}), by its analogue, in this case: 
\begin{equation}
\frac{\partial\psi_\infty}{\partial J^{(\infty)}}(z,J^{(\infty)}) =\pi_\infty\circ D_{s_\infty(z)(J^{(\infty)})}\psi \circ D_z s_\infty(z).
\end{equation}
We thus have
\begin{Cor}\label{Cor_partial_psi_infini_is_Perron_Irred}
For any real number $0<r<R_{<\infty}$, and any positive function $J^{(\infty)}\in E_\infty$, the operator $\frac{\partial\psi_\infty}{\partial J^{(\infty)}}(r,J^{(\infty)})$ is an irreducible Perron operator. 
\end{Cor}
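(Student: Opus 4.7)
The plan is to adapt the argument of Lemma \ref{Lem_7.12} to the index $\alpha = \infty$, substituting the chain-rule identity
\[
\frac{\partial\psi_\infty}{\partial J^{(\infty)}}(z,J^{(\infty)}) =\pi_\infty\circ D_{s_\infty(z)(J^{(\infty)})}\psi \circ D_z s_\infty(z)
\]
displayed above for the identity (\ref{eqts1}) used in the earlier proof. Since $D_z s_\infty$ is the inclusion $E_\infty \hookrightarrow E$, for any pair of vertices $[\theta_s], [\theta_a] \in V_\infty$ this specializes to
\[
\tfrac{\partial\psi_\infty}{\partial J^{(\infty)}}(r,J^{(\infty)})(1_{[\theta_s]})([\theta_a]) = D_{s_\infty(r)(J^{(\infty)})}\psi(1_{[\theta_s]})([\theta_a]),
\]
which is the exact analogue of equality (\ref{eq-t4}) in this setting.

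Next I would verify the positivity hypothesis needed to invoke Proposition \ref{Prop_Chemin_et_derivee}. The vector $s_\infty(r)(J^{(\infty)}) = J^{(\infty)} + v_r^{(\prec\infty)} \in E$ has non-negative coordinates: its $E_\infty$-part is the strictly positive $J^{(\infty)}$ by hypothesis, and its $E_{<\infty}$-part $v_r^{(\prec\infty)}$ consists of restricted Green's functions evaluated at the positive real $0 < r < R_{<\infty}$, hence is non-negative. An induction on the iteration order $n$, mirroring the induction in Proposition \ref{Prop_Chemin_et_derivee}, then shows that the positivity of the $n$-th matrix coefficient of $\tfrac{\partial\psi_\infty}{\partial J^{(\infty)}}(r,J^{(\infty)})$ at $([\theta_a],[\theta_s])$ is equivalent to the existence of a directed path of length $n$ in $\mathcal{V}$ from $[\theta_s]$ to $[\theta_a]$. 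Here Lemma \ref{Graduation_de_V}, ensuring that any edge emanating from $V_\infty$ lands in $V_\infty$, guarantees that the projection $\pi_\infty$ inserted at each step loses no relevant contribution: the only paths starting in $V_\infty$ remain in $\mathcal{V}_\infty$ throughout.

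Finally, I invoke Corollary \ref{Cor_connexite_de_V_infini} asserting the connectedness of $\mathcal{V}_\infty$, together with the finiteness of $V_\infty$, to select an integer $N := \max_{([\theta_s],[\theta_a]) \in V_\infty \times V_\infty} N([\theta_s],[\theta_a])$, where $N([\theta_s],[\theta_a])$ is the length of some chosen path in $\mathcal{V}_\infty$ from $[\theta_s]$ to $[\theta_a]$. By the equivalence established in the preceding paragraph,
\[
\sum_{n=0}^N \Bigl(\tfrac{\partial\psi_\infty}{\partial J^{(\infty)}}(r,J^{(\infty)})\Bigr)^{\circ n}(1_{[\theta_s]})([\theta_a]) > 0
\]
for every such pair, which is precisely the Perron-irreducibility condition recalled in (\ref{equation_temp_2b}). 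The only delicate point is the bookkeeping for the iterated chain-rule identity and the fact that cross-terms involving $v_r^{(\prec\infty)}$ do not spoil the matching with paths in $\mathcal{V}$; but this is exactly what Lemma \ref{Graduation_de_V} handles, so no new ingredient beyond the proof of Lemma \ref{Lem_7.12} is needed.
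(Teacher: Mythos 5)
Your proposal is correct and follows precisely the route the paper itself indicates: adapt the proof of Lemma \ref{Lem_7.12} to the index $\alpha=\infty$ via the chain-rule identity for $\frac{\partial\psi_\infty}{\partial J^{(\infty)}}$, then invoke Proposition \ref{Prop_Chemin_et_derivee} together with the connectedness of $\mathcal{V}_\infty$ (Corollary \ref{Cor_connexite_de_V_infini}) and the finiteness of $V_\infty$. You also make explicit a point the paper leaves tacit — that the intermediate projections $\pi_\infty$ in the iterates discard no contributing paths, since by Lemma \ref{Graduation_de_V} any directed path joining two vertices of $V_\infty$ stays inside $V_\infty$ — which is a useful clarification rather than a departure from the intended argument.
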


\begin{Cor}\label{Cor_Spectrum_alpha}
	For any $\alpha\prec \infty$ in $\Lambda$, for any non-zero complex number $z\in\mathbb{D}(0,R_\alpha)$, its inverse $\frac{1}{z}$ is not in $Spec\left(\vec{\psi}_\alpha(z)\right)$.
\end{Cor}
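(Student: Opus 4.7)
The strategy combines the Perron--Frobenius irreducibility of $\vec{\psi}_\alpha(r)$ from Lemma~\ref{Lem_7.12} with the fixed-point equation satisfied by $v^{(\alpha)}$, and then transfers the result from real positive $r$ to arbitrary complex $z$ through an entrywise majorization. If $\vec{\psi}_\alpha \equiv 0$, its spectrum is $\{0\}$ and $z \neq 0$ gives the result at once; otherwise, by Lemma~\ref{Lem_7.12}, for every $r \in \,]0, R_{\prec\alpha}[$ the operator $\vec{\psi}_\alpha(r)$ is an irreducible Perron operator, so its spectral radius $\rho(r)$ is a simple eigenvalue with strictly positive left and right Perron eigenvectors $u(r), w(r) > 0$.

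The heart of the proof is to show that $r\rho(r) < 1$ for every $r \in \,]0, R_\alpha[$. To obtain $\leq 1$, I use the affineness of $J^{(\alpha)} \mapsto \psi_\alpha(r, J^{(\alpha)})$ just established, which rewrites the fixed-point equation $v^{(\alpha)}_r = r\,\psi_\alpha(r, v^{(\alpha)}_r)$ as $(I - r\vec{\psi}_\alpha(r))\,v^{(\alpha)}_r = r\,\psi_\alpha(r, 0_\alpha)$, and then pair with $u(r)$:
\begin{equation*}
(1 - r\rho(r))\, u(r)^{T} v^{(\alpha)}_r \;=\; r\, u(r)^{T} \psi_\alpha(r, 0_\alpha) \;\geq\; 0.
\end{equation*}
For $r > 0$ and $[(a,b)_y] \in C_\alpha$, the coordinate $v^{(\alpha)}_r([(a,b)_y]) = G_r(a,b;\mathcal{B}(y)^\complement)$ is strictly positive (because $\mathcal{P}h(a,b;\mathcal{B}(y)^\complement)$ is non-empty by definition of $\Xi$), and $u(r) > 0$, so $u(r)^{T} v^{(\alpha)}_r > 0$, forcing $r\rho(r) \leq 1$. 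To upgrade to strict inequality, I exploit that $r \mapsto r\rho(r)$ is non-decreasing (the entries of $\vec{\psi}_\alpha(r)$ are power series in $r$ with non-negative coefficients, and the spectral radius is monotone under entrywise domination of non-negative matrices) and real-analytic on $\,]0, R_\alpha[$ (by simplicity of the Perron eigenvalue). If $r_0\rho(r_0) = 1$ held at some $r_0 \in \,]0, R_\alpha[$, the sandwich between the bound $\leq 1$ and monotonicity would force $r\rho(r) \equiv 1$ on $[r_0, R_\alpha[$, hence on all of $\,]0, R_\alpha[$ by the identity principle; this contradicts $\lim_{r \to 0^+} r\rho(r) = 0$, which follows from the boundedness of $\rho$ near $0$ ($\vec{\psi}_\alpha$ being analytic there).

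Having established $r\rho(r) < 1$ on $\,]0, R_\alpha[$, the complex case follows by a majorization argument. Each entry of $\vec{\psi}_\alpha(z)$ is a power series in $z$ with non-negative coefficients (inherited from $\psi$ and from the restricted Green's functions composing $v^{(\prec\alpha)}$), so for $|z| < R_\alpha$ we have $|\vec{\psi}_\alpha(z)_{ij}| \leq \vec{\psi}_\alpha(|z|)_{ij}$ componentwise. The classical spectral-radius comparison for non-negative majorants then yields $\rho(\vec{\psi}_\alpha(z)) \leq \rho(\vec{\psi}_\alpha(|z|)) < 1/|z|$, so every eigenvalue $\lambda$ of $\vec{\psi}_\alpha(z)$ satisfies $|\lambda| < |1/z|$ and cannot coincide with $1/z$. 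The main obstacle is the promotion of the weak inequality $r\rho(r) \leq 1$ to the strict one; the monotonicity-plus-analytic-continuation trick resolves it cleanly, but requires careful use of the simplicity of the Perron eigenvalue on $\,]0, R_\alpha[$ and of the continuity of $\rho$ at $0$.
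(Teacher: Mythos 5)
Your proposal is correct, but it reaches the key real-positive-$r$ fact by a different route than the paper. The shared structure is: treat the real axis first (via Perron irreducibility of $\vec{\psi}_\alpha(r)$ from Lemma~\ref{Lem_7.12}), then transfer to complex $z$ by the entrywise majorization $|\vec{\psi}_\alpha(z)_{ij}| \le \vec{\psi}_\alpha(|z|)_{ij}$ and the spectral-radius comparison (Lemma~\ref{inequality_spectral_radius}). Where you diverge is in the heart of the real case. The paper argues by contradiction: assuming $1/z\in\operatorname{Spec}(\vec{\psi}_\alpha(z))$ for some $z\in\overline{\mathbb{D}(0,R_\alpha)}$, it uses the intermediate value theorem (with continuity of $r\mapsto\rho(\vec{\psi}_\alpha(r))$ established via continuity of roots of polynomials) to produce $r_0\le|z|$ with $1/r_0=\rho(\vec{\psi}_\alpha(r_0))$, and then shows via an adjugate-matrix computation that $v^{(\alpha)}$ must blow up at $r_0$, forcing $r_0=R_\alpha$. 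You instead pair the fixed-point identity $(I-r\vec{\psi}_\alpha(r))\,v^{(\alpha)}_r=r\,\psi_\alpha(r,0_\alpha)$ against the left Perron eigenvector to get $r\rho(r)\le 1$ directly, then promote to strict inequality via monotonicity, real-analyticity of the simple Perron eigenvalue, and the identity principle. Your approach is somewhat more self-contained (no adjugate, no IVT) while the paper's blow-up argument has the side benefit of locating $R_\alpha$ precisely as the first $r$ with $1/r=\rho(\vec{\psi}_\alpha(r))$, a useful fact it reuses elsewhere.

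One small simplification you missed: the analyticity/identity-principle step is actually unnecessary. Since each coordinate of $\psi_\alpha(\cdot,0_\alpha)$ is a power series in $r$ with non-negative coefficients, it is either identically zero or strictly positive for all $r>0$; and the vector $\psi_\alpha(\cdot,0_\alpha)$ cannot vanish identically, for otherwise the fixed-point equation would force $v^{(\alpha)}_r\equiv 0$ near $0$ (hence everywhere), contradicting the strict positivity of the restricted Green's functions for $r>0$. Consequently $u(r)^{T}\psi_\alpha(r,0_\alpha)>0$ for all $r\in\,]0,R_\alpha[$ (since $u(r)>0$), and the pairing already yields $1-r\rho(r)>0$ strictly, with no need to pass through the monotonicity-plus-analytic-continuation step.
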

\begin{proof}
	We prove this result by induction on $\alpha\prec \infty$, with respect to $\preceq$. 
	
	Let $\alpha$ be in $\Lambda$ and suppose that the result is verified for any $\beta \prec \alpha$. By quantifying on the null set, this property is trivially satisfied when $\alpha$ is a minimal element of $\Lambda$.
	
	If $\vec{\psi}_\alpha\equiv 0$ is the null function, then the result is trivially true. Else, $\vec{\psi}_\alpha$ is not identically zero, and in particular $r\mapsto \rho\left(\vec{\psi}_\alpha(r)\right)$ is a non-decreasing function of $r\in [0,R_{\alpha}[$ (See Lemma \ref{inequality_spectral_radius} in the Appendix).
	Suppose there is some non-zero complex number $z\in\overline{\mathbb{D}(0,R_\alpha)}$ such that its inverse $\frac{1}{z}$ is in $Spec(\vec{\psi}_\alpha(z))$, we prove that necessarily $|z|=R_\alpha$.  By assumption, on $z$, we have
	$$\frac{1}{|z|}\leq \rho\left(\vec{\psi}_\alpha(z)\right)\leq \rho\left(\vec{\psi}_\alpha(|z|)\right).$$ 
	Using the intermediate values theorem, we deduce that there must exist a positive real number $0<r_0\leq|z|$ such that $\frac{1}{r_0}=\rho\left(\vec{\psi}_\alpha(r_0)\right)$. 
Note that we actually need the continuity of $r\mapsto \rho(\vec{\psi}_\alpha(r))$. To prove it
	Let $n\in\mathbb{N}$ be a non-negative integer that is fixed. The map that associates to any complex polynomial of degree $n$, its set of zeros is a continuous map in the coefficients of the polynomial (See \cite{Whitney_1972} App. V.4, page 363)\footnote{We actually don't need such a strong result, since the spectral radius of $\vec{\psi}_\alpha (r)$, for $r>0$, is a simple root of the characteristic polynomial associated with the operator, a variant of Rouché's Theorem gives the continuity of the function $r\mapsto \rho(\vec{\psi}_\alpha(r))$.}. Since the map $r\mapsto \vec{\psi}_\alpha(r)$ is clearly continuous, the map $r\mapsto \rho(\vec{\psi}_\alpha(r))$ is also continuous.

	For any real number $0<r<R_\alpha$, the vector $r\psi_\alpha(r,0_\alpha)$ is a non-negative function in $E_\alpha$, and $\vec{\psi}_\alpha(r)$ is Perron irreducible, thus we must have
	$$\lim_{r\to r_0}v^{(\alpha)}(r)=\lim_{r\to r_0}\left(I-r\vec{\psi}_\alpha(r)\right)^{-1}(r \psi_\alpha(r,0_\alpha))=\infty.$$
	Indeed, to obtain the above limit, it suffices to check that $r_0 \psi_\alpha(r_0,0_\alpha)$ is not in the kernel of the adjugate matrix of $\left(I-r_0\vec{\psi}_\alpha(r_0)\right)$. It can be verified\footnote{
	
Since $\left(I-r_0\vec{\psi}_\alpha(r_0)\right)$ is a matrix with real coefficients, its adjugate also has real coefficients. From the formula 
$$\left(I-r_0\vec{\psi}_\alpha(r_0)\right)~^t Adj\left(I-r_0\vec{\psi}_\alpha(r_0)\right)
=~^t Adj\left(I-r_0\vec{\psi}_\alpha(r_0)\right)\left(I-r_0\vec{\psi}_\alpha(r_0)\right)=0,$$
we deduce that any row of the adjugate matrix $Adj\left(I-r_0\vec{\psi}_\alpha(r_0)\right)$ is either zero, or the transpose of a real eigenvector of $r_0\vec{\psi}_\alpha(r_0)$, associated with the eigenvalue $1$. Since $r_0\vec{\psi}_\alpha(r_0)$ is an irreducible Perron operator, its real eigenvectors associated with the eigenvalue $1$ have all its coefficients positive, or all its coefficients negative. The same treatment can be applied to the columns. Thus, if we prove that the adjugate matrix possesses at least one positive (resp. negative) coefficient, then all its coefficients are positive (resp. negative). This last point is immediate since the rank of the matrix $\left(I-r_0\vec{\psi}_\alpha(r_0)\right)$, is $n-1$, so the rank of its adjugate must be $1$, and thus $Adj\left(I-r_0\vec{\psi}_\alpha(r_0)\right)$ can not be the null matrix. 
} that the adjugate matrix of $\left(I-r_0\vec{\psi}_\alpha(r_0)\right)$ has all its coefficients positive, or all its coefficients negative. Since for any $r_0>0$, the vector $r_0 \psi_\alpha(r_0,0_\alpha)$ in $E_\alpha$ has positive coordinates, it cannot be in the kernel of the adjugate matrix of $\left(I-r_0\vec{\psi}_\alpha(r_0)\right)$.
	
	It follows that necessarily $r_0=R_\alpha\leq |z|$. The result follows for $\alpha$. The induction is complete and the Corollary follows.
\end{proof}

\begin{Prop}\label{Prop_Spectrum_belonging_B}
	For any non-zero complex number $z\in\overline{\mathbb{D}(0,R)}$, its inverse $1/z$ is in $Spec\left(D_{v_z}\psi\right)$ if, and only if $1/z$ is in $Spec\left(\frac{\partial\psi_\infty}{\partial J^{(\infty)}}(z,v_z^{(\infty)})\right)$.
	
	In this case if $\Lambda_{1/z}$ is the associated generalized eigenspace associated with the eigenvalue $\frac{1}{z}$ of the operator $D_{v_z}\psi$, and $\Lambda_{1/z}(\infty)$ is the one associated with $\frac{\partial \psi_\infty}{\partial J^{(\infty)}}(z,v_z^{(\infty)})$, then
		$$ \Lambda_{1/z}=\{0_{<\infty}\}\times \Lambda_{1/z}(\infty).$$
\end{Prop}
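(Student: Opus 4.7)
The plan is to exploit the lower block triangular form of $D_{v_z}\psi$ displayed just before the statement, in which the diagonal blocks are precisely the operators $\vec{\psi}_\alpha(z)$ for $\alpha\prec\infty$ (one for each connected component of $\mathcal{V}_{<\infty}$, ordered compatibly with $\preceq$) and the final block is $\frac{\partial \psi_\infty}{\partial J^{(\infty)}}(z,v_z^{(\infty)})$. For a block triangular operator the spectrum is the union of the spectra of the diagonal blocks, so it suffices to show that for any non-zero $z\in\overline{\mathbb{D}(0,R)}$ and any $\alpha\prec\infty$, one has $1/z\notin\mathrm{Spec}(\vec{\psi}_\alpha(z))$, and then to analyse the generalised eigenspace via invariance of $\{0_{<\infty}\}\times E_\infty$.

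First I would check the size inequality that allows Corollary \ref{Cor_Spectrum_alpha} to be invoked: combining Remark \ref{Rem_R_infini_is_min} (giving $R\leq R_\infty$) with the strict inequality $R_\infty<R_{<\infty}=\min_{\alpha\prec\infty}R_\alpha$ established just before the statement, one obtains
\[
|z|\;\leq\; R\;\leq\; R_\infty\;<\;R_\alpha\qquad\text{for every }\alpha\prec\infty.
\]
Hence $z\in\mathbb{D}(0,R_\alpha)$, and Corollary \ref{Cor_Spectrum_alpha} yields $1/z\notin\mathrm{Spec}(\vec{\psi}_\alpha(z))$ for every $\alpha\prec\infty$. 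By the block triangular description of $\mathrm{Spec}(D_{v_z}\psi)$, it follows that $1/z\in\mathrm{Spec}(D_{v_z}\psi)$ iff $1/z\in\mathrm{Spec}\bigl(\tfrac{\partial\psi_\infty}{\partial J^{(\infty)}}(z,v_z^{(\infty)})\bigr)$, which is the first assertion.

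For the generalised eigenspace, I would split any $J\in E$ as $J=(J^{(<\infty)},J^{(\infty)})\in E_{<\infty}\oplus E_\infty$ and denote by $B(z)$ the restriction of $D_{v_z}\psi$ to $E_{<\infty}$ (itself block triangular with diagonal blocks $\vec{\psi}_\alpha(z)$, $\alpha\prec\infty$) and by $C(z)=\frac{\partial\psi_\infty}{\partial J^{(\infty)}}(z,v_z^{(\infty)})$. Because the upper-right block of $D_{v_z}\psi$ vanishes, the subspace $\{0_{<\infty}\}\times E_\infty$ is $D_{v_z}\psi$-invariant, and for $J=(0_{<\infty},J^{(\infty)})$ a direct induction on $k$ gives
\[
(D_{v_z}\psi-(1/z)\,\mathrm{Id})^{k}\,J\;=\;(0_{<\infty},\,(C(z)-(1/z)\,\mathrm{Id})^{k}\,J^{(\infty)}),
\]
which immediately shows the inclusion $\{0_{<\infty}\}\times \Lambda_{1/z}(\infty)\subseteq \Lambda_{1/z}$. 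For the reverse inclusion, if $J\in\Lambda_{1/z}$ then projecting $(D_{v_z}\psi-(1/z)\mathrm{Id})^{k}J=0$ onto $E_{<\infty}$ yields $(B(z)-(1/z)\mathrm{Id})^{k}J^{(<\infty)}=0$. Since $\mathrm{Spec}(B(z))=\bigcup_{\alpha\prec\infty}\mathrm{Spec}(\vec{\psi}_\alpha(z))$ does not contain $1/z$, the operator $B(z)-(1/z)\mathrm{Id}$ is invertible on $E_{<\infty}$, forcing $J^{(<\infty)}=0$. The previous display then shows $J^{(\infty)}\in\Lambda_{1/z}(\infty)$, finishing the proof.

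The argument is essentially linear-algebraic once the ordering/block structure is set up, so there is no genuine obstacle; the only point that requires some care is that the claim concerns generalised eigenspaces rather than eigenspaces, which is why one must use iterated powers of $B(z)-(1/z)\mathrm{Id}$ and exploit that its spectrum misses $1/z$, instead of just using non-singularity at first order.
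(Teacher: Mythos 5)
Your proof is correct and follows the same route as the paper: read off the block lower-triangular structure of $D_{v_z}\psi$, invoke Corollary \ref{Cor_Spectrum_alpha} (justified by the inequalities $R \leq R_\infty < R_{<\infty} \leq R_\alpha$) to kill the $\vec{\psi}_\alpha(z)$ blocks from the spectrum, then use the vanishing of the upper-right block to analyse $\Lambda_{1/z}$. The paper's proof is terser on the eigenspace step (it states only the forward inclusion $\Lambda_{1/z}\subseteq\{0_{<\infty}\}\times\Lambda_{1/z}(\infty)$ and leaves the converse implicit, e.g.\ via dimension counting on the characteristic polynomial of a block-triangular matrix), whereas you spell out both inclusions explicitly with iterated powers of $D_{v_z}\psi-(1/z)\mathrm{Id}$; this is a sound and slightly more self-contained version of the same argument.
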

\begin{proof}
From Proposition \ref{Graduation_de_V}, we infer that the differential $D_{v_z}\psi$ is a block triangular matrix, of the form 
	\begin{equation}\label{Form_of_D_J_psi}
	D_{v_z}\psi=\begin{pmatrix}
\vec{\psi}_{\alpha_1}(z) &0&\dots&0\\
\ast  &\ddots&\ddots&\vdots\\
\vdots&\ddots&\vec{\psi}_{\alpha_N}(z)&0\\
\ast  &\dots &\ast&\dfrac{\partial \psi_\infty}{\partial J^{(\infty)}}(z,v_z)
\end{pmatrix},
	\end{equation}
	where if $\alpha_i\prec\alpha_j$, then $i<j$. From Corollary \ref{Cor_Spectrum_alpha}, for any $\alpha\prec\infty$, and any non-zero complex number $z\in\mathbb{D}(0,R_{<\infty})$, its inverse $1/z$ is not in the spectrum $Spec\left(\vec{\psi}_\alpha(z)\right)$, we deduce that $1/z$ is in $Spec\left(D_{v_z}\psi\right)$ if, and only if $1/z$ is in $Spec\left(\frac{\partial\psi_\infty}{\partial J^{(\infty)}}(z,v_z^{(\infty)})\right)$. And in this case, if $\dot{J}=(\dot{J}^{(<\infty)},\dot{J}^{(\infty)})\in E_{<\infty}\times E_\infty=E$ is an associated generalized eigenvector of $D_{v_z}\psi$ for the eigenvalue $1/z$, then $\dot{J}^{(<\infty)}=0$ and $\dot{J}^{(\infty)}$ is a generalized eigenvector of $\frac{\partial\psi_\infty}{\partial J^{(\infty)}}(z,v_z^{(\infty)})$. Hence the proposition.
\end{proof}

\begin{Cor}\label{1surR=rho_D_v_R_psi}
    We have the equality:
        $$\frac{1}{R}=\rho(D_{v_R}\psi).$$
\end{Cor}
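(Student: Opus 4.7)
The plan is to combine Proposition \ref{Prop_Spectrum_belonging_B} with the block-triangular structure of $D_{v_R}\psi$ displayed in (\ref{Form_of_D_J_psi}), together with the Perron irreducibility results of Lemma \ref{Lem_7.12} and Corollary \ref{Cor_partial_psi_infini_is_Perron_Irred}. The spectral radius $\rho(D_{v_R}\psi)$ equals the maximum of the spectral radii of the diagonal blocks, namely $\rho(\vec{\psi}_\alpha(R))$ for $\alpha \prec \infty$, and $\rho\left(\tfrac{\partial \psi_\infty}{\partial J^{(\infty)}}(R,v_R^{(\infty)})\right)$. The strategy is to show that the first batch is strictly bounded above by $1/R$, and that the last block reaches exactly $1/R$.

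First I would handle the finite blocks. Since $R = R_\infty < R_{<\infty} \leq R_\alpha$ for every $\alpha \prec \infty$, Corollary \ref{Cor_Spectrum_alpha} tells us $1/R \notin \mathrm{Spec}(\vec{\psi}_\alpha(R))$. Whenever $\vec{\psi}_\alpha(R)$ is nonzero, Lemma \ref{Lem_7.12} says it is Perron irreducible, so its spectral radius is itself an eigenvalue, hence distinct from $1/R$. The map $r \mapsto \rho(\vec{\psi}_\alpha(r))$ is continuous and non-decreasing on $[0,R_\alpha)$ (as noted in the proof of Corollary \ref{Cor_Spectrum_alpha}) with value $0$ at $r=0$, so by the intermediate value theorem it cannot cross $1/r$ before $R_\alpha$; in particular $\rho(\vec{\psi}_\alpha(R)) < 1/R$. (The case $\vec{\psi}_\alpha \equiv 0$ is trivial.)

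For the $\infty$-block, I argue in two steps. \emph{Step 1: $1/R$ lies in the spectrum.} The vector $v_R^{(\infty)}$ is finite (the Green's functions converge at $R$), and $v^{(\infty)}$ satisfies the implicit equation $W_\infty(z,J^{(\infty)}) = J^{(\infty)} - z\psi_\infty(z,J^{(\infty)}) = 0$. If $1/R$ were not in $\mathrm{Spec}\left(\tfrac{\partial \psi_\infty}{\partial J^{(\infty)}}(R,v_R^{(\infty)})\right)$, then $\tfrac{\partial W_\infty}{\partial J^{(\infty)}}(R,v_R^{(\infty)}) = I - R\tfrac{\partial \psi_\infty}{\partial J^{(\infty)}}(R,v_R^{(\infty)})$ would be invertible, and the holomorphic implicit function theorem would extend $v^{(\infty)}$ to a neighbourhood of $R$, contradicting $R = R_\infty$. \emph{Step 2: $1/R$ is the spectral radius.} By Corollary \ref{Cor_partial_psi_infini_is_Perron_Irred}, the operator $\tfrac{\partial \psi_\infty}{\partial J^{(\infty)}}(r,v_r^{(\infty)})$ is Perron irreducible for $0 < r < R$, so its spectral radius is an eigenvalue, whence the same intermediate value argument as in Step 1 (combined with Corollary \ref{Cor_Spectrum_alpha}-style reasoning: $1/r$ is never in the spectrum for $r<R$ or the implicit equation would break earlier) gives $\rho\left(\tfrac{\partial \psi_\infty}{\partial J^{(\infty)}}(r,v_r^{(\infty)})\right) < 1/r$ on $(0,R)$. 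By continuity $\rho\left(\tfrac{\partial \psi_\infty}{\partial J^{(\infty)}}(R,v_R^{(\infty)})\right) \leq 1/R$, and this inequality must be equality since $1/R$ belongs to the spectrum.

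The main obstacle will be the implicit function theorem step: I need that $R$ is genuinely a singular point of $v^{(\infty)}$ (not merely the boundary of a disk of convergence of an otherwise regular branch). This is what Remark \ref{Rem_R_infini_is_min} provides in tandem with the finiteness of $v_R^{(\infty)}$, but the argument must be set up so that invertibility of $I - R\tfrac{\partial \psi_\infty}{\partial J^{(\infty)}}$ genuinely forces holomorphic extension past $R$, contradicting the definition of $R_\infty$. Once that is nailed down, the rest is a clean Perron--Frobenius / monotonicity package.
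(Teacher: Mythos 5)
Your proof follows essentially the same strategy as the paper's — the implicit function theorem to force $\frac{1}{R}\in Spec$, the intermediate value theorem to locate an $r_0\leq R$ with $1/r_0=\rho$, and Perron irreducibility — except that the paper applies the implicit function theorem to the full $W$ and then passes to the $\infty$-block via Proposition \ref{Prop_Spectrum_belonging_B}, while you decompose block by block from the outset. Your treatment of the finite blocks $\alpha\prec\infty$ (Corollary \ref{Cor_Spectrum_alpha} plus continuity of $r\mapsto\rho(\vec{\psi}_\alpha(r))$) and your Step 1 for the $\infty$-block are both sound.

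The gap is in Step 2. You claim that $1/r$ is never in $Spec\left(\frac{\partial\psi_\infty}{\partial J^{(\infty)}}(r,v_r^{(\infty)})\right)$ for $r<R$ because otherwise the implicit equation would break earlier, but this inverts the implicit function theorem: non-invertibility of $I-r\frac{\partial\psi_\infty}{\partial J^{(\infty)}}(r,v_r^{(\infty)})$ at some $r_0<R$ does not by itself contradict the analyticity of $v^{(\infty)}$ on $\mathbb{D}(0,R)$. Nor can you import Corollary \ref{Cor_Spectrum_alpha} wholesale: its proof rests on the resolvent formula (\ref{forme_fractionelle_de_v_alpha}), available only because $\psi_\alpha$ is affine for $\alpha\prec\infty$, whereas Proposition \ref{degre_infini_de_psi} shows $\psi_\infty$ has degree at least two. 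The paper supplies the missing positive ingredient by differentiating $v_z^{(\infty)}=z\psi_\infty(z,v_z^{(\infty)})$ to obtain
$$\left(I-r\frac{\partial\psi_\infty}{\partial J^{(\infty)}}(r,v_r^{(\infty)})\right)\frac{dv_r^{(\infty)}}{dz}=\psi_\infty(r,v_r^{(\infty)})+r\frac{\partial\psi_\infty}{\partial z}(r,v_r^{(\infty)})=:q(r),$$
noting that $q(r)$ has strictly positive coordinates, and then invoking Theorem \ref{Asymp-Thm_Perron_irred} in \cite{Asymp} together with the adjugate-positivity computation from the proof of Corollary \ref{Cor_Spectrum_alpha}: if $1/r_0=\rho\left(\frac{\partial\psi_\infty}{\partial J^{(\infty)}}(r_0,v_{r_0}^{(\infty)})\right)$, the resolvent applied to $q(r)$ blows up as $r\to r_0$, so $\frac{dv_r^{(\infty)}}{dz}\to\infty$, which forces $r_0=R$. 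Without this argument, Step 2 is an assertion rather than a proof.
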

\begin{proof}
Note that if $\frac{1}{R}$ is not in the spectrum $Spec\left( D_{v_R}\psi \right)$, then since $\frac{\partial W}{\partial J}(R,v_R)=I- R D_{v_R}\psi$, where $W:(z,J)\mapsto J-z\psi(J)$ is the polynomial map defining the Lalley's curve, using the complex Implicit Functions Theorem, the function $z\mapsto v_z$ admits a holomorphic extension in the neighbourhood of $z=R$, which cannot be. Thus $\frac{1}{R}$ belongs to $Spec\left( D_{v_R}\psi \right)$. In particular $\frac{1}{R}\leq \rho(D_{v_R}\psi )$. Then using the intermediate values theorem there must exists $0<r_0\leq R$, such that $1/r_0=\rho(D_{v_{r_0}}\psi)$.

From Proposition \ref{Prop_Spectrum_belonging_B}, we have $1/r_0=\rho\left(\frac{\partial\psi_\infty}{\partial J^{(\infty)}}(z,v_z^{(\infty)})\right)$

Differentiating the equation $z\psi_\infty(z,v_z^{(\infty)})=v_z^{(\infty)}$ with respect to $z$, we get for any real number $0<r<r_0$:
\begin{equation}\label{eq4251}
\frac{d v_r^{(\infty)}}{dz}=\left(I-r\left(\frac{\partial\psi_\infty}{\partial J^{(\infty)}}(r,v_r^{(\infty)})\right)\right)^{-1}\underbrace{\left(\psi_\infty(r,v_r^{(\infty)})+r\left(\frac{\partial\psi_\infty}{\partial z}(r,v_r^{(\infty)})\right)\right)}_{=:q(r)}.
\end{equation}

Using the non-decreasing of $\psi_\infty$ with respect to $z\in[0,R]$, we have that $q(r)\in E$ is a positive vector with positive coordinates in $E_\infty$. Thus, since $\left(\frac{\partial\psi_\infty}{\partial J^{(\infty)}}(z,v_z^{(\infty)})\right)$ is an irreducible Perron operator, we obtain\footnote{Using the same method as in the proof of Corollary \ref{Cor_Spectrum_alpha}} that the right hand side of the above equality (\ref{eq4251}) goes to $+\infty$, as $r$ goes to $r_0$. We deduce that $r_0=R$.
\end{proof}

\begin{Prop}\label{Prop_Spectrum_belonging_A}
	Let $z\in\overline{\mathbb{D}(0,R)}$ be a non-zero complex number, we have the equivalence:
	\begin{equation}\label{Implications_Prop_Spectrum_belonging_A}
	 \frac{1}{z}\in Spec(D_{v_z}\psi )\Leftrightarrow z^d=R^d,
	 \end{equation}
	 where $d=\gcd\{n: p^{(n)}(x,x)\}$ is the period of the Markov chain $(X_0,p)$.
	 
	Besides, in this case we have $1/R=\rho(D_{v_z}\psi)$ and $\Lambda_{1/(\zeta_d z)}=A\Lambda_{1/z}$ where $\Lambda_{1/z}$ correspond to the characteristic subspace associated with $D_{v_z}\psi$ and the eigenvalue $1/z$, $A$ is a diagonal operator of order $d$, defined in (\ref{Asymp-Def_of_A}) in \cite{Asymp} and $\zeta_d=\exp\left(\frac{2i\pi}{d}\right)$.
\end{Prop}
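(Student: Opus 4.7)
The plan is to reduce the problem to the Perron-irreducible infinity block via Proposition \ref{Prop_Spectrum_belonging_B} and exploit an intertwining relation coming from the periodicity cocycle.

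\textbf{Functional equation.} First I would establish $v_{\zeta_d z} = A\, v_z$ from the Taylor series $v_z((a,b)_y) = \sum_n p^{(n)}(a,b;\mathcal{B}(y)^\complement)\, z^n$, whose coefficients vanish unless $n\equiv r(a,b)\pmod{d}$ (since $p^{(n)}(a,b;\mathcal{B}(y)^\complement)>0$ implies $p^{(n)}(a,b)>0$). Monomial by monomial in the definition of $\psi$, using the cocycle identity $r(a,c)+r(c,b)\equiv r(a,b)\pmod d$ together with $r(a,c)\equiv 1\pmod d$ whenever $p(a,c)>0$, one then checks the identity $\psi(AJ)=\zeta_d^{-1}\, A\,\psi(J)$ for every $J\in E$. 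Differentiating and setting $J=v_z$ yields the key intertwining
\begin{equation}\label{eq:plan_intertwining}
D_{v_{\zeta_d z}}\psi = \zeta_d^{-1}\, A\, D_{v_z}\psi\, A^{-1}.
\end{equation}

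\textbf{Direction $z^d=R^d \Rightarrow 1/z\in Spec(D_{v_z}\psi)$.} By Corollary \ref{1surR=rho_D_v_R_psi}, $1/R\in Spec(D_{v_R}\psi)$. Iterating (\ref{eq:plan_intertwining}) $k$ times gives $D_{v_{\zeta_d^k R}}\psi=\zeta_d^{-k}A^k D_{v_R}\psi A^{-k}$, which admits $\zeta_d^{-k}/R = 1/(\zeta_d^k R)$ as an eigenvalue. This covers every $z$ with $z^d=R^d$.

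\textbf{Direction $1/z\in Spec(D_{v_z}\psi)\Rightarrow z^d=R^d$.} This is the delicate part. Since $|v_z|\leq v_{|z|}\leq v_R$ componentwise (Taylor coefficients are non-negative and $|z|\leq R$), $|D_{v_z}\psi|_{ij}\leq (D_{v_R}\psi)_{ij}$ entrywise, hence $\rho(D_{v_z}\psi)\leq \rho(D_{v_R}\psi)=1/R$, forcing $|z|=R$. By Proposition \ref{Prop_Spectrum_belonging_B}, $1/z$ actually lies in the spectrum of the infinity block, which is dominated by the Perron-irreducible operator $\frac{\partial\psi_\infty}{\partial J^{(\infty)}}(R, v_R^{(\infty)})$ of spectral radius $1/R$ (Corollary \ref{Cor_partial_psi_infini_is_Perron_Irred}). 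Wielandt's theorem then forces the entrywise modulus equality
$$\Big|\frac{\partial\psi_\infty}{\partial J^{(\infty)}}(z, v_z^{(\infty)})\Big|_{ij}=\Big(\frac{\partial\psi_\infty}{\partial J^{(\infty)}}(R, v_R^{(\infty)})\Big)_{ij}.$$
Each such entry is a polynomial with non-negative coefficients in the coordinates of $v_z$; equality of modulus propagates (triangle inequality applied monomial by monomial) to $|v_z(\theta)|=v_R(\theta)$ for every $\theta$ in the support. Fixing such a $\theta$ with $v_R(\theta)>0$, the Taylor series $v_z(\theta)=\sum_n c_n z^n$ with $c_n\geq 0$ attains its maximum modulus $v_R(\theta)=\sum_n c_n R^n$ only when every non-vanishing term $c_n z^n$ has the same argument; since the non-zero indices $n$ all lie in a single class modulo $d$, this forces $(z/R)^d=1$.

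\textbf{Spectral radius and eigenspace.} When $z=\zeta_d^k R$, the iterated form of (\ref{eq:plan_intertwining}) reads $D_{v_z}\psi=\zeta_d^{-k}A^k D_{v_R}\psi A^{-k}$; conjugation preserves spectral radius, giving $\rho(D_{v_z}\psi)=1/R$. Applying $A$ to a generalized eigenvector of $D_{v_z}\psi$ for $1/z$ yields, via (\ref{eq:plan_intertwining}), a generalized eigenvector of $D_{v_{\zeta_d z}}\psi$ for $\zeta_d^{-1}\cdot(1/z)=1/(\zeta_d z)$; hence $A\Lambda_{1/z}\subseteq\Lambda_{1/(\zeta_d z)}$, with the reverse inclusion by symmetry.

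The main obstacle is the converse direction: setting up Wielandt's theorem cleanly for the complex Jacobian block, and then propagating the entrywise modulus equality of the Jacobians down to the individual restricted Green's functions via the Taylor-series argument, which is what ultimately encodes that the peripheral spectral structure of $D_{v_R}\psi$ matches the Markov chain period $d$.
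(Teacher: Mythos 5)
Your overall architecture matches the paper's: reduce to the infinity block via Proposition \ref{Prop_Spectrum_belonging_B}, exploit the intertwining $\zeta_d\,\psi\circ A = A\circ\psi$ for the backward implication and the eigenspace claim, and use Perron theory plus the non-negativity of Taylor coefficients for the forward implication. Where the paper argues by contraposition through Lemma \ref{The_second_inequality} and Lemma \ref{inequality_spectral_radius_Perron} (a strict entrywise inequality forcing a strict spectral-radius inequality), you phrase the same content forward via Wielandt's theorem; these are equivalent. But your final step — going from $|v_z(\theta)|=v_R(\theta)$ to $z^d=R^d$ — has a real gap. You write: \og since the non-zero indices $n$ all lie in a single class modulo $d$, this forces $(z/R)^d=1$.\fg{} This does not follow. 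If the non-zero Taylor indices of $z\mapsto v_z(\theta)$ are $n_0<n_1<\cdots$, the equality $|v_z(\theta)|=v_{|z|}(\theta)$ with $|z|=R$ forces $(z/R)^{n_i-n_j}=1$ for all $i,j$, hence $(z/R)^{g}=1$ where $g=\gcd\{n_i-n_j\}$. The periodicity cocycle only gives $d\mid g$; if $g$ were a proper multiple $Md$ of $d$, you would only conclude $(z/R)^{Md}=1$, which does not yield $z^d=R^d$. The content of Lemma \ref{eqts8} is precisely that $g=d$ (written $d_\theta=d$), and its proof is a genuine geometric argument: it splices two loops at $y$ of lengths differing by $d$ into a far-away reachable portion of the admissible paths of $(a,b)_y$, and this requires $\theta\in\Xi_\infty$ so that such reachable far vertices exist. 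You assert the conclusion without this work.

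A second, smaller gap: you need to know that the monomial support of the Jacobian entries $\frac{\partial\psi_\infty}{\partial J^{(\infty)}}(R,v_R^{(\infty)})$ actually contains some $\theta\in\Xi_\infty$ before you can \og fix such a $\theta$.\fg{} If $\psi_\infty$ were affine in $J^{(\infty)}$, that Jacobian would be constant in $J^{(\infty)}$ and your propagation would only control $\theta\in\Xi_{<\infty}$, for which the argument $d_\theta = d$ is not available (the proof of Lemma \ref{eqts8} does not apply there). The paper closes this by Proposition \ref{degre_infini_de_psi} (degree at least $2$ with respect to the $\Xi_\infty$-variables), which you never invoke. Once you add both of these ingredients — Proposition \ref{degre_infini_de_psi} to secure a $\theta\in\Xi_\infty$ in the support, and the $d_\theta=d$ argument of Lemma \ref{eqts8} — your proof is sound and essentially the same as the paper's, modulo the Wielandt-vs-contrapositive phrasing.
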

To prove this Proposition, we will need the two following Lemmas:
\begin{Lem}\label{eqts8}
For any non-zero complex number $z$ in $\mathbb{D}(0,R)$, we have
\begin{equation}
	\forall \theta\in\Xi_\infty, |v_z(\theta)|\leq v_{|z|}(\theta),
\end{equation}
with equality if, and only if, $z^d=|z|^d$.
\end{Lem}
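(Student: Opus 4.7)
The plan is to use the power-series representation
\[
v_z((a,b)_y)=G_z(a,b;\mathcal{B}(y)^\complement)=\sum_{n\geq 0} q_n z^n, \qquad q_n:=p^{(n)}(a,b;\mathcal{B}(y)^\complement)\geq 0,
\]
from which the inequality $|v_z(\theta)|\leq v_{|z|}(\theta)$ follows immediately by the triangle inequality applied to a series with non-negative coefficients.

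For the ``$\Leftarrow$'' direction of the equivalence, write $z=|z|\zeta$ with $\zeta^d=1$. The periodicity cocycle (Definition \ref{Asymp-DEF:PERIODICITY_COCYCLE} in \cite{Asymp}) forces $q_n>0 \Rightarrow n\equiv r(a,b)\pmod{d}$, so $\zeta^n=\zeta^{r(a,b)}$ is constant on the support of $(q_n)$. Factoring it out yields $v_z(\theta)=\zeta^{r(a,b)} v_{|z|}(\theta)$, and taking moduli gives equality.

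For the ``$\Rightarrow$'' direction, assume $|v_z(\theta)|=v_{|z|}(\theta)$. Since $\theta\in\Xi_\infty$ admits (infinitely many) admissible paths, $v_{|z|}(\theta)>0$, and the equality case of the triangle inequality forces every non-zero monomial $q_n z^n$ to share a common argument. Setting $S_\theta:=\{n : q_n>0\}$ and $d_\theta:=\gcd\{n_1-n_2 : n_1,n_2\in S_\theta\}$, this means $(z/|z|)^{d_\theta}=1$, i.e.\ $z^{d_\theta}=|z|^{d_\theta}$. Periodicity of $(X_0,p)$ shows $d\mid d_\theta$; it remains to prove $d_\theta\mid d$, which will force $d_\theta=d$ and conclude.

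To establish $d_\theta\mid d$, I would exhibit two admissible paths in $\mathcal{P}h(a,b;\mathcal{B}(y)^\complement)$ whose lengths differ by exactly $d$. Since $\theta\in\Xi_\infty$, Proposition \ref{Stagnation_of_V_N} supplies a reachable vertex $\omega$ with $d(\omega,y)>M$ such that every vertex of $\Omega_\omega^y$ is reachable for $\theta$. By irreducibility of $(X_0,p)$, for each vertex $x$ the set $\{n : p^{(n)}(x,x)>0\}$ has gcd $d$ and contains all sufficiently large multiples of $d$; pick $k$ so that both $kd$ and $(k+1)d$ lie in it. Choosing $x$ deep enough in $\Omega_\omega^y$ (which is possible since the tree has valence $\geq 3$, so $\Omega_\omega^y$ contains balls of arbitrarily large radius centered at vertices far from $y$), the loops at $x$ of lengths $kd$ and $(k+1)d$ remain inside $\Omega_\omega^y\subseteq \mathcal{B}(y)^\complement$ because $p$ has range $k$ and they stay within a ball of radius $(k+1)dk$ around $x$. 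Splitting an admissible path through $x$ (guaranteed by reachability of $x$, using Corollary \ref{Stagnation_of_VN_upgraded} if needed) and inserting each loop gives two elements of $S_\theta$ differing by $d$. The main obstacle is precisely this last step: securing loops at $x$ that remain in $\mathcal{B}(y)^\complement$, for which the assumption $\theta\in\Xi_\infty$ is essential --- it is what lets us place $x$ arbitrarily far inside the shadow $\Omega_\omega^y$ so as to accommodate loops of both required $d$-multiple lengths.
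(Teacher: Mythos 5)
Your proof uses exactly the paper's strategy: power series with non-negative coefficients, equality in the triangle inequality reducing to $(z/|z|)^{d_\theta}=1$ with $d_\theta$ the gcd of length-differences, and $d_\theta=d$ established by splicing loops of lengths differing by $d$ into an admissible path through a reachable vertex far from $y$. The only implementation difference is that the paper fixes two loops $\gamma,\gamma'\in\mathcal{P}h(y,y)$ with $l(\gamma)=l(\gamma')+d$ and $\Gamma$-translates them to a reachable vertex $y'=gy$ with $d(y',y)>M+k$, whereas you generate loops of lengths $kd$ and $(k+1)d$ directly at a deep vertex $x$ — which is sound but requires (and you should flag, to avoid a quantifier circularity between choosing $k$ and choosing $x$) the fact, by $\Gamma$-invariance and finiteness of $\Gamma\backslash X_0$, that the threshold beyond which $\{n:p^{(n)}(x,x)>0\}$ contains all multiples of $d$ can be chosen uniformly in $x$.
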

\begin{proof}
From Corollary \ref{Asymp-Cor_Admissible_Path_Lenght} in the Appendix of \cite{Asymp}, the length of $p$-admissible paths in the set $\mathcal{P}h(a,b;\mathcal{B}(y)^\complement)$ are in a set $r(a,b) + d\mathbb{N}$, where $r:X_0\times X_0\to\mathbb{Z}/d\mathbb{Z}$ is the periodicity cocycle associated with the Markov chain $(X_0,p)$; denote it $r(a,b)+\mathcal{N}_\theta$, for $\theta=(a,b)_y\in\Xi_\infty$. Let $d_\theta:=gcd(\mathcal{N}_\theta)$ and let $\sum_n a_n z^n$ be the power series expansion in a neighbourhood of $z=0$, of the function $z\mapsto v_z(\theta)$. Using the triangular inequality we have the equivalence:
$$|v_z(\theta)|=\left|\sum_{n\in\, r+\mathcal{N}_\theta} \underbrace{a_n}_{> 0} z^n \right|= \sum_{n\in\, r+\mathcal{N}_\theta} a_n |z^n|=v(|z|)(\theta),$$
if, and only if $z^{d_\theta}=|z|^{d_\theta}$. To conclude, we need to show that $d_\theta=d$.

Let $\gamma,\gamma'$ be two $p$-admissible paths from $\mathcal{P}h(y,y)$ such that $l(\gamma)=d+l(\gamma')$. Such $p$-admissible paths exists for $\{l(\gamma) : \gamma \in \mathcal{P}h(y,y)\}$ is an additive subset of $\mathbb{N}$, with gcd equals to $d$. Let $M>0$ be such that the vertices of $\gamma$ and $\gamma'$ are in the ball $\mathcal{B}_M(y)$, and let $y'=gy$ for some $g\in\Gamma$, be a reachable vertex for $(a,b)_y$ at distance $>M+k$ from $y$. For $\gamma_1\in\mathcal{P}h(a,y';\mathcal{B}(y)^\complement)$, $\gamma_2\in\mathcal{P}h(y',b;\mathcal{B}(y)^\complement)$, the length of the two concatenated $p$-admissible paths $\gamma_1\ast g\gamma\ast\gamma_2$ and $\gamma_1\ast g\gamma'\ast\gamma_2$, are in the set $r+\mathcal{N}_\theta$. In particular $d_\theta$ divides $l(\gamma_1)+l(\gamma)+l(\gamma_1)-(l(\gamma_1)+l(\gamma')+l(\gamma_2))=d$. Hence $d_\theta=d$.
\end{proof}
\begin{Lem}\label{The_second_inequality}
	For any non-zero complex number $z\in\overline{\mathbb{D}}(0,R)$, we have the following inequality
	$$	\rho \left( \frac{\partial \psi_\infty}{\partial J^{(\infty)}}(|z|,|v_z^{(\infty)}|)\right)\leq
		\rho \left( \frac{\partial \psi_\infty}{\partial J^{(\infty)}}(|z|,v_{|z|}^{(\infty)})\right),
	$$
	with equality if, and only if, $z^d=|z|^d$.
\end{Lem}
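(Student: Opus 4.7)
The plan is to deduce the inequality from two ingredients supplied earlier, namely the pointwise majorisation $|v_z^{(\infty)}|\leq v_{|z|}^{(\infty)}$ of Lemma \ref{eqts8} and the fact that $\frac{\partial\psi_\infty}{\partial J^{(\infty)}}(|z|,\cdot)$ is entrywise a polynomial in $J^{(\infty)}$ with non-negative coefficients. The equality case will then be pinned down by combining Proposition \ref{degre_infini_de_psi} with the Perron irreducibility from Corollary \ref{Cor_partial_psi_infini_is_Perron_Irred}.

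Set $A:=\frac{\partial\psi_\infty}{\partial J^{(\infty)}}(|z|,|v_z^{(\infty)}|)$ and $B:=\frac{\partial\psi_\infty}{\partial J^{(\infty)}}(|z|,v_{|z|}^{(\infty)})$. Because $\psi_\infty(|z|,J^{(\infty)})=\pi_\infty\psi(J^{(\infty)}+v^{(\prec\infty)}(|z|))$ and $v^{(\prec\infty)}(|z|)\geq 0$, every entry of $\frac{\partial\psi_\infty}{\partial J^{(\infty)}}(|z|,\cdot)$ is a non-decreasing polynomial of $J^{(\infty)}$ on the non-negative orthant. Feeding $|v_z^{(\infty)}|\leq v_{|z|}^{(\infty)}$ into this monotonicity yields $A\leq B$ entrywise, and Lemma \ref{inequality_spectral_radius} then gives $\rho(A)\leq\rho(B)$. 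The direction $z^d=|z|^d\Rightarrow\rho(A)=\rho(B)$ is immediate: in that case Lemma \ref{eqts8} upgrades to the pointwise equality $|v_z^{(\infty)}|=v_{|z|}^{(\infty)}$, hence $A=B$.

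For the converse I would suppose $z^d\neq|z|^d$ and manufacture a strict entry gap $A_{[\theta],[\theta_1]}<B_{[\theta],[\theta_1]}$ at some position. Under this hypothesis, Lemma \ref{eqts8} forces $|v_z^{(\infty)}|([\theta'])<v_{|z|}^{(\infty)}([\theta'])$ strictly at every $[\theta']\in V_\infty$. Proposition \ref{degre_infini_de_psi} now supplies $[\theta],[\theta_1],[\theta_2]\in V_\infty$ and a non-zero polynomial $\phi$ with non-negative coefficients such that $\psi_\infty(J)([\theta])$ contains $J([\theta_1])J([\theta_2])\phi(J)$ as a factor of one of its monomials; consequently the $([\theta],[\theta_1])$ entry of $\frac{\partial\psi_\infty}{\partial J^{(\infty)}}(|z|,J^{(\infty)})$ contains the summand $J^{(\infty)}([\theta_2])\,\phi(J^{(\infty)}+v^{(\prec\infty)}(|z|))$. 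Evaluating at $Y:=v_{|z|}^{(\infty)}+v^{(\prec\infty)}(|z|)$ gives $\phi(Y)>0$ (since $Y>0$ coordinatewise and $\phi$ is non-zero with non-negative coefficients), and decomposing the difference of this summand evaluated at $Y$ and at $X:=|v_z^{(\infty)}|+v^{(\prec\infty)}(|z|)$ as
$$\bigl(v_{|z|}^{(\infty)}([\theta_2])-|v_z^{(\infty)}|([\theta_2])\bigr)\,\phi(Y)+|v_z^{(\infty)}|([\theta_2])\,\bigl(\phi(Y)-\phi(X)\bigr),$$
with both summands non-negative and the first strictly positive, delivers $A_{[\theta],[\theta_1]}<B_{[\theta],[\theta_1]}$; in particular $A\leq B$ with $A\neq B$. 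Corollary \ref{Cor_partial_psi_infini_is_Perron_Irred}, applied at $r=|z|\in(0,R_{<\infty})$ with the positive vector $v_{|z|}^{(\infty)}$, certifies $B$ as irreducible Perron. The standard Perron-Frobenius principle — a non-negative matrix strictly dominated entrywise at some position by an irreducible non-negative majorant has strictly smaller spectral radius — then closes the equivalence with $\rho(A)<\rho(B)$.

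The main obstacle is fabricating this strict entry gap: Proposition \ref{degre_infini_de_psi} furnishes only a single quadratic monomial, and one must evaluate the accompanying $\phi$ at a point where it is genuinely positive. Choosing $Y=v_{|z|}^{(\infty)}+v^{(\prec\infty)}(|z|)$, whose coordinates are all strictly positive for $|z|>0$, does the job; evaluating instead at $X=|v_z^{(\infty)}|+v^{(\prec\infty)}(|z|)$ might a priori produce vanishing coordinates. Everything else reduces to routine monotonicity arguments together with the standard strict-monotonicity statement from Perron-Frobenius theory.
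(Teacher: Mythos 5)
Your proof is correct and follows essentially the same route as the paper: entrywise comparison of the two Jacobians via Lemma \ref{eqts8} and the non-negativity of the coefficients of $\psi$, then using Proposition \ref{degre_infini_de_psi} to produce a strictly increasing matrix entry so that $z^d\neq|z|^d$ yields a strict entrywise gap, and finally the Perron strict-domination principle (which is Lemma \ref{inequality_spectral_radius_Perron}) together with Corollary \ref{Cor_partial_psi_infini_is_Perron_Irred}. You are slightly more explicit than the paper in fabricating the strict entry gap, but the ingredients and logical structure coincide.
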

\begin{proof}
Let $z\in\overline{\mathbb{D}}(0,R)$ be a non-zero complex number. Note that the matrix representation of the two operators $ \frac{\partial \psi_\infty}{\partial J^{(\infty)}}(|z|,|v_z^{(\infty)}|)$ and $\frac{\partial \psi_\infty}{\partial J^{(\infty)}}(|z|,v_{|z|}^{(\infty)})$ are matrix with non-negative coefficients, denote the first one $A$ and the second one $B$. From Proposition \ref{degre_infini_de_psi}, for any fixed non-zero complex number $z\in\overline{\mathbb{D}}(0,R)$, the map $J^{(\infty)}\mapsto \psi_\infty(|z|,J^{(\infty)})$ is a polynomial map of degree at least $2$ with non-negative coefficients.
In particular, for any $J^{(\infty)}\in E_\infty$ with positive coefficients, all the partial derivatives of order two at $J^{(\infty)}$, of this polynomial map are non-negative, and some of its partial derivatives are non-zero since $J^{(\infty)}\mapsto \psi_\infty(|z|,J^{(\infty)})$ is not affine.

As a consequence, we get the strict increasing of certain (not necessarily all) coefficients of the matrix representation of the partial derivative $\frac{\partial \psi_\infty}{\partial J^{(\infty)}}(|z|,J^{(\infty)})$, in $J^{(\infty)}$. In particular from Lemma \ref{eqts8}, some of the coefficients $A_{i,j}$ in $A$ are strictly lower than their homologous $B_{i,j}$ in $B$.
Then applying Lemma \ref{inequality_spectral_radius_Perron} from the Appendix, we get the result.
\end{proof}
\begin{proof}[Proof of Proposition \ref{Prop_Spectrum_belonging_A}]
From Proposition \ref{1surR=rho_D_v_R_psi} the real number $\frac{1}{R}$ belongs to $Spec\left( D_{v_R}\psi \right)$.
 Recall formula:
$$\zeta_d \cdot  \psi\circ A = A \circ \psi,$$
already seen in \cite{Asymp} equation (\ref{Asymp-Relation_psi_Z_d}). Differentiating the above, we get 
\begin{equation}
\zeta_d D_{v_R}\psi \circ A = A \circ D_{v_R}\psi,
\end{equation}
hence for any $l\in\{1,...,d-1\}$, $\frac{1}{(\zeta_d^l R)}\in Spec\left( D_{v_{(\zeta^l R)}}\psi \right) $ and $\Lambda_{1/(\zeta_d^l R)}=A^l\Lambda_{1/R}$. We have proven the indirect implication of (\ref{Implications_Prop_Spectrum_belonging_A}).

We now prove the direct implication. Since for any complex number $z\in\overline{\mathbb{D}(0,R)}$, for any $\theta\in\Xi$ we have $|v_z(\theta)|\leq v_{|z|}(\theta)$ and since $\psi$ is a polynomial map over $E$ with non-negative coefficients, we have that the matrix representation of $D_{v_z}\psi$ and $D_{v_{|z|}}\psi$ satisfy coefficientwise, in module,
$D_{v_z}\psi \leq D_{v_{|z|}}\psi$. Thus using Lemma \ref{inequality_spectral_radius}, we get:
$$\rho \left( D_{v_z}\psi\right)\leq \rho\left(D_{v_{|z|}}\psi \right).$$

Besides the non-negative function $r\mapsto \rho \left( D_{v_r}\psi\right)$ is non-decreasing, since the functions $r\mapsto v_r(\theta)$ for $\theta\in\Xi$ are non-decreasing. So by Corollary \ref{1surR=rho_D_v_R_psi},
$$ \forall z\in\overline{\mathbb{D}(0,R)}, \, 
\rho \left( D_{v_z}\psi\right)\leq \rho \left( D_{v_R}\psi\right) = \frac{1}{R}.
$$
We deduce that for any complex number $z$ of modulus $|z|<R$, the spectral radius $\rho \left( D_{v_z}\psi\right)$ is $\leq\frac{1}{R}<\frac{1}{|z|}$, thus $\frac{1}{z}$ cannot be in the spectrum $Spec\left( D_{v_z}\psi\right)$.

Lastly for $z\in\mathbb{C}$ such that $|z|=R$ and $z^d\neq R^d$ then using Lemma \ref{eqts8} and Lemma \ref{The_second_inequality}, we have
	$$\rho \left( \frac{\partial \psi_\infty}{\partial J^{(\infty)}}(z,v_z^{(\infty)})\right)\leq
	 \rho \left( \frac{\partial \psi_\infty}{\partial J^{(\infty)}}(R,|v_z^{(\infty)}|)\right)<
	 \rho \left( \frac{\partial \psi_\infty}{\partial J^{(\infty)}}(R,v_{R}^{(\infty)})\right),
	 $$
	 where the first inequality is a consequence of the non-negativity of the coefficients of the polynomial map $\psi$ and the inequalities $|v_z(\theta)|\leq v_{R}(\theta)$ for any $\theta\in\Xi$.
\end{proof}

\begin{Prop}\label{Spectrum_u(R)}
		The real number $\frac{1}{R}$ belongs to the spectrum of the operators $D_{v_R}\psi$ and $\frac{\partial \psi_\infty}{\partial J^{(\infty)}}(v_R)$.
	Denote by $\Lambda_{1/R}$ and $\Lambda_{1/R}(\infty)$ their respective generalized eigenspace associated with the eigenvalue $1/R$. Then we have 		
		 $$\Lambda_{1/R}=\{0_{<\infty}\}\times\Lambda_{1/R}(\infty)\text{ and }\Lambda_{1/R}(\infty)=\mathbb{C}\cdot \nu_\infty,$$
where $\nu_\infty$ is a strictly positive real vector in $E_\infty$.
\end{Prop}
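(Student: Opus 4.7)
The plan is to derive the statement from the block-triangular structure of $D_{v_R}\psi$ together with the Perron--Frobenius theory established in the previous corollaries. The first step is straightforward: Corollary \ref{1surR=rho_D_v_R_psi} gives $1/R=\rho(D_{v_R}\psi)$, so $1/R\in Spec(D_{v_R}\psi)$; then Proposition \ref{Prop_Spectrum_belonging_B} applied at $z=R$ yields simultaneously that $1/R\in Spec\left(\frac{\partial\psi_\infty}{\partial J^{(\infty)}}(R,v_R^{(\infty)})\right)$ and the product decomposition $\Lambda_{1/R}=\{0_{<\infty}\}\times\Lambda_{1/R}(\infty)$.

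For the description of $\Lambda_{1/R}(\infty)$, I would apply Corollary \ref{Cor_partial_psi_infini_is_Perron_Irred} to the operator $L:=\frac{\partial\psi_\infty}{\partial J^{(\infty)}}(R,v_R^{(\infty)})$. The hypotheses hold because $R<R_{<\infty}$ (established just before Proposition \ref{Charac_radius}) and $v_R^{(\infty)}$ has strictly positive entries, the restricted Green's functions being positive by irreducibility of the Markov chain. Hence $L$ is an irreducible Perron operator on $E_\infty$.

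The key additional step is to recognize $1/R$ as the Perron root of $L$. From the block-triangular form (\ref{Form_of_D_J_psi}), the spectrum of $D_{v_R}\psi$ is the union of $Spec(L)$ with the spectra of the finite-level blocks $\vec{\psi}_\alpha(R)$ for $\alpha\prec\infty$. Corollary \ref{Cor_Spectrum_alpha}, combined with the strict inequality $R<R_{<\infty}\leq R_\alpha$, gives $\rho(\vec{\psi}_\alpha(R))<1/R$ for every $\alpha\prec\infty$; since $\rho(D_{v_R}\psi)=1/R$ equals the maximum of the spectral radii of the diagonal blocks, we conclude $\rho(L)=1/R$.

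Finally, the classical Perron--Frobenius theorem for irreducible Perron operators applies to $L$: the Perron root $1/R$ is an algebraically simple eigenvalue, so its generalized eigenspace $\Lambda_{1/R}(\infty)$ coincides with the ordinary eigenspace and is one-dimensional over $\mathbb{C}$, generated by a strictly positive real vector $\nu_\infty\in E_\infty$. This gives $\Lambda_{1/R}(\infty)=\mathbb{C}\cdot\nu_\infty$ and closes the argument. The main subtlety is the spectral-radius identification in the third paragraph: one must rule out that $1/R$ might already arise from one of the finite-level blocks $\vec{\psi}_\alpha(R)$, which is handled cleanly by the strict inequality $R<R_{<\infty}$ via Corollary \ref{Cor_Spectrum_alpha}.
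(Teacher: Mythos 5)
Your proof is correct and follows essentially the same route as the paper: use Proposition \ref{Prop_Spectrum_belonging_B} for the product decomposition, and Corollary \ref{Cor_partial_psi_infini_is_Perron_Irred} (Perron irreducibility of $L$) for the one-dimensionality and positivity of $\Lambda_{1/R}(\infty)$; the paper additionally invokes Proposition \ref{Prop_Spectrum_belonging_A} to get $1/R\in Spec$, whereas you use Corollary \ref{1surR=rho_D_v_R_psi} directly, which is equivalent.

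One remark on your third paragraph, the "key additional step." Your reasoning is sound but slightly roundabout, and the way you cite Corollary \ref{Cor_Spectrum_alpha} is not quite airtight: that corollary only asserts $1/R\notin Spec(\vec\psi_\alpha(R))$; to upgrade this to $\rho(\vec\psi_\alpha(R))<1/R$ you need a small intermediate-value argument (the continuous non-decreasing function $r\mapsto r\,\rho(\vec\psi_\alpha(r))$ starts at $0$ and never hits $1$ on $(0,R_\alpha)$, so it stays below $1$), which you should spell out. However, you can avoid the finite-level blocks entirely: once step 1 gives $1/R\in Spec(L)$, you already have $\rho(L)\geq 1/R$; combined with $\rho(L)\leq\rho(D_{v_R}\psi)=1/R$ (from the block-triangular form, the spectrum of a diagonal block is contained in the spectrum of the whole), this yields $\rho(L)=1/R$ immediately. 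That is the cleaner way to identify $1/R$ as the Perron root of $L$ before applying Perron--Frobenius.
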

\begin{proof}
The first part of the lemma is a consequence of propositions \ref{Prop_Spectrum_belonging_B} and \ref{Prop_Spectrum_belonging_A}. Also, by Proposition \ref{Prop_Spectrum_belonging_B}, to compute the dimension of $\Lambda_{1/R}$,  we just need to compute the one of $\Lambda_{1/R}(\infty)$. It has dimension $1$ and contains a strictly positive real vector $\nu_\infty$, since $\frac{\partial \psi_\infty}{\partial J^{(\infty)}}(R,v_R^{(\infty)})$ is an irreducible Perron operator (Corollary \ref{Cor_partial_psi_infini_is_Perron_Irred}).
\end{proof}

\subsection{Regularity of \texorpdfstring{$\mathcal{C}$}{the Lalley's curve}}\label{subsection_regularity_of_C}

We prove the following proposition
\begin{Prop}\label{Regularity_of_C}
	The Lalley's curve $\mathcal{C}$ is a smooth analytic curve in a neighbourhood of the closed subset $\overline{\{(z,v_z): z\in\mathbb{D}(0,R)\}}\subset \mathcal{C}$. 	
\end{Prop}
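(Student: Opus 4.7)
The plan is to reduce smoothness of $\mathcal{C}$ at each point $(z_0,J_0)$ of the closed set $\overline{\{(z,v_z):z\in\mathbb{D}(0,R)\}}$ to surjectivity of the differential $DW(z_0,J_0):\mathbb{C}\times E\to E$ of the defining map $W(z,J)=J-z\psi(J)$, whose partial derivatives are $(-\psi(J),\,I-zD_J\psi)$. For every $z_0\in\overline{\mathbb{D}}(0,R)$ with $z_0^d\neq R^d$, Proposition \ref{Prop_Spectrum_belonging_A} gives $1/z_0\notin\mathrm{Spec}(D_{v_{z_0}}\psi)$, so $I-z_0 D_{v_{z_0}}\psi$ is already invertible and the holomorphic Implicit Function Theorem both yields smoothness of $\mathcal{C}$ near $(z_0,v_{z_0})$ and extends $v_z$ holomorphically through such a boundary point. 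The remaining points of the closure are $(\zeta_d^\ell R,v_{\zeta_d^\ell R})$, $\ell=0,\dots,d-1$. Using the relation $\zeta_d\psi\circ A=A\circ\psi$ from \cite{Asymp}, the map $(z,J)\mapsto(\zeta_d z,AJ)$ is an automorphism of $\mathcal{C}$ sending $(R,v_R)$ onto $(\zeta_d R,v_{\zeta_d R})$, so smoothness at all $\zeta_d^\ell R$ reduces to smoothness at $(R,v_R)$.

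At $(R,v_R)$, Proposition \ref{Spectrum_u(R)} tells us that $1/R$ is a simple eigenvalue of $D_{v_R}\psi$, so $I-RD_{v_R}\psi$ has kernel and cokernel both of dimension one, and its image equals the annihilator of any left null vector $w\in E^*$. Therefore $DW(R,v_R)$ is surjective iff $\langle w,\psi(v_R)\rangle\neq 0$, and using the curve relation $\psi(v_R)=v_R/R$ this reduces to $\langle w,v_R\rangle\neq 0$. Write $w=(w_0,w_1)\in E_{<\infty}^*\oplus E_\infty^*$ and
$$D_{v_R}\psi=\begin{pmatrix}P&0\\ B&C\end{pmatrix}$$
in the block form from Proposition \ref{Prop_Spectrum_belonging_B}. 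The equation $wD_{v_R}\psi=\tfrac{1}{R}w$ yields $w_1 C=\tfrac{1}{R}w_1$, so $w_1$ is strictly positive by Corollary \ref{Cor_partial_psi_infini_is_Perron_Irred} (irreducible Perron operator on $E_\infty$), and $w_0=Rw_1 B(I-RP)^{-1}$. The Neumann series $(I-RP)^{-1}=\sum_{k\geq 0}(RP)^k$ converges with non-negative entries since $\rho(RP)<1$ by Corollary \ref{Cor_Spectrum_alpha}. Since $B$, $v_R^{(<\infty)}$ and $v_R^{(\infty)}$ are all entrywise non-negative (with $v_R^{(\infty)}$ strictly positive as a vector of Green's functions over $\Xi_\infty$), one obtains $\langle w_0,v_R^{(<\infty)}\rangle\geq 0$ and $\langle w_1,v_R^{(\infty)}\rangle>0$; hence $\langle w,v_R\rangle>0$ and $\mathcal{C}$ is smooth at $(R,v_R)$.

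The main obstacle is precisely this last positivity check. The left null vector $w$ of the non-symmetric singular operator $I-RD_{v_R}\psi$ has no a priori sign, and its component $w_0$ on $E_{<\infty}$ cannot be controlled directly by Perron-Frobenius theory (the block $P$ is not itself irreducible on all of $E_{<\infty}$). The strategy is to exploit the graded decomposition of $\mathcal{V}$ (Lemma \ref{Graduation_de_V}) to obtain the block-triangular form of $D_{v_R}\psi$, solve for $w_0$ explicitly in terms of $w_1$, $B$, and $(I-RP)^{-1}$, and then transport the Perron-Frobenius positivity of $w_1$ to $\langle w_0,v_R^{(<\infty)}\rangle$ via the non-negativity of the coefficients of $\psi$.
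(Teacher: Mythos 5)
Your argument is correct and its overall architecture coincides with the paper's: the points $z$ with $z^d\neq R^d$ are handled by invertibility of $I-zD_{v_z}\psi$ (Lemma \ref{Regularity_Lemma_A} together with Proposition \ref{Prop_Spectrum_belonging_A}), the remaining roots $\zeta_d^\ell R$ are reduced to $z=R$ via the order-$d$ automorphism $A$, and the crux is surjectivity of $DW$ at $u(R)$. Where you diverge is in the formulation of that crux. The paper, through Lemma \ref{Regularity_Lemma_B} and Proposition \ref{Prop_Spectrum_belonging_C}, shows $\psi(v_R)\notin\operatorname{im}\left(I-RD_{v_R}\psi\right)$ by citing the Perron--Frobenius fact that the image of $I-R\frac{\partial\psi_\infty}{\partial J^{(\infty)}}(R,v_R^{(\infty)})$ contains no nonzero nonnegative vector; you go dual, introducing a left null covector $w=(w_0,w_1)$ of $I-RD_{v_R}\psi$ and showing $\langle w,\psi(v_R)\rangle>0$. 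These are the same fact seen from opposite sides: the image of $I-RD_{v_R}\psi$ is precisely the annihilator of $w$, and the positivity invoked by the paper is precisely the entrywise positivity $w_1>0$ of the left Perron vector. What your version buys is an explicit handling of the $E_{<\infty}$ block: the identity $w_0=Rw_1B(I-RP)^{-1}$ together with the Neumann-series nonnegativity of $(I-RP)^{-1}=\sum_k(RP)^k$ gives $w_0\geq 0$ outright, whence $\langle w,v_R\rangle\geq\langle w_1,v_R^{(\infty)}\rangle>0$. The paper's proof, which projects only onto $E_\infty$, leaves the corresponding verification (that the $B$-block term $RB\dot J^{(<\infty)}$ in the projected equation is nonnegative and so cannot rescue membership in the image of $I-RC$) somewhat implicit; your dual computation settles it cleanly. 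One small precision point: from Corollary \ref{Cor_Spectrum_alpha} as stated you only get $\frac1R\notin\operatorname{Spec}(\vec\psi_\alpha(R))$, whereas the Neumann series needs $\rho(RP)<1$; the stronger bound $\rho(\vec\psi_\alpha(R))<\frac1R$ does hold and is established inside the proof of that corollary via the monotonicity and intermediate-value argument, so it is available, but it is worth citing that step explicitly rather than the corollary's statement alone.
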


We start by stating two lemmas:
\begin{Lem}\label{Regularity_Lemma_A}
	Let $(\tau,J)\in\mathcal{C}$ with $\tau\neq 0$. 
	
	If $\frac{1}{\tau}$ is not in the spectrum $Spec \left(D_{J}\psi\right)$ of the differential $D_{J}\psi$ of $\psi$ at $J$, then $\mathcal{C}$ is smooth in the neighbourhood of $(\tau,J)\in\mathcal{C}$ and the map $\lambda:\mathcal{C}\to\mathbb{C}$ admits a holomorphic inverse in a neighbourhood of $(\tau,J)$.
\end{Lem}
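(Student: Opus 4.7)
The plan is to reduce the statement to a direct application of the complex implicit function theorem to the defining map $W:\mathbb{C}\times E\to E$, $W(z,J)=J-z\psi(J)$, of the Lalley's curve $\mathcal{C}$. Since $\psi:E\to E$ is polynomial, $W$ is holomorphic, and by definition $\mathcal{C}=W^{-1}(\{0\})$.

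First I would compute the partial differential of $W$ with respect to the $J$ variable at the point $(\tau,J)\in\mathcal{C}$:
\begin{equation*}
\frac{\partial W}{\partial J}(\tau,J)=I_E-\tau\,D_J\psi.
\end{equation*}
The eigenvalues of $I_E-\tau\,D_J\psi$ are exactly the numbers $1-\tau\mu$ for $\mu\in\mathrm{Spec}(D_J\psi)$. Since $\tau\neq 0$ and $\tfrac{1}{\tau}\notin\mathrm{Spec}(D_J\psi)$ by hypothesis, none of these eigenvalues vanish, so $\frac{\partial W}{\partial J}(\tau,J)$ is an invertible endomorphism of the finite-dimensional vector space $E$.

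Then I would invoke the holomorphic implicit function theorem: there exist an open neighbourhood $U$ of $\tau$ in $\mathbb{C}$, an open neighbourhood $V$ of $J$ in $E$, and a unique holomorphic map $h:U\to V$ such that $h(\tau)=J$ and
\begin{equation*}
\{(z,J')\in U\times V:W(z,J')=0\}=\{(z,h(z)):z\in U\}.
\end{equation*}
This exhibits $\mathcal{C}\cap (U\times V)$ as the holomorphic graph of $h$, which proves that $\mathcal{C}$ is a smooth analytic curve in a neighbourhood of $(\tau,J)$.

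Finally, the projection $\lambda:(z,J')\mapsto z$ restricted to $\mathcal{C}\cap (U\times V)$ is precisely the inverse of the holomorphic map $z\mapsto(z,h(z))$ from $U$ to $\mathcal{C}\cap(U\times V)$, hence $\lambda$ admits the holomorphic local inverse $z\mapsto(z,h(z))$ near $(\tau,J)$. I do not foresee any real obstacle here: the entire argument is a textbook application of the implicit function theorem once the spectral hypothesis is translated into the invertibility of $\partial_J W$, and this translation is immediate in finite dimension.
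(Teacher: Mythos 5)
Your argument is exactly the paper's proof: you apply the holomorphic implicit function theorem to $W(z,J)=J-z\psi(J)$ at $(\tau,J)$, using that $\partial W/\partial J=I-\tau D_J\psi$ is invertible precisely because $\tfrac{1}{\tau}\notin\mathrm{Spec}(D_J\psi)$. You merely spell out the invertibility and the local graph structure a bit more explicitly, but there is no substantive difference.
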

\begin{proof}
Recall that the Lalley's curve equals $\mathcal{C}=W^{-1}(0_E)$, where $W:\mathbb{C}\times E \to E $ is the map defined by 
$$W:(z,J)\mapsto J-z\psi(J). $$
At $(\tau,J)$, we have $W(\tau,J)=0$ and the result is a direct consequence of the complex Implicit Functions Theorem using the fact that the partial differential of $W$ at $(\tau,J)$ with respect to $J$ is
\begin{equation*}
\frac{\partial W}{\partial J}(\tau,J)= I-\tau D_{J}\psi ,
\end{equation*}
and is invertible by assumption.
\end{proof}

\begin{Lem}\label{Regularity_Lemma_B}
	Let $(\tau,J)\in\mathcal{C}$ with $\tau\neq 0$.
	
	If $\frac{1}{\tau}$ belongs to the spectrum $Spec \left(D_{J}\psi\right)$, if the associated characteristic subspace $\Lambda_{1/\tau}$ is of dimension $1$ and if $\psi(J)$ is not in the range $\left( I - \tau D_J\psi \right)(E)$ of the linear operator $\left( I - \tau D_J\psi \right)$ on $E$, then $\mathcal{C}$ is smooth in the neighbourhood of $(\tau,J)\in\mathcal{C}$ and its tangent space at $(\tau,J)$ is
	\begin{equation}\label{eqts3}
	    T_{(\tau,J)}\mathcal{C}=\{0\}\times \Lambda_{1/z}.
	\end{equation}
\end{Lem}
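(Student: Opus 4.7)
The plan is to apply the submersion theorem to $W: \mathbb{C} \times E \to E$, $W(z,J) = J - z\psi(J)$, at the point $(\tau, J)$. Smoothness of $\mathcal{C}$ and the identification of the tangent space both follow once we show that the total differential $DW(\tau, J)$ is surjective; the tangent space is then simply $\ker DW(\tau, J)$.

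First I would compute
$$DW(\tau, J)(\dot z, \dot J) = (I - \tau D_J\psi)(\dot J) - \dot z\, \psi(J),$$
so the range of $DW(\tau, J)$ is $(I - \tau D_J\psi)(E) + \mathbb{C}\cdot \psi(J)$. The surjectivity thus reduces to a codimension count on the linear operator $T := I - \tau D_J\psi$ on $E$. Here is where the two hypotheses enter. Because the generalized eigenspace $\Lambda_{1/\tau}$ of $D_J\psi$ for the eigenvalue $1/\tau$ has dimension $1$, the eigenvalue $1$ of $\tau D_J\psi$ has algebraic multiplicity $1$, hence also geometric multiplicity $1$. Thus $\ker T$ is one-dimensional and, by the rank-nullity theorem on the finite-dimensional space $E$, the image $T(E)$ has codimension $1$ in $E$. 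The assumption $\psi(J) \notin T(E)$ then forces
$$T(E) + \mathbb{C} \cdot \psi(J) = E,$$
which gives surjectivity of $DW(\tau, J)$. Consequently $\mathcal{C} = W^{-1}(0_E)$ is a smooth complex submanifold of $\mathbb{C}\times E$ of dimension $1$ near $(\tau, J)$.

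To identify the tangent space, I would solve $DW(\tau, J)(\dot z, \dot J) = 0$. Suppose $\dot z \neq 0$; then $\dot z\, \psi(J) = T(\dot J) \in T(E)$, contradicting $\psi(J) \notin T(E)$. Hence $\dot z = 0$ and $\dot J \in \ker T$. Since algebraic and geometric multiplicities of $1/\tau$ coincide, $\ker T = \Lambda_{1/\tau}$, yielding
$$T_{(\tau, J)}\mathcal{C} = \ker DW(\tau, J) = \{0\}\times \Lambda_{1/\tau},$$
as claimed in \eqref{eqts3}.

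The only potentially delicate point is the codimension count: one must be careful that \emph{generalized} eigenspace of dimension $1$ really does imply that the ordinary eigenspace is $1$-dimensional (and hence that the image of $T$ has codimension $1$). This is automatic on a finite-dimensional space from the Jordan form, since $\dim\Lambda_{1/\tau}=1$ precludes any non-trivial Jordan block at that eigenvalue. Aside from this standard linear-algebraic observation, the argument is a direct application of the implicit/submersion theorem.
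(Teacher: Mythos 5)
Your proof is correct and takes essentially the same route as the paper: apply the local submersion theorem to $W(z,J)=J-z\psi(J)$ at $(\tau,J)$, compute $D_{(\tau,J)}W(\dot z,\dot J)=(I-\tau D_J\psi)(\dot J)-\dot z\,\psi(J)$, and use the two hypotheses to control this differential. The only cosmetic difference is that you verify surjectivity directly via a codimension count on $T=I-\tau D_J\psi$, whereas the paper notes (by rank-nullity) that surjectivity is equivalent to $\dim\ker D_{(\tau,J)}W=1$ and computes the kernel once for both purposes; you also make explicit the point that $\dim\Lambda_{1/\tau}=1$ forces $\ker T=\Lambda_{1/\tau}$, which the paper leaves implicit.
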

\begin{proof}
		Recall that the Lalley's curve $\mathcal{C}=W^{-1}(0_E)$ where $W:\mathbb{C}\times E \to E $ is the map defined by 
$$W:(z,J)\mapsto J-z\psi(J). $$
At $(\tau,J)$, we have $W(\tau,J)=0$ and we have that the differential of $W$ at $(\tau,J)$ is the linear map from $\mathbb{C}\times E$ to $E$, that associates to any pair $(\dot{z},\dot{J})\in\mathbb{C}\times E$,
$$ D_{(\tau,J)}W(\dot{z},\dot{J})=\left( I-\tau D_{J}\psi \right)(\dot{J})-\dot{z}\psi (J). $$

By the local submersion theorem (See \cite{Lee_2013} Rank Theorem 4.12), if $D_{(\tau,J)}W$ is surjective, then $\mathcal{C}$ is regular in a neighbourhood of $(\tau,J)$. This is equivalent to prove that the kernel of $D_{(\tau,J)}W$ is of dimension $1$. But $D_{(\tau,J)}W(\dot{z},\dot{J})=0$ if, and only if, the following equation is satisfied
\begin{equation}\label{eqts2}
	\left( I-\tau D_{J}\psi \right)(\dot{J})=\dot{z}\psi (J).
\end{equation}
Since $\frac{1}{\tau}\in Spec( D_{J}\psi)$ and $\psi(J)$ is not in the range of $(I-\tau D_J\psi)$, if the above (\ref{eqts2}) is satisfied, then we must have $\dot{z}=0$. Thus $\dot{J}$ must be an eigenvector of $D_{J}\psi $ associated with $1/\tau$. We deduce that 
$$ Ker(D_{(\tau,J)}W)=\{0\}\times \Lambda_{1/\tau},$$
which is indeed of dimension $1$, by assumption on the dimension of $\Lambda_{1/\tau}$. Hence $\mathcal{C}$ is regular on a neighbourhood of $(\tau,J)$. Besides its tangent space at $(\tau,J)$ equals the kernel of $D_{(\tau,J)}W$, hence (\ref{eqts3}) holds.
\end{proof}

\begin{Prop}\label{Prop_Spectrum_belonging_C}	
	The Lalley's curve $\mathcal{C}$ is smooth in a neighbourhood of $u(R)$ and its tangent space at $u(R)$ is
	$$T_{u(R)}\mathcal{C}=\{0\}\times \{0_{<\infty}\}\times \mathbb{C}\cdot \nu_\infty\subset \mathbb{C}\times E_{<\infty}\times E_\infty,$$
where $\nu_\infty$ is a real vector in $E_\infty$, with positive coefficients.
\end{Prop}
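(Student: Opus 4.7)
The plan is to apply Lemma \ref{Regularity_Lemma_B} at the point $(R,v_R)=u(R)$. Two of the three hypotheses of that lemma come for free from Proposition \ref{Spectrum_u(R)}: the inverse $1/R$ lies in $\operatorname{Spec}(D_{v_R}\psi)$, and the associated generalized eigenspace $\Lambda_{1/R}=\{0_{<\infty}\}\times\mathbb{C}\cdot\nu_\infty$ has dimension one. The substantive step, and the main obstacle, is to verify the third hypothesis, namely that $\psi(v_R)\notin(I-RD_{v_R}\psi)(E)$.

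To check this, I would use the block-triangular decomposition of $D_{v_R}\psi$ inherited from Proposition \ref{Graduation_de_V},
$$D_{v_R}\psi=\begin{pmatrix} T_{<\infty} & 0\\ S & T_\infty\end{pmatrix},$$
where $T_\infty=\tfrac{\partial\psi_\infty}{\partial J^{(\infty)}}(R,v_R^{(\infty)})$, and where $I-RT_{<\infty}$ is invertible by Corollary \ref{Cor_Spectrum_alpha}. Splitting the equation $(I-RD_{v_R}\psi)\dot J=\psi(v_R)$ into its two blocks, the first row uniquely determines $\dot J^{(<\infty)}=(I-RT_{<\infty})^{-1}\pi_{<\infty}\psi(v_R)=\tfrac{dv_r^{(<\infty)}}{dr}\big|_{r=R}$ (obtained by differentiating the identity $v_r^{(<\infty)}=r\pi_{<\infty}\psi(v_r^{(<\infty)})$ at $r=R$). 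Substituting into the second row and using the chain-rule identity $RS\,\tfrac{dv_R^{(<\infty)}}{dr}=R\tfrac{\partial\psi_\infty}{\partial z}(R,v_R^{(\infty)})$, which follows from the defining formula $\psi_\infty(z,J^{(\infty)})=\pi_\infty\psi(J^{(\infty)}+v_z^{(<\infty)})$, together with $\pi_\infty\psi(v_R)=\psi_\infty(R,v_R^{(\infty)})$, the remaining equation collapses to
$$(I-RT_\infty)\dot J^{(\infty)}=\psi_\infty(R,v_R^{(\infty)})+R\tfrac{\partial\psi_\infty}{\partial z}(R,v_R^{(\infty)})=:q(R),$$
which is precisely the vector already encountered in the proof of Corollary \ref{1surR=rho_D_v_R_psi}. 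Thus the problem reduces to showing $q(R)\notin(I-RT_\infty)(E_\infty)$.

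This last step I expect to go through quickly by Perron--Frobenius duality. The vector $q(R)$ has strictly positive coordinates in $E_\infty$: the piece $\psi_\infty(R,v_R^{(\infty)})=v_R^{(\infty)}/R$ is positive because $v_R^{(\infty)}$ is (each restricted Green's function associated with an element of $\Xi_\infty$ is strictly positive), while $\tfrac{\partial\psi_\infty}{\partial z}\geq 0$ since $\psi$ has non-negative coefficients and $z\mapsto v_z^{(<\infty)}$ is non-decreasing on $[0,R_{<\infty})$. By Corollary \ref{Cor_partial_psi_infini_is_Perron_Irred} the operator $T_\infty$ is irreducible Perron, so by Perron--Frobenius its left eigenvector $\mu_\infty$ associated to the leading eigenvalue $1/R$ has strictly positive coordinates. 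Since $\mu_\infty$ annihilates $(I-RT_\infty)(E_\infty)$, strict positivity yields $\mu_\infty(q(R))>0$, hence $q(R)\notin(I-RT_\infty)(E_\infty)$. Lemma \ref{Regularity_Lemma_B} then delivers simultaneously the smoothness of $\mathcal{C}$ at $u(R)$ and the tangent space $T_{u(R)}\mathcal{C}=\{0\}\times\Lambda_{1/R}=\{0\}\times\{0_{<\infty}\}\times\mathbb{C}\cdot\nu_\infty$, as claimed.
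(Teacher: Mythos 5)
Your proposal is correct and follows essentially the same route as the paper: invoke Lemma \ref{Regularity_Lemma_B} at $u(R)$, supply the spectral and dimension hypotheses from Proposition \ref{Spectrum_u(R)}, and verify the range condition by Perron--Frobenius positivity of the left eigenvector of $\frac{\partial\psi_\infty}{\partial J^{(\infty)}}(R,v_R^{(\infty)})$. The only difference is cosmetic but welcome: you carry out the block reduction explicitly and identify the relevant vector as $q(R)=\psi_\infty(R,v_R^{(\infty)})+R\frac{\partial\psi_\infty}{\partial z}(R,v_R^{(\infty)})$ (accounting for the cross-term $RS\dot J^{(<\infty)}$), whereas the paper's proof elides this and appeals directly to the fact that the range of $I-RT_\infty$ meets the non-negative orthant only at the origin; both arguments close the same way since $q(R)$ is a non-zero non-negative vector.
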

\begin{proof}

	From Proposition \ref{Prop_Spectrum_belonging_A}, $\frac{1}{R}$ is in $Spec \left(\frac{\partial \psi_\infty}{\partial J^{(\infty)}}(R,v_R^{(\infty)})\right)$. Since $\frac{\partial \psi_\infty}{\partial J^{(\infty)}}(R,v_R^{(\infty)})$ is an irreducible Perron operator, the generalized eigenspace $\Lambda_{1/z}(\infty)$ associated with the eigenvalue $\frac{1}{R}$ of $\frac{\partial \psi_\infty}{\partial J^{(\infty)}}(R,v_R^{(\infty)})$, has dimension $1$ (See Corollary \ref{Cor_partial_psi_infini_is_Perron_Irred} and Theorem \ref{Asymp-Thm_Perron_irred} in the Appendix of \cite{Asymp} about irreducible Perron operator). Besides the range of the operator$\left(I-R\frac{\partial \psi_\infty}{\partial J^{(\infty)}}(R,v_R^{(\infty)})\right)$ doesn't contain any non-zero real vector with non-negative coordinates, and in particular doesn't contain $\psi_\infty(R,v_R^{(\infty)})$, see again Theorem \ref{Asymp-Thm_Perron_irred} in the Appendix of \cite{Asymp}. 
	Then combining Lemma \ref{Prop_Spectrum_belonging_B} with the previous Lemma \ref{Regularity_Lemma_B}, we get that $\mathcal{C}$ is smooth at $u(R)=(R,v_R)$. 
	The result is then a consequence of the Proposition \ref{Spectrum_u(R)}.
\end{proof}
\begin{proof}[Proof of Proposition \ref{Regularity_of_C}]
	Combining Proposition \ref{Prop_Spectrum_belonging_A} with Lemma \ref{Regularity_Lemma_A} we get that $\mathcal{C}$ is smooth in a neighbourhood of any point $(z,v_z)\in\mathcal{C}$ such that $z\in\overline{\mathbb{D}(0,R)}$, $z^d\neq R^d$. 
	
    From The above Proposition \ref{Prop_Spectrum_belonging_C}, the Lalley's curve $\mathcal{C}$ is smooth at $u(R)=(R,v_R)$. 
    
	Now from subsection \ref{Asymp-Subsection_ZdZ_action} in \cite{Asymp}, the automorphism of order $d$, $A\in Aut(\mathcal{C})$ sends smooth point of $\mathcal{C}$ to smooth points of $\mathcal{C}$. Since the $A$-orbit of the smooth point $u(R)$, is $(z,v_z)$ such that $z$ is in $\{ R, Re^{2i\pi/d},...,R e^{2i \pi(d-1)/d}\}$, we get that $\mathcal{C}$ is smooth at these points. Insofar as the set of smooth points of $\mathcal{C}$ is open in $\mathcal{C}$ (Cf.\cite{Lee_2013} Theorem 4.12), $\mathcal{C}$ is smooth in a neighbourhood of $\{u(z): z\in\overline{\mathbb{D}(0,R)}\}$ as expected.
\end{proof}

Actually the first part of this proof gives:
\begin{Prop}\label{Prop_Holomorphic_Extension_of_u}
	The map $u:z\mapsto(z,v_z)$ admits a holomorphic extension in the neighbourhood of any complex number $z\in\partial\mathbb{D}(0,R)$ such that $z^d\neq R^d$.
\end{Prop}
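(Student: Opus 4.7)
The plan is to apply Lemma \ref{Regularity_Lemma_A} at every boundary point of the form $(z_0,v_{z_0})\in\mathcal{C}$ with $z_0\in\partial\mathbb{D}(0,R)$, $z_0^d\neq R^d$, and then extend $u$ by pasting the resulting local holomorphic sections of $\lambda$ with $u$ itself. Essentially, the content is already proven inside the proof of Proposition \ref{Regularity_of_C}; we just need to extract the extension statement explicitly.

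First I would fix $z_0\in\partial\mathbb{D}(0,R)$ with $z_0^d\neq R^d$ and observe that $(z_0,v_{z_0})$ is well-defined as a point of $\mathcal{C}$: the component $v^{(<\infty)}$ is rational with radius of convergence $R_{<\infty}>R$, while $v^{(\infty)}$ extends continuously to $z_0$ because $1/z_0\notin \mathrm{Spec}\left(\frac{\partial\psi_\infty}{\partial J^{(\infty)}}(z_0,v_{z_0}^{(\infty)})\right)$, as can be read off from Proposition \ref{Prop_Spectrum_belonging_A} combined with Proposition \ref{Prop_Spectrum_belonging_B}. By Proposition \ref{Prop_Spectrum_belonging_A} applied at $z_0$, $1/z_0$ does not belong to $\mathrm{Spec}\left(D_{v_{z_0}}\psi\right)$.

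Then I would invoke Lemma \ref{Regularity_Lemma_A} at the point $(z_0,v_{z_0})\in\mathcal{C}$: it yields a neighbourhood $U\subset\mathbb{C}$ of $z_0$ and a holomorphic section $\sigma:U\to \mathcal{C}$ of the projection $\lambda$ with $\sigma(z_0)=(z_0,v_{z_0})$. To conclude that $\sigma$ actually extends $u$, I would use that $\lambda$ is a biholomorphism between a neighbourhood $V$ of $(z_0,v_{z_0})$ in $\mathcal{C}$ and $U$; by continuity of $z\mapsto v_z$ at $z_0$, $u(z)=(z,v_z)$ lies in $V$ for $z\in \mathbb{D}(0,R)$ close enough to $z_0$, and both $u(z)$ and $\sigma(z)$ are then preimages of $z$ under $\lambda|_V$, hence coincide. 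Thus the map defined by $u$ on $\mathbb{D}(0,R)$ and by $\sigma$ on $U$ is a holomorphic extension of $u$ across $z_0$.

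The step which requires a bit of care is the continuity at the boundary point $z_0$, i.e. that $v_z\to v_{z_0}$ as $z\to z_0$ from inside $\mathbb{D}(0,R)$, rather than just having $(z_0,v_{z_0})$ belong to $\mathcal{C}$ in some abstract sense. This is handled by the spectral analysis preceding the proposition: the components $v^{(\alpha)}$ for $\alpha\prec\infty$ are rational and have radius of convergence strictly greater than $R$, so continuity at $z_0$ is automatic there; for the $\infty$-component, continuity up to $z_0$ is a consequence of the implicit function theorem applied to the equation $J^{(\infty)}=z\psi_\infty(z,J^{(\infty)})$ at $(z_0,v_{z_0}^{(\infty)})$, which is non-singular thanks to $1/z_0\notin\mathrm{Spec}\left(\frac{\partial\psi_\infty}{\partial J^{(\infty)}}(z_0,v_{z_0}^{(\infty)})\right)$. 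With this, the pasting argument is unambiguous and the proposition follows.
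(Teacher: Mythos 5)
Your proposal is correct and follows the same route as the paper, which simply reads the statement off the first part of the proof of Proposition \ref{Regularity_of_C}: by Proposition \ref{Prop_Spectrum_belonging_A}, $1/z_0\notin\operatorname{Spec}(D_{v_{z_0}}\psi)$ when $z_0^d\neq R^d$, and Lemma \ref{Regularity_Lemma_A} then produces a local holomorphic inverse of $\lambda$ near $(z_0,v_{z_0})$, which glues with $u$ by uniqueness. One remark on presentation: the continuity of $z\mapsto v_z$ up to $z_0$, on which your gluing step relies, is simpler than your implicit-function-theorem route suggests — each coordinate of $v_z$ is a power series with non-negative coefficients whose sum at $z=R$ is finite (restricted Green's functions being dominated by the full Green's function, itself finite at $R$), so the series converges uniformly on $\overline{\mathbb{D}(0,R)}$ and $v$ is automatically continuous there, making $(z_0,v_{z_0})$ a well-defined point of $\mathcal{C}$ before any spectral input is used.
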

\begin{Rem}
Donald I. Cartwright proved in 1992 \cite{Cartwright_1992}, under some hypotheses - that apply in our context - that the only singular points of the green's functions $z\mapsto G_z(x,y)$ on the boundary $\partial\mathbb{D}(0,R)$ are exactly the complex numbers $z$ such that $z^d=R^d$.
\end{Rem}

\vspace{\baselineskip}

\section{Derivatives of \texorpdfstring{$\lambda$}{lambda}, \texorpdfstring{$f_{x,y}$}{fxy} and \texorpdfstring{$g_{x,y}$}{gxy} at \texorpdfstring{$u(R)$}{le point u(R)}}\label{Section_derivatives}

The objectives of this section are to compute the differential, at $u(R)$, of the functions $\lambda:(z,v)\mapsto z$, $f_{x,y}$ and $g_{x,y}$ from respectively Proposition \ref{Asymp-algebraicity_of_F} and Proposition \ref{Asymp-Cor_algbraicity_of_Greens_function} in \cite{Asymp}, whom we recall the definition below. 

In Proposition \ref{Prop_derivative_of_lambda} we prove that the first derivative of the function $\lambda:(z,J)\mapsto z$, over $T_{u(R)}\mathcal{C}$ is zero. In Proposition \ref{Prop_derive_seconde_de_lambda}, we prove that the second derivative of $\lambda$ at $u(R)$ is non-zero, and we prove in Proposition \ref{Prop_derivative_of_g} that the first derivative of the functions $g_{x,y}$, at $u(R)$, for any $x,y\in X_0$, are non-zero. We also prove that for some functions $f_{x,y}$, the derivative of $f_{x,y}$ at $u(R)$ is also non-zero. We actually give, in Proposition \ref{Corollary_PSeries_A}, a characterisation of the non-nullity of $D_{u(R)}f_{x,y}$ in term of properties of the $p$-admissible paths in $\mathcal{P}h(x,y;\{y\}^\complement)$. 

As said, we start by briefly recalling the definitions of the functions $\lambda$, $g_{x,y}$ and $f_{x,y}$, for $x,y$ being vertices of the tree $X$:

The function $\lambda:\mathbb{C}\times E\to\mathbb{C}$ is the projection on the first coordinate: 
$$\lambda:(z,J)\mapsto z.$$

To any vertex $y$ of $X$, we associate a real vector $\bar{p}=\bar{p}(y)$ indexed by $\mathcal{B}(y)\setminus\{y\}\subset X_0$, and a map $M$ from the vector space $E$ to the space of square matrices indexed by the finite set $\mathcal{B}(y)\setminus\{y\}$: 

The vector denoted $\bar{p}$ has coefficients 
	\begin{equation}\label{coefficients_of_p}
	 	\forall d\in \mathcal{B}(y)\setminus\{y\},\; \bar{p}_d=p(d,y).
	\end{equation}

And the map $M$ associates to any vector $J\in E$ the matrix $M_J$ whose coefficients are given by
\begin{equation}\label{coefficients_of_M_J}
	\forall (c,d)\in\mathcal{B}(y)\setminus\{y\},\; M_J(c,d):= p(c,d) + \sum_{c_1\notin\mathcal{B}(y)} p(c,c_1)(J)_{[c_1,y]}(c_1,d).
\end{equation}

Remember that for any element $J\in E$, the complex number $J_{[c',y]}(c',d')\in \mathbb{C}$ is only defined (See formula (\ref{Asymp-Formula_J_[z,y]}) in \cite{Asymp}) if $d'$ belongs to $\mathcal{B}(y)$ and if $c'$ is not in  $\mathcal{B}(y)$.
We extend this notation by setting $J_{[c',y]}(c',d'):=1$ when $c'=d'\in\mathcal{B}(y)$.

Remark that $J\mapsto M_J$ is a polynomial map, so is the map $J\mapsto M_J^n$ where $n\in\mathbb{N}$ is a non-negative integer. Besides we have the formula: 
		\begin{equation}\label{coefficients_of_M_Jn}
			M_J^n(c,d)=\sum_{c_0,...,c_{n}}\sum_{\substack{c_1',...,c_{n}'\\ c_i'\in \{c_i\}\cup\mathcal{B}(y)^\complement}}\left(\prod_{i=1}^{n} p(c_{i-1},c_i')(J_{[c_i',y]}(c_i',c_i))\right),
		\end{equation}
where the first sum relates to all $c_0,...,c_{n}\in\mathcal{B}(y)\setminus\{y\}$ such that $c_0=c$ and $c_n=d$. 

\underline{Definition of $f_{x,y}$}

Let $x,y$ be vertices of $X$, the function $f_{x,y}$ is defined by:
\begin{itemize}
	\item If $x=y$, then $f_{x,y}$ is the constant function equal to $1$; 
	\item If $x$ is in $\mathcal{B}(y)\setminus\{y\}$, then for any $(z,J)\in\mathbb{C}\times E$, that is not a pole, $\left(I-z M_{J}\right)^{-1}(\bar{p})$ is a complex vector in $\mathbb{C}^{(\mathcal{B}(y)\setminus\{y\})}$, and $f_{x,y}(z,J)$ is its coefficient indexed by $x$:
	\begin{equation}\label{Def_of_f_x,y_when_x_in_Ball_y}
		\forall (z,J)\in\mathbb{C}\times E, f_{x,y}(z,J)=\underbrace{\left[\left(I-z M_{J}\right)^{-1}(\bar{p})\right]}_{\displaystyle \in \mathbb{C}^{\mathcal{B}(y)\setminus\{y\}}}(x),
	\end{equation}
	with $\bar{p}$ and $J\mapsto M_J$ be respectively the vector and the matrix map associated to $y$ with coefficients given by (\ref{coefficients_of_p}) and (\ref{coefficients_of_M_J});
	\item If $x\notin\mathcal{B}(y)$, $f_{x,y}$ is the rational function
	\begin{equation}\label{Def_of_f_x,y_when_x_not_in_Ball_y}
	     f_{x,y}:(z,J)\mapsto\sum_{c\,\in\mathcal{B}(y)\setminus\{y\}} (J)_{[x,y]}(x,c) f_{c,y}(z,J).
	\end{equation}
\end{itemize}

\underline{Definition of $g_{x,y}$}
	\begin{itemize}
		\item If $x=y$, $g_{x,y}$ is the rational function given by
		\begin{equation}\label{Def_of_g_x,y_x_equal_y}
		 	g_{y,y}=\left(1-\sum_{c}zp(y,c)f_{c,y}\right)^{-1};
		\end{equation}
		\item If $x\neq y$, $g_{x,y}$ is the rational function
		\begin{equation}\label{Def_of_g_x,y_x_notequal_y}
		 	g_{x,y}=f_{x,y}\, g_{y,y}.
		\end{equation}
	\end{itemize}

Recall from Proposition \ref{Asymp-Extension_of_algebraicity_say} in \cite{Asymp}, that the functions $f_{x,y}$ and $g_{x,y}$, as rational functions over $\mathcal{C}$, are regular on a neighbourhood of $\overline{\{u(z):z\in\mathbb{D}(0,R)\}}$. Using the positivity of the coefficients in the definition of the polynomial maps $J\mapsto M_J$ (that depend of the given vertex $y$), we will characterize pairs of vertices of the tree $(x,y)$, such that $D_{u(R)}f_{x,y}\notequiv 0$ on the tangent space of $\mathcal{C}$ at $u(R)$, $T_{u(R)}\mathcal{C}$ (See Proposition \ref{Corollary_PSeries_A}).

\subsection{The projection \texorpdfstring{$\pi:\mathcal{C}\to\mathcal{C}_{<\infty}$}{projection pi} and Derivatives of \texorpdfstring{$\lambda$}{lambda}}\label{subsection_derivative_of_lambda} ~

\begin{Prop}\label{Prop_derivative_of_lambda}
	The derivative of $\lambda$ at $u(R)$ is zero on the tangent space $T_{u(R)}\mathcal{C}$.
\end{Prop}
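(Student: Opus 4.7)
The plan is to directly read off the result from the description of the tangent space $T_{u(R)}\mathcal{C}$ computed in Proposition \ref{Prop_Spectrum_belonging_C}. Since $\lambda:\mathbb{C}\times E\to\mathbb{C}$ is the projection on the first coordinate, its differential at any point is the projection $(\dot z,\dot J)\mapsto \dot z$ from $\mathbb{C}\times E$ onto $\mathbb{C}$. Hence computing $D_{u(R)}\lambda$ on $T_{u(R)}\mathcal{C}$ reduces to identifying the first coordinate of tangent vectors to $\mathcal{C}$ at $u(R)$.

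First I would recall from Proposition \ref{Prop_Spectrum_belonging_C} that
$$T_{u(R)}\mathcal{C}=\{0\}\times\{0_{<\infty}\}\times\mathbb{C}\cdot\nu_\infty\subset\mathbb{C}\times E_{<\infty}\times E_\infty.$$
Every tangent vector at $u(R)$ therefore has the form $(\dot z,\dot J)=(0,0_{<\infty},c\,\nu_\infty)$ for some $c\in\mathbb{C}$; in particular the $\mathbb{C}$-component is $0$. Applying $D_{u(R)}\lambda$ yields $\dot z=0$, which is exactly what we want.

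There is no genuine obstacle here, since the heavy lifting has already been done: the key input is the computation of $T_{u(R)}\mathcal{C}$ in Proposition \ref{Prop_Spectrum_belonging_C}, which itself relied on Lemma \ref{Regularity_Lemma_B} together with the fact (from Proposition \ref{Spectrum_u(R)} and Corollary \ref{Cor_partial_psi_infini_is_Perron_Irred}) that $\Lambda_{1/R}$ is a one-dimensional space lying entirely in the $E_\infty$-factor, coming from Perron irreducibility of $\frac{\partial\psi_\infty}{\partial J^{(\infty)}}(R,v_R^{(\infty)})$. I would simply state the two-line computation and cite Proposition \ref{Prop_Spectrum_belonging_C} for the explicit form of the tangent space. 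The result that $D_{u(R)}\lambda$ vanishes on $T_{u(R)}\mathcal{C}$ will be the point departing us from the implicit function regime used in Lemma \ref{Regularity_Lemma_A}, and it is the reason why the subsequent analysis needs the second-order derivative of $\lambda$ treated in Proposition \ref{Prop_derive_seconde_de_lambda}.
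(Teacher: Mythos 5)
Your proof is correct and follows exactly the same route as the paper's: observe that $D_{u(R)}\lambda$ is the first-coordinate projection (since $\lambda$ is linear) and read off the vanishing of the $\mathbb{C}$-component from the explicit form of $T_{u(R)}\mathcal{C}$ given in Proposition \ref{Prop_Spectrum_belonging_C}. Nothing is missing and nothing differs in substance from the paper's argument.
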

\begin{proof}
$\lambda$ is a linear map thus its differential at any point identifies as $\lambda$ itself. From Proposition \ref{Prop_Spectrum_belonging_C} the tangent space $T_{u(R)}\mathcal{C}$ is $\{0\}\times \{0<\infty\}\times \mathbb{C}\cdot \nu_\infty\subset E$, where $\nu_\infty$ is a real vector in $E_\infty$, with all its coefficients positive. The result follows.
\end{proof}

\begin{Prop}\label{Prop_derive_seconde_de_lambda}
				The second derivative of $\lambda$ at $u(R)$ is non-zero in the following sense:

			For any parametrization of $\mathcal{C}$ in the neighbourhood of $u(R)$, 
			$$\eta:\tau\mapsto (r(\tau),J(\tau)),$$ 
			such that $\eta(0)=u(R)$, the map $\tau\mapsto\lambda\circ\eta (\tau) =r(\tau)$ is holomorphic and $r'(0)=0$, $r''(0)\neq 0$.
		\end{Prop}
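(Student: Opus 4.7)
The plan is to differentiate the defining relation $J(\tau)=r(\tau)\psi(J(\tau))$ of $\mathcal{C}$ twice at $\tau=0$ and then use Perron--Frobenius positivity to pin down $r''(0)$. Set $J_0=v_R$, $\dot{J}=J'(0)$, $\ddot{J}=J''(0)$. A first differentiation at $\tau=0$, using Proposition \ref{Prop_derivative_of_lambda} that $r'(0)=0$, gives $(I-RD_{J_0}\psi)(\dot{J})=0$, which only restates that $\dot{J}\in\Lambda_{1/R}$; a second differentiation, evaluated at $\tau=0$ and again using $r'(0)=0$, yields
\begin{equation*}
(I-RD_{J_0}\psi)(\ddot{J}) \;=\; r''(0)\,\psi(J_0) + R\,D^2_{J_0}\psi(\dot{J},\dot{J}).
\end{equation*}
By Proposition \ref{Prop_Spectrum_belonging_C}, $\Lambda_{1/R}=\{0_{<\infty}\}\times\mathbb{C}\cdot\nu_\infty$, and since $\eta$ is a local parametrization with $r'(0)=0$, one must have $\dot{J}=c\cdot(0_{<\infty},\nu_\infty)$ with $c\neq 0$.

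The next step is to build a non-negative left eigenvector $\tilde{\mu}$ of $D_{J_0}\psi$ for the eigenvalue $1/R$. From the block-triangular form (\ref{Form_of_D_J_psi}) of $D_{J_0}\psi$, writing $\tilde{\mu}=(\tilde{\mu}^{(<\infty)},\tilde{\mu}^{(\infty)})$, the $E_\infty$ block forces $\tilde{\mu}^{(\infty)}$ to be a left $1/R$-eigenvector of $\frac{\partial\psi_\infty}{\partial J^{(\infty)}}(R,v_R^{(\infty)})$; by Corollary \ref{Cor_partial_psi_infini_is_Perron_Irred} this operator is irreducible Perron, and Perron--Frobenius provides a one-dimensional eigenspace spanned by a strictly positive $\mu_\infty$. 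The component $\tilde{\mu}^{(<\infty)}$ is then uniquely determined and equals $R(I-RA_{<\infty,<\infty})^{-T}A_{\infty,<\infty}^T\mu_\infty$, where $A_{<\infty,<\infty}$ and $A_{\infty,<\infty}$ denote the corresponding blocks of $D_{J_0}\psi$. Since Corollary \ref{Cor_Spectrum_alpha} yields $\rho(\vec{\psi}_\alpha(R))<1/R$ for every $\alpha\prec\infty$, a Neumann series argument gives $(I-R\vec{\psi}_\alpha(R))^{-1}\geq 0$ on each diagonal block; combined with the non-positivity of the off-diagonal blocks of $I-RA_{<\infty,<\infty}$ and a standard block inversion, this yields $(I-RA_{<\infty,<\infty})^{-1}\geq 0$, and hence $\tilde{\mu}^{(<\infty)}\geq 0$.

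Pairing the displayed identity with $\tilde{\mu}$ kills the left-hand side (as $(I-RD_{J_0}\psi)^T\tilde{\mu}=0$) and leaves the scalar equation
\begin{equation*}
0 \;=\; r''(0)\,\langle\tilde{\mu},\psi(J_0)\rangle + R\,\langle\tilde{\mu},D^2_{J_0}\psi(\dot{J},\dot{J})\rangle.
\end{equation*}
The first pairing is strictly positive because $\tilde{\mu}^{(\infty)}=\mu_\infty>0$ and $\pi_\infty\psi(J_0)=v_R^{(\infty)}/R>0$. For the second, $\psi$ has non-negative coefficients, so $D^2_{J_0}\psi(\dot{J},\dot{J})$ is coefficientwise non-negative; Proposition \ref{degre_infini_de_psi} then supplies classes $[\theta],[\theta_1],[\theta_2]\in V_\infty$ such that $J([\theta_1])J([\theta_2])$ is a factor of some monomial of $\psi_E(J)([\theta])$, and combined with $\nu_\infty>0$ and $v_R>0$ this forces $\pi_\infty D^2_{J_0}\psi(\dot{J},\dot{J})$ to have a strictly positive coordinate at $[\theta]$; since $\mu_\infty>0$, the second pairing is strictly positive as well. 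Consequently $r''(0)=-R\,\langle\tilde{\mu},D^2_{J_0}\psi(\dot{J},\dot{J})\rangle/\langle\tilde{\mu},\psi(J_0)\rangle\neq 0$.

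The main difficulty is the careful construction of the non-negative left eigenvector $\tilde{\mu}$ from the Perron eigenvector $\mu_\infty$ of the $\infty$-block, which crucially uses Corollary \ref{Cor_Spectrum_alpha} to invert $I-RA_{<\infty,<\infty}$ positively; the other essential input is Proposition \ref{degre_infini_de_psi}, since without the degree-$\geq 2$ property of $\psi_\infty$ in the $V_\infty$-variables the Hessian contribution could conceivably be annihilated by $\mu_\infty$, leaving $r''(0)$ unforced.
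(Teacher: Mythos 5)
Your proof is correct and, while it shares the same two pillars as the paper (Proposition~\ref{degre_infini_de_psi} for the degree-$\geq 2$ Hessian input, and Perron irreducibility of $\frac{\partial\psi_\infty}{\partial J^{(\infty)}}(R,v_R^{(\infty)})$), it reaches the conclusion by a genuinely different mechanism. The paper argues by contradiction: assuming $r''(0)=0$, it projects the twice-differentiated relation to $E_{<\infty}$ to kill the $E_{<\infty}$-component of $J''(0)$ (using $1/R\notin Spec(\vec\psi_\alpha(R))$, Corollary~\ref{Cor_Spectrum_alpha}), and then observes that the $E_\infty$-projected equation would exhibit a non-zero non-negative vector in the range of $\bigl(I-R\frac{\partial\psi_\infty}{\partial J^{(\infty)}}\bigr)$, impossible by the Perron theorem (Theorem~\ref{Asymp-Thm_Perron_irred} of \cite{Asymp}). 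You instead build a global non-negative left $1/R$-eigenvector $\tilde\mu$ of $D_{v_R}\psi$, extending the strictly positive left Perron vector $\mu_\infty$ of the $\infty$-block by a Neumann series for the $<\infty$-block, and then pair; this annihilates the unknown $\ddot J$ in one step and yields an explicit closed formula $r''(0)=-R\,\langle\tilde\mu,D^2\psi(\dot J,\dot J)\rangle/\langle\tilde\mu,\psi(J_0)\rangle$. The two proofs use the same Perron fact in dual form (non-negative vectors cannot be in $\operatorname{Range}(I-RA_\infty)$ versus pairing non-trivially with the left Perron vector), so the mathematical content is essentially the same, but yours avoids the contradiction and the two-stage projection, at the cost of constructing $\tilde\mu^{(<\infty)}$. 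For the latter, you can simplify: $A_{<\infty,<\infty}\ge 0$ is block lower-triangular with $\rho(\vec\psi_\alpha(R))<1/R$ on each diagonal block, so $\rho(RA_{<\infty,<\infty})<1$ directly and the single Neumann series $\sum_{n\ge 0}(RA_{<\infty,<\infty})^n$ gives $(I-RA_{<\infty,<\infty})^{-1}\ge 0$; no separate ``block inversion'' step is needed.

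One small inaccuracy to fix: $\dot J = c\,\nu_\infty$ with $c\in\mathbb{C}^\times$ not necessarily real, so $D^2_{J_0}\psi(\dot J,\dot J)=c^2\,D^2_{J_0}\psi(\nu_\infty,\nu_\infty)$ is $c^2$ times a non-negative, non-zero real vector, not itself ``coefficientwise non-negative,'' and the second pairing is $c^2$ times a strictly positive real number rather than ``strictly positive.'' This does not affect the conclusion --- it is still non-zero, so $r''(0)\ne 0$ --- but the phrasing should reflect the complex prefactor $c^2$, as the paper does by dividing out $Rc^2$ before invoking positivity.
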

		\begin{proof}
			Let $\eta:\tau\mapsto (r(\tau),J(\tau))$ be an arbitrary parametrization of $\mathcal{C}$ in the neighbourhood of $u(R)$, such that $\eta(0)=u(R)$. From the previous proposition, we have $r'(0)=0$ and from Proposition \ref{Prop_Spectrum_belonging_C} we furthermore have that, $J'(0)$ belongs to $\Lambda_{ 1/R}=\{0_{<\infty}\}\times \Lambda_{1/R}(\infty)$, the generalized eigenspace of $D_{v_R}\psi$ associated with the eigenvalue $1/R$. In particular there exists $c\in\mathbb{C}\setminus\{0\}$, such that $J'(0)=c\cdot \nu_\infty$, where $\nu_\infty$  is a vector in $\{0_{<\infty}\}\times E_\infty\subset E$, with all its coordinates in $E_\infty$ positive. Since $\eta$ parametrizes $\mathcal{C}$, for any $\tau$,
			$$J(\tau)=r(\tau)\psi(J(\tau)).$$
Differentiating two times this equation and evaluating at $\tau=0$ we get:
\begin{equation*}
    J''(0)= r''(0)\psi(J(0))+r(0)D^2_{J(0)}\psi(J'(0),J'(0))+r(0)D_{J(0)}\psi(J''(0)).
\end{equation*}

By contradiction suppose that $r''(0)=0$, then, after replacing $r(0)$ by $R$, its value, and $J'(0)$ by $c\cdot\nu_\infty$, in the above equality we get
\begin{equation}\label{eqts9}
   \left(I-RD_{J(0)}\psi\right)\dfrac{J''(0)}{R c^2}= D^2_{J(0)}\psi(\nu_\infty,\nu_\infty).
\end{equation}
The projection of $\nu_\infty$ onto $E_{< \infty}$, parallel to $\mathbb{C}\times\{0\}\times E_\infty$ is zero; from the form (\ref{Form_of_D_J_psi}) of the differential $D_{v_z}\psi$, we infer that the projection to $E_{<\infty}$ of the right hand side of the above is also zero. 
Besides $\frac{1}{R}$ is not on the spectrum of any $\vec{\psi}_\alpha(R)$ for $\alpha\prec \infty$ (Proposition \ref{belong_spectrum_alpha}), thus $ \left(I-RD_{J(0)}\psi\right)$ restricted to $E_{<\infty}$ is invertible. Projecting in $E_{<\infty}$ the equality (\ref{eqts9}) above, we get that all the coordinates in $E_{<\infty}$ of the vector $J''(0)$ are zero. Thus $J''(0)$ belongs to $\{0_{<\infty}\}\times E_\infty$.

By Proposition \ref{degre_infini_de_psi} the polynomial map with non-negative coefficients $\psi:E\to E$ has degree at least two with respect to the variables in $E_\infty$. Since $u(R)$ is a vector with positive coefficients, the bilinear map $D^2_{J(0)}\psi$ is non-negative and non-zero over $E_\infty\times E_\infty$, that is the quadratic map $J^{(\infty)} \mapsto D^2_{J(0)}\psi(J^{(\infty)},J^{(\infty)})$ is non-zero, and sends non-negative vectors of $E_\infty$ to non-negative vectors in $E_\infty$. This observation in mind, since $\nu_\infty$ is a real vector of $E_\infty$ with positive coordinates, the projection over $E_\infty$ of the right-hand-side of (\ref{eqts9}) is a non-zero, non-negative real vector of the vector space $E_\infty$.

But $J''(0)$ is in $\{0_{<\infty}\}\times E_\infty$, thus the image of $\dfrac{J''(0)}{R c^2}$ by $\left(I-RD_{J(0)}\psi\right)$ is the image of its component  $\dfrac{J_{\infty}''(0)}{Rc^2}$ in $E_\infty$,  by $\left(I-R\dfrac{\partial \psi_\infty}{\partial J^{(\infty)}}(R,v_R)\right)$, where we see $E_\infty$ has a vector subspace of $E$.

Now recall that $\dfrac{\partial \psi_\infty}{\partial J^{(\infty)}}(R,v_R)$ is an irreducible Perron operator (Corollary \ref{Cor_partial_psi_infini_is_Perron_Irred}) with spectral radius equals to $1/R$. From theorem \ref{Asymp-Thm_Perron_irred} in the Appendix of \cite{Asymp}, the left hand-side of (\ref{eqts9}) can not be non-negative and non-zero. That contradicts the previous paragraph. Hence it was absurd to assume $r''(0)=0$.
\end{proof}

\subsection{Derivatives of \texorpdfstring{$f_{x,y}$}{fxy} and \texorpdfstring{$g_{x,y}$}{gxy}}

\begin{Lem}\label{Lem_der_de_pi}
	The projection $\pi_{<\infty}:\mathcal{C}\to\mathcal{C}_{<\infty}$, that sends $(z,J^{(<\infty)},J^{(\infty)})\in\mathcal{C}$ to $(z,J^{(<\infty)})\in\mathcal{C}_{<\infty}$, has zero derivative at $u(R)$:
	$$D_{u(R)}\pi_{<\infty}\equiv 0, \quad \text{ over } T_{u(R)}\mathcal{C}.$$
\end{Lem}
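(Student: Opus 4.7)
The plan is to observe that $\pi_{<\infty}$ is (the restriction of) a linear map between complex vector spaces, so its differential at every point is canonically identified with the map itself. More precisely, $\pi_{<\infty}$ is the restriction to $\mathcal{C}$ of the projection
$$\mathbb{C}\times E_{<\infty}\times E_\infty \longrightarrow \mathbb{C}\times E_{<\infty},\qquad (z,J^{(<\infty)},J^{(\infty)})\longmapsto (z,J^{(<\infty)}),$$
and this ambient projection, being $\mathbb{C}$-linear, coincides at every point with its own differential. Consequently, for any tangent vector $(\dot{z},\dot{J}^{(<\infty)},\dot{J}^{(\infty)})\in T_{u(R)}\mathcal{C}$, we have
$$D_{u(R)}\pi_{<\infty}(\dot{z},\dot{J}^{(<\infty)},\dot{J}^{(\infty)})=(\dot{z},\dot{J}^{(<\infty)}).$$

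Now I invoke Proposition \ref{Prop_Spectrum_belonging_C}, which explicitly computes
$$T_{u(R)}\mathcal{C}=\{0\}\times \{0_{<\infty}\}\times \mathbb{C}\cdot \nu_\infty\subset \mathbb{C}\times E_{<\infty}\times E_\infty.$$
Any tangent vector at $u(R)$ thus has its first coordinate $\dot{z}=0$ and its second coordinate $\dot{J}^{(<\infty)}=0_{<\infty}$; only the $E_\infty$-component, which is a complex multiple of $\nu_\infty$, may be non-zero. Applying the formula for $D_{u(R)}\pi_{<\infty}$ from the previous paragraph, the image of such a tangent vector is $(0,0_{<\infty})\in\mathbb{C}\times E_{<\infty}$. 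Hence $D_{u(R)}\pi_{<\infty}\equiv 0$ on $T_{u(R)}\mathcal{C}$, as claimed.

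There is no real obstacle here: the content of the lemma is entirely captured by the explicit description of $T_{u(R)}\mathcal{C}$ furnished by Proposition \ref{Prop_Spectrum_belonging_C}, and the verification amounts to noting that the two coordinates on which $\pi_{<\infty}$ is non-trivial are exactly the two coordinates that vanish on every tangent vector at $u(R)$.
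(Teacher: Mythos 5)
Your proof is correct and follows essentially the same route as the paper: identify $\pi_{<\infty}$ with its own differential by linearity, then read off from Proposition \ref{Prop_Spectrum_belonging_C} that every tangent vector at $u(R)$ lies in $\{0\}\times\{0_{<\infty}\}\times\mathbb{C}\cdot\nu_\infty$ and hence is annihilated by the projection. Nothing is missing.
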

\begin{proof} The map $\pi_{<\infty}$ is the restriction to $\mathcal{C}$, of the linear projection from $\mathbb{C}\times E$ into $\mathbb{C}\times E_{<\infty}$ along $E_\infty$, that we also denote by $\pi_{<\infty}$. In particular $D_{u(R)}\pi_{<\infty}$ identifies as $\pi_{<\infty}$. From Proposition \ref{Prop_Spectrum_belonging_C}, we have
	$T_{u(R)}\mathcal{C}=\{0\}\times \{0_{<\infty}\}\times \mathbb{C}\cdot \nu_\infty$, thus $D_{u(R)}\pi_{<\infty}$ is zero over $T_{u(R)}\mathcal{C}$.
\end{proof}

\begin{Prop}\label{Prop_PSeries_expansion}
Let $x,y$ be two vertices of $X$, and $f_{x,y}$ and $g_{x,y}$ be the rational functions over $\mathbb{C}\times E\approx\mathbb{C}\times\mathbb{C}^N$, from Proposition \ref{Asymp-algebraicity_of_F} and  Corollary \ref{Asymp-Cor_algbraicity_of_Greens_function} in \cite{Asymp} respectively. These rational functions admit a convergent power series expansion in the neighbourhood of the origin, with non-negative coefficients, that is a power series:
\begin{equation}\label{power_expansion_1}
	\sum_{n,k_1,...,k_N\in\mathbb{N}} a_{n,k_1,...,k_N} \tau^n J_1^{k_1}\cdots J_N^{k_N}, \quad \text{ with }a_{n,k_1,...,k_N}\geq 0,
\end{equation}
converging on a polydisk 
\begin{equation}\label{polydisc_Q}
 	Q=\left\lbrace (\tau,J)\in\mathbb{C}\times E :
 	 |\tau|\leq R + \varepsilon, \forall [\theta]\in\Gamma\backslash\Xi, |J([\theta ])|\leq v_R({[\theta]})+\varepsilon 
 	 \right\rbrace,
\end{equation}
for some $\varepsilon>0$.
\end{Prop}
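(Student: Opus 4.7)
The plan is first to verify, by direct unfolding of the defining formulas, that the Taylor expansions of $f_{x,y}$ and $g_{x,y}$ at the origin have non-negative coefficients, and then to upgrade these formal expansions to convergent ones on the polydisk $Q$ via a Pringsheim-type argument. For $c\in\mathcal{B}(y)\setminus\{y\}$ I would use the Neumann expansion $(I-zM_{J})^{-1}=\sum_{n\geq 0}(zM_{J})^{n}$, which is valid in a neighbourhood of the origin, and combine it with formula (\ref{coefficients_of_M_Jn}) expressing every entry of $M_{J}^{n}$ as a polynomial in the coordinates of $J$ whose coefficients are sums of products of transition probabilities $p(\cdot,\cdot)\geq 0$; since $\bar{p}\geq 0$, this gives $f_{c,y}=\sum_{n}z^{n}[M_{J}^{n}\bar{p}](c)$ as a power series with non-negative coefficients. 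The other cases reduce to this one: for $x\notin\mathcal{B}(y)$, formula (\ref{Def_of_f_x,y_when_x_not_in_Ball_y}) writes $f_{x,y}$ as a finite sum of monomials (in the $J$ variables) times some $f_{c,y}$; the geometric expansion $g_{y,y}=(1-h)^{-1}=\sum_{n\geq 0}h^{n}$ with $h:=\sum_{c}zp(y,c)f_{c,y}$ produces a power series with non-negative coefficients since $h$ has zero constant term and non-negative coefficients; and finally $g_{x,y}=f_{x,y}\,g_{y,y}$ is a product of such series.

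For the second step, I would use the Remark following the main theorem (which invokes Proposition \ref{Regularity_of_C}) to deduce that $g_{x,y}$, and hence also $f_{x,y}=g_{x,y}/g_{y,y}$ (noting $g_{y,y}(R,v_{R})=G_{R}(y,y)>0$), is regular on some open neighbourhood of $u(R)=(R,v_{R})$ in $\mathbb{C}\times E$. I would choose $\varepsilon>0$ small enough that $f_{x,y}$ and $g_{x,y}$ remain regular at the real positive point $r:=(R+\varepsilon,v_{R}+\varepsilon\mathbf{1})$. To pass from regularity at $r$ to convergence of the Taylor series at $r$, I would apply the one-variable Pringsheim theorem to the rational function $F(t):=g_{x,y}(tr)$ of the complex parameter $t$: its Taylor coefficients at $t=0$ are $b_{n}=\sum_{|I|=n}a_{I}r^{I}\geq 0$, and provided $F$ is holomorphic on a neighbourhood of the closed real segment $[0,1]\subset\mathbb{C}$, Pringsheim's theorem applied to the non-negative series $\sum b_{n}t^{n}$ shows that its radius of convergence strictly exceeds $1$, so $\sum_{I}a_{I}r^{I}<\infty$. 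By non-negativity of the coefficients, for every $(\tau,J)\in Q$ one obtains the bound
\[
\Bigl|\sum_{n,k_{1},\dots,k_{N}}a_{n,k_{1},\dots,k_{N}}\tau^{n}J_{1}^{k_{1}}\cdots J_{N}^{k_{N}}\Bigr|\le\sum a_{n,k_{1},\dots,k_{N}}(R+\varepsilon)^{n}\prod_{i=1}^{N}(v_{R}([\theta_{i}])+\varepsilon)^{k_{i}}<\infty,
\]
which gives absolute convergence on the whole polydisk $Q$.

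The main obstacle will be to check that $F(t)$ has no singularity along the real segment $[0,1]$. The singularities come from the vanishing of $\det(I-zM_{J})$ (for $f_{c,y}$, and therefore for $f_{x,y}$ and $g_{x,y}$) and of $1-h(z,J)$ (for $g_{y,y}$). Along the ray $t\mapsto tr$ the matrix $tr_{0}M_{tJ'}$ (with $r=(r_{0},J')$) has non-negative entries depending monotonically on $t\in[0,1]$, so by Perron--Frobenius its spectral radius $t\mapsto\rho(tr_{0}M_{tJ'})$ is a non-decreasing continuous function starting at $0$; if one can show $\rho(RM_{v_{R}})<1$ strictly, then by continuity $\rho((R+\varepsilon)M_{v_{R}+\varepsilon\mathbf{1}})<1$ for small enough $\varepsilon$, so $\rho(tr_{0}M_{tJ'})<1$ throughout $[0,1]$, which keeps $1$ out of the spectrum of $tr_{0}M_{tJ'}$ and thus $\det(I-tr_{0}M_{tJ'})$ away from $0$. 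The analogous estimate $h(r)<1$ follows from $g_{y,y}(R,v_{R})=(1-h(R,v_{R}))^{-1}<\infty$ together with continuity. The strict spectral bound $\rho(RM_{v_{R}})<1$ is the key nontrivial input; it ultimately reflects the transience at rate $R$ of the chain $(X_{0},p)$ and can be established by applying a coordinatewise divergence argument to $(RM_{v_{R}})^{n}\bar{p}$, using the strict positivity of the Perron eigenvector of $RM_{v_{R}}$ to rule out $\rho(RM_{v_{R}})\geq 1$.
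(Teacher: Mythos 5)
Your first step (non-negativity of the coefficients) is exactly the paper's: Neumann series for $(I-zM_J)^{-1}$ fed into $f_{c,y}$, then (\ref{Def_of_f_x,y_when_x_not_in_Ball_y}), then the geometric expansion of $g_{y,y}$ and the product $g_{x,y}=f_{x,y}\,g_{y,y}$. No issue there.

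The second step has a genuine gap. You write that you would use the Remark/Proposition \ref{Asymp-Extension_of_algebraicity_say} ``to deduce that $g_{x,y}$ \dots\ is regular on some open neighbourhood of $u(R)$ in $\mathbb{C}\times E$.'' But that proposition only asserts that the \emph{restriction} $g_{x,y}|_{\mathcal{C}}$ to the Lalley curve is regular near $u(R)$. For a rational function $P/Q$ on $\mathbb{C}\times E$, regularity of the restriction to a curve does not give regularity in the ambient space: the pole locus $\{Q=0\}$ can pass through $u(R)$ while $P$ also vanishes along $\mathcal{C}$ there, making $P/Q|_{\mathcal{C}}$ regular yet $P/Q$ singular in every ambient neighbourhood. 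So you cannot ``choose $\varepsilon$ small enough that $f_{x,y}$ and $g_{x,y}$ remain regular'' at the off-curve point $r=(R+\varepsilon,v_R+\varepsilon\mathbf{1})$ by appealing to that remark; regularity of the ambient rational function near $u(R)$ is precisely what the proposition you are trying to prove establishes, so assuming it is circular.

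You do partly recognize this, since you then try to rule out singularities along the segment $[0,1]$ directly via the spectral bound $\rho(RM_{v_R})<1$. But that bound is asserted, not proved: $M_{v_R}$ has no a priori reason to be Perron-irreducible (the whole point of Section \ref{Section_The_Graph} is that the relevant dependency structures are \emph{not} irreducible in general for finite-range walks), $\bar p$ may have zero entries so ``coordinatewise divergence'' of $(RM_{v_R})^n\bar p$ does not directly control the spectral radius, and the assertion $h(R,v_R)<1$ already presupposes $\det(I-RM_{v_R})\neq 0$ so that $f_{c,y}(R,v_R)$ is finite — again circular.

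The paper avoids all of this with a different mechanism: Proposition \ref{Prop_rational_extension} in the Appendix, a Pringsheim/Abel argument adapted to a non-negative multivariable power series restricted to a monotone parametrisation $Z=u:\overline{\mathbb{D}(0,R)}\to\mathcal{C}$ of the curve. Its hypothesis is only that $f(Z(t))$ has a finite limit as $t\to R$, which here is just the finiteness of $G_R(x,y)$ and $F_R(x,y)$; the non-negativity of the coefficients then propagates convergence outward from the one-dimensional slice $\{u(t)\}$ to the full polydisk $Q$ (Lemmas \ref{Lem_a1} and \ref{Lem_a2}). In particular the ambient regularity near $u(R)$, and the spectral estimate you were trying to establish, come out as \emph{consequences} of that proposition rather than being needed as inputs. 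To repair your argument you would essentially have to reprove Proposition \ref{Prop_rational_extension}.
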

\begin{proof}
	Fix $y$ an arbitrary vertex of $X$.
	In a neighbourhood of the origin $(z,J)=(0,0_E)$, the operator $\left( I-zM_J \right)^{-1}$ equals the series of matrices
	\begin{equation}\label{power_series_of_matrices}
		\sum_{n\geq 0} z^n M_J^n
	\end{equation}
	And thus each coefficient of the matrix $\left( I-zM_J \right)^{-1}$ is a power series with non-negative coefficients of the form (\ref{power_expansion_1}). Since $\bar{p}$ is a real vector with non-negative coefficients we infer from the definitions of the functions \og $f_{x,y}$ \fg{} , that for any vertex $x$, the function $f_{x,y}$ admits a power series expansion in the neighbourhood of $(z,J)=(0,0_E)$ of the form (\ref{power_expansion_1}). 
	
	Lastly in a neighbourhood of $(z,J)=(0,0_E)$, from formula (\ref{Def_of_g_x,y_x_equal_y}), $g_{y,y}$ satisfies  
	$$g_{y,y}(z,J)=\sum_n \left(\sum_c z p(y,c)f_{c,y}(z,J)\right)^n.$$
	Then, from the result for the $f_{c,y}$, we deduce that $g_{x,y}$ also admits a power series expansion with non-negative coefficients of the form (\ref{power_expansion_1}).
	
	The convergence of such power series expansions on some polydisk $Q$ defined like (\ref{polydisc_Q}) for some $\varepsilon>0$ is a consequence of Proposition \ref{Prop_rational_extension} in the Appendix, with $n=1+dim_{\mathbb{C}}(E)$, with $F=f_{x,y}$ or $g_{x,y}$, $Z=u$ the function $z\mapsto(z,v_z)$ defined on $\overline{\mathbb{D}(0,R)}$ (parameterizing $\mathcal{C}$ in a neighbourhood of the origin, see Lemma \ref{Asymp-Lem_Green_parametrization_of_C} in \cite{Asymp})  and $f=F|_{\mathcal{C}}$ the restriction of the rational map $F$ to the Lalley's curve $\mathcal{C}$, which is regular thanks to Proposition \ref{Asymp-Extension_of_algebraicity_say} in \cite{Asymp}.
\end{proof}
\begin{Prop}\label{Corollary_PSeries_A}
	We take the same notation as in the previous Proposition \ref{Prop_PSeries_expansion}, and consider (\ref{power_expansion_1}) to be the power series expansion of $f_{x,y}$. For convenience, we suppose that the isomorphism $E\approx \mathbb{C}^N$ sends the subspace $E_{<\infty}$ to $\mathbb{C}^L\times \{0_{N-L}\}\subset \mathbb{C}^N$ and $E_\infty$ to $\{0_L\}\times \mathbb{C}^{N-L}$, where $L:=\dim(E_{<\infty})$.
	
		Let $x,y$ be two distinct vertices of $X$, the followings are equivalent:
	\begin{enumerate}[label=\roman*)]
		\item $f_{x,y}$ factors through $\pi_{<\infty}$;
		\item $D_{u(R)}f_{x,y}\equiv 0$ on $T_{u(R)}\mathcal{C}$;
		\item $R_F(x,y)> R$;
		\item For any $(n,k_1,...,k_N)\in\mathbb{N}^{1+N}$,  if $k_j>0$ for some $j>L$ then the coefficient of the product $\tau^n J_1^{k_1}\cdots J_N^{k_N}$ in (\ref{power_expansion_1}) is zero, $$a_{n,k_1,...,k_N}=0.$$
		\item There exists an integer $M$ such that for any $p$-admissible path $\gamma=(\omega_0,...,\omega_n)\in\mathcal{P}h(x,y;\{y\}^\complement)$, $\max_i(d(\omega_i,y))\leq M$.
	\end{enumerate}
\end{Prop}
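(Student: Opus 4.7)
The plan is to close the equivalences through the cycle $i) \Rightarrow ii) \Rightarrow iv) \Rightarrow iii)$, together with the direct equivalence $iv) \Leftrightarrow i)$ (immediate from the power series form) and the combinatorial equivalence $iv) \Leftrightarrow v)$; the main obstacle will be the implication $iii) \Rightarrow iv)$, which uses the precise singular behaviour of $v^{(\infty)}$ at $z=R$.

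The implication $i) \Rightarrow ii)$ is the chain rule applied to $f_{x,y} = \tilde{f}\circ \pi_{<\infty}$ combined with Lemma \ref{Lem_der_de_pi}. For $iv) \Rightarrow i)$, a power series not involving the variables $J_{L+1},\dots,J_N$ trivially factors through $\pi_{<\infty}$ on the polydisc $Q$ of Proposition \ref{Prop_PSeries_expansion}, and this factorisation extends to the whole rational function by analytic continuation. For $ii) \Rightarrow iv)$ the argument is positivity: by Proposition \ref{Prop_Spectrum_belonging_C} the tangent space $T_{u(R)}\mathcal{C}$ is spanned by $(0, 0_{<\infty}, \nu_\infty)$ with $\nu_\infty$ strictly positive over $\Xi_\infty$, so
\[
0 = D_{u(R)}f_{x,y}(0, 0_{<\infty}, \nu_\infty) = \sum_{j>L}\partial_{J_j} f_{x,y}(R, v_R)\,\nu_\infty^{(j)};
\]
every summand is non-negative (non-negativity of the coefficients of $f_{x,y}$ combined with positivity of $R$ and $v_R$) and $\nu_\infty^{(j)}>0$, so each $\partial_{J_j} f_{x,y}(R, v_R)$ vanishes for $j > L$, and the non-negative monomial expansion then forces $a_{n,k_1,\dots,k_N} = 0$ whenever $k_j > 0$ with $j > L$. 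The step $iv) \Rightarrow iii)$ is immediate: if the series of $f_{x,y}$ only involves $(z, J^{(<\infty)})$, then $F_z(x,y) = f_{x,y}(z, v_z^{(<\infty)})$ extends holomorphically on $\mathbb{D}(0, R_{<\infty})$ since $v^{(<\infty)}$ is rational on that larger disc and $R_{<\infty} > R$.

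The key step is $iii) \Rightarrow iv)$. Combining Propositions \ref{Prop_Spectrum_belonging_C} and \ref{Prop_derive_seconde_de_lambda} with a local parametrisation $\tau \mapsto \eta(\tau)$ of $\mathcal{C}$ at $u(R)$ chosen so that $\eta'(0)$ is a positive multiple of $(0, 0_{<\infty}, \nu_\infty)$, one checks on the real interval $(0, R)$ that, for every $j > L$,
\[
v_z^{(j)} = v_R^{(j)} - c_j\sqrt{R - z} + O(R - z),\qquad c_j > 0,
\]
so $dv_z^{(j)}/dz \to +\infty$ as $z \to R^-$, whereas for $j \le L$ the derivative stays bounded. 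If $R_F(x,y) > R$, then $F_z(x,y)$ is holomorphic at $R$ and $\partial_z F_z(x,y)$ remains bounded as $z\to R^-$. The chain rule
\[
\partial_z F_z(x,y) = \partial_z f_{x,y}(z, v_z) + \sum_{j=1}^N \partial_{J_j}f_{x,y}(z, v_z)\,\frac{dv_z^{(j)}}{dz}
\]
presents $\partial_z F_z(x,y)$ as a sum of non-negative real functions on $(0,R)$ (by non-negativity of the coefficients of $f_{x,y}$ and of the $v^{(j)}$); boundedness forces $\partial_{J_j}f_{x,y}(R, v_R) = 0$ for every $j > L$, via monotone convergence and comparison with $c_j/(2\sqrt{R-z})$, and we conclude $iv)$ exactly as in the $ii)\Rightarrow iv)$ argument.

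Finally, $iv) \Leftrightarrow v)$ is combinatorial: inspecting the matrix power formula (\ref{coefficients_of_M_Jn}) and the definitions (\ref{Def_of_f_x,y_when_x_in_Ball_y})--(\ref{Def_of_f_x,y_when_x_not_in_Ball_y}) shows that each coefficient $a_{n,k_1,\dots,k_N}$ is a positive sum over skeleton-and-excursion decompositions of $p$-admissible paths in $\mathcal{P}h(x,y;\{y\}^\complement)$, where $k_j$ counts the number of excursions of class $[\theta_j] \in \Gamma\backslash\Xi$. An excursion with index $j > L$ lies in an orbit of $V_\infty$, whose representatives can reach arbitrarily far from $y$ by Proposition \ref{Stagnation_of_V_N}; conversely, a path of $\mathcal{P}h(x,y;\{y\}^\complement)$ that reaches distance $>M$ from $y$ has, for $M$ large, a skeleton-excursion decomposition involving at least one excursion whose orbit lies in $V_\infty$ (pigeonhole on the finite set $\Gamma\backslash\Xi$). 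This produces the equivalence between $v)$ and the vanishing condition $iv)$ on the coefficients of $f_{x,y}$.
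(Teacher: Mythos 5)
Your proposal shares the cycle $i)\Leftrightarrow ii)\Leftrightarrow iv)$ and the implication $iv)\Rightarrow iii)$ with the paper's own proof, but diverges on the two remaining links, and one of these divergences creates a genuine gap.

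For $iii)\Rightarrow iv)$ you take a genuinely different route. The paper argues by contraposition at the power-series level: from the identity (\ref{eqts10}) it extracts, using positivity of all the coefficients, a term-by-term lower bound $p^{(m+l)}(x,y;\{y\}^\complement)\geq c\, a_{n,k_1,\dots,k_N}\, p^{(m)}(a_j,b_j;\mathcal{B}(y_j)^\complement)$ and then invokes Cauchy--Hadamard to force $R_F(x,y)\leq R$. You instead use the local geometry of $\mathcal{C}$ at $u(R)$: the square-root singularity of $v_z^{(j)}$ for $j>L$ (via Propositions \ref{Prop_Spectrum_belonging_C} and \ref{Prop_derive_seconde_de_lambda}), together with the non-negativity of the terms in the chain rule, to pin down $\partial_{J_j}f_{x,y}(R,v_R)=0$. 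Both are correct, but be aware that the paper's argument is more elementary and does not rely on the derivative analysis of Section~\ref{Section_derivatives}, whereas yours does; this matters for the logical order of the paper, since \ref{Corollary_PSeries_A} is stated and used in the same section as \ref{Prop_derive_seconde_de_lambda}.

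The real gap is in your treatment of $iv)\Leftrightarrow v)$. You assert that each coefficient $a_{n,k_1,\dots,k_N}$ is a positive sum over skeleton-and-excursion decompositions with $k_j$ counting excursions of class $[\theta_j]$, and you close the converse direction by ``pigeonhole on the finite set $\Gamma\backslash\Xi$''. But the monomials arising from (\ref{coefficients_of_M_Jn}) and (\ref{Def_of_f_x,y_when_x_in_Ball_y})--(\ref{Def_of_f_x,y_when_x_not_in_Ball_y}) involve \emph{nested} expansions of $J_{[c',y]}(c',d')$ along geodesics, and the distance a path reaches from $y$ is not directly visible in a single orbit count: to conclude that a path going far from $y$ produces, in its decomposition, a restricted-Green factor whose class lies in $V_\infty$, one has to track how far a subpath goes from its \emph{local} base vertex $y'$, which requires peeling off the first step $(c_{j-1},c_j')$ and re-decomposing along $[c_j',y]$ exactly as in Proposition~\ref{Prop:three_to_five}, using the threshold $M$ of Proposition~\ref{Stagnation_of_V_N} and the triangle-inequality bookkeeping $d(w^*,y')>M$. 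A pigeonhole on $\Gamma\backslash\Xi$ does not produce this. The paper avoids the issue entirely by routing through $iii)$: it cites the already-established $v)\Rightarrow iii)$ (Lemma~\ref{Asymp-Lem:five_to_three} in \cite{Asymp}) and proves $\neg v)\Rightarrow\neg iii)$ as the separate Proposition~\ref{Prop:three_to_five}. You should either adopt that route or replace the pigeonhole sentence with the full decomposition argument.
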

\begin{proof}
We have already proven $v)\Rightarrow iii)$ in Lemma \ref{Asymp-Lem:five_to_three} from \cite{Asymp}.

We first prove the equivalences $i)\Leftrightarrow ii) \Leftrightarrow iv)$, then we prove $iii)\Leftrightarrow iv)$. Lastly we prove $non(v))\Rightarrow non(iii))$ in Proposition \ref{Prop:three_to_five}.

	It is clear from the definition of $\pi_{<\infty}:\mathcal{C}\to\mathcal{C}_{<\infty}$ that $iv)\implies i)$.	From Lemma \ref{Lem_der_de_pi}, $i)\implies ii)$.
	
	Recall from Proposition \ref{Prop_Spectrum_belonging_C} that we have $T_{u(R)}\mathcal{C}=\{0\}\times \{0_{<\infty}\}\times \mathbb{C}\cdot \nu_\infty$ where $\nu_\infty\in E_\infty$ is a real vector with strictly positive coefficients. Now since the derivative of the power series (\ref{power_expansion_1}) on the polydisk $Q$, is the sum of the derivatives of the maps 
	\begin{equation}\label{eqts4}
	 (\tau,J_1,...,J_N)\mapsto a_{n,k_1,...,k_N} \tau^n J_1^{k_1}\cdots J_N^{k_N},
	 \end{equation} it follows by non-negativity of the $a_{n,k_1,...,k_N}$ that the derivative of (\ref{eqts4}) at $u(R)$ over $T_{u(R)}\mathcal{C}$ is zero if, and only if $k_{L+1}=...=k_N=0$ or $a_{n, k_1, ..., k_n} = 0$. Hence $ii)\Longleftrightarrow iv)$.

At this point we have shown that the assertions $i)$, $ii)$ and $iv)$ are equivalent.  We now prove that $iii)$ is equivalent to $i)$, $ii)$ and $iv)$, by proving $i)\implies iii)$ and $non(iv))\implies non(iii))$.

	We have that $i)\implies iii)$, for if $\varphi_{x,y}$ stands for the factor of $f_{x,y}$ through $\pi_{<\infty}$, then $\varphi_{x,y}$ is a rational function over $\mathcal{C}_{<\infty}$ that is regular on a neighbourhood of the range of $v^{(<\infty)}$ over $\overline{\mathbb{D}(0,R)}\varsubsetneq \mathbb{D}(0,R_{<\infty})$. Hence $F_z(x,y)=\varphi_{x,y}\circ v^{(<\infty)}(z)$ is holomorphic on a neighbourhood of $\overline{\mathbb{D}(0,R)}$, thus we have $R_F(x,y)>R$. 
	
	We prove $iii)\implies iv)$ by contraposition. Recall that $E=\mathcal{F}(\Xi,\mathbb{C})^\Gamma\approx\mathcal{F}(\Gamma\backslash\Xi,\mathbb{C})$. Set $[\theta_1],...,[\theta_N]\in\Gamma\backslash\Xi$, be such that the isomorphism $\mathbb{C}^N\to E$ sends $J=(J_i)_{1\leq i\leq N}\in\mathbb{C}^N$ to the function $\sum_i J_i \mathds{1}_{[\theta_i]}$ in $E$. In particular, for any $i>L$, the orbit $[\theta_i]$ is in $\Xi_\infty$, and for any $i\leq L$, the orbit $[\theta_i]$ is in $\Xi_{<\infty}$. If for $i$ in $\{1,...,N\}$, we denote $\theta_i:=(a_i,b_i)_{y_i}$, then we have $v_z({[\theta_i]})=\sum_{n\geq 0} p^{(n)}(a_i,b_i,\mathcal{B}(y_i)^\complement)z^n$. Thus for any complex number $z\in\mathbb{D}(0,R)$, we have equality
		\begin{equation}\label{eqts10}
		\begin{split}
		F_z(x,y)=&\sum_n p^{(n)}(x,y;\{y\}^\complement)z^n
		\\
		&=f_{x,y}(v_z)
		\\
		&=\sum_{n,k_1,...,k_N\in\mathbb{N}} a_{n,k_1,...,k_N} z^n \prod_{i=1}^N \left(\sum_{m\geq 0} p^{(m)}(a_i,b_i,\mathcal{B}(y_i)^\complement)z^m\right)^{k_i} 
			\end{split}
		\end{equation}
		Expanding the last equality and using the non-negativity of the coefficients, we deduce the following: Let $j\in\{1,...,N\}$ be such that there exists $(n,k_1,...,k_N)\in\mathbb{N}^{N+1}$ that verifies $a_{n,k_1,...,k_N}\neq 0$ and $k_j>0$, then there exist a positive constant $c>0$ and a non-negative integer $l\geq n$ such that for any non-negative integer $m\in\mathbb{N}$, we have:
		\begin{equation}\label{eqts11}
		 p^{(m+l)}(x,y;\{y\}^\complement)\geq c \, a_{n,k_1,...,k_N} p^{(m)}(a_j,b_j,\mathcal{B}(y_j)^\complement).
		 \end{equation}
		
In particular if $non(iv))$ is satisfied, that is there exists $(n,k_1,...,k_N)\in\mathbb{N}^{1+N}$,  with $k_j>0$ for some $j>L$ and $a_{n,k_1,...,k_N}>0$, then the inequality (\ref{eqts11}) hold and the Cauchy-Hadamard formula gives:

$$\frac{1}{R_{F}(x,y)}=\limsup_m \left(p^{(m+l)}(x,y;\{y\}^\complement)^{\frac{1}{m+l}}\right)\geq \limsup_m \left(p^{(m)}(a_j,b_j,\mathcal{B}(y_j)^\complement)\right)^{\frac{1}{m}}=\frac{1}{R}.$$
Hence $non(iv))\implies non(iii))$.

The equivalence of the assertions $i)$ to $iv)$ with the last assertion $v)$ is given by the Lemma \ref{Asymp-Lem:five_to_three} from \cite{Asymp} and by the Proposition \ref{Prop:three_to_five} below.
\end{proof}

\begin{Rem}\label{Rem_for_g}
	If we replace the \og $f_{x,y}$ \fg{} by \og $g_{x,y}$ \fg{} in the Proposition \ref{Corollary_PSeries_A}. Then the result still holds, and the proof of the equivalence of the four assertion $i)- iv)$ is the same mutatis-mutandis, as the one given above. But since the radius of convergence of the Green's function \og $G_z(x,y)$ \fg{} is always equal to $R$, we get the following Proposition \ref{Prop_derivative_of_g}:
\end{Rem}
\begin{Prop}\label{Prop_derivative_of_g}

Let $x,y$ be two vertices of $X$ and $g_{x,y}$ be the rational function over $\mathbb{C}\times E\approx\mathbb{C}\times\mathbb{C}^N$, from Corollary \ref{Asymp-Cor_algbraicity_of_Greens_function} in \cite{Asymp}.

	The derivative $D_{u(R)}g_{x,y}$, of $g_{x,y}$ at $u(R)$ is non-zero over $T_{u(R)}\mathcal{C}$.
\end{Prop}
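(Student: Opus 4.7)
The plan is to reduce to the equivalence already established in Proposition \ref{Corollary_PSeries_A} and then invoke the standard fact that every Green's function in this setting has radius of convergence exactly $R$.

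First, I would invoke Remark \ref{Rem_for_g}: the equivalences $i)\Leftrightarrow ii)\Leftrightarrow iii)\Leftrightarrow iv)$ of Proposition \ref{Corollary_PSeries_A} remain valid when $f_{x,y}$ is replaced by $g_{x,y}$ and $R_F(x,y)$ by $R_G(x,y)$. In particular, the statement
\[
D_{u(R)} g_{x,y} \equiv 0 \text{ on } T_{u(R)}\mathcal{C}
\]
is equivalent to $R_G(x,y) > R$. Justifying this transcription does not require any new argument: the power series expansion of $g_{x,y}$ around the origin with non-negative coefficients is already supplied by Proposition \ref{Prop_PSeries_expansion}, so the derivative-computation step $ii)\Leftrightarrow iv)$ carries over word for word via the description of $T_{u(R)}\mathcal{C}$ from Proposition \ref{Prop_Spectrum_belonging_C}; similarly, the Cauchy-Hadamard step $iii)\Leftrightarrow iv)$ is identical, with $G_z(x,y) = g_{x,y}(z,v_z)$ replacing $F_z(x,y) = f_{x,y}(z,v_z)$.

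Second, I would recall from the classical theory of irreducible random walks (see \cite{Woess_2000}, Chapter 2) that the radius of convergence of the Green's function $z\mapsto G_z(x,y)$ equals $R$ for every pair of vertices $x,y\in X_0$. In other words, $R_G(x,y) = R$ identically, so the inequality $R_G(x,y) > R$ never holds.

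Combining these two observations, the analog of assertion $iii)$ for $g_{x,y}$ fails for every pair $(x,y)$, and so by the equivalence the analog of $ii)$ fails too: $D_{u(R)}g_{x,y}$ is not identically zero on $T_{u(R)}\mathcal{C}$. This is exactly the claim. The only step that might look delicate is the mutatis-mutandis transcription of Proposition \ref{Corollary_PSeries_A}, but the heavy lifting there depended only on the non-negativity of the coefficients in the power series expansion (\ref{power_expansion_1}) and on the form of $T_{u(R)}\mathcal{C}$, both of which are available for $g_{x,y}$ as well. Hence there is no real obstacle; the result is essentially a direct corollary.
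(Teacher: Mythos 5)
Your proof is correct and is essentially the paper's own argument: the paper's Remark \ref{Rem_for_g} performs exactly the mutatis-mutandis transfer of the equivalences $i)$--$iv)$ from Proposition \ref{Corollary_PSeries_A} to $g_{x,y}$, and then observes that the Green's function always has radius of convergence $R$, so assertion $iii)$ fails and hence $D_{u(R)}g_{x,y}\not\equiv 0$ on $T_{u(R)}\mathcal{C}$.
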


\begin{Prop}\label{Prop:three_to_five}
Let $x,y$ be two vertices of $X$ and $f_{x,y}$ be the rational function over $\mathbb{C}\times E\approx\mathbb{C}\times\mathbb{C}^N$, from Proposition \ref{Asymp-algebraicity_of_F} in \cite{Asymp}. Suppose that the assumption $v)$ of Proposition \ref{Corollary_PSeries_A} is not satisfied, that is, for any non-negative integer $M$, there exists a $p$-admissible path $\gamma=(\omega_0,...,\omega_n)\in\mathcal{P}h(x,y;\{y\}^\complement)$, with $\max_i(d(\omega_i,y))\geq M$.
Then the radius of convergence of the first-passage generating function $F_z(x,y)$ is $R_F(x,y)=R$.
\end{Prop}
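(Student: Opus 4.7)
The plan is to establish $R_F(x,y)=R$ by combining the easy bound $R_F(x,y)\geq R$---which is immediate from the coefficientwise inequality $0\leq p^{(n)}(x,y;\{y\}^\complement)\leq p^{(n)}(x,y)$ together with $R_G(x,y)=R$---with the reverse bound $R_F(x,y)\leq R$. For the latter, the strategy is to construct, using the negation of assertion $v)$, first-passage paths of arbitrarily large length $n$ whose cumulative weight is at least a constant times $(1/R-\varepsilon)^{n}$; by Cauchy–Hadamard this will force $1/R_F(x,y)\geq 1/R-\varepsilon$ for every $\varepsilon>0$ and hence $R_F(x,y)\leq R$.

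First I would exploit the hypothesis combined with the finiteness of $\Gamma\backslash X_0$ to select, via pigeonhole, a single $\Gamma$-orbit $[\omega_0]$ such that for arbitrarily large $M$ there exists $\gamma_M\in\mathcal{P}h(x,y;\{y\}^\complement)$ passing through some vertex $g_M\omega_0$ with $d(g_M\omega_0,y)\geq M$. Decomposing $\gamma_M=\gamma_1^{(M)}\ast\gamma_2^{(M)}$ at $g_M\omega_0$, the key geometric observation is that any $p$-admissible loop at $g_M\omega_0$ of length $l$ visits only vertices at distance at most $\lfloor l/2\rfloor k$ from $g_M\omega_0$, since single steps of the walk have length at most $k$ and the loop must return to its starting point. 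Consequently, as soon as $lk<2M$, such a loop stays in $\{y\}^\complement$, and the same holds for any $N$-fold concatenation of such loops. By $\Gamma$-invariance of $p$, the total weight of $N$-fold concatenations of length-$l$ loops at $g_M\omega_0$ equals $(p^{(l)}(\omega_0,\omega_0))^{N}$.

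Inserting $N$ such loops between $\gamma_1^{(M)}$ and $\gamma_2^{(M)}$ yields first-passage paths of length $n_N:=l(\gamma_1^{(M)})+Nl+l(\gamma_2^{(M)})$ and gives
$$p^{(n_N)}(x,y;\{y\}^\complement)\geq w(\gamma_1^{(M)})\cdot (p^{(l)}(\omega_0,\omega_0))^{N}\cdot w(\gamma_2^{(M)}),$$
where $w(\cdot)$ denotes the product of transition probabilities along a path. Extracting $n_N$-th roots and letting $N\to\infty$ produces $\limsup_{n}p^{(n)}(x,y;\{y\}^\complement)^{1/n}\geq (p^{(l)}(\omega_0,\omega_0))^{1/l}$ for every $l$ with $lk<2M$. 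Since $M$ may be chosen arbitrarily large along the pigeonhole subsequence, this inequality persists for every fixed $l\geq 1$. Finally, irreducibility of $(X_0,p)$ gives $\limsup_{l}p^{(l)}(\omega_0,\omega_0)^{1/l}=1/R$, and the conclusion follows.

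The main obstacle I anticipate is the geometric/combinatorial verification: making sure that an $N$-fold concatenation of short loops at a far-away vertex genuinely stays in a ball of radius strictly less than $d(g_M\omega_0,y)$ (so that the concatenation is a legitimate first-passage path), and that the pigeonhole choice of orbit $[\omega_0]$ can be made once and for all so as to survive the subsequent limits $N\to\infty$ and $l\to\infty$. The analytic input---identifying $1/R$ with $\limsup_{l}p^{(l)}(\omega_0,\omega_0)^{1/l}$ for an irreducible walk---is standard.
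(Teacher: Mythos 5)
Your proof is correct and takes a genuinely different route from the paper. The paper's proof reuses the dependency-digraph machinery: it decomposes a far-reaching path $\gamma\in\mathcal{P}h(x,y;\{y\}^\complement)$ at its passages through $\mathcal{B}(y)$, then along a geodesic segment, to extract a subsegment $\gamma''$ belonging to some $\mathcal{P}h(a,b;\mathcal{B}(y')^\complement)$ with $(a,b)_{y'}\in\Xi_\infty$ (this is where Proposition \ref{Stagnation_of_V_N} enters), and then bounds $F_r(x,y)\geq w_r(\gamma_s)\,G_r(a,b;\mathcal{B}(y')^\complement)\,w_r(\gamma_b)$, concluding because $R_\infty=R$ (Remark \ref{Rem_R_infini_is_min}, which itself rests on earlier structural results including Proposition \ref{Regularity_of_C}). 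Your approach instead sidesteps the whole $\Xi_\infty$ apparatus: you pigeonhole a $\Gamma$-orbit $[\omega_0]$ of far-away visited vertices, insert $N$-fold concatenations of length-$l$ loops at the chosen representative $g_M\omega_0$ (which remain first-passage paths precisely because $\lfloor l/2\rfloor k<M\leq d(g_M\omega_0,y)$, using only the finite-range assumption and the triangle inequality), and conclude from the elementary identity $\limsup_l p^{(l)}(\omega_0,\omega_0)^{1/l}=1/R$ valid for any irreducible chain. What your route buys is self-containment: it needs neither the graph $\mathcal{V}$, nor Proposition \ref{Stagnation_of_V_N}, nor the equality $R_\infty=R$; it reduces the whole thing to metric estimates and a standard spectral-radius fact. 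What the paper's route buys is the stronger intermediate bound $F_r(x,y)\geq C\,G_r(a,b;\mathcal{B}(y')^\complement)$, which tracks better how the singularity of $F$ is inherited from a restricted Green's function on $\Xi_\infty$ and fits the surrounding analysis of the Lalley curve. Minor points worth making explicit in your write-up: the pigeonhole must be applied to the orbits of (say) the first vertex of each $\gamma_M$ reaching distance $\geq M$ from $y$, so that the orbit $[\omega_0]$ is fixed once and for all before the double limit $N\to\infty$ then $l\to\infty$; and the distinct loop-sequences must be noted to produce distinct first-passage paths so that the weights genuinely add.
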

\begin{proof}
Let $M>0$, from Proposition \ref{Stagnation_of_V_N}, be such that for any triple $(a,b)_{y'}$ in $\Xi$, if there exists a $p$-admissible path $\gamma\in\mathcal{P}h(a,b;\mathcal{B}(y')^\complement)$ satisfying $\max_{w\in\gamma}d(w,y')>M$, then $(a,b)_{y'}$ belongs to $\Xi_\infty$. Thanks to our assumptions, let $\gamma$ be a $p$-admissible path in $\mathcal{P}h(x,y;\{y\}^\complement)$ such that $\max_{w\in\gamma}d(w,y)>M+d(y,x)+2k$.
Decompose $\gamma$ with respect to all its passages in the ball $\mathcal{B}(y)$ (See figure \ref{figure where we have numbered the steps in B(y) and cut the gamma path between these numbers}). That is let $c_0,...,c_m\in\mathcal{B}(y)$ be the vertices of $\gamma$ in $\mathcal{B}(y)$.
Let $\gamma_0\in\mathcal{P}h(x,c_0;\mathcal{B}(y)^\complement)$, $\gamma_1\in\mathcal{P}h(c_0,c_1;\mathcal{B}(y)^\complement)$, ..., $\gamma_m\in\mathcal{P}h(c_{m-1},c_m;\mathcal{B}(y)^\complement)$, with $\gamma_0=(x)=(c_0)$ if $x$ is in the ball $\mathcal{B}(y)$, be $p$-admissible paths such that 
$$\gamma=\gamma_0\ast\gamma_1\ast\cdots\ast\gamma_m.$$
Among these $p$-admissible paths, one contains $w^*$ a vertex that is at distance $d(w^*,y)>M+d(x,y)+2k$ from $y$, say $\gamma_j$ for some $j\in\{0,1,...,m\}$. Then decompose $\gamma_j$ with respect to its first step: $\gamma_j=(c_{j-1},c_{j}')\ast\gamma_j'$, taking $c_{-1}=x$ if $j=0$. Note that $c_j'$ is not in the ball $\mathcal{B}(y)$, otherwise we would have $c'_j=c_j$, and the $p$-admissible path $\gamma_j$ would be $(c_{j-1},c_j)$, which contradicts our assumption on $\gamma_j$.

Lastly decompose $\gamma_j'\in\mathcal{P}h(c_j',c_{j};\mathcal{B}(y)^\complement)$ along the geodesic segment $[c_j',y]$ with respect to $\mathcal{B}$:
$$\gamma_j'=\gamma_1''\ast\cdots\ast\gamma_l''.$$
Among these $p$-admissible paths, one of them contains the vertex $w^*$, say $\gamma_i''$, for some integer $i\in\{1,...,l\}$. The $p$-admissible path $\gamma_i''$ belongs to some set $\mathcal{P}h(a,b;\mathcal{B}(y')^\complement)$, with $(a,b)_{y'}\in\Xi$ and $y'\in[c_j',y]$.

Since $d(w^*,y')\geq d(w^*,y)-d(y,y')$ and $d(y,y')\leq d(y,c_j')\leq d(y,c_{j-1})+d(c_{j-1},c_j)\leq k+ d(y,x)+k$, we have 
$$d(w^*,y')> M.$$
Thus from Proposition \ref{Stagnation_of_V_N}, the triple $(a,b)_{y'}$ belongs to $\Xi_\infty$. Finally, let $\gamma_s$ and $\gamma_b$ be the $p$-admissible path in $\mathcal{P}h(x,a;\{y\}^\complement)$ and $\mathcal{P}h(b,y;\{y\}^\complement)$, respectively, defined by:
\begin{align*}
\gamma_s&=\gamma_0\ast\cdots\ast\gamma_{j-1}\ast (c_{j-1},c_{j}')\ast\gamma_1''\ast\cdots\ast\gamma_{i-1}''\\
\gamma_b&=\gamma_{i+1}''\ast\cdots\ast\gamma_l''\ast\gamma_{j+1}\ast\cdots\ast\gamma_m.
\end{align*}
Then for any $p$-admissible path $\gamma''\in\mathcal{P}h(a,b;\mathcal{B}(y')^\complement)$, the concatenated $p$-admissible path $\gamma_s\ast\gamma''\ast\gamma_b$ is in $\mathcal{P}h(x,y;\{y\}^\complement)$. Thus for any non-negative real number $r\geq 0$,
we have:
$$w_r(\gamma_s)\underbrace{
\left(\sum_{\gamma''\in\mathcal{P}h(a,b;\mathcal{B}(y)^\complement)} w_r(\gamma'') \right)
}_{\displaystyle=G_r(a,b;\mathcal{B}(y)^\complement)} w_r(\gamma_b)
\leq \underbrace{\sum_{\gamma\in\mathcal{P}h(x,y;\{y\}^\complement)} w_r(\gamma)}_{\displaystyle=F_r(x,y)}
$$
In particular since $(a,b)_{y'}$ is in $\Xi_\infty$, the radius of convergence $R(a,b;\mathcal{B}(y)^\complement)$ of the restricted Green's function $G_z(a,b;\mathcal{B}(y)^\complement)$ is equal to $R$ (Remark \ref{Rem_R_infini_is_min}). Hence we get $R_F(x,y)\leq R$.
\end{proof}

\begin{figure}[ht]
\centering
		\includegraphics[scale=0.5]{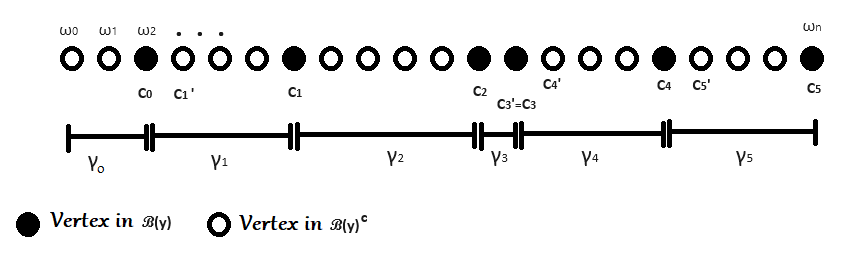}
		\caption{Example of decomposition used in the proof of Proposition \ref{Prop:three_to_five}.}
		\label{figure where we have numbered the steps in B(y) and cut the gamma path between these numbers}
	\end{figure}
\newpage
\appendix

\begin{center}
\huge{\textbf{Appendix}} 
\end{center}

\section{Examples of Dependency Digraph}\label{Section_Examples}
    Let $(\mathbb{Z}/2\mathbb{Z})^{\star 3}$ be the free product of three copies of $\mathbb{Z}/2\mathbb{Z}$. Denote by $a,b$ and $c$ the generators of each copies of $\mathbb{Z}/2\mathbb{Z}$.
    
	In the following Examples we are looking at finite range random walk on the Cayley graph $Cay(F,\{a,b,c\})$ of the free product $F=(\mathbb{Z}/2\mathbb{Z})^{\star 3}$, with respect to the finite set of generators $\{a,b,c\}$. That is, consider a measure $\mu$ of probability over $(\mathbb{Z}/2\mathbb{Z})^{\star 3}$, and set $p_\mu:F\times F\to[0,1]$ to be the transition kernel over $F$, $(x,y)\mapsto \mu(x^{-1} y)$. If $\operatorname{Supp}(\mu)$ generates $F$ as a semi-group then we get a discrete Markov chain $(F,p_\mu)$ that is irreducible. Note that in this case the group $F$ identifies as a group of automorphisms of the Cayley graph preserving the transition kernel $p_\mu$.

    The letter $e$ represents the neutral element in $F$. The graphs represented in figure \ref{figure graphes} are the associated quotient dependency digraph $\mathcal{V}$ from Section \ref{Section_The_Graph}, for three different support for the step distribution $\mu$. Namely $\operatorname{Supp}(\mu)=\{a,b,c,ab\}$, $\operatorname{Supp}(\mu)=\{a,ac,ba\}$ and $\operatorname{Supp}(\mu)=\{a,ab,ac\}$ that are all included in $\mathcal{B}_2(e)$.
%

	\begin{figure}
		\centering
		\begin{subfigure}[b]{\textwidth}
            \centering
            \includegraphics[width=0.92\textwidth]{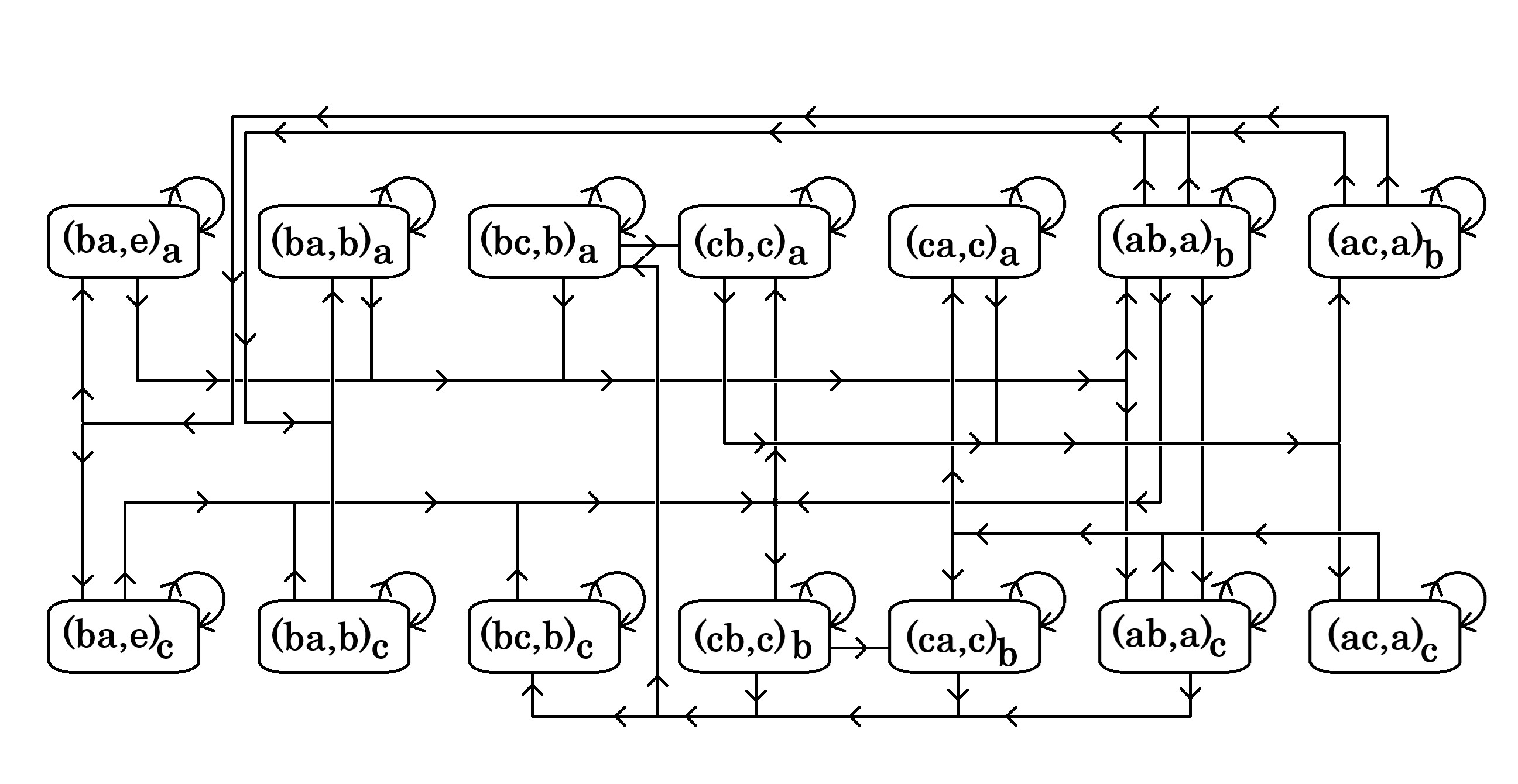}
        \end{subfigure}
        \hfill
        \begin{subfigure}[b]{\textwidth}
            \centering
           \includegraphics[width=0.92\textwidth]{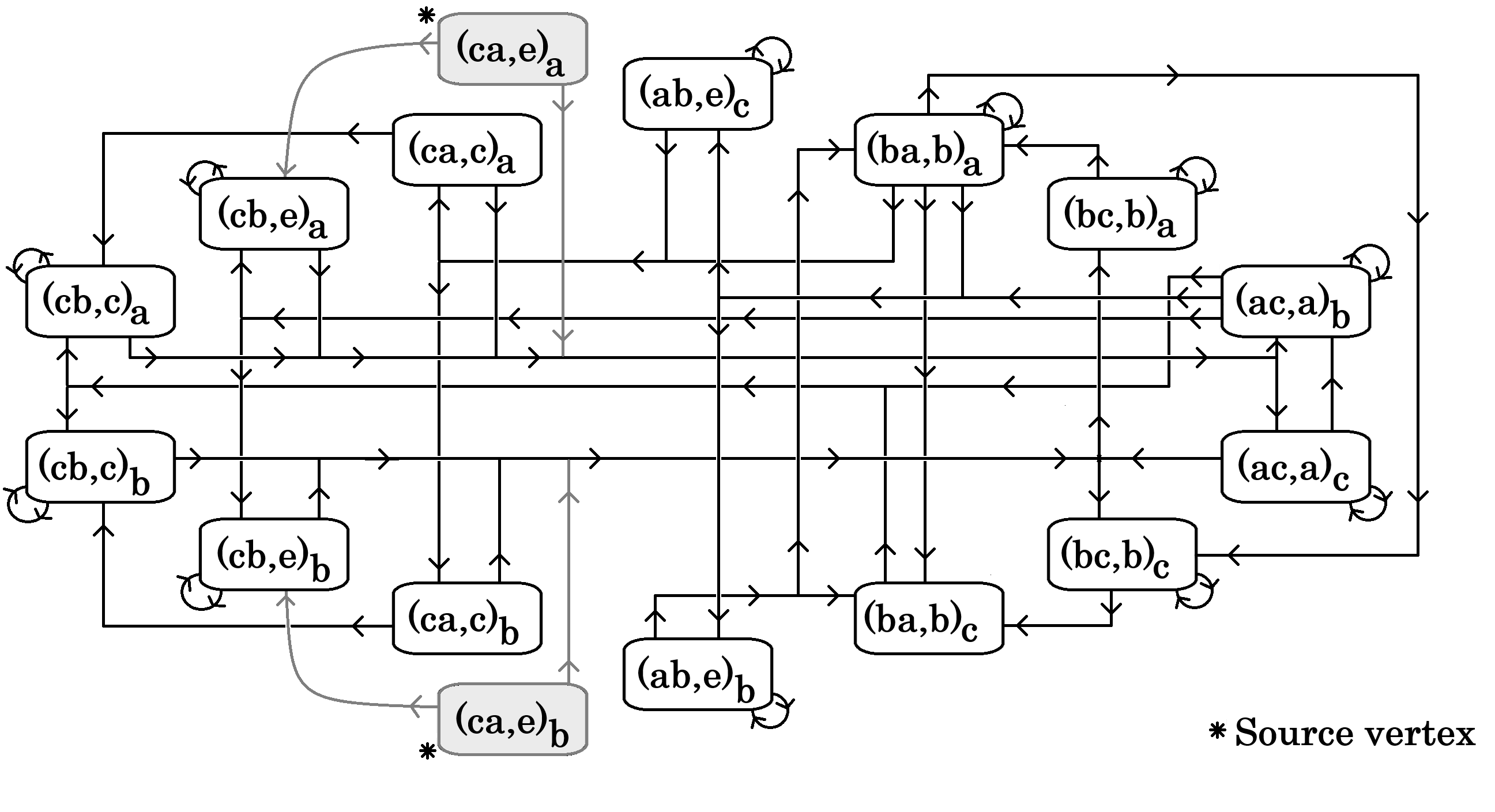}
        \end{subfigure}
        \begin{subfigure}[b]{\textwidth}
            \centering
           \includegraphics[width=0.92\textwidth]{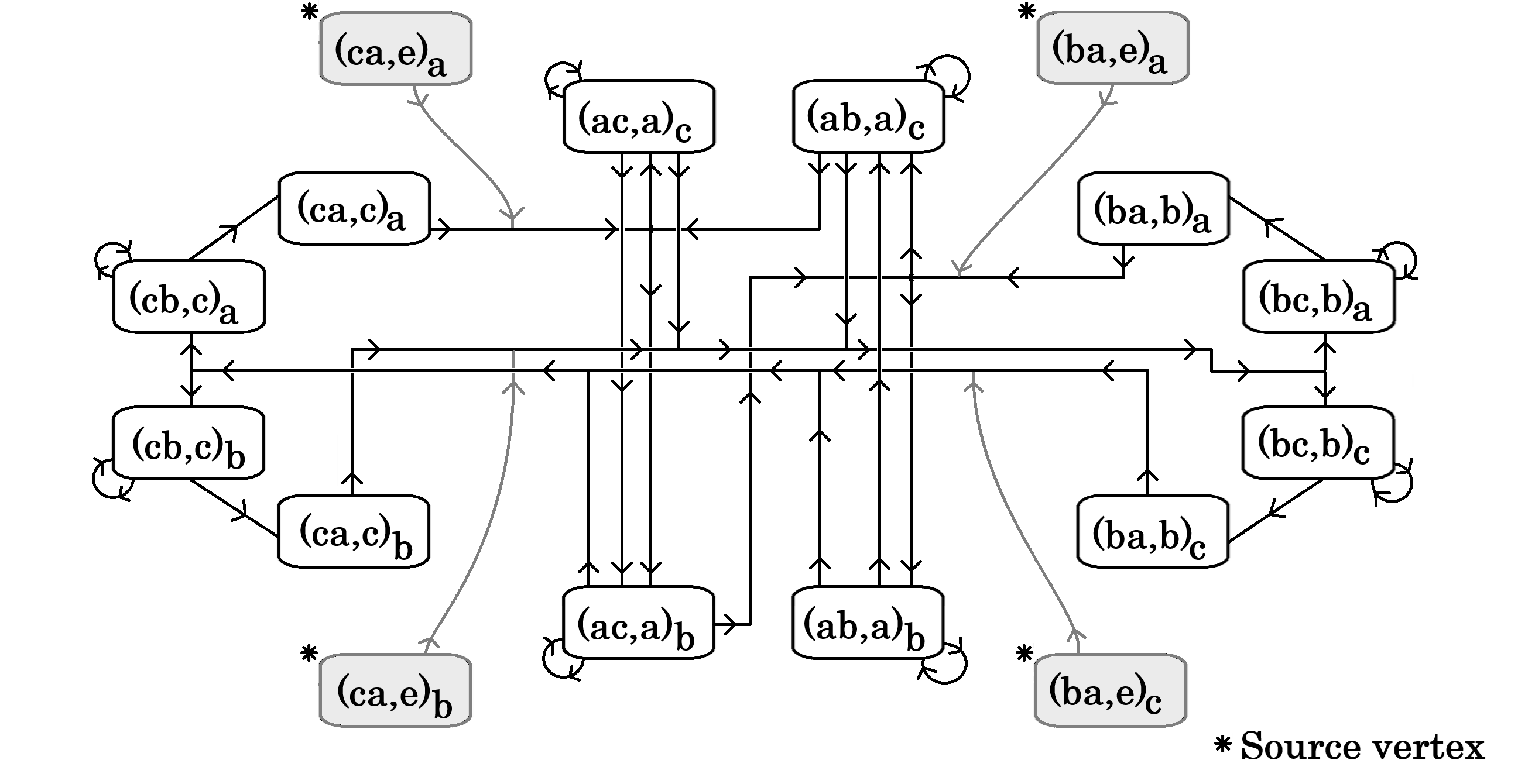}
        \end{subfigure}
       \caption{Dependancy digraph $\mathcal{V}$ for $\mu$ with support (from top to bottom) $\operatorname{Supp}(\mu)=\{a,b,c,ab\}$, $\operatorname{Supp}(\mu)=\{a,ac,ba\}$ and $\operatorname{Supp}(\mu)=\{a,ab,ac\}$}\label{figure graphes}
	\end{figure}

	\section{Rational fraction admitting power series expansion with non-negatives coefficients }
\begin{Prop}\label{Prop_rational_extension}
Let $\mathcal{C}\subset\mathbb{C}^n$ be an algebraic curve and consider a multivariable rational function $F\in\mathbb{C}(X_1,...,X_n)$ that admits a formal power series expansion with \underline{non-negative} coefficients:
	\begin{equation}\label{formula_t6}
	    \sum_{k_1,...,k_n \geq 0} a_{k_1,...,k_n} X_1^{k_1}\cdot\cdot\cdot X_n^{k_n}, \quad a_{k_1,...,k_n}\geq 0,
	\end{equation}
 converging in a neighbourhood of $0\in\mathbb{C}^n$.

Let $Z:\overline{\mathbb{D}(0,R)}\to\mathcal{C}$, $t\mapsto Z(t)=(z_1(t),...,z_n(t))$ be a holomorphic parametrisation\footnote{That is a bijective continuous map on $\overline{\mathbb{D}}(0,R)$ that is holomorphic on $\mathbb{D}(0,R)$.} of $\mathcal{C}$ such that:
\begin{enumerate}[label=\roman*)]
\item $Z(0)=0$;
\item $Z(t)$ is real whenever $t$ is a real number;
\item For any $i\in\{1,...,n\}$, $z_i:[0,R]\to\mathbb{R}$ is strictly increasing;
\item For any $t\in\mathbb{D}(0,R)$, $|z_i(t)|\leq z_i(|t|)$.
\end{enumerate}
Assume that we have a rational function $f\in\mathbb{C}(\mathcal{C})$ over $\mathcal{C}$ that is regular on\footnote{that is to say $f$ does not possess any pole on... } the set $\{Z(t): t\in\mathbb{D}(0,R)\}\subset\mathcal{C}$, and such that $F|_\mathcal{C}=f$, this is equivalent to asking that for any complex number $t$ of modulus sufficiently small, we have
\begin{equation}\label{eq_a3}
 F(Z(t))=\sum_{k_1,...,k_n} a_{k_1,...,k_n} z_1(t)^{k_1}\cdot\cdot\cdot z_n(t)^{k_n}=f(Z(t)).
\end{equation}

If $f(Z(t))$ admits a finite limit when $t$ goes to $R$ , then $f$ is regular on a neighbourhood of the closure $\overline{\{Z(t):t\in\mathbb{D}(0,R)\}}$ and the series (\ref{formula_t6}) converges on a polydisk  
$$\{ (w_1,...,w_n) \in\mathbb{C}^n : \forall 1\leq i \leq n, |w_i|\leq z_i(R)+\varepsilon\},$$
for some $\varepsilon>0$.
\end{Prop}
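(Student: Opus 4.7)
The strategy is to reduce everything to a one-variable Pringsheim-type argument along the real segment $\gamma := \{(u z_1(R), \ldots, u z_n(R)) : u \in [0,1]\}$, exploiting the non-negativity of $a_{k_1,\ldots,k_n}$ together with the monotonicity hypotheses $ii)$--$iv)$. First I would pass to the limit as $r \to R^-$ along the real axis in (\ref{eq_a3}). By hypothesis $iii)$ and the non-negativity of the coefficients, each term $a_{k_1,\ldots,k_n}\, z_1(r)^{k_1}\cdots z_n(r)^{k_n}$ is non-decreasing in $r \in [0,R)$. The finite-limit hypothesis together with the monotone convergence theorem then yields
\begin{equation*}
    \sum_{k_1,\ldots,k_n \geq 0} a_{k_1,\ldots,k_n}\, z_1(R)^{k_1}\cdots z_n(R)^{k_n}
    \;=\; \lim_{r \to R^-} f(Z(r)) \;<\; +\infty,
\end{equation*}
so the series (\ref{formula_t6}) converges absolutely at the positive point $(z_1(R), \ldots, z_n(R))$. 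By non-negativity and hypothesis $iv)$, this upgrades automatically to absolute and uniform convergence on the whole closed polydisk $\overline{P_0} := \{|w_i| \leq z_i(R)\}$, defining a function $H$ continuous on $\overline{P_0}$ and holomorphic on its interior; analytic continuation from a small neighbourhood of the origin identifies $F$ with $H$ on $\mathrm{int}(\overline{P_0})$.

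The crux of the argument is then to show that $F$ itself is regular on an open neighbourhood of $Z(R) := (z_1(R),\ldots,z_n(R))$. Writing $F = P/Q$ with $P, Q$ coprime polynomials, a pole at $Z(R)$ (i.e.\ $Q(Z(R)) = 0$ and $P(Z(R)) \neq 0$) would force $|f(Z(t))|$ to blow up as $t \to R$, contradicting the finite-limit hypothesis; a genuine indeterminacy (both $P$ and $Q$ vanishing) is incompatible with the fact that $H$ provides a continuous extension of $F|_{\mathrm{int}(\overline{P_0})}$ at $Z(R)$ whose value is the specific finite number $\sum a_{k_1,\ldots,k_n} z_1(R)^{k_1}\cdots z_n(R)^{k_n}$. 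Hence $Q(Z(R)) \neq 0$ and $F$ is regular on a neighbourhood of $Z(R)$. Combining this with regularity on $\mathrm{int}(\overline{P_0})$, the compact segment $\gamma$ lies in the open locus where $F$ is regular, and by compactness the same holds for the slightly rescaled segment $\gamma_\varepsilon := \{(u(z_1(R)+\varepsilon),\ldots,u(z_n(R)+\varepsilon)) : u \in [0,1]\}$ provided $\varepsilon > 0$ is small enough.

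Finally, I would apply the one-variable Pringsheim theorem to the holomorphic function $g_\varepsilon(u) := F(u(z_1(R)+\varepsilon),\ldots,u(z_n(R)+\varepsilon))$ defined on a neighbourhood of $[0,1]$. Its Taylor coefficients at $u = 0$ are the non-negative numbers $b_N := \sum_{k_1+\cdots+k_n = N} a_{k_1,\ldots,k_n}(z_1(R)+\varepsilon)^{k_1}\cdots(z_n(R)+\varepsilon)^{k_n}$, and regularity at $u = 1$ implies $\sum_N b_N < +\infty$, i.e.\ absolute convergence of (\ref{formula_t6}) at $(z_1(R)+\varepsilon,\ldots,z_n(R)+\varepsilon)$; by non-negativity this upgrades to absolute convergence on the entire closed polydisk $\{|w_i| \leq z_i(R)+\varepsilon\}$, which is the announced conclusion. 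The regularity of $f = F|_{\mathcal{C}}$ on a neighbourhood of $\overline{\{Z(t) : t \in \mathbb{D}(0,R)\}}$ is then automatic: by $iv)$ this closure lies in $\overline{P_0}$, which is contained in the open polydisk $\{|w_i| < z_i(R)+\varepsilon\}$ on which $F$ is now holomorphic. The main obstacle is the regularity step at $Z(R)$, where one must really use the non-negativity of the Taylor coefficients rather than merely the continuity of $f \circ Z$.
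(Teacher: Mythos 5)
Your proof has a real gap at its very first step, and it is precisely the point where the paper has to work hardest. You ``pass to the limit as $r\to R^-$ in (\ref{eq_a3})'' and invoke the monotone convergence theorem, but (\ref{eq_a3}) is only given for $|t|$ \emph{sufficiently small}. To apply MCT and conclude $\sum a_{k}\,z(R)^{k}=\lim_{r\to R^-}f(Z(r))$ you need the identity $\sum_{k} a_{k}\,z_1(r)^{k_1}\cdots z_n(r)^{k_n}=f(Z(r))$ to hold for \emph{every} $r\in[0,R)$, which in particular requires the multivariable series to converge at $Z(r)$ for all such $r$. This is not given: the domain of convergence of the power series of $F$ at the origin is some log-convex Reinhardt domain, and a priori $Z(r)$ may leave it long before $r$ reaches $R$. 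The non-negativity of the coefficients and hypothesis $iii)$ do not save you here, because monotonicity of the partial sums $S_K(Z(r))$ in $r$ gives no bound for large $r$ once you have left the region where $S_K(Z(r))\le f(Z(r))$ is known.

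The paper closes exactly this hole by a bootstrapping argument: it takes $L\in(0,R]$ maximal such that the identity (\ref{eqclaim_a3}) holds on $\mathbb{D}(0,L)$, applies Lemma~\ref{Lem_a1} (truncation plus Abel) to conclude $\sum a_k z(L)^k<\infty$, then applies Lemma~\ref{Lem_a2} (Vivanti--Pringsheim on the diagonal) to push the domain of convergence strictly past the polydisk of polyradius $z(L)$, which contradicts maximality unless $L=R$. Your subsequent steps --- absolute convergence on $\overline{P_0}$ once the sum at $Z(R)$ is finite, regularity of $F$ at $Z(R)$ from the continuous extension, and the one-variable Pringsheim argument on the rescaled diagonal $g_\varepsilon(u)=F(u(z_1(R)+\varepsilon),\dots,u(z_n(R)+\varepsilon))$ --- are sound in outline and are in fact the same circle of ideas as Lemma~\ref{Lem_a2}. (A minor caveat: your dismissal of an indeterminacy point $P(Z(R))=Q(Z(R))=0$ deserves a word more than ``incompatible with a continuous extension'', since indeterminacies do admit continuous extensions along subvarieties; you need that $F|_{P_0\setminus\{Q=0\}}$ is unbounded near such a point, which is where the open polydisk is used.) But without the maximality/bootstrap step your very first equality $\sum a_k z(R)^k=\lim_{r\to R}f(Z(r))$ is unjustified, and everything downstream rests on it.
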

Note that the regularity of $f$ on a neighbourhood of $Z(R)\in\mathcal{C}$, is a consequence of the existence of the limit $\lim_{t\to R}f(Z(t))$. 

To prove this proposition we will use the two following lemmas:
\begin{Lem}\label{Lem_a1}
Let $\sum_{k_1,...,k_n} a_{k_1,...,k_n} z_1^{k_1}\cdot\cdot\cdot z_n^{k_n}$ be a multivariable power series with non-negative coefficients $a_{k_1,...,k_n}\geq 0$ that converges on $\mathbb{D}(0,1)^n$; Let $Z=(Z_1,...,Z_n):[0,1]\to (\mathbb{R}^+)^n$ be a continuous function such that $Z(1)=(1,...,1)$ and for any $1\leq i \leq n$ and $t<1$, we have $Z_i(t)<1$; Lastly, let $f$ denote the function that associates to any real number $t\in[0,1[$, the value  $f(t)=\sum_{k_1,...,k_n} a_{k_1,...,k_n} Z_1(t)^{k_1}\cdot\cdot\cdot Z_n(t)^{k_n}$.

If $\lim_{t\to 1}f(t)=:f(1^-)$ exists, then $\sum_{k_1,...,k_n} a_{k_1,...,k_n}<\infty$ and equals $f(1^-)$.
\end{Lem}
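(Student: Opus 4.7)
The plan is to treat this as a standard Abel-type / monotone convergence argument, exploiting crucially that all coefficients $a_{k_1,\dots,k_n}$ are non-negative and that $Z_i(t)\in[0,1]$ for every $t\in[0,1]$. The latter is a small preliminary observation: $Z_i$ is continuous, $Z_i(1)=1$, and $Z_i(t)<1$ for $t<1$, so in particular $Z_i(t)\le 1$ throughout $[0,1]$.

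First I would establish the upper bound $\sum_{k_1,\dots,k_n}a_{k_1,\dots,k_n}\le f(1^-)$. For any finite subset $S\subset\mathbb{N}^n$, the partial sum
$$P_S(t):=\sum_{(k_1,\dots,k_n)\in S} a_{k_1,\dots,k_n}\,Z_1(t)^{k_1}\cdots Z_n(t)^{k_n}$$
is a finite sum of continuous functions, hence continuous on $[0,1]$, with $P_S(1)=\sum_{k\in S}a_k$. Non-negativity of the omitted terms yields $P_S(t)\le f(t)$ on $[0,1)$; letting $t\to 1$ gives $\sum_{k\in S}a_k\le f(1^-)$. Taking the supremum over all finite $S$ shows that the series $\sum_{k_1,\dots,k_n}a_{k_1,\dots,k_n}$ converges and is bounded above by $f(1^-)$.

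Next I would prove the reverse inequality $f(1^-)\le \sum a_{k_1,\dots,k_n}$. Since $0\le Z_i(t)\le 1$ for $t\in[0,1)$, each factor satisfies $Z_i(t)^{k_i}\le 1$, so termwise $a_k\,Z_1(t)^{k_1}\cdots Z_n(t)^{k_n}\le a_k$, and summing gives $f(t)\le \sum_{k_1,\dots,k_n}a_{k_1,\dots,k_n}$ for every $t\in[0,1)$. Passing to the limit $t\to 1$ yields $f(1^-)\le \sum a_{k_1,\dots,k_n}$, and combining the two inequalities produces the desired equality.

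I do not expect a genuine obstacle: the statement is the multivariable analogue of Abel's theorem in the presence of non-negative coefficients, and the two-sided squeeze (finite truncations bounded above by $f$, and $f$ bounded above by the total sum) makes the whole argument routine. The only point requiring a word of care is checking that $Z_i(t)\le 1$ on $[0,1]$, which follows from the hypotheses as noted.
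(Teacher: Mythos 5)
Your proof is correct, and the upper bound $\sum a_{k_1,\dots,k_n}\le f(1^-)$ is obtained essentially the same way the paper does it (finite truncation, pass to the limit $t\to1$, take the supremum; the paper truncates by total degree $k_1+\cdots+k_n\le K$ rather than by arbitrary finite subsets, but these are equivalent). Where you diverge is in closing the equality: the paper simply cites Abel's theorem for power series at that point, whereas you give a direct, elementary reverse inequality by observing that $0\le Z_i(t)\le 1$ for $t\in[0,1]$, so termwise $a_k Z_1(t)^{k_1}\cdots Z_n(t)^{k_n}\le a_k$, hence $f(t)\le\sum a_k$ for all $t\in[0,1)$, and then letting $t\to1$ gives $f(1^-)\le\sum a_k$. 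Your two-sided squeeze is self-contained and sidesteps the (slightly delicate) translation of the one-variable Abel theorem into this multivariable setting with a nonlinear path $t\mapsto Z(t)$; it is a small but genuine simplification of the paper's final step.
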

\begin{proof}
	Let $0\leq t<1$, for any non-negative integer $K\in\mathbb{N}$, we have
	\begin{equation*}
		\sum_{\substack{k_1,...,k_n \\ k_1+...+k_n\leq K}} a_{k_1,...,k_n}Z_1(t)^{k_1}\cdot\cdot\cdot Z_n(t)^{k_n}\leq f(t).
	\end{equation*}
	Letting $t$ goes to $1$, we obtain
	\begin{equation*}
		\sum_{\substack{k_1,...,k_n \\ k_1+...+k_n\leq K}} a_{k_1,...,k_n}\leq f(1^-).
	\end{equation*}
	Lastly, taking the supremum for $K\in\mathbb{N}$ we get
	\begin{equation*}
		\sum_{k_1,...,k_n} a_{k_1,...,k_n}\leq f(1^-).
	\end{equation*}
	The result follows from Abel's theorem for power series (See \cite{Ahlfors_1978} p.41).
\end{proof}

\begin{Lem}\label{Lem_a2}
	Let $F\in\mathbb{C}(X_1,...,X_n)$ be a rational function admitting a formal power series expansion with non-negative coefficients:
	\begin{equation}\label{Multivariable_power_series}
	    \sum_{k_1,...,k_n} a_{k_1,...,k_n} X_1^{k_1}\cdot\cdot\cdot X_n^{k_n}, \quad a_{k_1,...,k_n}\geq 0.
	\end{equation}
	
	If $\sum_{k_1,...,k_n} a_{k_1,...,k_n}<\infty$ then there exists $\varepsilon>0$ such that the multivariable power series (\ref{Multivariable_power_series}) converges on the polydisc
	$$\{(w_1,...,w_n)\in\mathbb{C}^n: \forall 1\leq i \leq n, \, |w_i|\leq 1+\varepsilon \}$$
\end{Lem}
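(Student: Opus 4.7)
The plan is to reduce the multivariable statement to a one-variable statement by restricting $F$ to the diagonal and then to invoke Vivanti--Pringsheim. Concretely, set $g(t) := F(t,t,\ldots,t)$. Since the multivariable series converges on a neighbourhood of $0\in\mathbb{C}^n$, for $|t|$ small enough we have the identity
$$g(t)=\sum_{k_1,\ldots,k_n}a_{k_1,\ldots,k_n} t^{k_1+\cdots+k_n}=\sum_{m\geq 0}b_m t^m, \qquad b_m:=\sum_{k_1+\cdots+k_n=m}a_{k_1,\ldots,k_n}\geq 0.$$
The coefficients $b_m$ are non-negative, and by the hypothesis $\sum_{k_1,\ldots,k_n}a_{k_1,\ldots,k_n}<\infty$, their total sum is also finite.

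Next I would argue that the one-variable power series $\sum b_m t^m$ has radius of convergence strictly greater than $1$. First, since $g=F(t,\ldots,t)$ is a restriction of a rational function of several variables to an algebraic line, it is itself a rational function of $t\in\mathbb{C}$, hence meromorphic on $\mathbb{C}$. Second, the identity above combined with $\sum_m b_m<\infty$ shows (for instance by monotone convergence, or Abel's theorem) that $g$ is bounded on $[0,1]$ with a finite limit as $t\to 1^-$; in particular $t=1$ is not a pole of the rational function $g$, and so $g$ admits a holomorphic extension to a neighbourhood of $t=1$. By the Vivanti--Pringsheim theorem applied to the non-negative coefficient series $\sum b_m t^m$, the radius of convergence of $g$ must therefore be strictly larger than $1$: there exists $\varepsilon>0$ such that
$$\sum_{m\geq 0} b_m (1+\varepsilon)^m<\infty.$$

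Finally, I would use this estimate to control the multivariable series on the polydisk. For any $(w_1,\ldots,w_n)\in\mathbb{C}^n$ with $|w_i|\leq 1+\varepsilon$ for every $i$, the non-negativity of the $a_{k_1,\ldots,k_n}$ gives
$$\sum_{k_1,\ldots,k_n}a_{k_1,\ldots,k_n}|w_1|^{k_1}\cdots|w_n|^{k_n}\leq \sum_{k_1,\ldots,k_n}a_{k_1,\ldots,k_n}(1+\varepsilon)^{k_1+\cdots+k_n}=\sum_{m\geq 0}b_m(1+\varepsilon)^m<\infty,$$
so the multivariable series converges absolutely on the closed polydisk of radius $1+\varepsilon$, as claimed. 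The only slightly subtle step is the appeal to Vivanti--Pringsheim, but this is really a one-line argument once one observes that the rational function $g$ is regular at $t=1$; the rest is bookkeeping via the non-negativity hypothesis, which I expect to present no real obstacle.
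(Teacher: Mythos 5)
The proposal is correct and takes essentially the same route as the paper: restrict $F$ to the diagonal to get a one-variable rational function with non-negative power-series coefficients, observe via $\sum_m b_m<\infty$ that $t=1$ is not a pole, invoke Vivanti--Pringsheim to push the radius of convergence strictly past $1$, and then transfer back to the polydisc by non-negativity. The only stylistic difference is that you package the diagonal coefficients as $b_m$ before applying Vivanti--Pringsheim, whereas the paper works with the substituted rational function $F(z_1(L)X_1,\dots,z_n(L)X_n)$ directly; the underlying argument is the same.
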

\begin{proof}
	Consider the complex rational function $f:t\mapsto F(t,...,t)$ - where we see $F$ as a complex valued function $F:\mathbb{C}^n\to\mathbb{C}\cup\{\infty\}$. By assumption, for any complex number $t\in\mathbb{D}(0,1)$, we have $f(t)= \sum_{k_1,...,k_n} a_{k_1,...,k_n} t^{k_1+...+k_n}$. Besides since the series $\sum_{k_1,...,k_n} a_{k_1,...,k_n}$ is finite by Abel's theorem for power series (See \cite{Ahlfors_1978} p.41), the non-negative real number $f(t)$ goes to $\sum_{k_1,...,k_n} a_{k_1,...,k_n}$ when $t\in [0,1[$ goes to $1$. In particular, $1$ is not a pole for $f$. Since the set of poles of $f$ forms a discrete set in $\mathbb{C}$, there exists $\varepsilon>0$ such that, $f$ does not possesses any pole in the disk $\mathbb{D}(1,2\varepsilon)\subset\mathbb{C}$. Then by Vivanti–Pringsheim theorem for singularities of power series with positive coefficients (See \cite{Remmert_1991}, p.235), the rational function $f$ does not possesses any pole in the disk $\mathbb{D}(0,1+2\varepsilon)$ and in particular 
	$$\sum_{k_1,...,k_n} a_{k_1,...,k_n}(1+\varepsilon)^{k_1}\cdot\cdot\cdot (1+\varepsilon)^{k_n}<\infty.$$
	Hence the multivariable power series (\ref{Multivariable_power_series}) converges on the polydisc
	$$\{(w_1,...,w_n)\in\mathbb{C}^n: \forall 1\leq i \leq n, \, |w_i|\leq 1+\varepsilon \}.$$
\end{proof}
\begin{proof}[Proof of Proposition \ref{Prop_rational_extension}] 

	Let $0<L\leq R$ be maximal such that for any $t\in\mathbb{D}(0,L)$, we have 
 \begin{equation}\label{eqclaim_a3}
 F(Z(t))=\sum_{k_1,...,k_n} a_{k_1,...,k_n} z_1(t)^{k_1}\cdot\cdot\cdot z_n(t)^{k_n}=f(Z(t)).
 \end{equation} 
 	Applying Lemma \ref{Lem_a1} with the continuous function $W:t\mapsto (\frac{z_1(Lt)}{z_1(L)},...,\frac{z_n(Lt)}{z_n(L)})$ and the power series $$\sum_{k_1,...,k_n} \left(a_{k_1,...,k_n} z_1(L)^{k_1}\cdot\cdot\cdot z_n(L)^{k_n}\right) z_1^{k_1}\cdot\cdot\cdot z_n^{k_n},$$ we obtain that $\sum_{k_1,...,k_n} a_{k_1,...,k_n} z_1(L)^{k_1}\cdot\cdot\cdot z_n(L)^{k_n}<\infty$. 
Then applying Lemma \ref{Lem_a2} to the polynomial $F(z_1(L)X_1,...,z_n(L) X_n)\in\mathbb{C}(X_1,...,X_n)$, that is the rational function $F$ whom, for each $i\in\{1,...,n\}$, we have substituted $X_i$ by $z_i(L)X_i$, admitting the power series expansion 
$$
\sum_{k_1,...,k_n} \left(a_{k_1,...,k_n} z_1(L)^{k_1}\cdot\cdot\cdot z_n(L)^{k_n}\right) X_1^{k_1}\cdot\cdot\cdot X_n^{k_n},$$
we get the convergence of the series (\ref{formula_t6}) in a polydisc
$$C_L:=\{w\in\mathbb{C}^n : \forall 1\leq i \leq n,\, |w_i|\leq z_i(L)+\varepsilon_L\},$$
for some $\varepsilon_L>0$.
In particular $F$ does not possesses any pole on this polydisk and neither does $f$ on the intersection of this polydisk with $\mathcal{C}$. 

Beside, if $L<R$, then there exists $\delta_L>0$ such that for any complex number $t\in\mathbb{D}(0,L+\delta_L)$, we have that $Z(t)$ belongs to the polydisk $C_L$. Thus the equality (\ref{eqclaim_a3}) holds for any $t\in\mathbb{D}(0,L+\delta_L)$. And in particular $L$ can't be maximal. Thus necessarily $L=R$, and  (\ref{eqclaim_a3}) holds for any $t\in\overline{\mathbb{D}(0,R)}$. Hence $f$ is regular on a neighbourhood of the closure $\overline{\{Z(t):t\in\mathbb{D}(0,R)\}}$. 

The proposition is proven.

\end{proof}
\section{Spectral Radius Properties}
\begin{Lem}\label{inequality_spectral_radius}
Let $n>0$ be a positive integer and let $A$ and $B$ be two $n\times n$ complex matrices. Suppose that $B$ has non-negative real coefficients and that we have coefficient-wise the inequality 
\begin{equation}\label{Ineq_lem_sr_1}
|A|\leq B.
\end{equation}
Then the spectral radius of $A$ is lower than the one of $B$:
\begin{equation}\label{Ineq_lem_sr_2}
\rho(A)\leq \rho(B).
\end{equation}
Besides, if the inequality in (\ref{Ineq_lem_sr_1}) is strict coefficientwise and $\rho(B)\neq 0$, then the inequality in (\ref{Ineq_lem_sr_2}) is also strict.
\end{Lem}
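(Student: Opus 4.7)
The plan is to prove the non-strict inequality via Gelfand's formula $\rho(M) = \lim_{n \to \infty}\|M^n\|^{1/n}$, choosing any matrix norm $\|\cdot\|$ that is monotone with respect to the entry-wise order on matrices with non-negative coefficients; for concreteness take $\|M\| := \sum_{i,j} |M_{ij}|$. Applying the triangle inequality coefficient-wise gives $|A^n| \leq |A|^n$ for every positive integer $n$, since each entry of $A^n$ is a sum of products of entries of $A$. Next, because the cone of matrices with non-negative entries is stable under multiplication, the hypothesis $|A| \leq B$ iterates into $|A|^n \leq B^n$. Combining both yields $|A^n| \leq B^n$, hence $\|A^n\| \leq \|B^n\|$ by monotonicity of the chosen norm. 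Taking $n$-th roots and letting $n \to \infty$ gives $\rho(A) \leq \rho(B)$ by Gelfand's formula.

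For the strict inequality I would reduce to the first part by a scaling argument. Assume $|A| < B$ strictly coefficient-wise and $\rho(B) \neq 0$. If $A = 0$, the conclusion $\rho(A) = 0 < \rho(B)$ is immediate, so assume $A \neq 0$ and set
$$c := \min\bigl\{ B_{ij}/|A_{ij}| \;:\; |A_{ij}| > 0 \bigr\}.$$
This minimum is taken over a finite non-empty set of real numbers strictly greater than $1$ (each ratio exceeds $1$ by the strict hypothesis), hence $c > 1$. By construction $c|A| \leq B$ coefficient-wise, so the first part of the lemma applied to the matrices $cA$ and $B$ gives $c\rho(A) = \rho(cA) \leq \rho(B)$. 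Dividing by $c > 1$ yields $\rho(A) \leq \rho(B)/c < \rho(B)$, since $\rho(B)$ is a positive real number.

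I expect no serious obstacle here; the two minor technicalities are to verify that the chosen norm is indeed monotone on non-negative matrices (this is immediate for the entry-wise $\ell^1$-norm) and to handle the edge case where some entries of $|A|$ vanish when defining the scaling constant $c$, which is dealt with by restricting the minimum to indices where $|A_{ij}| > 0$ and separately treating $A = 0$.
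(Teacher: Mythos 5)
Your proof is correct and follows essentially the same route as the paper: Gelfand's spectral radius formula together with the entry-wise monotonicity of matrix powers for the non-strict bound, and a scaling argument exploiting the homogeneity $\rho(cM)=|c|\rho(M)$ for the strict bound. The only cosmetic difference is that the paper bounds $\rho(A)^n=\|A^nv_A\|$ using a unit eigenvector while you use a monotone entry-wise norm directly, and you spell out the existence of the scaling constant $c>1$ (equivalently the paper's $\varepsilon$ with $|A|\leq(1-\varepsilon)B$), which the paper leaves implicit.
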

\begin{proof}
If $B$ is zero, then $A$ is zero too and (\ref{Ineq_lem_sr_1}) is trivial.
If $A=(a_{i,j})$ then we denote $|A|=(|a_{i,j}|)$ the matrix whose coefficients are the absolute values of those in $A$. We have for any vector $v\in\mathbb{C}^n$, the inequality
$$ |Av|\leq |A||v|\leq B|v|.$$
where $|v|$ is the vector whose coordinates are the absolute values of the coordinates of $v$.
Let $||.||$ denote the $1$-norm on $\mathbb{C}^n$, and let $v_A\in\mathbb{C}^n$ be an eigenvector of $A$, with norm $||v_A||=1$, associated with the eigenvalue $\lambda_A$ of $A$ of maximal modulus, that is $\lambda_A=\rho(A)$. Then for any non-negative integer $n\in\mathbb{N}$, we have
$$\rho(A)^n=||A^n v_A ||\leq ||\, |A|^n|v_A|\,||\leq || B^n |v_A|\,||\leq \sup_{||v||=1}||B^n v ||=:|||B^n|||.$$
Lastly using the spectral radius formula, also knwon as Gelfand's formula (See \cite{Rudin_FR_2009}, Théorème 18.9) 
$\rho(B)=\lim_{n\to\infty} |||B^n|||^{1/n}$, we get:
$$\rho(A)\leq \rho(B).$$

For the second point note that for any $\varepsilon\in ]0,1[$, we have $\rho((1-\varepsilon)B)=(1-\varepsilon)\rho(B)$. Thus if $\rho(B)\neq 0$, it suffices to find a $\varepsilon\in ]0,1[$ such that $|A|\leq (1-\varepsilon)B$ to obtain a strict inequality in (\ref{Ineq_lem_sr_2}).
\end{proof}

\begin{Lem}\label{inequality_spectral_radius_Perron}
Let $n>0$ be a positive integer and let $A$ and $B$ be two $n\times n$ complex matrices. Suppose that $B$ has non-negative real coefficients and that we have coefficient-wise the inequality 
\begin{equation}
|A|\leq B.
\end{equation}
Suppose now that there exists a pair $(i,j)\in\{1,...,n\}^2$ such that the $(i,j)$ coefficients of $A$ and $B$ satisfy $|A_{i,j}|< B_{i,j}$
 and suppose that $B$ is a Perron irreducible matrix, then the spectral radius of $A$ is strictly lower than the one of $B$,
\begin{equation}
\rho(A) < \rho(B).
\end{equation}
\end{Lem}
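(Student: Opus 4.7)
The plan is to reduce the strict inequality $\rho(A)<\rho(B)$ to the analogous statement for the non-negative matrix $|A|$. By the preceding Lemma~\ref{inequality_spectral_radius} applied to the pair $(A,|A|)$ we have $\rho(A)\le\rho(|A|)$, so it is enough to establish the strict inequality $\rho(C)<\rho(B)$ for the non-negative matrix $C:=|A|$ satisfying $C\le B$ coefficientwise, $C_{i,j}<B_{i,j}$, with $B$ Perron irreducible. In other words, the content of the lemma boils down to the standard monotonicity statement: strictly decreasing even a single entry of an irreducible non-negative matrix strictly decreases its spectral radius.

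For this reduced statement I would argue by contradiction: assume $\rho(C)=\rho(B)=:\rho$. Since $C\ge 0$, the Perron--Frobenius theorem provides a non-negative eigenvector $u\ge 0$, $u\ne 0$, with $Cu=\rho u$. Since $B$ is Perron irreducible, the corresponding theorem for irreducible non-negative matrices (Theorem~\ref{Asymp-Thm_Perron_irred} in~\cite{Asymp}) gives a strictly positive left eigenvector $v>0$ with $v^\top B=\rho v^\top$, and moreover the right $\rho$-eigenspace of $B$ is one-dimensional and spanned by a strictly positive vector.

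The key step uses $v$ twice. From $Bu=Cu+(B-C)u=\rho u+(B-C)u$ one gets $Bu\ge\rho u$ coefficientwise; pairing against $v$ yields
\[
\rho\,v^\top u\;=\;v^\top Bu\;=\;\rho\,v^\top u+v^\top(B-C)u,
\]
so $v^\top(B-C)u=0$. Since $v>0$, $B-C\ge 0$ and $u\ge 0$, every term in the sum $\sum_{k,l}v_k(B-C)_{k,l}u_l$ vanishes; in particular $v_i(B_{i,j}-C_{i,j})u_j=0$, which forces $u_j=0$ because $v_i>0$ and $B_{i,j}-C_{i,j}>0$. At the same time $(B-C)u=0$ means $Bu=Cu=\rho u$, so $u$ is a non-zero non-negative eigenvector of $B$ at $\rho$. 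Irreducibility of $B$ forces $u$ to be a positive multiple of the Perron vector of $B$, hence $u>0$, contradicting $u_j=0$.

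There is no real obstacle here beyond invoking the two Perron--Frobenius statements correctly: one for a general non-negative $C$ (existence of a non-negative eigenvector at $\rho(C)$), and the stronger one for the irreducible $B$ (existence of a strictly positive left eigenvector and one-dimensionality of the right $\rho$-eigenspace with strictly positive generator). The mechanism of the argument is entirely encoded in the fact that multiplying the non-negative vector $(B-C)u$ by the strictly positive functional $v^\top$ gives zero, which converts the single strict inequality $C_{i,j}<B_{i,j}$ into the vanishing $u_j=0$ and thereby into the desired contradiction.
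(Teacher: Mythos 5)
Your proof is correct, and it takes a genuinely different route from the paper's. The paper argues by first reducing (via Lemma~\ref{inequality_spectral_radius}) to non-negative $A$, then choosing a polynomial $P\ne 0$ with non-negative coefficients such that $P(B)$ has all entries positive, and observing that the single strict inequality $A_{i,j}<B_{i,j}$ then propagates to a \emph{strict coefficientwise} inequality $P(A)\,A\,P(A)<P(B)\,B\,P(B)$; applying the strict case of Lemma~\ref{inequality_spectral_radius} and the monotonicity of $x\mapsto P(x)^2x$ on $[0,\infty)$ finishes the proof. You instead run the classical Perron--Frobenius duality argument: assume $\rho(C)=\rho(B)=\rho$ for $C=|A|$, pair a non-negative right $\rho$-eigenvector $u$ of $C$ against the strictly positive left $\rho$-eigenvector $v$ of $B$ to get $v^\top(B-C)u=0$, deduce $u_j=0$ and $(B-C)u=0$, hence $u$ is a non-negative $\rho$-eigenvector of the irreducible $B$, which forces $u>0$ --- a contradiction. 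Both are valid; the trade-off is that your version invokes the full Perron--Frobenius theorem for irreducible matrices (existence and uniqueness of a strictly positive left eigenvector, one-dimensionality of the Perron eigenspace) plus the weak existence theorem for a general non-negative $C$, whereas the paper's proof is more self-contained, needing only the Gelfand-type comparison of Lemma~\ref{inequality_spectral_radius} and the fact that some polynomial in an irreducible matrix is entrywise positive. One small remark on your writeup: the step from $v^\top(B-C)u=0$ to $(B-C)u=0$ is correct but deserves to be stated as such (all coordinates of the non-negative vector $(B-C)u$ vanish because $v$ is strictly positive), since you then use $Bu=\rho u$ rather than merely $u_j=0$; as written this is implied but slightly compressed. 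Also worth noting that $\rho(B)>0$ is tacitly used; it holds here because $B_{i,j}>|A_{i,j}|\ge 0$ forces $B\ne 0$ and an irreducible non-negative matrix with a positive entry has positive spectral radius.
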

\begin{proof}
Without loss of generality (thanks to Lemma \ref{inequality_spectral_radius}), we can suppose that $A$ has non-negative coefficients,
so that $\rho(A)$ is an eigenvalue of $A$, and $\rho(B)$ is an eigenvalue of $B$.

Let $P$ be a polynomial with non-negative coefficients. Then the matrices $P(A)$ and $P(B)$ satisfies  $$P(A)\leq P(B).$$ 
Take $P\neq 0$ with non-negative coefficient to be such that, $P(B)$ has all its coefficients positive. Then coefficient-wise we have
$$P(A)A P(A) < P(B) B P(B).$$ 
For the coefficient indexed $(i,j)$ of $A$ (resp. $B$) appears in the expression of any coefficients of the above left (resp. right) handside matrix.

Lastly, from Lemma \ref{inequality_spectral_radius} we get
$$P(\rho(A))^2\rho(A)\leq \rho(P(A)2P(A))<\rho(P(B)BP(B))=P(\rho(B))^2\rho(B).$$
Which by strict increasing of $x\mapsto P(x)x P(x)$ on $[0,+\infty[$ gives $\rho(A)<\rho(B)$.
\end{proof}

\bibliographystyle{alpha}
\bibliography{sample.bib}

Institut Mathématiques de Bordeaux/ Institut Montpellierain Alexander Grothendieck\\
\indent E-MAIL: gchevalier@protonmail.com
\end{document}